\newcommand{\QQ}{\mathbb{Q}}
\newcommand{\NN}{\mathbb{N}}
\newcommand{\ZZ}{\mathbb{Z}}
\newcommand{\FF}{\mathbb{F}}
\DeclareMathOperator{\id}{id}
\DeclareMathOperator{\rk}{rk}
\DeclareMathOperator{\sgn}{sgn}
\DeclareMathOperator{\ind}{Ind}
\DeclareMathOperator{\Rad}{Rad}
\DeclareMathOperator{\Hd}{Hd}
\DeclareMathOperator{\Hom}{Hom}
\DeclareMathOperator{\End}{End}
\DeclareMathOperator{\Mat}{Mat}
\renewcommand{\leq}{\leqslant}
\renewcommand{\geq}{\geqslant}
\renewcommand{\b}{\mathbf}
\begin{document}

\swapnumbers
\theoremstyle{definition}
\newtheorem{defi}{Definition}[section]
\newtheorem{rem}[defi]{Remark}
\newtheorem{ques}[defi]{Question}
\newtheorem{expl}[defi]{Example}
\newtheorem{conj}[defi]{Conjecture}
\newtheorem{claim}[defi]{Claim}
\newtheorem{nota}[defi]{Notation}
\newtheorem{noth}[defi]{}

\theoremstyle{plain}
\newtheorem{prop}[defi]{Proposition}
\newtheorem{lemma}[defi]{Lemma}
\newtheorem{cor}[defi]{Corollary}
\newtheorem{thm}[defi]{Theorem}

\renewcommand{\proofname}{\textsl{\textbf{Proof}}}

\begin{center}
{\bf\Large On Integral Forms of Specht Modules \\ Labelled by Hook Partitions}

Susanne Danz and Tommy Hofmann
\medskip

\today

\end{center}

\begin{abstract}
\noindent
We investigate integral forms of simple modules of symmetric groups over fields of characteristic $0$ labelled by hook partitions. 
Building on work of Plesken and Craig, for every odd prime $p$, we give a set of representatives of the isomorphism classes of $\ZZ_p$-forms
of the simple $\QQ_p \mathfrak{S}_n$-module labelled by the partition $(n-k,1^k)$, where $n\in\NN$ and $0\leq k\leq n-1$. 
We also settle the analogous question for $p=2$, assuming that $n\not\equiv 0\pmod{4}$ and $k\in\{2,n-3\}$.
As a consequence this  leads to a set of representatives of the isomorphism classes of $\ZZ$-forms 
of the simple $\QQ\mathfrak{S}_n$-modules labelled by $(n-2,1^2)$ and $(3,1^{n-3})$, again assuming $n\not\equiv 0\pmod{4}$.

\smallskip

\noindent
{\bf Keywords:} integral representation, integral form, Jordan--Zassenhaus, symmetric group, Specht module, hook partition

\smallskip
\noindent
{\bf MR Subject Classification:} 20C10, 20C11, 20C30, 20C20
\end{abstract}

\section{Introduction}\label{sec intro}

Suppose that $R$ is a principal ideal domain and $K$ its field of fractions, and let $G$ be  a
finite group. As is well known, every finitely generated $KG$-module $V$ admits an $R$-form, that is,
a finitely generated $RG$-module $M$ that is $R$-free of finite rank and satisfies $V\cong K\otimes_R M$. In general,
$R$-forms of $V$ are far from being unique. However,  under suitable conditions on $R$ and $K$, the Jordan--Zassenhaus Theorem ensures that
there are only finitely many $RG$-isomorphism classes of $R$-forms of $V$. 
By \cite[Theorem (24.1), Theorem (24.7)]{Curtis1981} this holds, in particular, if $K$ is a global field, or if $R$ is a complete discrete valuation
ring and $K$ is a local field.
A finitely generated $R$-free $RG$-module of finite $R$-rank will be called an $RG$-lattice throughout.

In light of the Jordan--Zassenhaus Theorem one is immediately led to asking for the precise number
of isomorphism classes of $R$-forms of $V$, and possibly concrete representatives of these isomorphism classes. 
In such generality this is of course a completely hopeless task. So one might impose restrictions on the $KG$-modules under consideration, starting
with simple $KG$-modules. It turns out that even then not too much is known when it comes to
determining all isomorphism classes of $R$-forms of $V$ or their number. 

A list of some known results in this direction can be found in~\cite[\S 34]{Curtis1981} and~\cite{Reiner1970}.
Moreover, if $\mathfrak S_n$ is the symmetric group of degree $n \in \NN$ and $V$ is
the natural simple $\QQ \mathfrak S_n$-module, then the isomorphism
classes of $\ZZ$-forms of $V$ have been determined independently by
Craig~\cite{Craig1976} and Plesken~\cite{Plesken1974, Plesken1977}.
This was generalized by Feit first to the reflection representation of Weyl groups of indecomposable root systems, see~\cite{Feit1998}, and later to the natural representation of some complex reflection groups, see~\cite{Feit2003}.

In this article we are concerned with the case where $G$ is the symmetric group $\mathfrak{S}_n$ of degree $n\in \NN$,
and $R$ is the ring of integers $\ZZ$ or its $p$-adic completion $\ZZ_p$, for some prime number $p$.
The simple $\QQ\mathfrak{S}_n$-modules have been well studied for more than a century, and are known as {\sl Specht modules}.
Their isomorphism classes are in bijection with the partitions of $n$, and the Specht $\QQ\mathfrak{S}_n$-module
labelled by a partition $\lambda$ will be denoted by $S^\lambda_\QQ$. Specht modules have a number of remarkable properties: 
for instance, they are absolutely simple, self-dual, and each $\QQ\mathfrak{S}_n$-module
 $S^\lambda_\QQ$ already comes with a distinguished $\ZZ$-form, which we shall denote by $S^\lambda_\ZZ$ and
 whose definition will be recalled in \ref{noth Young Specht}. 
 
 The aim of this article now is to investigate the $\ZZ$-forms of the Specht $\QQ\mathfrak{S}_n$-modules
 labelled by {\sl hook partitions}, that is, partitions of shape $(n-k,1^k)$, for $k\in\{0,\ldots,n-1\}$.
 Since $S^{(n)}_\QQ$ is just the trivial $\QQ\mathfrak{S}_n$-module and $S^{(1^n)}_\QQ$ is the one-dimensional
 module affording the sign representation, each of these clearly has only one $\ZZ$-form up to isomorphism.
 The modules $S^{(n-1,1)}_\QQ$ and $S^{(2,1^{n-2})}_\QQ$ are precisely those dealt with by Plesken and Craig mentioned above;
 the number of isomorphism classes of $\ZZ$-forms of each of them is the number of positive divisors of $n$, and
 both Plesken and Craig give explicit representatives. 
 
 Starting from Plesken's and Craig's results, we shall thus focus on the case where $k\geq 2$.
 Our overall strategy is as follows: first, in order to determine the isomorphism classes of $\ZZ$-forms
 of $S^{(n-k,1^k)}_\QQ$, it suffices to determine the isomorphim classes of $\ZZ_p$-forms of
 the $p$-adic completion $S^{(n-k,1^k)}_{\QQ_p}:=\QQ_p\otimes_\QQ S^{(n-k,1^k)}_\QQ$, for
 every prime number $p$; we shall explain this in more detail in Section~\ref{sec forms}.
 Given $p$, every $\ZZ_p$-form of $S^{(n-k,1^k)}_{\QQ_p}$ is isomorphic
 to a full-rank sublattice of any fixed $\ZZ_p$-form of $S^{(n-k,1^k)}_{\QQ_p}$; see Remark~\ref{rem forms all in one}.
 
 Thus, for every prime number $p$, it then suffices to determine the full-rank $\ZZ_p\mathfrak{S}_n$-sublattices of the 
 given $\ZZ_p$-form $S^{(n-k,1^k)}_{\ZZ_p}:=\ZZ_p\otimes_\ZZ S^{(n-k,1^k)}_\ZZ$
 of $S^{(n-k,1^k)}_{\QQ_p}$, up to isomorphism. 
 It turns out that the cases $p=2$ and $p\geq 3$ behave completely differently, the main issue being
 the poorly understood behaviour of the Specht lattices $S^{(n-k,1^k)}_{\ZZ_2}$ after $2$-modular reduction. While
 for odd $p$ we are able to give representatives of the isomorphism classes of $\ZZ_p$-forms
 of $S^{(n-k,1^k)}_{\QQ_p}$ for all $k\in\{2,\ldots,n-2\}$, for $p=2$ we only get partial information
 and only in the case where $k\in\{2,n-3\}$.  As the first main result of this paper we obtain the following theorem, which will
 be proved in Section~\ref{sec proofs}. Here, for a prime number $p$ and a natural number $n \in \NN$, we denote by $\nu_p(n)$ the $p$-adic valuation of $n$, that is, $\nu_p(n) = \max \{ k \in \NN_0 : p^k \mid n \}$.
 
\begin{thm}\label{thm intro}
Let $n \in \NN$ be such that $n\geq 3$, and let $k \in \{1,\dotsc,n-2\}$. Let $h_p(k)$ be the number of isomorphism classes of $\ZZ_p$-forms of $S_{\QQ_p}^{(n-k, 1^k)}$.

\smallskip

{\rm (a)}\,  If $p$ is odd, then $h_p(k) = \nu_p(n) + 1$.

\smallskip

{\rm (b)}\,  Assume that $p=2$, $n\geq 5$, $n \not\equiv 0 \pmod 4$ and $k\in\{2,n-3\}$.

\smallskip

\quad {\rm (i)} If $n \geq 5$ is odd, then $h_2(k) = 3$.

\quad {\rm (ii)} If $n \equiv 2 \pmod 4$, then $h_2(k) = 4$.
\end{thm}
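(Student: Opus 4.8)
The plan is to reduce everything, as prepared in Section~\ref{sec forms} and Remark~\ref{rem forms all in one}, to a purely local problem: fix a $\ZZ_p$-form $M := S^{(n-k,1^k)}_{\ZZ_p}$ of the absolutely simple module $S^{(n-k,1^k)}_{\QQ_p}$, and count the $\ZZ_p\mathfrak{S}_n$-isomorphism classes among the full-rank $\ZZ_p\mathfrak{S}_n$-sublattices $L \subseteq M$; since $S^{(n-k,1^k)}_{\QQ_p}$ is simple, any two isomorphic such sublattices differ by a nonzero scalar, so one may always normalise $L \not\subseteq pM$. The engine of the count is the interplay between (i) the submodule lattice of the reduction $\overline{M} := \FF_p \otimes_{\ZZ_p} M$ and (ii) a descent through sublattices: if $L \not\subseteq pM$, then $\overline{L} := (L + pM)/pM$ is a nonzero submodule of $\overline{M}$; if $\overline{L} = \overline{M}$, then $L = M$ by Nakayama, and otherwise $L$ lies in the preimage $M_1 \subseteq M$ of some proper submodule of $\overline{M}$, and one repeats the analysis with $M_1$ in place of $M$. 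Each genuinely new lattice occurring in this process contributes one isomorphism class, and the arithmetic of $n$ governs how many new lattices can appear before one returns to a scalar multiple of $M$.

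\emph{Part (a).} For $k = 1$ this is precisely the theorem of Plesken~\cite{Plesken1974, Plesken1977} and Craig~\cite{Craig1976}, so assume $k \geq 2$. The input is the modular structure of $\overline{M}$: by the work of Peel and James on Specht modules labelled by hook partitions, $\overline{M}$ is simple when $p \nmid n$, and when $p \mid n$ it is uniserial of length $2$ with two non-isomorphic composition factors. If $p \nmid n$, then $\overline{M}$ is simple, the descent stops at once, and $h_p(k) = 1 = \nu_p(n) + 1$. Suppose now $p \mid n$ and put $a := \nu_p(n)$. I would first exhibit an explicit chain of full-rank sublattices $M = M_0 \supsetneq M_1 \supsetneq \dots \supsetneq M_a$ --- for instance obtained from Plesken's forms $\Lambda_{p^0}, \dots, \Lambda_{p^a}$ of $S^{(n-1,1)}_{\QQ_p}$ via the identification $S^{(n-k,1^k)}_{\QQ_p} \cong \bigwedge^k S^{(n-1,1)}_{\QQ_p}$ --- and show that they are pairwise non-isomorphic by means of a $\ZZ_p\mathfrak{S}_n$-invariant that recovers the index $i$ (such as the isomorphism type of $M_i / p^j M_i$ for a suitable $j$, or the conductor of the associated endomorphism order inside a fixed ambient lattice). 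The substantial part is exhaustiveness: running the descent, one computes the submodule lattice of $\overline{M_i} := M_i / p M_i$ at every stage and finds that it is alternately (semisimple, with the two composition factors as direct summands) and (uniserial of length $2$); at a semisimple stage exactly one of the two preimages leads back to a scalar multiple of an earlier $M_j$ while the other produces the next lattice $M_{i+1}$, and at a uniserial stage the unique proper preimage is forced. Tracking $p$-divisibilities shows that the chain closes after exactly $a$ genuinely new lattices, whence $h_p(k) = a + 1 = \nu_p(n) + 1$.

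\emph{Part (b).} Tensoring with the rank-one sign lattice is an auto-equivalence of the category of $\ZZ_p\mathfrak{S}_n$-lattices, and it carries forms of $S^{(n-k,1^k)}_{\QQ_p}$ to forms of $S^{(n-k,1^k)}_{\QQ_p} \otimes \sgn \cong S^{(k+1,1^{n-k-1})}_{\QQ_p}$; hence $h_p(k) = h_p(n-1-k)$ for every prime $p$, and in particular $h_2(2) = h_2(n-3)$, so it suffices to treat $k = 2$. Under the hypothesis $n \not\equiv 0 \pmod 4$ one can determine the $2$-modular structure of $M = S^{(n-2,1^2)}_{\ZZ_2}$ explicitly: $\overline{M}$ has a short, computable Loewy series, and its submodule lattice is small enough to carry the descent through to the end. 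Running that descent as in part~(a), while keeping track of the extra composition factor responsible for the additional isomorphism class at the prime $2$, yields $h_2(2) = 3$ when $n$ is odd (so $\nu_2(n) = 0$) and $h_2(2) = 4$ when $n \equiv 2 \pmod 4$ (so $\nu_2(n) = 1$), as claimed.

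\emph{The main obstacle.} In part~(a) the crux is the bookkeeping in the exhaustiveness step: one must show that the descent through sublattices cannot produce more than $\nu_p(n) + 1$ isomorphism types, which requires a precise understanding of how the submodule lattice of $M_i / p M_i$ evolves down the chain --- this is the point at which the $p$-adic valuation of $n$ genuinely enters. In part~(b) the obstacle is more basic: there is no general description of the $2$-modular reduction of hook Specht modules, and the hypotheses $n \not\equiv 0 \pmod 4$ and $k \in \{2, n-3\}$ are imposed exactly so that $\overline{M}$, and the first sublattices in the descent, can be computed by hand; establishing the non-splitness of the relevant extensions in that $2$-modular structure is the technical heart of the argument.
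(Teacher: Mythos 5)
Your part (a) takes a genuinely different route from the paper's, and that route has a gap. The paper does not descend through the sublattice lattice of $S^{(n-k,1^k)}_{\ZZ_p}$ directly; instead it proves Theorem~\ref{thm forms bijection p}, a bijection between isomorphism classes of $\ZZ_p$-forms of the \emph{natural} Specht module $S^{(n-1,1)}_{\QQ_p}$ and those of $S^{(n-k,1^k)}_{\QQ_p}$, given by $L \mapsto \bigwedge^k L$, and then simply transports Plesken/Craig's count of $\nu_p(n)+1$ from $k=1$. The bijection rests on two results you do not supply: Theorem~\ref{thm exterior forms iso}, which shows that for $R$-forms of an absolutely simple module whose $k$-th exterior power is again absolutely simple, $M\cong N$ if and only if $\bigwedge^k M \cong \bigwedge^k N$ (this replaces your unspecified ``$\ZZ_p\mathfrak{S}_n$-invariant recovering $i$''), and Proposition~\ref{prop hook forms}, which identifies the maximal sublattices of $\bigwedge^k(M)$ as exterior powers $\bigwedge^k(N)$ (suitably rescaled by powers of $p$) of maximal sublattices $N$ of $M$. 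The latter is exactly the ``exhaustiveness'' you defer to ``bookkeeping'' and ``tracking $p$-divisibilities'': it requires a genuine computation with aligned bases (Lemma~\ref{lemma aligned bases}) and exterior indices (Lemma~\ref{lemma exterior index}), not a routine count, and without it the inductive descent does not close. A further error: the Loewy types down the chain are not ``alternately semisimple and uniserial.'' By Lemma~\ref{lemma possible Loewy} and Proposition~\ref{prop possible Loewy}, exactly two of the $\nu_p(n)+1$ forms have uniserial mod-$p$ reduction, namely $S^{(n-k,1^k)}_{\ZZ_p}$ and its dual, and these sit at the two ends of the chain; every intermediate form reduces to a direct sum $D(k)\oplus D(k+1)$.

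Part (b) is essentially the paper's approach in outline: use the sign twist (Remark~\ref{rem transpose bij}) to reduce $k=n-3$ to $k=2$, then compute the $2$-modular structure of $S^{(n-2,1^2)}_{\ZZ_2}$ and its relevant sublattices and apply the uniserial-descent machinery of Proposition~\ref{prop max forms 2} and Proposition~\ref{prop max forms}. You correctly flag that the hard part is pinning down the Loewy series in characteristic $2$ (Proposition~\ref{prop S in char 2}, Lemmas~\ref{lemma S submodules}--\ref{lemma S submodules 2}, Corollary~\ref{cor T uniserial}), which is exactly what Section~\ref{sec p 2} does; your sketch does not fill in that work but correctly locates it.
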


We would like to stress that the statement of
part (a) of Theorem~\ref{thm intro} can already be found implicitly in work of Plesken \cite[Theorem~(VI.2)]{Plesken1983}.
The arguments given there are, however, not too obvious.
We shall give an elementary and detailed proof of part (a) in Section~\ref{sec p odd}.
In fact, it will follow from the results of Plesken and Craig concerning the case $k=1$, 
and our investigation of the $\ZZ_p\mathfrak{S}_n$-sublattices of $S^{(n-k,1^k)}_{\ZZ_p}$ in Section~\ref{sec p odd}.
In Theorem~\ref{thm forms bijection p} we shall prove that, for $p\geq 3$ and $k\in\{1,\ldots,n-2\}$, there is a bijection between the isomorphism
classes of $\ZZ_p$-froms of $S^{(n-1,1)}_{\QQ_p}$ and those of $S^{(n-k,1^k)}_{\QQ_p}$. The key step here is Theorem~\ref{thm exterior forms iso} on exterior powers, which should also be of independent interest.

Part (b) of Theorem~\ref{thm intro} will be a consequence of a very careful analysis
of the structure of the $\ZZ_2\mathfrak{S}_n$-lattice $S^{(n-2,1^2)}_{\QQ_2}$ in Section~\ref{sec p 2}, the main results there being
Theorem~\ref{thm n odd} and Theorem~\ref{thm n 2 mod 4}. 
Unfortunately, at present, we are not able to settle the case where $p=2$ and $n\equiv 0\pmod{4}$. However, based on computational data we shall
state a conjecture at the end of Section~\ref{sec p 2}. 

As well, the case $k\in\{3,\ldots,n-4\}$ and $p=2$ remains open so far,  due to a lack of knowledge of the structure of 
the $\ZZ_2\mathfrak{S}_n$-lattices $S^{(n-k,1^k)}_{\ZZ_2}$ and their $2$-modular reductions. 
Conjecture~\ref{conj}(b) concerns the number of isomorphism classes of $\ZZ_2$-forms of $S^{(n-3,1^3)}_{\QQ_2}$
and $S^{(4,1^{n-4})}_{\QQ_2}$, and
is also based on computer calculations.

\smallskip

As an immediate consequence of Theorem~\ref{thm intro} and Corollary~\ref{cor number local}, we 
get

\begin{cor}\label{cor intro}
Let $n \in \NN$ be such that $n\geq 4$ and $n \not\equiv 0 \pmod 4$. Let $k\in\{2,n-3\}$ and denote by $j(k)$ the number of isomorphism classes of $\ZZ$-forms of $S_{\QQ}^{(n-k, 1^k)}$, and by $d(n)$ the number of divisors of $n$ in $\NN$.

\smallskip

{\rm (a)}\, If $n \geq 5$ is odd, then $j(k) = 3 d(n)$.

\smallskip

{\rm (b)}\,  If $n \equiv 2 \pmod 4$, then $j(k) = 2 d(n)$.
\end{cor}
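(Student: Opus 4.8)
The plan is to derive Corollary~\ref{cor intro} by combining the local information in Theorem~\ref{thm intro} with the global-to-local principle alluded to in the introduction, which (as promised) is made precise by Corollary~\ref{cor number local}. The starting point is the general fact that the number $j(k)$ of isomorphism classes of $\ZZ$-forms of $S_\QQ^{(n-k,1^k)}$ is governed prime by prime: writing $V = S_\QQ^{(n-k,1^k)}$ and $V_p = \QQ_p \otimes_\QQ V$, Corollary~\ref{cor number local} should say that $j(k) = \prod_p h_p(k)$, where $h_p(k)$ is the number of isomorphism classes of $\ZZ_p$-forms of $V_p$ and the product runs over all primes $p$. (For all but finitely many $p$ — precisely those not dividing the relevant discriminant, which here means all $p$ with $p \nmid n$ — one has $h_p(k) = 1$, so the product is finite; this is exactly the content of the passage ``it suffices to determine the isomorphism classes of $\ZZ_p$-forms'' in the introduction.)

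First I would invoke Theorem~\ref{thm intro}(a): for every odd prime $p$ we have $h_p(k) = \nu_p(n) + 1$. Hence the contribution to $\prod_p h_p(k)$ coming from the odd primes is
\[
\prod_{\substack{p \text{ odd} \\ p \mid n}} \bigl(\nu_p(n) + 1\bigr),
\]
and by the standard multiplicativity of the divisor-counting function this equals $d(n)/\bigl(\nu_2(n)+1\bigr)$, i.e. the number of odd divisors of $n$. Then I would bring in the factor at $p = 2$. Under the standing hypotheses $n \geq 4$, $n \not\equiv 0 \pmod 4$ and $k \in \{2, n-3\}$, Theorem~\ref{thm intro}(b) applies and gives $h_2(k) = 3$ if $n$ is odd (case $n \geq 5$) and $h_2(k) = 4$ if $n \equiv 2 \pmod 4$. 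It remains only to assemble the two pieces.

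In the case $n$ odd we have $\nu_2(n) = 0$, so $d(n)$ itself is already the number of odd divisors of $n$; multiplying by $h_2(k) = 3$ yields $j(k) = 3\,d(n)$, which is part~(a). In the case $n \equiv 2 \pmod 4$ we have $\nu_2(n) = 1$, so $d(n) = 2 \cdot (\text{number of odd divisors of } n)$, whence the number of odd divisors of $n$ equals $d(n)/2$; multiplying by $h_2(k) = 4$ gives $j(k) = 4 \cdot d(n)/2 = 2\,d(n)$, which is part~(b). The small boundary case $n = 4$ allowed in the statement but excluded from Theorem~\ref{thm intro}(b)(ii) is vacuous under $n \not\equiv 0 \pmod 4$, so no separate treatment is needed; for $n = 4$ odd there is nothing, and the hypothesis $n \geq 4$ together with $n \not\equiv 0\pmod 4$ forces $n \geq 5$ whenever $n$ is odd and $n \geq 6$ whenever $n \equiv 2 \pmod 4$, so Theorem~\ref{thm intro}(b) is always available.

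The only real content beyond bookkeeping is the passage from local to global, i.e. the identity $j(k) = \prod_p h_p(k)$; but this is precisely Corollary~\ref{cor number local}, which we are entitled to assume, so the proof of Corollary~\ref{cor intro} is a short assembly of Theorem~\ref{thm intro}(a), Theorem~\ref{thm intro}(b), and the multiplicativity of $d$. I do not anticipate any genuine obstacle here; the one point requiring a line of care is to confirm that the product over primes is finite and picks up nontrivial factors only at primes dividing $n$ (odd case) together with the prime $2$, which is guaranteed by Remark~\ref{rem forms all in one} and the fact that $S^{(n-k,1^k)}_{\ZZ_p}$ has irreducible $p$-modular reduction for $p \nmid n$.
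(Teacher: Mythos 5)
Your proposal is correct and follows exactly the same route as the paper: apply Corollary~\ref{cor number local} to reduce the count $j(k)$ to the product $\prod_p h_p(k)$, plug in Theorem~\ref{thm intro}(a) for odd primes and Theorem~\ref{thm intro}(b) for $p=2$, and finish by the multiplicativity of the divisor function $d(n) = \prod_{p \mid n}(\nu_p(n)+1)$. Your added care about which primes contribute nontrivially and the boundary case $n=4$ being vacuous is sound bookkeeping that the paper leaves implicit.
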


In fact, our proof of Theorem~\ref{thm intro} will not only reveal the number
of isomorphism classes of $\ZZ_p$-forms of the $\QQ_p\mathfrak{S}_n$-modules in question, but
will provide explicit representatives of their isomorphism classes. Hence we have the following, a
more precise statement being the content of Theorem~\ref{thm expl}.

\begin{thm}\label{thm intro expl}
Let $n \in \NN$ be such that $n\geq 5$ and $n \not\equiv 0 \pmod 4$. For $k\in\{2,n-3\}$, we can explicitly construct representatives of the isomorphism classes of $\ZZ$-forms of $S_{\QQ}^{(n - k, 1^k)}$.
\end{thm}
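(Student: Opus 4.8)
The plan is to combine the local analysis underlying Theorem~\ref{thm intro} with the local--global correspondence of Corollary~\ref{cor number local}. Recall from Section~\ref{sec forms} that two $\ZZ\mathfrak S_n$-lattices inside $S^{(n-k,1^k)}_\QQ$ are isomorphic if and only if their $p$-adic completions are isomorphic for every prime $p$, and that every $\ZZ_p$-form of $S^{(n-k,1^k)}_{\QQ_p}$ is isomorphic to a full-rank $\ZZ_p\mathfrak S_n$-sublattice of the standard Specht lattice $S^{(n-k,1^k)}_{\ZZ_p}$, with equality for all but finitely many $p$ (see Remark~\ref{rem forms all in one} and Corollary~\ref{cor number local}). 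Consequently, to produce explicit global representatives it suffices to: (i) write down, for each prime $p$, explicit full-rank sublattices $L_{p,1}=S^{(n-k,1^k)}_{\ZZ_p},L_{p,2},\dotsc,L_{p,h_p(k)}$ of $S^{(n-k,1^k)}_{\ZZ_p}$ representing all $h_p(k)$ isomorphism classes of $\ZZ_p$-forms; and (ii) patch a chosen family of such local lattices into a single $\ZZ$-lattice.

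For an odd prime $p$ the required local representatives come out of the proof of Theorem~\ref{thm intro}(a): by Theorem~\ref{thm forms bijection p} the isomorphism classes of $\ZZ_p$-forms of $S^{(n-k,1^k)}_{\QQ_p}$ are in bijection with those of $S^{(n-1,1)}_{\QQ_p}$, the bijection being realised on the level of lattices through the exterior-power construction of Theorem~\ref{thm exterior forms iso}. Since Plesken and Craig give explicit $\ZZ$-forms of $S^{(n-1,1)}_\QQ$, indexed by the positive divisors of $n$ and hence amounting to a family of $\nu_p(n)+1$ local forms per prime, completing these at $p$ and pushing them through the exterior-power isomorphism yields explicit full-rank $\ZZ_p\mathfrak S_n$-sublattices of $S^{(n-k,1^k)}_{\ZZ_p}$ exhausting all $\nu_p(n)+1$ classes. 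For $p\nmid n$ only the standard lattice occurs.

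For $p=2$ we invoke the structural results of Section~\ref{sec p 2}. For $n\geq 5$ odd, Theorem~\ref{thm n odd} exhibits the three isomorphism classes of $\ZZ_2$-forms of $S^{(n-2,1^2)}_{\QQ_2}$ as explicit full-rank sublattices of $S^{(n-2,1^2)}_{\ZZ_2}$; for $n\equiv 2\pmod 4$, Theorem~\ref{thm n 2 mod 4} does the same with four classes. The conjugate partition $(3,1^{n-3})$, i.e.\ $k=n-3$, is handled by tensoring with the unique $\ZZ$-form of the sign representation, which induces a class-preserving bijection between $\ZZ_2$-forms of $S^{(n-2,1^2)}_{\QQ_2}$ and of $S^{(3,1^{n-3})}_{\QQ_2}$ (and likewise globally and at odd primes), so no extra work is needed there.

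Finally, given any family $(L_p)_p$ with $L_p$ one of the chosen local representatives and $L_p=S^{(n-k,1^k)}_{\ZZ_p}$ for almost all $p$, identify each $L_p$ with a sublattice of $S^{(n-k,1^k)}_\QQ$ having the prescribed completion at $p$ and the standard completion elsewhere --- concretely, scale the basis vectors of $S^{(n-k,1^k)}_\ZZ$ by rationals supported at the relevant finite set of primes --- and set $M:=\{v\in S^{(n-k,1^k)}_\QQ:\ 1\otimes v\in L_p\text{ for all }p\}$. Then $M$ is a full-rank $\ZZ\mathfrak S_n$-lattice with $\ZZ_p\otimes_\ZZ M\cong L_p$ for every $p$, so by Corollary~\ref{cor number local} the lattices $M$ obtained in this way form a complete and irredundant set of representatives of the $\ZZ$-forms of $S^{(n-k,1^k)}_\QQ$, their total number being $3d(n)$ or $2d(n)$ as in Corollary~\ref{cor intro}. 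I expect the patching step and the odd-prime case to be essentially routine once Theorem~\ref{thm exterior forms iso} is available; the real obstacle, and the reason for the hypotheses $n\not\equiv 0\pmod 4$ and $k\in\{2,n-3\}$, is the explicit determination of the $\ZZ_2$-forms in Theorems~\ref{thm n odd} and~\ref{thm n 2 mod 4}, which requires controlling the $2$-modular reduction and the submodule structure of the Specht lattice $S^{(n-2,1^2)}_{\ZZ_2}$.
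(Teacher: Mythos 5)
Your proposal is correct and follows essentially the same route as the paper's Theorem~\ref{thm expl}: exterior powers of Plesken--Craig lattices for the odd-prime local forms (via Theorems~\ref{thm forms bijection p} and~\ref{thm craig}), the explicit sublattices of Theorems~\ref{thm n odd} and~\ref{thm n 2 mod 4} at $p=2$, sign-tensoring for $k=n-3$, and a local-to-global patching step. Your adelic description of $M$ is just a reformulation of the intersection $N_1\cap\dotsb\cap N_g$ in Proposition~\ref{prop rep iso}, which the paper cites to finish the proof.
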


\medskip

The present paper is organized as follows: in Section~\ref{sec general}, we summarize the
necessary background on $RG$-lattices and $KG$-modules that will be used throughout. In Section~\ref{sec forms}
we explain how the $\ZZ$-forms of an absolutely simple $\QQ G$-module $V$ are related to
the $\ZZ_p$-forms of its $p$-adic completion; specifically Proposition~\ref{prop rep iso} will
be of great importance for our proof of Theorem~\ref{thm intro expl}.

Since the Specht module $S^{(n-k,1^k)}_{\QQ}$, for $k\in\{1,\ldots,n-2\}$, arises as the $k$th exterior power of $S^{(n-1,1)}_\QQ$, 
in Section~\ref{sec exterior} we give a brief overview of exterior powers of $KG$-modules and $RG$-lattices.
In Section~\ref{sec specht} we introduce Specht modules and Specht lattices, and recall the connection of Specht modules
labelled by hook partitions with exterior powers just mentioned. Sections~\ref{sec p odd} and \ref{sec p 2}
are then devoted to analyzing the $\ZZ_p\mathfrak{S}_n$-lattices $S^{(n-k,1^k)}_{\ZZ_p}$ in the case where
$p\geq 3$ and $p=2$, respectively. The main results of these two sections will pave the way towards 
proving Theorem~\ref{thm intro} and Theorem~\ref{thm intro expl} in Section~\ref{sec proofs}.

We conclude this article with an appendix concerning dual Specht $\ZZ\mathfrak{S}_n$-lattices. 
More precisely, in unpublished work \cite{Wildon2002} Wildon has presented, for every partition $\lambda$ of $n$, a concrete 
$\ZZ\mathfrak{S}_n$-monomorphism that embeds the $\ZZ$-linear dual $(S^\lambda_\ZZ)^*$ into $S^\lambda_\ZZ$.
Wildon's proof refers to the proof of \cite[\S 7.4, Lemma 5]{Fulton1997}, which, however, seems to contain some subtleties. Wildon's  
embedding $(S^\lambda_{\ZZ})^*\to S^\lambda_\ZZ$ has been one of the key steps in our proof of Theorem~\ref{thm n 2 mod 4}, and
should also be of independent interest. Therefore, we consider it to be worthwhile giving some more details on the construction of the
embedding, following the lines of \cite{Wildon2002} and \cite[Theorem 6.7]{James1978}. We also mention that similar
constructions can be found in work of Fayers~\cite[Section 4]{Fayers2003}.

\bigskip
\noindent
{\bf Acknowledgements:} The authors would like to thank John Murray for drawing
their attention to \cite{Wildon2002}. 
As well, both authors would like to thank the Universities of Kaiserslautern and Eichst\"att-Ingolstadt for their
kind hospitality during mutual visits.


\section{Preliminaries on modules and lattices}\label{sec general}

Throughout this paper let $R$ be a principal ideal domain and $K$ its field of fractions. 
We start by summarizing some well-known facts concerning $RG$-lattices and $KG$-modules that we shall need 
throughout. We assume the reader to be familiar with the basic notions on modules over group algebras of finite groups, and
refer to \cite{Curtis1981,Nagao1989} for background.

By $\ZZ$, $\NN$ and $\NN_0$ we denote the set of integers, the set of positive integers and the set of non-negative integers, respectively. 
For every prime number $p$, we further denote by $\ZZ_p$ the ring of $p$-adic integers and by $\QQ_p$ the field of $p$-adic numbers, the $p$-adic valuation will be denoted by $\nu_p$.
The localization of $\ZZ$ at the prime ideal $(p)$ will be denoted by $\ZZ_{(p)}$.

\begin{nota}\label{nota lattice} Let $G$ be a finite group.

\smallskip

(a) An {\sl $RG$-lattice} $L$ is always understood to be a left $RG$-module that is finitely generated and free over $R$; the $R$-rank of
$L$ will be denoted by $\rk_R(L)$.

\smallskip

(b)\, Suppose that $F$ is any field. By an {\sl $FG$-module} we shall always mean a finitely generated left $FG$-module.
If $V$ is an $FG$-module, then we denote by $\Rad(V)$ the {\sl (Jacobson) radical} of $V$ and by $\Hd(V):=V/\Rad(V)$
the {\sl head} of $V$. 

For $i\geq 0$, we denote the $i$th radical of $V$ by $\Rad^i(V)$, where $\Rad^0(V):=V$. Suppose that $V$ has Loewy length $l\geq 1$ with
Loewy layers $\Rad^{i-1}(V)/\Rad^i(V)\cong D_{i1}\oplus\cdots \oplus D_{i r_i}$, for $i\in\{1,\ldots,l\}$, $r_1,\ldots,r_l\in \NN$
and simple $FG$-modules
$D_{i1},\ldots,D_{ir_i}$. Then we shall write
\begin{equation}\label{eqn Loewy}
V\sim\begin{bmatrix}   D_{11}\oplus\cdots \oplus D_{1r_1}\\\vdots\\D_{l1}\oplus\cdots \oplus D_{lr_l} \end{bmatrix}\,,
\end{equation}
and say that $V$ has {\sl Loewy series} (\ref{eqn Loewy}).
\end{nota}

\begin{noth}\label{noth change rings}{\bf Change of coefficient rings.}\,
Let $S$ be a principal ideal domain domain, and let $\rho:R\to S$ be a unitary ring homomorphism, so that $S$ becomes an $(R,R)$-bimodule via $\rho$.

\smallskip

(a)\, With the above notation, we shall identify the $S$-algebra $S\otimes_R RG$ with the group algebra $SG$ in the usual way. 

Suppose that $L$ is an $RG$-lattice with $R$-basis $\{b_1,\ldots,b_k\}$. Then the $S$-lattice
$L_S:=S\otimes_R L$ becomes naturally an $SG$-lattice with $S$-basis $\{1\otimes b_1,\ldots,1\otimes b_k\}$. 

If $L_1$ and $L_2$ are $RG$-lattices then there is an isomorphism of $SG$-lattices $S\otimes_R(L_1\otimes_R L_2)\cong (S\otimes_R L_1)\otimes_S (S\otimes_R L_2)$.

\smallskip

(b)\, Suppose that $\rho$ is injective. Then we may view $L$ as an $RG$-submodule of $S\otimes_ R L$, by identifying
$x\in L$ with $1\otimes x$. 
 With these conventions,
we then have $L_S=S\otimes_R L=SL={}_S\langle b_1,\ldots,b_k\rangle$.

\smallskip

(c)\, Consider the special case where $S=K$ and $\rho$ is the inclusion map. Every $KG$-module $V$ admits an $RG$-lattice $L$ such that $V\cong KL$ and $\rk_R(L)=\dim_K(V)$; see, for instance,
\cite[Theorem (73.6)]{Curtis1962}. One calls $L$ an {\sl $R$-form} of $V$.

\smallskip

(d)\, Suppose that $L$ is an $RG$-lattice, and denote by $L^*:=\Hom_R(L,R)$ the dual $RG$-lattice. 
If $\{b_1,\ldots,b_k\}$ is an
$R$-basis of the $RG$-lattice $L$ then we denote by $\{b_1^*,\ldots,b_k^*\}$ the $R$-basis of $L^*$ that is dual to $\{b_1,\ldots,b_k\}$.
If $L\cong L^*$ as $RG$-lattices, then $L$ is called {\sl self-dual}.

Furthermore, we always obtain an $SG$-isomorphism 
$$S\otimes_R L^*\to (S\otimes_R L)^*=\Hom_S(S\otimes_R L,S)\,,$$
sending $1\otimes b_i^*$ to $(1\otimes b_i)^*$, for $i\in\{1,\ldots,k\}$.

\smallskip

(e)\, If $S$ is flat as a right $R$-module via $\rho$ and if $L_1$ and $L_2$ are $RG$-lattices, then there is an isomorphism
of $S$-modules
$$\Psi:S\otimes_R\Hom_{RG}(L_1,L_2)\cong \Hom_{SG}(S\otimes_RL_1,S\otimes_RL_2)$$
such that $(\Psi(\alpha \otimes \varphi))(\beta\otimes x)= \alpha\beta\otimes \varphi(x)$, for
$\alpha,\beta\in S$, $\varphi\in \Hom_{RG}(L_1,L_2)$, and $x\in L_1$;
see \cite[Theorem 1.11.7]{Nagao1989}.

\smallskip

(f)\, Let $\mathfrak{a}$ be an ideal in $R$, let $S:=R/\mathfrak{a}$, and let $\rho:R\to S$ be the
canonical projection. Suppose that $L$ is an $RG$-lattice. Then the factor $RG$-module $L/\mathfrak{a}L$ naturally
becomes an $SG$-lattice, and one has an $SG$-isomorphism
$$L/\mathfrak{a}L\cong S\otimes_RL\,;$$
see \cite[Theorem 1.9.17]{Nagao1989}. As well, note that, by the Third Isomorphism Theorem, every $SG$-sublattice of $L/\mathfrak{a}L$
is of the form $M/\mathfrak{a}L$, where $M$ is an $RG$-sublattice of $L$ containing $\mathfrak{a}L$.
\end{noth}

\begin{lemma}\label{lemma max sub}
Let $M$ be an $RG$-lattice.

\smallskip

{\rm (a)}\, 
  If $N$ is a maximal $RG$-sublattice of $M$, then there exists a maximal ideal $\mathfrak m$ of $R$ such that $\mathfrak m M \subseteq N \subseteq M$.
  
  \smallskip

{\rm (b)}\,  Let $N$ be an $RG$-sublattice of $M$ with $\mathfrak m M \subseteq N \subseteq M$, for some maximal ideal $\mathfrak m$ of $R$. Then the following are equivalent:

\smallskip

\quad {\rm (i)}\, $N$ is a maximal $RG$-sublattice of $M$;

\quad {\rm (ii)}\,  $N/\mathfrak m M$ is a maximal $(R/\mathfrak m)G$-submodule of $M/\mathfrak m M$;

\quad {\rm (iii)}\, $M/N$ is a simple $(R/\mathfrak m)G$-module.
\end{lemma}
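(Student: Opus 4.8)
This is a standard but useful lemma relating maximal sublattices to maximal submodules of modular reductions. The plan is to work through parts (a) and (b) in turn, using the dictionary in \ref{noth change rings}(f) between $RG$-sublattices of $M$ containing $\mathfrak a M$ and $(R/\mathfrak a)G$-submodules of $M/\mathfrak a M$.

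For part (a): let $N$ be a maximal $RG$-sublattice of $M$. Since $N$ is maximal, $M/N$ is a nonzero simple $RG$-module; in particular it is a simple module, hence cyclic, and its annihilator in $R$ contains a nonzero element (because $M/N$ is a torsion $R$-module — here one uses that $N$ has full $R$-rank, which follows from maximality since $RN \subsetneq M$ would contradict maximality, as $RN$ is an $RG$-sublattice strictly between $N$ and $M$ unless $RN = M$; so $KN = KM$ and $M/N$ is $R$-torsion). The annihilator $\operatorname{Ann}_R(M/N)$ is a nonzero ideal of the PID $R$, hence contained in some maximal ideal $\mathfrak m$. Then $\mathfrak m$ annihilates $M/N$, i.e. $\mathfrak m M \subseteq N$, giving the chain $\mathfrak m M \subseteq N \subseteq M$ as required. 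One should double-check the full-rank claim carefully: if $N$ is maximal then $N \neq M$, and I claim $N$ has full rank — otherwise $KN \cap M$ is an $RG$-sublattice with $N \subseteq KN\cap M \subsetneq M$ and $KN \cap M \neq N$ in general; more cleanly, pick $0 \neq x \in M$ with $Rx \cap N$ of smaller rank consideration — actually the cleanest argument is: $N + Rx$ for any $x \in M \setminus N$ equals $M$ by maximality (it's an $RG$-sublattice strictly containing $N$, wait — $N + Rx$ need not be a $G$-submodule). So instead: the $RG$-sublattice generated by $N$ and $x$ is all of $M$; since $M/N$ is then a cyclic $RG$-module generated by the image of $x$, and if $N$ had non-full rank then $M/N$ would have positive rank as an $R$-module, but a cyclic $RG$-module that is $R$-free of positive rank cannot be simple over $RG$ when... hmm, this needs care — the standard fact is that simple $RG$-modules for $R$ a PID with $G$ finite are torsion. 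I will cite this or argue it via: $|G|\cdot(M/N)$ is a proper $RG$-submodule (it's killed after tensoring with $\QQ$ if $M/N$ were to remain nonzero... ) — the safe route is to note $M/N \cong (R/\mathfrak a)G$-module only after we know it's torsion, so I would instead phrase: let $\mathfrak a = \operatorname{Ann}_R(M/N)$; if $\mathfrak a = 0$ then $M/N$ embeds into $K \otimes_R M/N$ which is a nonzero $KG$-module, but then $M/N$ contains $\mathfrak p(M/N)$ properly for any maximal ideal, contradicting simplicity. So $\mathfrak a \neq 0$, done.

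For part (b): assume $\mathfrak m M \subseteq N \subseteq M$ with $\mathfrak m$ maximal. By \ref{noth change rings}(f), $M/\mathfrak m M \cong S \otimes_R M$ is an $SG$-lattice where $S = R/\mathfrak m$ is a field, and $N/\mathfrak m M$ is an $SG$-submodule of it; moreover the correspondence $N \leftrightarrow N/\mathfrak m M$ is inclusion-preserving and bijective between $RG$-sublattices of $M$ containing $\mathfrak m M$ and $SG$-submodules of $M/\mathfrak m M$. The equivalence (i)$\Leftrightarrow$(ii) is then immediate from this correspondence: $N$ is maximal among $RG$-sublattices containing $\mathfrak m M$ iff $N/\mathfrak m M$ is maximal among $SG$-submodules; and $N$ maximal among \emph{all} $RG$-sublattices is equivalent to $N$ maximal among those containing $\mathfrak m M$, since any $RG$-sublattice $N'$ with $N \subsetneq N' \subseteq M$ automatically contains $\mathfrak m M$ (as $\mathfrak m M \subseteq N \subseteq N'$). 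The equivalence (ii)$\Leftrightarrow$(iii) is the elementary module-theoretic fact that a submodule $W$ of a module $U$ over any ring is maximal iff $U/W$ is simple, applied to $U = M/\mathfrak m M$ and $W = N/\mathfrak m M$, together with the Third Isomorphism Theorem identification $(M/\mathfrak m M)/(N/\mathfrak m M) \cong M/N$.

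The main obstacle, such as it is, is purely the bookkeeping in part (a): correctly establishing that a maximal $RG$-sublattice is automatically of full rank (equivalently, that $M/N$ is $R$-torsion), since everything else follows formally once the relevant nonzero ideal $\mathfrak a$ with $\mathfrak a M \subseteq N$ is produced and then enlarged to a maximal ideal $\mathfrak m$. I would handle this by the argument above: set $\mathfrak a := \operatorname{Ann}_R(M/N)$ and rule out $\mathfrak a = 0$ by observing that otherwise $M/N$ would be a nonzero $R$-torsion-free cyclic $RG$-module, hence $\mathfrak p (M/N) \subsetneq M/N$ for every maximal ideal $\mathfrak p$ of $R$ by Nakayama applied to the localization, contradicting the simplicity of $M/N$. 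No further subtleties are expected.
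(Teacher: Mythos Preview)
Your treatment of part~(b) is correct and matches the paper, which simply calls it obvious. For part~(a), however, there is a genuine slip: having established that $\mathfrak a := \operatorname{Ann}_R(M/N)$ is nonzero, you write that $\mathfrak a$ is ``contained in some maximal ideal $\mathfrak m$'' and then claim ``$\mathfrak m$ annihilates $M/N$''. This does not follow: the containment $\mathfrak a \subseteq \mathfrak m$ says nothing about elements of $\mathfrak m \smallsetminus \mathfrak a$ annihilating $M/N$.

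The repair is easy. Since $M/N$ is a simple $RG$-module, Schur's Lemma makes $\End_{RG}(M/N)$ a division ring; the central action of $R$ gives an embedding $R/\mathfrak a \hookrightarrow \End_{RG}(M/N)$, so $R/\mathfrak a$ is a domain and $\mathfrak a$ is a nonzero prime ideal of the PID $R$, hence already maximal---take $\mathfrak m = \mathfrak a$. Alternatively, and this is essentially the paper's route (it just invokes Nakayama's Lemma and a reference), argue directly without annihilators: for each maximal ideal $\mathfrak m$, the subset $\mathfrak m(M/N)$ is an $RG$-submodule of the simple module $M/N$, hence equals $0$ or $M/N$; if $\mathfrak m(M/N)=M/N$ for every $\mathfrak m$, then Nakayama applied after localization gives $(M/N)_{\mathfrak m}=0$ for every $\mathfrak m$, forcing $M/N=0$. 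This bypasses the whole torsion discussion.
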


\begin{proof}
Part (a) is consequence of Nakayama's Lemma; a proof can be found in \cite[Lemma~8.3]{Hofmann2016}.
Part (b) is obvious.
\end{proof}

\begin{noth}\label{noth order ideal}{\bf Order ideal and index.}\, 
Suppose that $M$ and $N$ are $R$-lattices of the same rank $n\in\NN$ with $N\subseteq M$. Then the factor module $M/N$ is  a torsion
$R$-module and, hence, admits a decomposition 
$$M/N\cong \prod_{i=1}^m R/(r_i)\,,$$
for suitable $m\in\NN$ and $r_1,\ldots,r_m\in R$. The ideal $(r_1\cdots r_m)$ of $R$ is independent of the chosen
decomposition; one sets $(M:N):=(M:N)_R:=(r_1\cdots r_m)$, and calls $(M:N)$ the {\sl order ideal} of $N$ in $M$.
For details concerning order ideals see \cite[\S 4D]{Curtis1981}.

\smallskip

Note that one always has an $R$-endomorphism $\phi:M\to M$ with $\phi(M)=N$.
Using this, a connection between the determinant of any such $R$-endomorphism of $M$, the order ideal of $N$ in $M$ and the ordinary index $[M:N]$ from group theory is given by the next proposition.
In the proof we shall use that $[M:N]=\prod_{i=1}^m|R/(r_i)|=|R/(r_1\cdots r_m)|=|R/(M:N)|$, provided that every proper factor ring of $R$ is
finite. This is due to the fact that $R$ is a unique factorization domain, and can be deduced immediately from the following
observation and the Chinese Remainder Theorem: Suppose that every proper factor ring of $R$ is finite. Suppose further that $p$ is a prime element in $R$ and $k\in \NN$.
Then one has a surjective $R$-module homomorphism $R/(p^k)\to (p^{k-1})/(p^k)$, given
by multiplication with $p^{k-1}$, with kernel $(p)/(p^k)$. This gives $R/(p)\cong (p^{k-1})/(p^k)$ as
$R$-modules, and implies $|R/(p^k)|=|R/(p)|\cdot |(p)/(p^2)|\cdots |(p^{k-1}/(p^k)|=|R/(p)|^k$.
\end{noth}

\begin{prop}\label{prop det}
Let $M$ and $N$ be $RG$-lattices such that $N\subseteq M$
and $\rk_R(M)=\rk_R(N)=n\in\NN$. Let further $\phi: M\to M$ be an $R$-endomorphism of $M$ with $\phi(M) = N$.
Then one has $(M:N)=(\det(\phi))$. If, moreover, every proper factor ring of $R$ is finite, then
$$|R/(M:N)|=[M:N]=|R/(\det(\phi))|\,;$$
in particular, if $R=\ZZ$, then $[M:N]=|\det(\phi)|$. If $R=\ZZ_p$ for some prime number $p$, then 
$[M:N]=p^{\nu_p(\det(\phi))}$. 
\end{prop}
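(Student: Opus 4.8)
The plan is to reduce everything to the Smith normal form of $\phi$ and then compare the two descriptions of $M/N$. First I would choose the endomorphism $\phi\colon M\to M$ with $\phi(M)=N$ whose existence is guaranteed; concretely, an $R$-basis $\{b_1,\dots,b_n\}$ of $M$ and an $R$-basis $\{c_1,\dots,c_n\}$ of $N$, and let $\phi$ send $b_i\mapsto c_i$, so $\det(\phi)$ is the determinant of the matrix $A\in\Mat_n(R)$ expressing the $c_i$ in terms of the $b_i$. Since $R$ is a principal ideal domain, by the theory of modules over a PID (equivalently the Smith normal form) there are bases of $M$ and of $N$ with respect to which $\phi$ has a diagonal matrix $\operatorname{diag}(r_1,\dots,r_n)$ with each $r_i\in R$; concretely there exist $P,Q\in\operatorname{GL}_n(R)$ with $PAQ=\operatorname{diag}(r_1,\dots,r_n)$, whence $M/N\cong\prod_{i=1}^n R/(r_i)$ and $\det(\phi)=\det(A)$ differs from $r_1\cdots r_n$ only by the unit $\det(P)^{-1}\det(Q)^{-1}$. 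By the very definition of the order ideal recalled in \ref{noth order ideal}, this gives $(M:N)=(r_1\cdots r_n)=(\det(\phi))$, which is the first assertion. (Note that a priori the decomposition in \ref{noth order ideal} has some number $m$ of factors, possibly with trivial ones; padding with factors $R/(1)=0$ or discarding them does not change the product ideal, so taking $m=n$ is harmless.)

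For the second assertion, assume every proper factor ring of $R$ is finite. Then, as observed in \ref{noth order ideal}, one has $|R/(r)|<\infty$ for every nonzero nonunit $r$, and multiplicativity of this index over the Chinese Remainder decomposition together with the computation $|R/(p^k)|=|R/(p)|^k$ recorded there yields $|R/(ab)|=|R/(a)|\cdot|R/(b)|$ for all nonzero $a,b\in R$. Applying this to $M/N\cong\prod_{i=1}^n R/(r_i)$ gives
\[
[M:N]=|M/N|=\prod_{i=1}^n|R/(r_i)|=|R/(r_1\cdots r_n)|=|R/(M:N)|=|R/(\det(\phi))|\,,
\]
using the first assertion in the last step. (If some $r_i$ is a unit the corresponding factor is trivial and contributes $1$; if some $r_i$ were $0$ then $M/N$ would be infinite, contradicting $\rk_R(N)=n=\rk_R(M)$, so this does not occur.)

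Finally, the two special cases are immediate instances: $\ZZ$ and $\ZZ_p$ both satisfy the hypothesis that every proper factor ring is finite, with $|\ZZ/(m)|=|m|$ and $|\ZZ_p/(a)|=p^{\nu_p(a)}$, so $[M:N]=|\det(\phi)|$ and $[M:N]=p^{\nu_p(\det(\phi))}$ respectively. I do not expect a serious obstacle here: the only point requiring a little care is the bookkeeping between the abstract decomposition of $M/N$ in \ref{noth order ideal} (with its $m$ factors) and the Smith normal form (with $n$ diagonal entries), and the passage from ideals to cardinalities, which is exactly what the hypothesis on factor rings and the computation in \ref{noth order ideal} are designed to handle.
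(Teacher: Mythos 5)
Your proof is correct and follows essentially the same route as the paper. The only difference is that for the first claim $(M:N)=(\det(\phi))$ the paper simply cites \cite[Proposition~(4.20a)]{Curtis1981}, whereas you re-derive it via the Smith normal form; from there on, both arguments invoke the same multiplicativity of $|R/(\cdot)|$ recorded in \ref{noth order ideal} to pass from $M/N\cong\prod R/(r_i)$ to $[M:N]=|R/(M:N)|=|R/(\det(\phi))|$, and the specializations to $\ZZ$ and $\ZZ_p$ are immediate.
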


\begin{proof}
By \cite[Proposition~(4.20a)]{Curtis1981}, we have $(M:N)=(\det(\phi))$.
Now suppose that every proper factor ring of $R$ is finite.
Let $M/N\cong \prod_{i=1}^m R/(r_i)$ be a decomposition into cyclic torsion $R$-modules as in \ref{noth order ideal}.
Then $r_i\neq 0$, for all $i\in \{1,\ldots,m\}$. Since all proper quotients of $R$ are finite and since $R$ is
a principal ideal domain, we have
$[M:N]=\prod_{i=1}^m|R/(r_i)|=|R/(r_1\cdots r_m)|=|R/(M:N)|$, as noted in \ref{noth order ideal}.
\end{proof}

\begin{lemma}\label{lemma aligned bases}
Let $\mathfrak{m}=(p)$ be a  maximal ideal in $R$, and let $M$ and $N$ be $R$-lattices of rank $n\in\NN$ such that
$\mathfrak{m}M\subseteq N\subseteq M$. Let further $k$ be the residue field $R/\mathfrak{m}$, and
let $s:=\dim_k(M/N)$. 

\smallskip

{\rm (a)}\, There exists an $R$-basis $\{v_1,\ldots,v_n\}$ of $M$ and an $R$-basis
$\{w_1,\ldots,w_n\}$ of $N$ such that $v_i=w_i$, for $i\in\{1,\ldots,n-s\}$, and $w_i=p v_i$, for
$i\in\{n-s+1,\ldots,n\}$.

\smallskip

{\rm (b)}\, Suppose that $s=1$, and let $\{v_1,\ldots,v_n\}$ be any $R$-basis of $M$. Then there
exists an $R$-basis $\{w_1,\ldots,w_n\}$ of $N$ and $r_1,\ldots,r_n\in R$ such that $w_j=p v_j$, for some
$j\in\{1,\ldots,n\}$, and $w_i=v_i+r_i v_j$, for all $i\neq j$.
\end{lemma}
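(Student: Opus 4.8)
The plan is to prove part (a) by working over the residue field $k=R/\mathfrak{m}$ and then lifting, and to derive part (b) as a refinement in the special case $s=1$. First I would consider the quotient $\bar M := M/\mathfrak{m}M$, a $k$-vector space of dimension $n$, together with its subspace $\bar N := N/\mathfrak{m}M$, which has dimension $n-s$ since $\dim_k(M/N)=s$ and $\mathfrak{m}M\subseteq N$. Pick a $k$-basis $\bar w_1,\dots,\bar w_{n-s}$ of $\bar N$ and extend it to a $k$-basis $\bar w_1,\dots,\bar w_{n-s},\bar u_{n-s+1},\dots,\bar u_n$ of $\bar M$. Lift each $\bar w_i$ ($i\le n-s$) to an element $w_i\in N$ and each $\bar u_j$ ($j>n-s$) to an element $v_j\in M$. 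Then $\{w_1,\dots,w_{n-s},v_{n-s+1},\dots,v_n\}$ reduces mod $\mathfrak{m}M$ to a $k$-basis of $\bar M$, so by Nakayama's lemma it is an $R$-basis of $M$; set $v_i:=w_i$ for $i\le n-s$. It remains to produce the basis of $N$: I claim $\{w_1,\dots,w_{n-s},\,p v_{n-s+1},\dots,p v_n\}$ is an $R$-basis of $N$. These elements lie in $N$ (the first $n-s$ by construction, and $pv_j\in\mathfrak{m}M\subseteq N$), they are $R$-linearly independent (their images span an $(n-s)$-plus-$s$ dimensional picture after clearing denominators — more precisely, any $R$-relation among them yields, after dividing the $pv_j$ coefficients, a relation among $v_1,\dots,v_n$), and they generate $N$: given $x\in N$, write $x=\sum_{i\le n-s} a_i v_i + \sum_{j>n-s} b_j v_j$ in the $M$-basis; reducing mod $\mathfrak{m}M$ and using that $\bar x\in\bar N=\langle\bar w_1,\dots,\bar w_{n-s}\rangle$ forces $b_j\in\mathfrak{m}=(p)$ for each $j>n-s$, i.e. $b_j = p b_j'$, so $x=\sum a_i w_i + \sum b_j'(p v_j)$ lies in the span. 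Setting $w_j := p v_j$ for $j>n-s$ completes part (a).

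For part (b), with $s=1$ and an arbitrarily given $R$-basis $\{v_1,\dots,v_n\}$ of $M$, the point is that the reductions $\bar v_1,\dots,\bar v_n$ form a $k$-basis of $\bar M$, while $\bar N$ is a hyperplane; so there is some index $j$ with $\bar v_j\notin\bar N$. Then $\bar v_1,\dots,\widehat{\bar v_j},\dots,\bar v_n$ together with a suitable representative span, and concretely: for each $i\ne j$ there is a unique $c_i\in k$ with $\bar v_i - c_i \bar v_j \in \bar N$ (since $\bar M/\bar N\cong k$ is spanned by the image of $\bar v_j$). Lift $c_i$ to $r_i\in R$ and put $w_i := v_i - r_i v_j$... wait — I should be careful about the sign convention in the statement, which asks for $w_i = v_i + r_i v_j$; this is achieved simply by choosing $r_i$ to be a lift of $-c_i$ instead. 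Also set $w_j := p v_j \in \mathfrak{m}M\subseteq N$. Each $w_i\in N$ for $i\ne j$ because its reduction lies in $\bar N$ and $\mathfrak{m}M\subseteq N$ handles the rest; and $\{w_1,\dots,w_n\}$ is an $R$-basis of $N$ by exactly the same Nakayama-plus-index-counting argument as in (a), since $\{w_1,\dots,w_n\}$ differs from $\{v_1,\dots,\widehat{v_j},\dots,v_n, pv_j\}$ by a unimodular change of basis (the $v_i\mapsto v_i+r_iv_j$ part).

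The main obstacle, such as it is, is purely bookkeeping: making sure that the candidate set for the basis of $N$ is genuinely a \emph{basis} — i.e. both that it generates $N$ (not merely a finite-index sublattice) and that it is linearly independent — rather than taking this on faith. Both points follow cleanly once one reduces modulo $\mathfrak{m}M$ and invokes Nakayama's lemma together with the rank count $\dim_k(M/N)=s$, as recorded in Lemma~\ref{lemma max sub} and the surrounding discussion of \ref{noth order ideal}; there are no substantive difficulties beyond this, and the lemma is essentially the elementary-divisors normal form specialized to the case $\mathfrak{m}M\subseteq N$, where all the invariant factors are forced to be $1$ or $p$.
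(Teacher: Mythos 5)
Your proposal is correct, but it takes a genuinely different route from the paper. The paper proves part (a) by invoking the Smith normal form: it writes down the matrix $T$ expressing a basis of $N$ in terms of a basis of $M$, diagonalizes $T$ to $\operatorname{diag}(s_1,\dots,s_n)$ via invertible $U_1,U_2$, and uses $M/N\cong (R/(p))^s$ to force $s_i\in\{1,p\}$; for part (b) it uses the Hermite normal form with a careful choice of representatives (so that $P(1)=\{0\}$ kills all off-diagonal entries except in row $j$). Your argument avoids the normal-form machinery entirely: you reduce mod $\mathfrak{m}M$, choose a $k$-basis of $\bar M$ adapted to the hyperplane or subspace $\bar N$, lift via Nakayama, and verify by hand that the resulting set generates $N$ (the coefficient reduction $b_j\in\mathfrak m$ is exactly the right observation). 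This is more elementary and arguably more transparent about why the alignment is possible -- it is just linear algebra over $k$ plus a lift -- whereas the paper's proof buys the uniqueness/ordering of the invariant factors for free from the structure theory.

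One small caution in part (b): the sentence asserting that $\{w_1,\dots,w_n\}$ differs from $\{v_1,\dots,\widehat{v_j},\dots,v_n, pv_j\}$ by a ``unimodular change of basis'' should be read as: the $R$-automorphism $\phi$ of $M$ defined by $v_i\mapsto v_i+r_iv_j$ ($i\neq j$), $v_j\mapsto v_j$, carries the sublattice $N_0:=\sum_{i\neq j}Rv_i+R\,pv_j$ (which has order ideal $(p)$ in $M$) onto $\operatorname{span}_R(w_1,\dots,w_n)$; since each $w_i\in N$ and $(M:N)=(p)=(M:\phi(N_0))$, the order-ideal multiplicativity $(M:\phi(N_0))=(M:N)(N:\phi(N_0))$ forces $\phi(N_0)=N$. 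As written, the phrase could be misread as a change of basis \emph{within} $N_0$, which it is not, since $v_j\notin N_0$. Alternatively, one can just repeat the spanning argument from (a) directly: write $x=\sum a_iv_i\in N$ as $\sum_{i\neq j}a_iw_i+cv_j$ with $c=a_j-\sum_{i\neq j}a_ir_i$, and conclude $c\in\mathfrak m$ from $\bar c\,\bar v_j\in\bar N$ and $\bar v_j\notin\bar N$.
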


\begin{proof}
In the proof we shall make use of the Hermite and Smith normal form, see \cite[Section 2.4]{Cohen1993}
and \cite[Sections 5.2, 5.3]{Adkins1992}.

\smallskip

(a)\, Let $\{x_1,\dotsc,x_n\}$ and $\{y_1,\dotsc,y_n\}$ be arbitrary $R$-bases
of $M$ and $N$ respectively, and let $T=(t_{ij}) \in \Mat_{n\times n}(R)$ be the 
(uniquely determined)
matrix with 
$\sum_{j=1}^nt_{ji}x_j=y_i$, for $i\in\{1,\ldots,n\}$. That is, $T$ is the matrix of the $R$-endomorphism
$\phi:M\to M,\, x_i\mapsto y_i$ with respect to the basis $\{x_1,\dotsc,x_n\}$ of $M$.
By the
theory of the Smith normal form, there exists a diagonal matrix $S = \operatorname{diag}(s_1,\dotsc,s_n) \in
\Mat_{n\times n}(R)$ and invertible matrices $U_1, U_2 \in \operatorname{GL}_n(R)$
with $U_2\cdot T\cdot U_1 = S$. Moreover, $s_i \mid s_{i+1}$, for $i\in\{1,\ldots,n - 1\}$ and $M/N \cong \prod_{i=1}^n R/(s_i)$ as $R$-modules.
But as $k$-vector spaces and $R$-modules we also have $M/N \cong k^s=(R/(p))^s$.
So, by \cite[Corollary 7.6]{Adkins1992}, we may assume that $s_i = 1$ for $i \in\{1,\dotsc,n - s\}$, and $s_i = p$ for $i \in\{n-s+1,\dotsc,n\}$.
Now if $U_1=(u_{ij})$ and $U_2^{-1}=(u_{ij}')$, then we set $v_i:=\sum_{j=1}^nu'_{ji}x_j$ and 
$z_i:=\sum_{j=1}^n u_{ji} x_j$. Then $\{z_1,\ldots,z_n\}$ and $\{v_1,\ldots,v_n\}$ are $R$-bases of $M$ and
$S$ is the matrix of $\phi$ with respect to these bases. Thus $\phi(z_i)=s_i v_i\in N$, for $i\in\{1,\ldots,n\}$.
Setting $w_i:=s_iv_i$, for $i\in\{1,\ldots,n\}$,
the claim of (a) follows.

\smallskip

(b)\, We proceed as in (a) but use the Hermite normal form instead of the Smith normal form.
Let $\{v_1,\dotsc,v_n\}$ be the fixed $R$-basis of $M$, let $\{y_1,\dotsc,y_n\}$ be an arbitrary $R$-basis of $N$, and
let $T \in \Mat_{n\times n}(R)$ be such that 
$\sum_{j=1}^nt_{ji}v_j=y_i$, for $i\in\{1,\ldots,n\}$.

Now let $P\subseteq R$ be a set of representatives of the equivalence classes modulo $R$-associates. We may choose 
$1\in P$ (as a representative of the equivalence class of the units in $R$) as well as $p\in P$. For each $r\in P$, let further $P(r)$ be
a set of representatives of the residue classes of $R/(r)$; in particular, we choose $P(1)=\{0\}$.
By appealing to the Hermite normal form, there exists a transformation $U \in \operatorname{GL}_n(R)$ such that $H = TU$
is a lower triangular matrix with the following properties: the diagonal entries are non-zero elements of $P$, and
if $r_i\in P$ is the entry at position $(i,i)$, then all entries at the positions $(i,1),\ldots,(i,i-1)$ are elements of $P(r_i)$.
By our hypothesis, we have
$M/N\cong R/(p)$ as $R$-modules and $k$-vector spaces. Hence 
$(p)=(M:N)=(\det(T))=(\det(H))$, by Proposition~\ref{prop det}.
With our choice of $P$ this forces that there is a unique diagonal entry of $H$ not equal to 1, say at position $(j,j)$, and this
entry is equal to $p$.
If $H=(h_{il})$, the claim then follows with $w_i:=\sum_{l=1}^nh_{li}v_l$, for $i\in\{1,\ldots,n\}$, since then
$w_j=pv_j$ and $w_i=v_i+h_{ji}v_j$ for $i\neq j$.
 \end{proof}

The next results will provide the key method for determining $R$-forms of simple
$K G$-modules in subsequent sections. 
These statements go back to \cite{Plesken1974} in case $R = \ZZ$ and can easily be generalized to the setting of the present paper;
see also \cite[Lemma 8.4]{Hofmann2016b}.

\begin{prop}\label{prop p max sublattices}
Let $\mathfrak{m}$ be a maximal ideal in $R$, and let $k$ be the residue field $R/\mathfrak{m}$. Moreover, let
$M$ be an $RG$-lattice, and let $\pi:M\to M/\mathfrak{m}M$ be the canonical projection.

\smallskip

{\rm (a)}\, Let $D$ be a simple $kG$-module, and suppose that there is a non-zero
$kG$-homomorphism $\phi\in \Hom_{kG}(M/\mathfrak{m}M,D)$. Let $N:=\pi^{-1}(\ker(\phi))$. Then $N$ is a 
maximal $RG$-sublattice of $M$ with $\mathfrak{m}M\subseteq N\subseteq M$ and $M/N\cong D$ as
$kG$-modules; in particular, $\mathfrak{m}^{\dim_k(D)}=(M:N)$.

\smallskip

{\rm (b)}\, Conversely, let $N$ be a maximal $RG$-sublattice of $M$ such that $\mathfrak{m}M\subseteq N\subseteq M$,
and let $s\in\NN$ be such that $(M:N)=\mathfrak{m}^s$. Then
there is a simple $kG$-module $D$ and some $\phi\in\Hom_{kG}(M/\mathfrak{m}M,D)$
such that $\dim_k(D)=s$ and $N=\pi^{-1}(\ker(\phi))$.
\end{prop}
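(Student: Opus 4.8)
The plan is to exploit the bijective correspondence, for lattices squeezed between $\mathfrak{m}M$ and $M$, with submodules of the $kG$-module $M/\mathfrak{m}M$, which is the content of \ref{noth change rings}(f) together with Lemma~\ref{lemma max sub}(b). For part (a), set $N:=\pi^{-1}(\ker(\phi))$. Since $\phi$ is a non-zero homomorphism into a simple module it is surjective, so $\ker(\phi)$ is a maximal $kG$-submodule of $M/\mathfrak{m}M$, and clearly $\mathfrak{m}M=\ker(\pi)\subseteq N\subseteq M$. By Lemma~\ref{lemma max sub}(b), $N$ is then a maximal $RG$-sublattice of $M$. The induced map $M/N\to (M/\mathfrak{m}M)/\ker(\phi)$ is an isomorphism of $kG$-modules, and the latter is isomorphic to $D$ via the map induced by $\phi$; hence $M/N\cong D$. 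Finally, to compute the order ideal: $M/N\cong D\cong k^{\dim_k(D)}\cong (R/\mathfrak{m})^{\dim_k(D)}$ as $R$-modules, so by the definition of the order ideal in \ref{noth order ideal} we get $(M:N)=\mathfrak{m}^{\dim_k(D)}$.

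For part (b), let $N$ be a maximal $RG$-sublattice with $\mathfrak{m}M\subseteq N\subseteq M$ and $(M:N)=\mathfrak{m}^s$. By Lemma~\ref{lemma max sub}(b), $\overline{N}:=N/\mathfrak{m}M$ is a maximal $kG$-submodule of $M/\mathfrak{m}M$, and $D:=(M/\mathfrak{m}M)/\overline{N}$ is a simple $kG$-module. Take $\phi$ to be the composite of $\pi$ with the canonical projection $M/\mathfrak{m}M\twoheadrightarrow D$; then $\phi\in\Hom_{kG}(M/\mathfrak{m}M,D)$ (or, if one prefers $\phi$ defined on $M/\mathfrak{m}M$ directly, just the canonical projection), it is non-zero, and by construction $\ker(\phi)=\overline{N}$ in $M/\mathfrak{m}M$, whence $\pi^{-1}(\ker(\phi))=N$. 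It remains to identify $\dim_k(D)$ with $s$: by the isomorphism in \ref{noth change rings}(f) and the Third Isomorphism Theorem, $M/N\cong (M/\mathfrak{m}M)/\overline{N}=D$ as $kG$-modules, hence also as $R$-modules; comparing with the hypothesis $(M:N)=\mathfrak{m}^s$ and using that $M/N$ is killed by $\mathfrak{m}$, we get $M/N\cong (R/\mathfrak{m})^s$, so $\dim_k(D)=s$.

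There is essentially no serious obstacle here: both parts are bookkeeping with the Third Isomorphism Theorem and the order-ideal formalism, the only mildly delicate point being to keep straight that the correspondence $N\leftrightarrow \overline{N}=N/\mathfrak{m}M$ is inclusion-preserving and turns "maximal $RG$-sublattice of $M$ containing $\mathfrak{m}M$" into "maximal $kG$-submodule of $M/\mathfrak{m}M$" (Lemma~\ref{lemma max sub}(b)) and that $M/N$, being a module over $R/\mathfrak{m}=k$, has order ideal determined entirely by its $k$-dimension. So the proof will consist of writing out these identifications carefully in both directions.
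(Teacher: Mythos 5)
Your proof is correct. The paper does not in fact give its own proof of this proposition (it is stated with a reference to Plesken~\cite{Plesken1974} and to~\cite{Hofmann2016b}), but the argument you give is exactly the standard one those references would carry out: apply the inclusion-preserving correspondence between $RG$-sublattices of $M$ containing $\mathfrak{m}M$ and $kG$-submodules of $M/\mathfrak{m}M$ via Lemma~\ref{lemma max sub}(b) and \ref{noth change rings}(f), and then read off the order ideal from the fact that $M/N$ is a $k$-vector space. One cosmetic slip: in part~(b) the phrase "the composite of $\pi$ with the canonical projection" describes a map $M\to D$, not an element of $\Hom_{kG}(M/\mathfrak{m}M,D)$; the intended $\phi$ is, as you note parenthetically, just the canonical projection $M/\mathfrak{m}M\twoheadrightarrow D$ itself.
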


\begin{cor}\label{cor p max sublattices}
Let $\mathfrak m$ be a maximal ideal in $R$, let $k$ be the residue field $R/\mathfrak m$, and let $M$ be an $RG$-lattice.

\smallskip

{\rm (a)}\, Suppose that $D$ is an absolutely simple $kG$-module occurring as a composition factor of $\Hd(M/\mathfrak{m}M)$
with multiplicity one. Then there is a unique maximal $RG$-sublattice $N$ of $M$ such that $\mathfrak{m}M\subseteq N\subseteq M$ and
$M/N\cong D$. 

\smallskip

{\rm (b)}\,  Suppose that $\Hd(M/\mathfrak m M)$ is multiplicity-free with absolutely simple composition factors 
$D_1,\dotsc,D_l$ of $\Hd(M/\mathfrak m M)$. Then $M$ has exactly $l$ maximal $RG$-sublattices $M_1,\dotsc,M_l$ with 
$\mathfrak m M \subseteq M_i \subseteq M$, for $i\in\{1,\ldots,l\}$. Moreover, after reordering, one has $M/M_i \cong D_i$ for 
$i\in\{1,\ldots,l\}$.
\end{cor}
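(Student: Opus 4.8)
The plan is to derive both parts from Proposition~\ref{prop p max sublattices} by translating everything into a statement about the semisimple $kG$-module $\Hd(M/\mathfrak m M)$. Put $\overline{M}:=M/\mathfrak m M$ and let $\pi\colon M\to\overline{M}$ be the canonical projection, as in Proposition~\ref{prop p max sublattices}. By \ref{noth change rings}(f) together with Lemma~\ref{lemma max sub}(b), the assignment $N\mapsto N/\mathfrak m M$ is a bijection between the maximal $RG$-sublattices $N$ of $M$ with $\mathfrak m M\subseteq N\subseteq M$ and the maximal $kG$-submodules of $\overline{M}$, and under this bijection $M/N\cong\overline{M}/(N/\mathfrak m M)$. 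Since every maximal submodule of $\overline{M}$ contains $\Rad(\overline{M})$, forming preimages along the projection $\overline{M}\to\Hd(\overline{M})$ identifies these in turn with the maximal $kG$-submodules of the semisimple module $\Hd(\overline{M})$, again compatibly with the isomorphism type of the cokernel. So it suffices to count, for a prescribed simple quotient, the maximal submodules of the semisimple module $\Hd(\overline{M})$.

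For part (a): by hypothesis $\Hd(\overline{M})$ decomposes as the direct sum of a single copy of $D$ and finitely many simple $kG$-modules, none of which is isomorphic to $D$. Since $D$ is absolutely simple, $\End_{kG}(D)=k$, so by Schur's Lemma $\Hom_{kG}(\Hd(\overline{M}),D)\cong\End_{kG}(D)=k$ is one-dimensional; hence there is, up to a nonzero scalar, exactly one nonzero $kG$-homomorphism $\Hd(\overline{M})\to D$, and its kernel is the unique maximal submodule of $\Hd(\overline{M})$ with quotient isomorphic to $D$. Via the correspondence above, this yields a unique maximal $RG$-sublattice $N$ with $\mathfrak m M\subseteq N\subseteq M$ and $M/N\cong D$; existence can alternatively be read off by applying Proposition~\ref{prop p max sublattices}(a) directly to the composite homomorphism $\overline{M}\to\Hd(\overline{M})\to D$.

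For part (b): applying part (a) to each $D_i$ produces maximal $RG$-sublattices $M_1,\dotsc,M_l$ of $M$ with $\mathfrak m M\subseteq M_i\subseteq M$ and $M/M_i\cong D_i$, and these are pairwise distinct since the $D_i$ are pairwise non-isomorphic. Conversely, let $N$ be any maximal $RG$-sublattice of $M$ with $\mathfrak m M\subseteq N\subseteq M$. By Lemma~\ref{lemma max sub}(b) the quotient $M/N$ is simple over $kG$, and being a simple quotient of $\overline{M}$ it factors through $\Hd(\overline{M})\cong D_1\oplus\cdots\oplus D_l$, hence is isomorphic to some $D_i$; the uniqueness assertion of part (a) then forces $N=M_i$. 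Therefore $M_1,\dotsc,M_l$ are precisely the maximal $RG$-sublattices of $M$ sandwiched between $\mathfrak m M$ and $M$, and after reordering $M/M_i\cong D_i$ for all $i$.

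I expect no genuine obstacle: the whole content is the chain of identifications — maximal $RG$-sublattices containing $\mathfrak m M$, maximal $kG$-submodules of $\overline{M}$, maximal submodules of the semisimple module $\Hd(\overline{M})$ — together with the elementary fact that a semisimple module has exactly one maximal submodule with a given simple quotient occurring there with multiplicity one. The only place the stated hypotheses are really invoked is the computation $\Hom_{kG}(\Hd(\overline{M}),D)\cong k$, where absolute simplicity of $D$ keeps the bookkeeping cleanest; in fact multiplicity one alone already pins down the maximal sublattice, so absolute simplicity is a convenience rather than an essential ingredient.
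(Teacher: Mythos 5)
Your proof is correct and takes essentially the same route as the paper: both reduce the count to maximal submodules of the semisimple head $\Hd(M/\mathfrak m M)$ and rest on the computation $\Hom_{kG}(\Hd(\bar M),D)\cong \End_{kG}(D)\cong k$, you by setting up the chain of correspondences explicitly via Lemma~\ref{lemma max sub} and \ref{noth change rings}(f), the paper by invoking Proposition~\ref{prop p max sublattices} and then factoring through the head. Your closing observation that multiplicity one alone suffices is also right: by Schur's Lemma every nonzero $kG$-map $\Hd(\bar M)\to D$ kills the complement of $D$ and hence has the same kernel regardless of whether $\End_{kG}(D)=k$.
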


\begin{proof}
To simplify the notation, let us write $\bar M = M / \mathfrak m M$.
  First note that if $D$ is a simple $kG$-module, then $\Hom_{kG}(\bar M, D) \neq \{ 0 \}$ if and only if $D$ is a composition factor of $\Hd(\bar M)$.

\smallskip

(a)\, By Proposition~\ref{prop p max sublattices},  it suffices to show that $\dim_k(\Hom_{kG}(\bar M, D)) = 1$. Then every non-zero element of $\Hom_{kG}(\bar M, D)$ will have the same kernel.
Since $D$ is absolutely simple and occurs with compostion multiplicity one in $\Hd(\bar{M})$,
we have $\dim_k(\Hom_{kG}(\Hd(\bar M), D)) = 1$. The kernel of every non-zero $kG$-homomorphism
$\bar{M}\to D$ is a maximal submodule of $\bar{M}$, thus factors through $\Hd(\bar{M})=\bar{M}/\Rad(\bar{M})$.
Hence the canonical map $\Hom_{kG}(\bar M, D) \to \Hom_{kG}(\Hd(\bar M), D)$ is an isomorphism of $k$-vector spaces.

Assertion (b) now follows from (a).
\end{proof}

Let $M$ and $N$ be $RG$-lattices. Via the conventions in \ref{noth change rings}, we may and shall from now on
identify the $K$-vector spaces $\Hom_{KG}(KM,KN)$ and $K\otimes_R\Hom_{RG}(M,N)$. 
Note that every homomorphism of $RG$-lattices $\phi \colon M \to N$ has a unique extension to a homomorphism of $KG$-modules $KM \to KN$, which, by abuse of notation, will also be denoted by $\phi$.
\begin{lemma}\label{lem:hom1}
Let $M$ and $N$ be $RG$-lattices.

\smallskip

{\rm (a)}\, 
One has 
$$\Hom_{RG}(M, N) = \{ \phi_{|_M} : \phi \in \Hom_{KG}(KM,KN), \, \phi(M) \subseteq N \}$$ 
and $\rk_R(\Hom_{RG}(M,N))=\dim_K(\Hom_{KG}(KM,KN))$.

\smallskip

{\rm (b)}\, Suppose that $\Hom_{RG}(M,N)$ is an $R$-lattice of rank one, generated by $\phi$. If $0\neq r\in R\smallsetminus R^\times$,
then the $KG$-homomorphism $r^{-1}\phi\in \Hom_{KG}(KM,KN)$ is 
not the extension of an element in $\Hom_{RG}(M,N)$.

\smallskip

{\rm (c)}\, Suppose that $\Hom_{RG}(M,N)$ is an $R$-lattice of rank one. Suppose further that 
there is an $RG$-isomorphism $\phi:M\to N$. Then $\Hom_{RG}(M,N)={}_R\langle \phi\rangle$.
\end{lemma}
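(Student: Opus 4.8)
The plan is to deduce part (c) directly from parts (a) and (b). Since $\Hom_{RG}(M,N)$ has rank one, by (a) it is a free $R$-module on a single generator, say $\psi$, and we may write every element of $\Hom_{RG}(M,N)$ as $r\psi$ with $r \in R$. The given isomorphism $\phi \colon M \to N$ lies in $\Hom_{RG}(M,N)$, so $\phi = r_0 \psi$ for some $r_0 \in R$. The goal is then to show $r_0 \in R^\times$, for then $\psi = r_0^{-1}\phi$ and ${}_R\langle \phi\rangle = {}_R\langle \psi\rangle = \Hom_{RG}(M,N)$.

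First I would rule out $r_0 = 0$: that would force $\phi = 0$, contradicting that $\phi$ is an isomorphism (note $M \neq 0$ since rank considerations with a rank-one $\Hom$ preclude the degenerate case, or simply because $\phi$ being an isomorphism between nonzero modules — if $M = 0$ then $N = 0$ and $\Hom = 0$ has rank zero, not one). So $r_0 \neq 0$. Now suppose for contradiction that $r_0 \notin R^\times$. Working inside $\Hom_{KG}(KM,KN)$, from $\phi = r_0\psi$ we get $\psi = r_0^{-1}\phi$ as $KG$-homomorphisms. But $\phi \colon M \to N$ is an $RG$-isomorphism; in particular it restricts to an isomorphism of $R$-lattices, and $\phi(M) = N$. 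Then $r_0^{-1}\phi$ maps $M$ onto $r_0^{-1}N$, which strictly contains $N$ because $r_0$ is a non-unit; hence $(r_0^{-1}\phi)(M) \not\subseteq N$. This is precisely the situation excluded by part (b) — with the rank-one generator taken to be $\phi$ itself (legitimate once we know $\phi$ generates, but to avoid circularity I would instead apply (b) with generator $\psi$: then $r_0^{-1}\psi$ would have to fail to land in $N$, and since $\psi = r_0\cdot(\text{something})$... ).

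Let me streamline to avoid the circularity worry. The cleanest route: apply part (b) with the rank-one generator $\psi$ and the non-unit $r := r_0$. Part (b) then says $r_0^{-1}\psi \in \Hom_{KG}(KM,KN)$ is not the restriction-extension of an element of $\Hom_{RG}(M,N)$. But $r_0^{-1}\psi = r_0^{-1}(r_0^{-1}\phi) = r_0^{-2}\phi$; hmm, that is not obviously what I want. Better: since $\phi = r_0\psi$ with $r_0$ a non-unit, we have $\phi \in \mathfrak{a}\cdot\Hom_{RG}(M,N)$ for $\mathfrak{a} = (r_0) \subsetneq R$, so $\phi = r_0 \psi$ shows that as a $KG$-map, $\psi = r_0^{-1}\phi$ arises from dividing $\phi$ by the non-unit $r_0$. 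By part (b) applied to the rank-one lattice $\Hom_{RG}(M,N) = {}_R\langle\psi\rangle$ — wait, I want $\phi$ as generator. The honest fix: just argue directly. We have $\psi \in \Hom_{RG}(M,N)$, so $\psi(M) \subseteq N$. Also $\phi = r_0\psi$, so $\phi(M) = r_0\psi(M) \subseteq r_0 N \subseteq N$. But $\phi$ is surjective onto $N$, forcing $r_0 N = N$, hence $r_0 \in R^\times$ (a non-unit $r_0$ would make $r_0 N \subsetneq N$, since $N$ is a nonzero free $R$-module and $R$ a domain).

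The main obstacle is essentially bookkeeping rather than any real difficulty: making sure the degenerate cases ($M = 0$, or $N = 0$) are handled — they cannot occur since $\Hom_{RG}(M,N)$ has rank one — and correctly invoking the identification of $\Hom_{RG}(M,N)$ with a full-rank submodule of $\Hom_{KG}(KM,KN)$ from part (a) so that the equation $\phi = r_0\psi$ makes sense both as $RG$-maps and as $KG$-maps. Once $r_0 N = N$ is established, conclude $r_0 \in R^\times$ from the fact that $N$ is $R$-free of positive rank over the principal ideal domain $R$ (so $N/r_0 N$ is nonzero whenever $r_0$ is a non-unit, using $\rk_R(N) \geq 1$). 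Therefore $\psi = r_0^{-1}\phi \in {}_R\langle\phi\rangle$, whence $\Hom_{RG}(M,N) = {}_R\langle\psi\rangle = {}_R\langle\phi\rangle$, as claimed.
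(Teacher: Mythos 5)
Your final streamlined argument is exactly the paper's proof: write $\phi=r\psi$ for a basis element $\psi$ of the rank-one $\Hom$-lattice, deduce $rN\subseteq N=\phi(M)=r\psi(M)\subseteq rN$ hence $rN=N$, and conclude $r\in R^\times$. The earlier detours through part~(b) were unnecessary, but the direct containment argument you settle on is correct and matches the paper's reasoning.
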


\begin{proof}
Part (a) is clear. 

As for (b), note that if $\Hom_{RG}(M,N)$ has $R$-rank 1, then 
$\Hom_{KG}(KM,KN)$ has $K$-dimension 1, and is spanned by $r^{-1}\phi$, for every $0\neq r\in R$. 
Thus, if $(r^{-1}\phi)_{\mid M}\in \Hom_{RG}(M,N)$, then 
$\phi(x)=rs\phi(x)$, for some $s\in R$ and all $x\in M$. But this forces $\phi=rs\phi$, $rs=1$ and $r\in R^\times$.

To prove (c), let $\{\psi\}$ be an $R$-basis 
of $\Hom_{RG}(M,N)$, and let $r\in R$ be such that $\phi=r\psi$.
Then $rN \subseteq N = \phi(M) = (r\psi)(M) = r \psi(M) \subseteq r N$, which implies $rN = N$. Hence $r\in R^\times$ and $\{\phi\}$ is also an $R$-basis of $\Hom_{RG}(M, N)$.
\end{proof}

\begin{prop}\label{prop extend}
Let $V_1$ and $V_2$ be simple $KG$-modules with $R$-forms $L_1$ and $L_2$, respectively.
If $\phi:L_1\to L_2$ is a non-zero $RG$-homomorphism, then $\phi$ is injective.
\end{prop}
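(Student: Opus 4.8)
The plan is to use the fact that $V_1$ is simple together with Proposition~\ref{prop extend}'s hypothesis that $\phi$ is non-zero. First I would extend $\phi$ to a $KG$-homomorphism $\tilde\phi \colon KL_1 \to KL_2$, using the conventions recalled just before Lemma~\ref{lem:hom1}; since $L_1$ is an $R$-form of $V_1$ and $L_2$ is an $R$-form of $V_2$, we have $KL_1 \cong V_1$ and $KL_2 \cong V_2$ as $KG$-modules. The kernel $\Ker(\tilde\phi)$ is then a $KG$-submodule of $V_1$.

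Next I would argue that $\tilde\phi$ is non-zero: if $\tilde\phi$ were the zero map, then in particular $\tilde\phi(x) = 0$ for all $x \in L_1$, which forces $\phi = 0$ on $L_1$, contradicting the hypothesis. Since $V_1 \cong KL_1$ is simple, its only $KG$-submodules are $\{0\}$ and $V_1$ itself, so $\Ker(\tilde\phi) = \{0\}$; that is, $\tilde\phi$ is injective as a map $KL_1 \to KL_2$. Finally, since $L_1 \subseteq KL_1$ (identifying $x$ with $1 \otimes x$ as in \ref{noth change rings}(b)) and $\phi = \tilde\phi|_{L_1}$, the map $\phi \colon L_1 \to L_2$ is the restriction of an injective map and hence injective.

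This argument is essentially immediate once the set-up is unwound, and I do not anticipate a genuine obstacle; the only points requiring a little care are the bookkeeping identifications, namely that $KL_i$ really is (isomorphic to) the simple module $V_i$ by definition of an $R$-form, and that passing from $\phi$ to its $KG$-extension $\tilde\phi$ and back to its restriction is harmless because $R \hookrightarrow K$ is injective, so $L_1$ sits inside $KL_1$ with $KL_1 = KL_1$. One could alternatively phrase the whole thing without extending scalars: $\Ker(\phi)$ is an $RG$-sublattice of $L_1$, and $K \otimes_R \Ker(\phi)$ is a $KG$-submodule of $V_1$, so it is $\{0\}$ or $V_1$; ruling out $V_1$ (which would force $\phi = 0$ since $\Ker(\phi)$ would then be a full-rank sublattice, in fact all of $L_1$ as $L_1/\Ker(\phi)$ would be torsion and a sublattice of the torsion-free $L_2$) gives that $\Ker(\phi)$ has rank $0$, hence is $\{0\}$ since it is $R$-free.
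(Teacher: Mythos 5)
Your proof is correct and follows essentially the same route as the paper: extend $\phi$ to a $KG$-homomorphism between $KL_1 \cong V_1$ and $KL_2 \cong V_2$, observe it is non-zero, and use simplicity to conclude injectivity. The only (minor) difference is that the paper invokes Schur's Lemma, while you observe directly that $\Ker(\tilde\phi)$ is a proper $KG$-submodule of the simple module $V_1$ and so must vanish — an economy that uses only the simplicity of $V_1$; your purely integral variant at the end is also a valid alternative.
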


\begin{proof}
Assume that $x\in L_1$ is such that $x\neq 0$ and $\phi(x)=0$, and view $\phi$ as a  $KG$-homomorphism
$KL_1\to KL_2$ as before. Since $KL_1\cong V_1$ and $KL_2\cong V_2$, Schur's Lemma implies that 
$\phi$ is either a $KG$-isomorphism or the zero map.
Since $0\neq x\in KL_1$ and $\phi(x)=0$, we must have $\phi=0$, both as $KG$-homomorphism and as $RG$-homomorphism,
a contradiction. 
Therefore, $\phi$ is injective.
\end{proof}

In the next section we shall explain how to obtain $\ZZ$-forms of absolutely simple $\QQ G$-modules from
$p$-local data. There and in the following we shall make use of the following observation:

\begin{rem}\label{rem forms all in one}
Let $M$ be an $RG$-lattice of rank $n\in\NN$.
Assume that also $N$ is an $RG$-lattice and there is an $KG$-isomorphism $\phi\colon KN\to KM$. Since both $M$ and $\phi(N)$ are full-rank $RG$-sublattices of $KM$, there exists $0 \neq r \in R$ such that $\phi(rN) = r \phi(N) \subseteq M$. 
Moreover, $\phi(rN)\cong rN\cong N$ as $RG$-lattices.
This shows that every $RG$-lattice $N$ with $KN \cong KM$ as $KG$-modules is isomorphic to an $RG$-sublattice of the fixed $RG$-lattice $M$. 

Conversely, if $N\subseteq M$ is an $RG$-sublattice of rank $n$, then $KN$ is a $KG$-submodule of $KM$ and both modules
have $K$-dimension $n$. Thus $KM=KN$.
Consequently, determining all isomorphism classes of $RG$-lattices $N$ with $KN \cong KM$ as $KG$-modules is equivalent to determining all isomorphism classes of
$RG$-sublattices of $M$ of full rank.
\end{rem}
 
 We conclude this section with the following consequence of a theorem of Brauer--Nesbitt.

\begin{prop}\label{prop notdivides}
Suppose that $R$ is a discrete valuation ring with maximal ideal $\mathfrak{m}$ and residue field
$k:=R/\mathfrak{m}$. Let $V$ be any $KG$-module.

\smallskip

{\rm (a)}\, If $M$ and $N$ are $RG$-forms of $V$, then the $kG$-modules $M/\mathfrak{m}M$ and
$N/\mathfrak{m}N$ have the same composition factors.

\smallskip

{\rm (b)}\, Suppose that $V$ is absolutely simple. Suppose further that, for an $R$-form $M$ of $V$,
the $kG$-module $M/\mathfrak{m}M$ is also absolutely simple. Then there is only one isomorphism
class of $R$-forms of $V$. If, moreover, $V$ is self-dual, then so is every $R$-form of $V$.
\end{prop}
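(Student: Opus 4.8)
The plan is to prove both parts of Proposition~\ref{prop notdivides} using the Brauer--Nesbitt theorem on the equality of Brauer characters (equivalently, composition factors with multiplicity) of reductions of lattices affording the same ordinary character, together with the classical fact that a lattice over a complete (or merely local) discrete valuation ring with an absolutely simple reduction is essentially unique.

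For part (a), I would argue as follows. Since $M$ and $N$ are both $R$-forms of $V$, we have $KM \cong V \cong KN$ as $KG$-modules. By Remark~\ref{rem forms all in one}, $N$ is isomorphic to a full-rank $RG$-sublattice of $M$; replacing $N$ by this sublattice, we may assume $N \subseteq M$ with $KN = KM$. Then $M/N$ is a finite $R$-module, so there is $0 \neq r \in \mathfrak{m}$ with $r M \subseteq N \subseteq M$, hence also $r M \subseteq N$. The point is now that the Brauer--Nesbitt theorem (see \cite[Theorem~(82.1)]{Curtis1962} or \cite[Corollary~1.9.18 and Theorem~2.5.3]{Nagao1989}) asserts that the composition factors of $M/\mathfrak{m}M$, counted with multiplicity, depend only on the character of $KM$, i.e.\ only on $V$. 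Since $KM \cong KN$ afford the same character, the $kG$-modules $M/\mathfrak{m}M$ and $N/\mathfrak{m}N$ have the same composition factors with multiplicities; in particular, the same composition factors. (If one wants to avoid quoting Brauer--Nesbitt directly, one can instead observe that $M/rM$ is filtered by $M/N$, $N/rM$, and $N/rM \cong N/(N \cap rM)$ is, up to the $r$-scaling isomorphism $N \cong rN \subseteq rM$, related to $rM/rN \cong M/N$; pushing this through gives that $M/\mathfrak m M$ and $N/\mathfrak m N$ have the same image in the Grothendieck group, but the cleanest route is simply to invoke Brauer--Nesbitt.)

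For part (b), suppose $V$ is absolutely simple and that for one $R$-form $M$ of $V$ the reduction $\bar M := M/\mathfrak{m}M$ is absolutely simple. Let $N$ be any other $R$-form of $V$. By part (a), $\bar N := N/\mathfrak m N$ has the same composition factors as $\bar M$; since $\bar M$ is simple, $\bar N$ has a unique composition factor, namely $\bar M$, with multiplicity one, so $\bar N \cong \bar M$ is simple too. Now I would invoke the uniqueness result for lattices: when $\bar M$ is absolutely simple, the $KG$-module $V$ has, up to isomorphism, only one $R$-form. Concretely, by Remark~\ref{rem forms all in one} we may take $N \subseteq M$ a full-rank sublattice, and choose $r \in \mathfrak m$ minimal (in valuation) with $rM \subseteq N$; if $N \subsetneq M$ then $rM \subsetneq N$ as well (else $N = rM \cong M$), and one considers the nonzero $RG$-map $M \twoheadrightarrow M/N$, whose target is a nonzero $kG$-module all of whose composition factors equal $\bar M$ by part (a) applied appropriately — more directly, one shows the inclusion $N \hookrightarrow M$ induces a nonzero map $\bar N \to \bar M$ of simple modules hence an isomorphism, forcing $N + \mathfrak m M = M$ and then $N = M$ by Nakayama. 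Thus all $R$-forms of $V$ are isomorphic to $M$. Finally, if $V$ is self-dual, then $M^* := \Hom_R(M,R)$ is an $RG$-lattice with $KM^* \cong (KM)^* \cong V^* \cong V$ by \ref{noth change rings}(d), so $M^*$ is again an $R$-form of $V$, hence $M^* \cong M$ by the uniqueness just established; so every $R$-form of $V$ is self-dual.

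The main obstacle is the precise bookkeeping in the uniqueness argument of part (b): one must carefully reduce to the case $N \subseteq M$ with $rM \subsetneq N \subsetneq M$ and then extract a nonzero $kG$-homomorphism between the two simple reductions $\bar N$ and $\bar M$ in order to run a Nakayama-type descent showing $N = M$. Equivalently, one phrases this via Corollary~\ref{cor p max sublattices}: since $\Hd(\bar M) = \bar M$ has the single absolutely simple constituent $\bar M$ with multiplicity one, $M$ has a unique maximal sublattice $M_1$ with $\mathfrak m M \subseteq M_1$, and iterating shows the only full-rank sublattices of $M$ are, up to isomorphism, copies of $M$ itself (each step scales by a unit times a power of $\mathfrak m$). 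I would present part (b) in this language to keep the descent clean. Part (a) is then the routine half, being essentially a citation of Brauer--Nesbitt.
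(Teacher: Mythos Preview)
Your proposal is correct and lands on essentially the same proof as the paper: part~(a) is Brauer--Nesbitt (the paper cites \cite[Proposition~(16.16)]{Curtis1981}), and for part~(b) the paper does exactly what you settle on at the end, namely invoke Corollary~\ref{cor p max sublattices} to see that $\mathfrak m M$ is the unique maximal sublattice of $M$, hence every full-rank sublattice is $\mathfrak m^i M\cong M$, and then use \ref{noth change rings}(d) for the self-duality statement.

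One small caveat about your intermediate Nakayama sketch: choosing $r$ of \emph{minimal} valuation with $rM\subseteq N$ does not by itself guarantee that the composite $N\hookrightarrow M\to M/\mathfrak m M$ is nonzero (e.g.\ $M=R^2$, $N=\pi R\oplus \pi^2 R$ has $r=\pi^2$ minimal but $N\subseteq \mathfrak m M$). The correct normalisation is to take $s$ \emph{maximal} with $N\subseteq \mathfrak m^s M$ and replace $M$ by $\mathfrak m^s M$, so that $N\not\subseteq \mathfrak m M$; then the induced map $\bar N\to\bar M$ is nonzero, hence an isomorphism of simples, and Nakayama gives $N=M$. Since you ultimately opt for the Corollary~\ref{cor p max sublattices} route anyway, this is moot, but worth fixing if you keep the alternative in the write-up.
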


\begin{proof}
The assertion  in part (a) is due to Brauer--Nesbitt; a proof can be found in \cite[Proposition (16.16)]{Curtis1981}.

The first assertion of (b) is then a direct consequence of (a) and Corollary~\ref{cor p max sublattices}: if $M$ is an $R$-form
such that $M/\mathfrak{m}M$ is absolutely simple, then $\mathfrak{m}M$ is the unique maximal sublattice of $M$. 
Moreover, $\mathfrak{m}M\cong M$ as $\mathfrak m$ is principal.

Lastly suppose that $V$ is self-dual. If $M$ is an $R$-form of $V$, then
so is its dual lattice $M^*$, by \ref{noth change rings}(d). Since all $R$-forms of $V$ are mutually isomorphic, the claim follows.
\end{proof}


\section{Determining integral forms of simple $\QQ G$-modules}\label{sec forms}

Let $G$ be a finite group, and suppose that $V$ is a simple $\QQ G$-module.  As mentioned in \ref{noth change rings},
$V$ always admits a $\ZZ$-form $M$, which  is in general far from being unique, not even up to
isomorphism.  However the Theorem of Jordan--Zassenhaus ensures that there are only finitely many 
$\ZZ$-forms of $V$ up to $\ZZ G$-isomorphism.
A similar result holds in the local setting: If $p$ is a prime number then every simple $\QQ_p G$-module
admits only finitely many $\ZZ_p$-forms up to $\ZZ_p G$-isomorphism. For both statements we refer to \cite[Theorem (24.1), Theorem (24.7)]{Curtis1981},
which holds in much greater generality. We content ourselves with the rational version, since this is the one we shall need 
in Section~\ref{sec proofs}.

In the following we shall be concerned with the case that $V$ is absolutely simple, that is, $K\otimes_\QQ V$ is a simple
$KG$-module, for every extension field $K$ of $\QQ$. We shall explain in more detail how $\ZZ$-forms of $V$ and $\ZZ_p$-forms
of the $p$-adic completion $V_{\QQ_p}=\QQ_p\otimes_{\ZZ_p} V$ are related.
These methods will provide our strategy towards proving Corollary~\ref{cor intro} and Theorem~\ref{thm intro expl} in Section~\ref{sec proofs}. Our main reference here is \cite[\S 30, \S 31]{Curtis1981}.

\begin{noth}\label{noth local}{\bf Localization and $p$-completion.}\,
(a)\, Let $M$ be a $\ZZ$-form of $V$, and let $p$ be a prime number. Then $M_{\ZZ_{(p)}}$ is a $\ZZ_{(p)}$-form
of $V$ as well, and $M_{\ZZ_p}$ is a $\ZZ_p$-form of $V_{\QQ_p}$. 

\smallskip

(b)\, Suppose further that $N\subseteq M$ is also a $\ZZ$-form of $V$, and let $\phi \colon M \to M$ be a
$\ZZ$-endomorphism with $\phi(M)=N$.  Since $\ZZ_{(p)}$ and $\ZZ_p$
are both flat over $\ZZ$, we can regard $N_{\ZZ_{(p)}}$ as a $\ZZ_{(p)} G$-sublattice of $M_{\ZZ_{(p)}}$, and
$N_{\ZZ_{p}}$ as a $\ZZ_{p} G$-sublattice of $M_{\ZZ_{p}}$. 
The $\ZZ$-endomorphism $\phi$ of $M$ yields a $\ZZ_{(p)}$-endomorphism of $M_{\ZZ_{(p)}}$ with image $N_{\ZZ_{(p)}}$
as well as a $\ZZ_p$-endomorphism of $M_{\ZZ_{p}}$ with image $N_{\ZZ_{p}}$.
By 
\cite[Corollary~(4.18), Proposition~(4.20a)]{Curtis1981}, one has
$(M:N)=\ZZ\cdot \det(\phi)$, $(M_{\ZZ_{(p)}}:N_{\ZZ_{(p)}})=\ZZ_{(p)}\cdot \det(\phi)$ and
$(M_{\ZZ_{p}}:N_{\ZZ_{p}})=\ZZ_{p}\cdot \det(\phi)$. 

In particular, if $\det(\phi)\in \ZZ_p^\times$, then one has $M_{\ZZ_p}=N_{\ZZ_p}$, by Proposition~\ref{prop det}. 

If $[M:N]$ is a $p$-power, then Proposition~\ref{prop det} implies
$[M:N]=[M_{\ZZ_{(p)}}:N_{\ZZ_{(p)}}]=[M_{\ZZ_{p}}:N_{\ZZ_{p}}]$.
 \end{noth}

As a converse of (a), one has

\begin{prop}\label{prop rep iso p}
Let $V$ be an absolutely simple $\QQ G$-module, let $p$ be a prime number, and let $N$ be a $\ZZ_p$-form of
$V_{\QQ_p}$. Then there is a $\ZZ$-form $M$ of $V$  and a $\ZZ_{(p)}$-form $L$ of $V$ such that 
$M_{\ZZ_{(p)}}\cong L$ and $M_{\ZZ_p}\cong L_{\ZZ_p}\cong N$.
\end{prop}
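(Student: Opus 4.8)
The plan is to start from an arbitrary $\ZZ$-form $M_0$ of $V$ — one exists by \ref{noth change rings}(c) — and then modify it "at $p$" so that its $p$-adic completion becomes isomorphic to the prescribed lattice $N$. First I would observe that $M_0{}_{\ZZ_p}$ is a $\ZZ_p$-form of $V_{\QQ_p}$ by \ref{noth local}(a), and hence, by Remark~\ref{rem forms all in one}, there is a $\ZZ_p G$-isomorphism identifying $N$ with a full-rank $\ZZ_p G$-sublattice of $M_0{}_{\ZZ_p}$; after rescaling (multiplying by a suitable power of $p$, which does not change the isomorphism type of the lattice) we may assume $p^a M_0{}_{\ZZ_p} \subseteq N \subseteq M_0{}_{\ZZ_p}$ for some $a \in \NN_0$. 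The torsion module $M_0{}_{\ZZ_p}/N$ is then annihilated by $p^a$, so it is a finite $p$-group.

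Next I would descend this sublattice inclusion back to $\ZZ_{(p)}$. The key point is that $\ZZ_{(p)} \to \ZZ_p$ is faithfully flat and that $\ZZ_{(p)}$ and $\ZZ_p$ have "the same" finite quotients at $p$: concretely, $M_0{}_{\ZZ_{(p)}} / p^a M_0{}_{\ZZ_{(p)}} \cong M_0{}_{\ZZ_p} / p^a M_0{}_{\ZZ_p}$ as $(\ZZ/p^a\ZZ)G$-modules, by \ref{noth change rings}(f) applied on both sides together with base change. Under this identification, let $\overline{N}$ be the image of $N/p^a M_0{}_{\ZZ_p}$; it corresponds to a $(\ZZ/p^a\ZZ)G$-submodule, and by \ref{noth change rings}(f) its preimage in $M_0{}_{\ZZ_{(p)}}$ is a $\ZZ_{(p)}G$-sublattice $L$ with $p^a M_0{}_{\ZZ_{(p)}} \subseteq L \subseteq M_0{}_{\ZZ_{(p)}}$ and $L_{\ZZ_p} = N$ inside $M_0{}_{\ZZ_p}$; in particular $L$ is a full-rank $\ZZ_{(p)}$-sublattice, hence a $\ZZ_{(p)}$-form of $V$, and $L_{\ZZ_p} \cong N$.

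Finally I would produce a genuine $\ZZ$-form $M$ with $M_{\ZZ_{(p)}} \cong L$. Since $L$ and $M_0{}_{\ZZ_{(p)}}$ agree away from $p$, I would define $M := L \cap M_0$, the intersection taken inside $V$ (equivalently inside $M_0{}_{\ZZ_{(p)}}$). This is a $\ZZ G$-submodule of the $\ZZ$-lattice $M_0$, hence $\ZZ$-free of finite rank, and since both $L$ and $M_0$ are full-rank it is a $\ZZ$-form of $V$. One then checks that localizing at $p$ recovers $L$: for every prime $q \neq p$ one has $L_{\ZZ_{(q)}} = M_0{}_{\ZZ_{(q)}}$ (as $M_0{}_{\ZZ_{(p)}}/L$ is a $p$-group), and localization commutes with finite intersections of submodules of a fixed module, so $M_{\ZZ_{(p)}} = L_{\ZZ_{(p)}} \cap M_0{}_{\ZZ_{(p)}} = L$. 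Completing at $p$ then gives $M_{\ZZ_p} \cong L_{\ZZ_p} \cong N$, as required.

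The main obstacle is bookkeeping rather than conceptual: making the identifications between the $p$-adic and $p$-local pictures precise and compatible (so that "$L_{\ZZ_p} = N$" is an honest equality of submodules of $M_0{}_{\ZZ_p}$, not merely an abstract isomorphism), and verifying that forming $M = L \cap M_0$ genuinely behaves well under all localizations. Absolute simplicity of $V$ is what guarantees (via Schur's lemma, cf.\ Proposition~\ref{prop extend}) that there is essentially no ambiguity in the ambient $\QQ G$-module into which everything embeds, so no extra rigidifying choices are needed; beyond that, the argument is a standard patching of lattices along the arithmetic square $\ZZ \to \ZZ_{(p)} \to \ZZ_p$.
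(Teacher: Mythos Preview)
Your argument is correct. The paper's own proof of this proposition is a bare citation to \cite[Proposition~(23.13), Corollary~(30.10)]{Curtis1981}, so you have in effect supplied a self-contained version of the underlying construction: descend the given $\ZZ_p$-lattice to $\ZZ_{(p)}$ by matching submodules of $M_0/p^aM_0$, then intersect with the global lattice $M_0$ to obtain a $\ZZ$-form. The verification that $M_{\ZZ_{(p)}}=L$ is the only delicate point, and your argument via flatness of $\ZZ\to\ZZ_{(p)}$ (so that localization commutes with finite intersections inside $V$) together with $\ZZ_{(p)}\cdot L=L$ handles it; note also that $p^aM_0\subseteq L\cap M_0=M$, which makes it transparent that $M$ has full rank. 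What the citation buys is brevity; what your approach buys is an explicit description of $M$ inside $V$, which is closer in spirit to the constructive results the paper is after.

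Two minor remarks. First, the aside about primes $q\neq p$ is unnecessary (and somewhat ill-posed, since $\ZZ_{(q)}\cdot L=V$ for $q\neq p$); it plays no role in your actual computation of $M_{\ZZ_{(p)}}$. Second, absolute simplicity of $V$ is not really used anywhere in your construction: Remark~\ref{rem forms all in one} already lets you realize $N$ inside $(M_0)_{\ZZ_p}$ without any appeal to Schur's Lemma, so your argument in fact proves the statement for an arbitrary finitely generated $\QQ G$-module $V$.
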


\begin{proof}
The assertion follows from \cite[Proposition~(23.13), Corollary~(30.10)]{Curtis1981}.
\end{proof}

\begin{prop}\label{prop local}
Let $V$ be an absolutely simple $\QQ G$-module with $\ZZ$-forms $M$ and $N$. Let further $\{p_1,\ldots,p_g\}$ be
the set of prime numbers dividing $|G|$. Then the following are equivalent:

\smallskip

{\rm (i)}\, $M\cong N$ as $\ZZ G$-lattices;

\smallskip

{\rm (ii)}\, for all prime numbers $p$, one has $M_{\ZZ_{(p)}}\cong N_{\ZZ_{(p)}}$ as $\ZZ_{(p)} G$-lattices;

\smallskip

{\rm (iii)}\, for all $i\in\{1,\ldots,g\}$, one has  $M_{\ZZ_{(p_i)}}\cong N_{\ZZ_{(p_i)}}$ as $\ZZ_{(p_i)} G$-lattices;

\smallskip

{\rm (iv)}\, for all prime numbers $p$, one has $M_{\ZZ_{p}}\cong N_{\ZZ_{p}}$ as $\ZZ_{p} G$-lattices;

\smallskip

{\rm (v)}\, for all $i\in\{1,\ldots,g\}$, one has  $M_{\ZZ_{p_i}}\cong N_{\ZZ_{p_i}}$ as $\ZZ_{p_i} G$-lattices.
\end{prop}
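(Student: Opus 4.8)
The plan is to prove the cyclic chain of implications $\text{(i)}\Rightarrow\text{(ii)}\Rightarrow\text{(iv)}\Rightarrow\text{(v)}\Rightarrow\text{(iii)}\Rightarrow\text{(i)}$. By Remark~\ref{rem forms all in one} I may replace $N$ by an isomorphic $\ZZ G$-sublattice of $M$, so I assume from now on that $N\subseteq M$; this changes none of the five statements, as each is invariant under replacing $N$ by an isomorphic copy. The key structural fact is that, since $V$ is absolutely simple, $\End_{\QQ G}(V)=\QQ$, so by Lemma~\ref{lem:hom1}(a) the ring $\End_{\ZZ G}(M)$ is a $\ZZ$-lattice of rank one; being a subring of $\End_{\QQ G}(\QQ M)=\QQ$ that contains $\id_M$ and is finitely generated (hence integral) over the integrally closed ring $\ZZ$, it must equal $\ZZ\cdot\id_M$. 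By flatness and \ref{noth change rings}(e), likewise $\End_{\ZZ_{(p)}G}(M_{\ZZ_{(p)}})=\ZZ_{(p)}\cdot\id$ and $\End_{\ZZ_pG}(M_{\ZZ_p})=\ZZ_p\cdot\id$ for every prime $p$ (note that $\QQ_p\otimes_\QQ V$ is again absolutely simple). Hence, for $R\in\{\ZZ_{(p)},\ZZ_p\}$: if $L$ is a full-rank $RG$-sublattice of $M_R$ with $L\cong M_R$, then composing an $RG$-isomorphism $M_R\to L$ with the inclusion $L\hookrightarrow M_R$ yields an element of $\End_{RG}(M_R)=R\cdot\id$ which is injective with image $L$; so $L=aM_R=p^{\nu_p(a)}M_R$ for some nonzero $a\in R$.

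The implications $\text{(i)}\Rightarrow\text{(ii)}$ and $\text{(ii)}\Rightarrow\text{(iv)}$ follow by applying $\ZZ_{(p)}\otimes_\ZZ-$, respectively $\ZZ_p\otimes_{\ZZ_{(p)}}-$, to the given isomorphisms, and $\text{(iv)}\Rightarrow\text{(v)}$ is trivial. For $\text{(v)}\Rightarrow\text{(iii)}$, fix $i$ and write $p:=p_i$. From $M_{\ZZ_p}\cong N_{\ZZ_p}$, together with $N_{\ZZ_p}\subseteq M_{\ZZ_p}$ and the previous paragraph, one gets $N_{\ZZ_p}=p^eM_{\ZZ_p}$ for some $e\in\NN_0$. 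Now $N_{\ZZ_{(p)}}$ and $p^eM_{\ZZ_{(p)}}$ are $\ZZ_{(p)}$-sublattices of $M_{\ZZ_{(p)}}$ with the same image under $\ZZ_p\otimes_{\ZZ_{(p)}}-$; since $\ZZ_p$ is faithfully flat over $\ZZ_{(p)}$, this forces $N_{\ZZ_{(p)}}=p^eM_{\ZZ_{(p)}}\cong M_{\ZZ_{(p)}}$, which is~(iii).

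It remains to prove $\text{(iii)}\Rightarrow\text{(i)}$. I claim that for every prime $p$ there is $e_p\in\NN_0$ with $N_{\ZZ_{(p)}}=p^{e_p}M_{\ZZ_{(p)}}$. If $p\mid|G|$, then $M_{\ZZ_{(p)}}\cong N_{\ZZ_{(p)}}$ by~(iii), and the last assertion of the first paragraph yields $N_{\ZZ_{(p)}}=p^{e_p}M_{\ZZ_{(p)}}$. If $p\nmid|G|$, then by Maschke's theorem $\ZZ_{(p)}G$ is a maximal $\ZZ_{(p)}$-order in $\QQ G$; over such an order the isomorphism type of a lattice is determined by its rationalization (see, e.g., \cite[\S\,26]{Curtis1981}), so $M_{\ZZ_{(p)}}\cong N_{\ZZ_{(p)}}$, and we again obtain $N_{\ZZ_{(p)}}=p^{e_p}M_{\ZZ_{(p)}}$. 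Since $M/N$ is finite, $e_p=0$ for almost all $p$, so $a:=\prod_p p^{e_p}\in\NN$ is well defined, and for every prime $q$ one computes $(aM)_{\ZZ_{(q)}}=aM_{\ZZ_{(q)}}=q^{e_q}M_{\ZZ_{(q)}}=N_{\ZZ_{(q)}}$. As a $\ZZ$-lattice inside $V$ equals the intersection of all its localizations, $aM=N$, so multiplication by $a$ is a $\ZZ G$-isomorphism $M\to N$, proving~(i). The only non-elementary ingredient is the uniqueness of lattices over the maximal orders $\ZZ_{(p)}G$ for $p\nmid|G|$; granted absolute simplicity of $V$, everything else reduces to tracking, prime by prime, the elementary divisors of $N$ in $M$, which the hypotheses force to be \emph{homogeneous} at every prime.
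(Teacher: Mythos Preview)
Your proof is correct and takes a genuinely different route from the paper's. The paper simply cites three results from Curtis--Reiner: Proposition~(30.17) for (ii)$\Leftrightarrow$(iv) and (iii)$\Leftrightarrow$(v), Proposition~(31.2) for (ii)$\Leftrightarrow$(iii) and (iv)$\Leftrightarrow$(v), and Maranda's Theorem~(81.5) for (i)$\Leftrightarrow$(ii). These are general local--global results for lattices over orders and do not by themselves require $V$ to be absolutely simple.

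Your argument, by contrast, exploits the absolute simplicity hypothesis from the outset: it forces $\End_{RG}(M_R)=R\cdot\id$ for $R\in\{\ZZ,\ZZ_{(p)},\ZZ_p\}$, so any full-rank $RG$-sublattice of $M_R$ isomorphic to $M_R$ is a scalar multiple $p^eM_R$. This turns the abstract local--global machinery into the concrete problem of tracking the exponents $e_p$, and the implication (iii)$\Rightarrow$(i) then becomes the elementary observation that $N=aM$ for $a=\prod_p p^{e_p}$, bypassing Maranda's Theorem entirely. Your step (v)$\Rightarrow$(iii) via faithful flatness of $\ZZ_p$ over $\ZZ_{(p)}$ likewise replaces the cited Proposition~(30.17) by a direct argument. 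The one place where you still invoke outside machinery is the maximal-order fact for $p\nmid|G|$; this is correct (and in the present absolutely simple situation it reduces to the statement that all full $\Mat_n(\ZZ_{(p)})$-lattices in $\QQ^n$ are isomorphic, via Morita), though your pointer to \S26 of \cite{Curtis1981} is a bit loose---the precise statement is closer to the material in \S18 or \S26 of Reiner's \emph{Maximal Orders}, or one can argue directly as in Proposition~\ref{prop notdivides} since $\FF_pG$ is semisimple. In short: the paper's proof is shorter and more general, while yours is more transparent about exactly where absolute simplicity enters and avoids the heavier genus-theoretic input.
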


\begin{proof}
The equivalences (ii)$\Leftrightarrow$(iv) and (iii)$\Leftrightarrow$(v) follow from \cite[Proposition~(30.17)]{Curtis1981},
while the equivalences (ii)$\Leftrightarrow$(iii) and (iv)$\Leftrightarrow$(v) follow from \cite[Proposition~(31.2)]{Curtis1981}.
Lastly, By Maranda's Theorem \cite[Theorem~(81.5)]{Curtis1962}, also (i) and (ii) are equivalent.
\end{proof}

As a consequence of Proposition~\ref{prop local} and the considerations in \cite[\S 31A., page 651]{Curtis1981},
one obtains

\begin{cor}\label{cor number local}
Let $V$ be an absolutely simple $\QQ G$-module. For each prime number, let $h_p(V)$ be the
number of isomorphism classes of $\ZZ_p$-forms of $V_{\QQ_p}$, and let $j(V)$ be the number
of isomorphism classes of $\ZZ$-forms of $V$. If 
$\{p_1,\ldots,p_g\}$ is the set of prime divisors
of $|G|$, then one has
$$j(V)=\prod_{i=1}^g h_{p_i}(V)\,.$$
\end{cor}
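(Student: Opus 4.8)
The plan is to derive Corollary~\ref{cor number local} as a counting consequence of Proposition~\ref{prop local} together with the local-global description of $\ZZ$-forms. First I would fix an absolutely simple $\QQ G$-module $V$ and note that, by Proposition~\ref{prop local}, the isomorphism class of a $\ZZ$-form $M$ of $V$ is completely determined by the tuple of local isomorphism classes $([M_{\ZZ_{p_1}}],\ldots,[M_{\ZZ_{p_g}}])$, where $\{p_1,\ldots,p_g\}$ is the set of prime divisors of $|G|$. This is because, for a prime $p\nmid |G|$, the group algebra $\ZZ_p G$ is a maximal order (indeed $V_{\QQ_p}$ is absolutely simple and $\ZZ_p G$ is semisimple-reducible in the relevant sense), so $V_{\QQ_p}$ has a unique $\ZZ_p$-form up to isomorphism; hence those coordinates carry no information and the map $[M]\mapsto([M_{\ZZ_{p_i}}])_{i=1}^g$ is well-defined and injective on isomorphism classes. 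Thus $j(V)\leq\prod_{i=1}^g h_{p_i}(V)$.

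For the reverse inequality I would show the map is surjective, i.e. every tuple of local forms is realized by a global form. Given, for each $i$, a $\ZZ_{p_i}$-form $N_i$ of $V_{\QQ_{p_i}}$, I would invoke Proposition~\ref{prop rep iso p} to produce, for each $i$, a $\ZZ$-form $M^{(i)}$ of $V$ with $M^{(i)}_{\ZZ_{p_i}}\cong N_i$; then I would splice these together using a standard local-global patching argument for lattices over Dedekind domains — concretely, one fixes one $\ZZ$-form $M$ of $V$ and modifies it prime-by-prime: at each $p_i$ there exists $r_i\in\ZZ$ with $r_i M\subseteq$ (a copy of) $N_i$ inside $V$, and by the weak approximation / Chinese Remainder argument in \cite[\S 31A]{Curtis1981} (the local lattices $N_i$ agree with $M$ at all primes outside $\{p_1,\ldots,p_g\}$, since there the form is unique) one finds a single $\ZZ G$-lattice $M'$ with $M'_{\ZZ_{p_i}}\cong N_i$ for all $i$. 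This $M'$ is the desired global form mapping to the prescribed tuple.

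Combining the two inequalities gives a bijection between isomorphism classes of $\ZZ$-forms of $V$ and $\prod_{i=1}^g(\text{iso.\ classes of }\ZZ_{p_i}\text{-forms of }V_{\QQ_{p_i}})$, whence $j(V)=\prod_{i=1}^g h_{p_i}(V)$. I expect the main obstacle to be the surjectivity/patching step: one must be careful that Proposition~\ref{prop rep iso p} only guarantees agreement at a single prime, so the realization of an arbitrary tuple requires the simultaneous approximation statement, which is exactly what the cited material in \cite[\S 31A, page 651]{Curtis1981} provides; making precise that the forms are genuinely "glued" (rather than merely existing individually) is where the argument has real content, while the injectivity half and the "unique form away from $|G|$" fact are comparatively routine. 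Since the excerpt explicitly allows us to cite \cite[\S 31A., page 651]{Curtis1981}, I would present the surjectivity step by reference to that passage and Proposition~\ref{prop rep iso p}, and give the injectivity step directly from Proposition~\ref{prop local}(i)$\Leftrightarrow$(v).
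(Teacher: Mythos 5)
Your proof is correct and follows the same route the paper indicates: injectivity of the local-invariants map from Proposition~\ref{prop local} (i)$\Leftrightarrow$(v), and surjectivity via Proposition~\ref{prop rep iso p} together with the genus/patching material in \cite[\S 31A]{Curtis1981}, which is precisely what the paper cites without further elaboration. The only minor wrinkle is the slightly garbled parenthetical justifying uniqueness of the $\ZZ_p$-form at primes $p\nmid |G|$ ("semisimple-reducible in the relevant sense"); the correct statement is simply that for $p\nmid|G|$ the order $\ZZ_pG$ is maximal and separable, so lattices over it are determined by their rational span, but this does not affect the validity of the argument.
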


The following result shows how Corollary~\ref{cor number local} can be strengthened to give explicit representatives for the isomorphism classes of $\ZZ$-forms of $V$.
It is implicitely contained in~\cite{Plesken1977, Plesken1974} (see also \cite{Hofmann2016b}).
Since it will be essential in Section~\ref{sec proofs}, for the readers' convenience we give a proof.

\begin{prop}\label{prop rep iso}
Let $V$ be an absolutely simple $\QQ G$-module, and let $M$ be  a $\ZZ$-form of $V$.
Let $\{p_1,\ldots,p_g\}$ be the set of prime divisors of $|G|$. 
Suppose that, for each $p \in \{p_1,\dotsc,p_g\}$,
one is given a set $X_p$ of $\ZZ$-forms of $V$ contained in $M$ such that, for all $N \in X_p$,
the index $[M : N]$ is a $p$-power and $\{ N_{\ZZ_p} : N \in X_p \}$ is a set of
representatives of the isomorphism classes of $\ZZ_p$-forms of $V_{\QQ_p}$.
Then
\[ \{ N_1 \cap \dotsb \cap N_g : N_i \in X_{p_i}, \, 1 \leq i \leq g \} \]
is a set of representatives of the isomorphism classes of $\ZZ$-forms of $V$.
\end{prop}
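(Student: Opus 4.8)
The plan is to combine the local-global principles established earlier (Proposition~\ref{prop local} and Corollary~\ref{cor number local}) with the approximation statement of Proposition~\ref{prop rep iso p}. First I would show that the indicated set exhausts all isomorphism classes, and then that its members are pairwise non-isomorphic; the count will then match $j(V)=\prod_{i=1}^g h_{p_i}(V)$ from Corollary~\ref{cor number local}, which forces everything to be consistent.

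\textbf{Step 1: every $\ZZ$-form of $V$ is isomorphic to one of the listed lattices.} Let $L$ be an arbitrary $\ZZ$-form of $V$. For each $i\in\{1,\dots,g\}$, the completion $L_{\ZZ_{p_i}}$ is a $\ZZ_{p_i}$-form of $V_{\QQ_{p_i}}$, hence by hypothesis $L_{\ZZ_{p_i}}\cong (N_i)_{\ZZ_{p_i}}$ for a unique $N_i\in X_{p_i}$. Set $N:=N_1\cap\dots\cap N_g\subseteq M$. I must check that $N$ is a $\ZZ$-form of $V$ and that $N\cong L$. Since each $[M:N_i]$ is a power of the distinct prime $p_i$, the indices are pairwise coprime, so $N$ has finite index in $M$ (in fact $[M:N]=\prod_i[M:N_i]$) and is therefore a full-rank sublattice of $M$; by Remark~\ref{rem forms all in one} it is a $\ZZ$-form of $V$. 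For the prime-local behaviour: because $[M:N_j]$ is a $p_i$-power that is trivial for $j\neq i$, localizing at $p_i$ gives $N_{\ZZ_{p_i}}=(N_i)_{\ZZ_{p_i}}$ (intersecting with the $N_j$, $j\neq i$, changes nothing $p_i$-locally, as $[M:N_j]\in\ZZ_{p_i}^\times$; cf.\ \ref{noth local}). Hence $N_{\ZZ_{p_i}}\cong (N_i)_{\ZZ_{p_i}}\cong L_{\ZZ_{p_i}}$ for every $i$. For primes $p\notin\{p_1,\dots,p_g\}$ one has $N_{\ZZ_p}=M_{\ZZ_p}$ and $L_{\ZZ_p}=M_{\ZZ_p}$, since a $\ZZ_pG$-form of $V_{\QQ_p}$ is unique up to isomorphism when $p\nmid|G|$ (Maschke, applied via Proposition~\ref{prop notdivides}(b), or the standard fact that $\ZZ_pG$ is a maximal order), so $N_{\ZZ_p}\cong L_{\ZZ_p}$ trivially. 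Thus $N_{\ZZ_p}\cong L_{\ZZ_p}$ for \emph{all} primes $p$, and Proposition~\ref{prop local} (condition (iv) $\Rightarrow$ (i)) gives $N\cong L$ as $\ZZ G$-lattices.

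\textbf{Step 2: distinct tuples give non-isomorphic lattices.} Suppose $N_i,N_i'\in X_{p_i}$ with $N_1\cap\dots\cap N_g\cong N_1'\cap\dots\cap N_g'$ as $\ZZ G$-lattices. Localizing at $p_i$ and using the computation from Step~1, $(N_i)_{\ZZ_{p_i}}\cong (N_1\cap\dots\cap N_g)_{\ZZ_{p_i}}\cong(N_1'\cap\dots\cap N_g')_{\ZZ_{p_i}}\cong(N_i')_{\ZZ_{p_i}}$ for each $i$. Since $\{N_{\ZZ_{p_i}}:N\in X_{p_i}\}$ is by hypothesis a set of \emph{representatives} of the isomorphism classes, this forces $N_i=N_i'$ for all $i$. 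Hence the map from $\prod_i X_{p_i}$ to isomorphism classes of $\ZZ$-forms given by intersection is injective, and by Step~1 surjective.

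\textbf{Main obstacle.} The one genuinely delicate point is the interplay in Step~1 between the integral intersection $N=N_1\cap\dots\cap N_g$ and its $p_i$-localizations: one needs the coprimality of the indices $[M:N_j]$ to conclude both that $N$ is full-rank and that $N_{\ZZ_{p_i}}$ recovers exactly $(N_i)_{\ZZ_{p_i}}$ rather than some smaller lattice. This is where the hypothesis ``$[M:N]$ is a $p$-power'' for $N\in X_p$ is essential, and it is exactly the local-global bookkeeping worked out in \ref{noth local} together with Proposition~\ref{prop local}; everything else is a formal assembly. (One may alternatively phrase Step~2 as a counting argument: Corollary~\ref{cor number local} gives $j(V)=\prod_i h_{p_i}(V)=\prod_i|X_{p_i}|$, which equals the number of tuples, so surjectivity from Step~1 automatically upgrades to bijectivity; but the direct argument above avoids invoking finiteness and is cleaner.)
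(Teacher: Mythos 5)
Your proof is correct, and the crucial computational step, namely that $(N_1\cap\dotsb\cap N_g)_{\ZZ_{p_i}}=(N_i)_{\ZZ_{p_i}}$ because the indices $[M:N_j]$ for $j\neq i$ are units in $\ZZ_{p_i}$, is exactly the one the paper uses. The only difference in organization is that you prove surjectivity directly (Step 1), whereas the paper gets it for free from the count $j(V)=\prod_i|X_{p_i}|$ in Corollary~\ref{cor number local} once the pairwise non-isomorphy (your Step 2) is established; you note this alternative yourself, so the two arguments are essentially the same. One small simplification you could make in Step 1: instead of handling primes $p\nmid|G|$ separately (unique $\ZZ_p$-form when $p\nmid|G|$), just invoke the implication (v)$\Rightarrow$(i) of Proposition~\ref{prop local}, which only requires checking isomorphism at the primes dividing $|G|$.
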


\begin{proof}
By Proposition~\ref{prop local},  the $\ZZ$-forms $L$ and $N$ of $V$ are isomorphic if and only if the $\ZZ_p$-forms $L_{\ZZ_p}$ and$N_{\ZZ_p}$
of $V_{\QQ_p}$ are isomorphic, for all $p\in \{p_1,\dotsc,p_g\}$. Moreover, by Corollary~\ref{cor number local}, the number of isomorphism classes of $\ZZ$-forms of $V$ is equal to $\prod_{i=1}^g |X_{p_i}|$.
Let us write $X := \{ N_1 \cap \dotsb \cap N_g : N_i \in X_{p_i}, \, 1 \leq i \leq g \}$.
It suffices to show that $|X| = \prod_{i=1}^g |X_{p_i}|$ and that no two elements of $X$ are isomorphic.
To this end, let $L = N_1 \cap \dotsb \cap N_g \in X$ with $N_i \in X_{p_i}$ for $i\in\{1,\ldots,g\}$.
Let $i\in\{1,\ldots,g\}$. By assumption, for $j \neq i$, we have that
$(M_{\ZZ_{p_i}}:(N_j)_{\ZZ_{p_i}})=\ZZ_{p_i}[M:N_j]$, where $[M:N_j]$ is a power of $p_j\in \ZZ_{p_i}^\times$, hence
$(M_{\ZZ_{p_i}}:(N_j)_{\ZZ_{p_i}})= \ZZ_{p_i}$ and $(N_j)_{\ZZ_{p_i}} = M_{\ZZ_{p_i}}$. This yields
\[ L_{\ZZ_{p_i}} = (N_1 \cap \dotsb \cap N_g)_{\ZZ_{p_i}} = (N_i)_{\ZZ_{p_i}} \cap M_{\ZZ_{p_i}} = (N_i)_{\ZZ_{p_i}}. \]
This shows that $X$ has the correct cardinality and the elements of $X$ are pairwise non-isomorphic.
\end{proof}

We also mention the following useful result, which will be applied in Section~\ref{sec p odd}.

\begin{lemma}\label{lemma possible Loewy}
Let $p$ be a prime, and let $W$ be an absolutely simple self-dual $\QQ_p G$-module. Let $M$ be
a $\ZZ_p$-form of $V$, and suppose that the $\FF_p G$-module $M/pM$ is uniserial with
Loewy series
$$M/pM\sim \begin{bmatrix}  D_1\\ D_2\end{bmatrix}\,,$$
for absolutely simple self-dual $\FF_p G$-modules $D_1$ and $D_2$ with $D_1\not\cong D_2$. 
Let $L$ be any
$\ZZ_p$-form of $V$. Then one has the following

\smallskip

{\rm(a)}\, $\End_{\FF_p G}(M/pM)\cong \FF_p$; in particular, the $\FF_pG$-module $M/pM$ is absolutely
indecomposable;

{\rm (b)}\, $L/pL\sim\begin{bmatrix} D_1\\D_2 \end{bmatrix}$ if and only if $L\cong M$;

\smallskip

{\rm (c)}\, $L/pL\sim\begin{bmatrix} D_2\\D_1 \end{bmatrix}$ if and only if $L\cong M^*$.
\end{lemma}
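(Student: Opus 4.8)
The plan is to first establish (a), then use it together with the general theory of maximal sublattices from Section~\ref{sec general} to pin down the two possible Loewy series of an arbitrary $\ZZ_p$-form, and finally identify which form realizes which series. For (a), since $D_1$ and $D_2$ are absolutely simple and $D_1\not\cong D_2$, any $\FF_pG$-endomorphism $\varphi$ of $M/pM$ must send the socle $D_2$ to itself (the socle is the unique minimal submodule, being uniserial) and induce a scalar there by Schur's Lemma for absolutely simple modules; subtracting that scalar, $\varphi-\lambda\cdot\id$ kills the socle, hence factors through $M/pM\big/\Soc(M/pM)\cong D_1$, and its image lies in $\Soc(M/pM)=D_2$, but $\Hom_{\FF_pG}(D_1,D_2)=0$ since $D_1\not\cong D_2$ and both are simple. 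Thus $\varphi=\lambda\cdot\id$, so $\End_{\FF_pG}(M/pM)\cong\FF_p$ and $M/pM$ is absolutely indecomposable.

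For (b) and (c), let $L$ be an arbitrary $\ZZ_p$-form of $V$. By Proposition~\ref{prop notdivides}(a), $L/pL$ and $M/pM$ have the same composition factors, namely $D_1$ and $D_2$ each once; since $D_1\not\cong D_2$, the Loewy length of $L/pL$ is either $1$ or $2$. If it were $1$, then $L/pL\cong D_1\oplus D_2$ would be decomposable, and the idempotent projections would lift (again via Proposition~\ref{prop p max sublattices} / Corollary~\ref{cor p max sublattices}: $L$ would have two distinct maximal sublattices $L_1, L_2$ with $L/L_i$ simple of the ``wrong'' dimensions, or more directly, $L$ would decompose after tensoring with $\QQ_p$, contradicting that $V$ is simple) — so the Loewy length is exactly $2$, and $L/pL$ is uniserial with head one of $D_1, D_2$ and socle the other. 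Hence there are exactly two candidates, $\left[\begin{smallmatrix}D_1\\ D_2\end{smallmatrix}\right]$ and $\left[\begin{smallmatrix}D_2\\ D_1\end{smallmatrix}\right]$, and it remains to match them with $M$ and $M^*$.

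One inclusion of (b) is trivial. For the converse, note that by \ref{noth change rings}(d) the lattice $M^*$ is again a $\ZZ_p$-form of $V$ (using that $V$ is self-dual), and $M^*/pM^*\cong(M/pM)^*$; dualizing a uniserial module reverses its Loewy series and replaces each layer by its dual, so since $D_1, D_2$ are self-dual we get $M^*/pM^*\sim\left[\begin{smallmatrix}D_2\\ D_1\end{smallmatrix}\right]$. Now I would argue uniqueness: up to isomorphism there are exactly two $\ZZ_p$-forms of $V$ in the situation at hand. To see this, take any $\ZZ_p$-form, which by Remark~\ref{rem forms all in one} may be taken to be a full-rank sublattice $N\subseteq M$; pick a maximal sublattice chain and use Corollary~\ref{cor p max sublattices} — since $\Hd(M/pM)=D_1$ is absolutely simple with multiplicity one, $M$ has a \emph{unique} maximal sublattice $M_1$ containing $pM$, with $M/M_1\cong D_1$ and $M_1/pM\cong D_2$; repeating with $M_1$ (whose mod-$p$ reduction $M_1/pM_1$ one analyzes: it has $D_2$ in the head) one finds $M\supset M_1\supset pM\supset\cdots$, so every proper full-rank sublattice of $M$ is of the form $p^aM$ or $p^aM_1$, giving only the two isomorphism types $M$ and $M_1$. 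Since $M_1\cong M^*$ would follow from $M_1/pM_1\sim\left[\begin{smallmatrix}D_2\\ D_1\end{smallmatrix}\right]$ together with the fact that a form is determined up to isomorphism by its Loewy series in this two-form situation, we conclude: $L\cong M$ iff $L/pL\sim\left[\begin{smallmatrix}D_1\\ D_2\end{smallmatrix}\right]$, and $L\cong M^*$ iff $L/pL\sim\left[\begin{smallmatrix}D_2\\ D_1\end{smallmatrix}\right]$, which is (b) and (c).

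The main obstacle is the last step: showing that the Loewy series of $L/pL$ determines $L$ up to isomorphism, equivalently that there are precisely two forms. The cleanest route is the sublattice-chain analysis via Corollary~\ref{cor p max sublattices}, which requires knowing that $M_1/pM_1$ again has simple, multiplicity-one head — this should drop out of the uniseriality of $M/pM$ and a short diagram chase relating $M_1/pM_1$ to the extension $0\to D_2\to M/pM\to D_1\to 0$, but it is the one point needing care.
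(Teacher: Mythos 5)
Your proof of part (a) is correct and in fact cleaner than the paper's: the paper rules out $\End_{\FF_pG}(M/pM)\cong\FF_{p^r}$ with $r>1$ by a field-extension-and-radical argument, whereas your Fitting-style argument (restrict to the socle, get a scalar $\lambda$ by Schur, observe that $\varphi-\lambda\cdot\id$ kills $\Soc$ hence factors through $D_1$, but $M/pM$ has no simple submodule $\cong D_1$) is shorter and more transparent.

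Parts (b) and (c), however, contain a genuine gap. You claim that the Loewy length of $L/pL$ must be exactly $2$, ruling out $L/pL\cong D_1\oplus D_2$; both parenthetical justifications you give for this are wrong, and the claim itself is false. Having two distinct maximal sublattices of $L$ is not in tension with $V$ being simple, and a decomposition of $L/pL$ does not lift: the reduction map $\End_{\ZZ_pG}(L)\to\End_{\FF_pG}(L/pL)$ has image only the scalars $\FF_p$ (since $\End_{\ZZ_pG}(L)\cong\ZZ_p$), so the idempotents of $\End_{\FF_pG}(D_1\oplus D_2)$ need not lift, and $\QQ_p\otimes L$ stays simple. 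In fact the semisimple case genuinely occurs in the setting of this paper: see Remark~\ref{rem possible Loewy} and Proposition~\ref{prop possible Loewy}(c), where $L/pL\cong D(k)\oplus D(k+1)$ is one of the three listed possibilities and corresponds to the ``intermediate'' $\ZZ_p$-forms that exist whenever $\nu_p(n)\geq 2$. Consequently your sublattice-chain count is wrong as well: if $M_1$ denotes the unique maximal sublattice of $M$, the extension $0\to pM/pM_1\to M_1/pM_1\to M_1/pM\to 0$, i.e.\ $0\to D_1\to M_1/pM_1\to D_2\to 0$, can be split, in which case $M_1$ has a second maximal sublattice distinct from $pM$ and the chain $p^aM\supset p^aM_1$ does not exhaust the full-rank sublattices. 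You flagged exactly this point (``$M_1/pM_1$ should have multiplicity-one head'') as the one needing care, and indeed it does not follow from the uniseriality of $M/pM$.

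The paper sidesteps the enumeration of forms entirely. Given $L$ with $L/pL\sim\bigl[\begin{smallmatrix}D_1\\D_2\end{smallmatrix}\bigr]$, it observes that $\Hom_{\ZZ_pG}(L,M)$ is a rank-one $\ZZ_p$-lattice, applies the injection $\overline{\Hom_{\ZZ_pG}(L,M)}\hookrightarrow\Hom_{\FF_pG}(L/pL,M/pM)$ to produce a nonzero $\phi$, argues that $\phi$ must be an isomorphism because a proper image/kernel would force $D_1\cong D_2$, and then lifts $\phi$ to a $\ZZ_pG$-isomorphism via a determinant-modulo-$p$ argument. Part (c) then follows by dualizing as you suggest. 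To repair your proof you would need to replace the ``two forms only'' claim by this direct Hom-lattice comparison, or else impose the extra hypothesis (not available here) that $M_1/pM_1$ is again uniserial.
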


\begin{proof}
To prove (a), let $\phi$ be a non-zero endomorphism of the $\FF_pG$-module $M/pM$. Then $\phi$ is an isomorphism, since $D_1\not\cong D_2$. Thus the endomorphism algebra $\End_{\FF_pG}(M/pM)$ is a division
algebra of finite $\FF_p$-dimension. Since $\FF_p$ is a finite field, $\End_{\FF_pG}(M/pM)$ then has to be
an extension field of $\FF_p$, say $\End_{\FF_pG}(M/pM)\cong \FF_{p^r}$, for some $r\in\NN$. Assume that $r>1$.
Then, by \cite[Theorem 1.11.12]{Nagao1989}, we have 
$$\End_{\FF_{p^r}G}(\FF_{p^r}\otimes_{\FF_p} (M/pM))\cong \FF_{p^r}\otimes_{\FF_p} \End_{\FF_pG}(M/pM)\cong \underbrace{\FF_{p^r}\oplus\cdots\oplus \FF_{p^r}}_r\,$$
as $\FF_{p^r}$-algebras. So, by \cite[Theorem 1.5.4]{Nagao1989}, the $\FF_{p^r}G$-module
$\FF_{p^r}\otimes_{\FF_p} (M/pM)$ splits into the direct sum of $r$ indecomposable modules. Since $D_1$ and $D_2$
are absolutely simple, this forces $r=2$ and $\FF_{p^2}\otimes_{\FF_p} (M/pM)\cong (\FF_{p^2}\otimes_{\FF_p} D_1)\oplus (\FF_{p^2}\otimes_{\FF_p} D_2)$. So $\FF_{p^2}\otimes_{\FF_p} (M/pM)$ is semisimple, and
$\Rad(\FF_{p^2}\otimes_{\FF_p} (M/pM))=\{0\}$. 
By \cite[Lemma 2.51, Lemma 3.1.28]{Nagao1989}, we also have
$\Rad(\FF_{p^2}G)=\FF_{p^2}\otimes_{\FF_p} \Rad(\FF_p G)$. Thus
\begin{align*}
\Rad(\FF_{p^2}\otimes_{\FF_p} (M/pM))&=\Rad(\FF_{p^2}G)\cdot (\FF_{p^2}\otimes_{\FF_p}(M/pM))\\
&=(\FF_{p^2}\otimes_{\FF_p}\Rad(\FF_pG))\cdot (\FF_{p^2}\otimes_{\FF_p}(M/pM))\\
&=\FF_{p^2}\otimes_{\FF_p} \Rad(M/pM)\,.
\end{align*}
But $\Rad(M/pM)\neq\{0\}$, thus $\FF_{p^2}\otimes_{\FF_p} \Rad(M/pM)\neq\{0\}$, a contradiction. This shows that, in fact, $r=1$, thus $\End_{\FF_p G}(M/pM)\cong \FF_p$; in particular, $M/pM$ is an absolutely indecomposable $\FF_pG$-module, by
\cite[Lemma 4.7.1]{Nagao1989}. 
 
\smallskip

I remains to prove (b) and (c). Since $V\cong V^*$, also $M^*$ is a $\ZZ_p$-form of $V$, by \ref{noth change rings}(c).
Since $D_1$ and $D_2$ are both self-dual, $M^*/pM^*$ is uniserial with Loewy series $M^*/pM^*\sim\begin{bmatrix}  D_2\\ D_1\end{bmatrix}$.

\smallskip

Conversely, let $L$ be a $\ZZ_p$-form of $S(k)_{\QQ_p}$ with the same Loewy structure as $M$. 
By Lemma~\ref{lem:hom1}, $\Hom_{\ZZ_p \mathfrak{S}_n}(L,M)\cong \ZZ_p$ as $\ZZ_p$-lattice. Thus
$\FF_p\cong \FF_p\otimes_{\ZZ_p} \Hom_{\ZZ_p \mathfrak{S}_n}(L,M)\cong \Hom_{\ZZ_p \mathfrak{S}_n}(L,M)/p \Hom_{\ZZ_p \mathfrak{S}_n}(L,M)$ as $\FF_p$-vector spaces.
Consider the canonical projection ${}^-:\Hom_{\ZZ_p \mathfrak{S}_n}(L,M)\to \Hom_{\ZZ_p \mathfrak{S}_n}(L,M)/p \Hom_{\ZZ_p \mathfrak{S}_n}(L,M)$.
By
\cite[Lemma 4.8.8]{Nagao1989}, we have an injective $\FF_p$-linear map
\begin{equation}\label{eqn mono}
\Phi:\overline{\Hom_{\ZZ_p \mathfrak{S}_n}(L,M)}\to \Hom_{\FF_p\mathfrak{S}_n}(L/pL,M/pM)\,;
\end{equation}
in particular, there is some non-zero homomorphism $\phi\in \Hom_{\FF_p\mathfrak{S}_n}(L/pL,M/pM)$.
If $\phi$ was not an isomorphism, we would have $\mathrm{Im}(\phi)\cong D_2\cong (L/pL)/\ker(\phi)\cong D_1$, a contradiction.
Hence $\phi$ is an isomorphism, implying $\Hom_{\FF_p\mathfrak{S}_n}(L/pL,M/pM)\cong \End_{\FF_p\mathfrak{S}_n}(M)\cong \FF_p$
as $\FF_p$-vector spaces.  Hence the $\FF_p$-monomorphism in (\ref{eqn mono})
actually has to be an isomorphism, so that there is some $\psi\in \Hom_{\ZZ_p\mathfrak{S}_n}(L,M)$ such that
$\Phi(\bar{\psi})=\phi$. 
Let $\{b_1,\ldots,b_s\}$ be a $\ZZ_p$-basis of $L$ and let $\{c_1,\ldots,c_s\}$ be a $\ZZ_p$-basis of $M$.
Let further $A\in\mathrm{Mat}_{s\times s}(\ZZ_p)$ be the matrix of $\psi$ with respect to these bases.
Then $\{b_1+pL,\ldots,b_s+pL\}$ and $\{c_1+pM,\ldots, c_s+pM\}$ are $\FF_p$-bases of $L/pL$
and $M/pM$, respectively. The matrix $\bar{A}$ of $\Phi(\bar{\psi})=\phi$ with respect of these bases is obtained
by reducing the entries of $A$ modulo $p$. In particular, $0\neq \det(\bar{A})=\det(A)+p\ZZ_p$. Hence $\det(A)\in\ZZ_p^\times$,
implying that $\psi$ is a $\ZZ_p\mathfrak{S}_n$-isomorphism.

\smallskip

To prove assertion (b), suppose that $L$ is a $\ZZ_p$-form of $V$ 
such that $L/pL\sim\begin{bmatrix} D_2\\ D_1 \end{bmatrix}$.
Then the dual lattice $L^*$ is also a $\ZZ_p$-form of $V$, and the $\FF_p\mathfrak{S}_n$-module
$L^*/pL^*\cong \FF_p\otimes_{\ZZ_p} L^*\cong
(\FF_p\otimes_{\ZZ_p} L)^*\cong (L/pL)^*$ is uniserial with Loewy series $\begin{bmatrix} D_1\\D_2\end{bmatrix}$.
Thus, by (a), we have $L^*\cong M$, hence $L\cong M^*$, as claimed.
\end{proof}

In general, given a principal ideal domain, its field of fractions $K$ and
an absolutely simple $KG$-module $V$, it is quite difficult to determine all isomorphism classes of $R$-forms of $V$.
We record here two special cases, where $R$ is local and where the reduction modulo the maximal ideal of $R$ reveals enough information to determine all isomorphism classes.
Both will be applied in the setting of Specht modules in Sections~\ref{sec p odd} and~\ref{sec p 2},
where $G$ is a symmetric group and $R=\ZZ_p$, for some prime number $p$.

\begin{prop}\label{prop max forms 2}
Suppose that $R$ is  local with maximal ideal $\mathfrak m$ and residue field $k:=R/\mathfrak{m}$. Let $V$ be a $KG$-module, and let $M$ an $R$-form of $V$. Let $n:=\rk_R(M)\in\NN$.

\smallskip

{\rm (a)}\, Suppose that, for every  $RG$-lattice $N$ with $\mathfrak m M \subseteq N \subseteq M$, the $k G$-module $N/\mathfrak{m}N$ is uniserial. Then the set of $RG$-sublattices of $M$ of rank $n$ is totally ordered.
Moreover, every $RG$-sublattice of $M$ of rank $n$ is of the form 
$\mathfrak m^i N$, for some $i \in \NN_0$ and some $RG$-sublattice $N$ of $M$ with $\mathfrak m M \subsetneq N \subseteq M$.

\smallskip

{\rm (b)}\,  Suppose that the set of $RG$-sublattices of $M$ of rank $n$
is totally ordered. Then the set of $RG$-lattices $\{ N : \mathfrak m M \subsetneq N \subseteq M \}$ contains a set of representatives of the isomorphism classes of $R$-forms of $V$.
\end{prop}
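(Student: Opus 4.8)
We may assume $R$ is a discrete valuation ring (if $R$ is a field the only full-rank $RG$-sublattice of $M$ is $M$ itself, and both parts are immediate); write $\mathfrak m=(\pi)$. Both parts concern the poset of full-rank $RG$-sublattices of $M$ ordered by inclusion. Recall from \ref{noth change rings}(f) and Lemma~\ref{lemma max sub} that the $RG$-sublattices $N$ with $\mathfrak m M\subseteq N\subseteq M$ correspond bijectively to the $kG$-submodules of $M/\mathfrak m M$, and that, since $R$ is local, every maximal $RG$-sublattice of $M$ contains $\mathfrak m M$ by Lemma~\ref{lemma max sub}(a). The only step requiring genuine work is a propagation property in part (a); the rest is bookkeeping with composition lengths.

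\textbf{Part (a).} Since $M/\mathfrak m M$ is uniserial (the case $N=M$ of the hypothesis), $M$ has a \emph{unique} maximal $RG$-sublattice, say $M_1$. The key step is the propagation lemma: $M_1$ again satisfies the hypothesis of (a), i.e.\ $N/\mathfrak m N$ is uniserial whenever $\mathfrak m M_1\subseteq N\subseteq M_1$. To prove this, observe that $M_1/\mathfrak m M_1$ is uniserial (hypothesis for $N=M_1$), so these $N$ form a chain. If $N=\mathfrak m M_1$, then $N/\mathfrak m N=\mathfrak m M_1/\mathfrak m(\mathfrak m M_1)\cong M_1/\mathfrak m M_1$ is uniserial. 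Otherwise it suffices to show $\mathfrak m M\subseteq N$, for then $\mathfrak m M\subseteq N\subseteq M_1\subseteq M$ and $N/\mathfrak m N$ is uniserial by the hypothesis on $M$. Now $\mathfrak m M_1\subseteq\mathfrak m M\subseteq M_1$, so $\mathfrak m M$ and $N$ are comparable; if $N\subsetneq\mathfrak m M=\pi M$, then $\pi^{-1}N$ is a full-rank $RG$-sublattice of $M$ with $M_1\subseteq\pi^{-1}N\subsetneq M$, hence $\pi^{-1}N=M_1$ (as $M_1$ is the unique maximal sublattice of $M$), i.e.\ $N=\mathfrak m M_1$, a contradiction. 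Therefore $\mathfrak m M\subseteq N$.

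Iterating the propagation lemma gives a chain $M=M_0\supsetneq M_1\supsetneq M_2\supsetneq\cdots$ with each $M_{r+1}$ the unique maximal $RG$-sublattice of $M_r$. An induction on the composition length $\lambda_{RG}(M/L)$ of the $RG$-module $M/L$ (which is finite, since $R$ is a discrete valuation ring) shows that every full-rank $RG$-sublattice $L$ of $M$ equals $M_{\lambda_{RG}(M/L)}$: if $L\subsetneq M$, then any maximal $RG$-sublattice of $M$ containing $L$ is $M_1$, and applying the inductive hypothesis to $L\subseteq M_1$ — whose canonical chain is $M_1\supsetneq M_2\supsetneq\cdots$ — yields the claim. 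So the full-rank $RG$-sublattices of $M$ are exactly the $M_r$, $r\ge 0$, a totally ordered set. Finally, put $\ell:=\lambda_{RG}(M/\mathfrak m M)$; multiplication by $\pi$ is an $RG$-isomorphism $M\to\mathfrak m M$ sending the chain $\{M_r\}$ onto its own tail, so $M_\ell=\mathfrak m M$ and $M_{j\ell+t}=\mathfrak m^{j}M_t$ for $j\ge 0$ and $0\le t\le\ell-1$. As $\mathfrak m M=M_\ell\subsetneq M_t\subseteq M_0=M$ for such $t$, every full-rank $RG$-sublattice of $M$ is of the stated form $\mathfrak m^{i}N$ with $\mathfrak m M\subsetneq N\subseteq M$.

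\textbf{Part (b).} By Remark~\ref{rem forms all in one}, every $R$-form of $V$ is $RG$-isomorphic to a full-rank $RG$-sublattice of $M$, so it is enough to show each such $L$ is $RG$-isomorphic to some $N$ with $\mathfrak m M\subsetneq N\subseteq M$. Since $L\ne 0$ and $\bigcap_{i\ge 0}\pi^{i}M=0$, there is a largest $i_0\ge 0$ with $\pi^{-i_0}L\subseteq M$; set $N:=\pi^{-i_0}L$. Then $N\cong L$ as $RG$-lattices, $N\subseteq M$, and $N\not\subseteq\pi M=\mathfrak m M$ by the maximality of $i_0$. Since the full-rank $RG$-sublattices of $M$ are totally ordered, $N$ and $\mathfrak m M$ are comparable, and $N\not\subseteq\mathfrak m M$ forces $\mathfrak m M\subsetneq N\subseteq M$, as required. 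The one genuinely delicate point in the whole argument is the propagation lemma.
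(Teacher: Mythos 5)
Your proof is correct and follows essentially the same strategy as the paper's: identify the chain of $RG$-sublattices between $\mathfrak m M$ and $M$ coming from the unique composition series of $M/\mathfrak m M$, then use periodicity under multiplication by a uniformizer to list all full-rank sublattices as $\mathfrak m^j M_i$ and induct on the distance from $M$. The one place where you make something explicit that the paper handles differently is your ``propagation lemma'': the paper instead observes that the lattices $M_1,\dots,M_r$ from the lifted composition series all lie between $\mathfrak m M$ and $M$, so the hypothesis of (a) applies to them directly, and then reduces $\mathfrak m^j M_i$ to $M_i$ via the isomorphism $m\mapsto p^j m$; your propagation argument (showing $\mathfrak m M\subseteq N$ or $N=\mathfrak m M_1$, hence the hypothesis descends to $M_1$) is a correct alternative route to the same inductive step. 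For part (b), your argument --- take $i_0$ maximal with $\pi^{-i_0}L\subseteq M$, then use total ordering to compare $N=\pi^{-i_0}L$ with $\mathfrak m M$ --- is somewhat more direct and self-contained than the paper's, which simply asserts that (b)'s hypothesis forces the sublattices to be the $\mathfrak m^j M_i$ without spelling out the comparison.
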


\begin{proof}
First recall from Remark~\ref{rem forms all in one} that, up to isomorphism, the $R$-forms of $V$ are precisely the $RG$-sublattices of $M$ of rank $n$.
Lifting any composition series of the $kG$-module $M/\mathfrak{m}M$ to $RG$, we obtain an $r\in\NN$ and $RG$-lattices $M_1,\ldots,M_r$ such that $\mathfrak{m}M=M_r\subseteq M_{r-1}\subseteq \cdots \subseteq M_2\subseteq M_1=M$ and such that
$M_i$ is a maximal $RG$-sublattice of $M_{i-1}$, for $i\in\{2,\ldots,r\}$.

\smallskip

(a)\, Since, in particular, $M/\mathfrak m M$ is uniserial, we deduce that $M_1,\ldots,M_r$ are precisely the $RG$-sublattices 
of $M$ containing $\mathfrak{m}M$. We claim that
$$\{ \mathfrak m^j M_i: j \in \NN_0, 1 \leq i \leq r \}$$
is the set of all $RG$-sublattices of $M$ of full rank $n$, which is clearly totally ordered. To this end, for every $RG$-sublattice $L$ of $M$ with $\rk_R(L)=n$, we define
$\ell(L)$ to be the minimal integer $s\in\NN_0$ such that there exists a chain of $RG$-lattices $L=L_s\subsetneq \cdots \subsetneq L_0=M$,
where $L_i$ is a maximal $RG$-sublattice of $L_{i-1}$, for all $i\in\{1,\ldots,s\}$.
We argue by induction on $\ell(L)$ to show that $L\in\{ \mathfrak m^j M_i : j \in \NN_0, 1 \leq i \leq r \}$. The case $\ell(L)=0$ is trivial, and if $\ell(L)=1$ then $L=M_2$, since $M_2$ is the unique maximal $RG$-sublattice of $M$. So suppose now that $\ell(L)\geq 2$.
Then there is some $RG$-sublattice $L'$ of $M$ of full rank such that $\ell(L')\leq \ell(L)-1$ and such that $L$ is maximal in $L'$.
By induction, $L' = \mathfrak m^j M_i$ for some $j \in \NN_0$ and some $i \in\{1,\ldots, r\}$.
We shall show that $L'$ has a unique maximal sublattice, namely $\mathfrak m^j M_{i+1}$ if $i <r$, and $\mathfrak m^{j+1} M_1$ if $i = r$.

Let $p\in R$ be such that $(p)=\mathfrak{m}$. Then we have the $RG$-isomorphism $M_i\to \mathfrak{m}^jM_i=L',\; m\mapsto p^j m$. Hence we may suppose that $j=0$, so that $L'=M_i$. By assumption, $M_i/\mathfrak{m}M_i$ is a uniserial
$kG$-module. Therefore, $M_i$ has a unique maximal $RG$-sublattice; by construction, the latter equals $M_{i+1}$ if $i<r$
and $\mathfrak{m}M=\mathfrak{m}M_1$ if $i=r$.
This completes the proof of (a).

\smallskip

(b)\, 
Since $L\cong \mathfrak{m}^jL$, for every $RG$-lattice $L$ and every $j\in\NN_0$, the hypotheses of (b)
imply that  $\{ \mathfrak m^j M_i : j \in \NN_0, 1 \leq i \leq r \}$ is the set of $RG$-sublattices of $M$ of rank $n$, and the assertion follows.
\end{proof}

\begin{prop}\label{prop max forms}
Suppose that $R$ is local with maximal ideal $\mathfrak{m}$ and residue field $k:=R/\mathfrak{m}$. 
Let $V$ be a $KG$-module, and let $M$ be an $R$-form of $V$.
Suppose that $M/\mathfrak{m}M= D_1\oplus D_2$, for absolutely simple $kG$-modules
$D_1$ and $D_2$ with $D_1\not\cong D_2$.
Then the $RG$-lattice $M$ has precisely two maximal sublattices $N_1$ and $N_2$. If, moreover, $N_1/\mathfrak{m}N_1$ and 
$N_2/\mathfrak{m}N_2$ are both uniserial $kG$-modules, then $M,N_1$ and $N_2$ are representatives of the isomorphism classes
of $R$-forms of $V$.
\end{prop}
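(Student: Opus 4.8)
The plan is to establish the two assertions in turn. Throughout, let $p$ be a generator of $\mathfrak{m}$, which we take to be non-zero (so that $R$ is a discrete valuation ring, as in all applications), and put $n:=\rk_R(M)$, noting $n\geq 2$ since $D_1$ and $D_2$ are non-zero. For the first assertion, observe that by Lemma~\ref{lemma max sub}(a) and the locality of $R$, every maximal $RG$-sublattice of $M$ contains $\mathfrak{m}M$. Since $M/\mathfrak{m}M=D_1\oplus D_2$ is semisimple, $\Hd(M/\mathfrak{m}M)=M/\mathfrak{m}M$ is multiplicity-free with absolutely simple composition factors $D_1,D_2$, so Corollary~\ref{cor p max sublattices}(b) yields exactly two maximal $RG$-sublattices $N_1,N_2$, which I label so that $M/N_1\cong D_1$ and $M/N_2\cong D_2$.

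Next I would pin down the relevant local structure. As $D_1\not\cong D_2$ are absolutely simple, the only $kG$-submodules of $D_1\oplus D_2$ are $\{0\}$, $D_1$, $D_2$ and $D_1\oplus D_2$; since $N_i/\mathfrak{m}M$ is a submodule of $M/\mathfrak{m}M$ with quotient $M/N_i$, it follows that $N_1/\mathfrak{m}M\cong D_2$ and $N_2/\mathfrak{m}M\cong D_1$, so that $\mathfrak{m}M$ is a maximal $RG$-sublattice of each $N_i$. Multiplication by $p$ gives an $RG$-isomorphism $M\to\mathfrak{m}M$ carrying $N_i$ onto $\mathfrak{m}N_i$, whence $\mathfrak{m}M/\mathfrak{m}N_i\cong M/N_i$. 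Thus $N_i/\mathfrak{m}N_i$ has the submodule $\mathfrak{m}M/\mathfrak{m}N_i\cong D_i$ and the quotient $N_i/\mathfrak{m}M$ (which is $D_2$ for $i=1$ and $D_1$ for $i=2$); using the hypothesis that $N_i/\mathfrak{m}N_i$ is uniserial, it is therefore uniserial of length $2$ with socle $D_i$ and simple head $N_i/\mathfrak{m}M$. Consequently, by Corollary~\ref{cor p max sublattices}(a) applied with the absolutely simple head of $N_i/\mathfrak{m}N_i$, together with Lemma~\ref{lemma max sub}(a), each $N_i$ has a unique maximal $RG$-sublattice, which by the above must be $\mathfrak{m}M$.

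For the second assertion, Remark~\ref{rem forms all in one} reduces the problem to showing that every full-rank $RG$-sublattice $L$ of $M$ is isomorphic to $M$, $N_1$ or $N_2$, and that these three are pairwise non-isomorphic. The first part I would prove by induction on the length $\ell(L)$ of the finite-length $R$-module $M/L$. If $\ell(L)=0$ then $L=M$. If $\ell(L)\geq 1$, then $L$ lies in some maximal $RG$-sublattice $N_i$ of $M$; if $L=N_i$ we are done, and otherwise $L$ lies in the unique maximal $RG$-sublattice $\mathfrak{m}M$ of $N_i$, so $L=pL'$ where $L':=\{x\in M:px\in L\}$ is a full-rank $RG$-sublattice of $M$ isomorphic to $L$ via multiplication by $p$, with $\ell(L')=\ell(L)-n<\ell(L)$, and the inductive hypothesis applies. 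For pairwise non-isomorphism, recall that isomorphic $RG$-lattices have isomorphic reductions modulo $\mathfrak{m}$: the semisimple length-$2$ module $M/\mathfrak{m}M=D_1\oplus D_2$ is not uniserial (as $D_1\not\cong D_2$), whereas each $N_i/\mathfrak{m}N_i$ is, so $M\not\cong N_i$; and $N_1/\mathfrak{m}N_1$, $N_2/\mathfrak{m}N_2$ have the non-isomorphic heads $D_2$, $D_1$, so $N_1\not\cong N_2$.

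I expect the one genuinely delicate point to be the descent in the induction: once $L$ has dropped strictly below a maximal sublattice $N_i$, one must be able to factor a copy of $p$ out of it, and this is precisely what the uniseriality of $N_i/\mathfrak{m}N_i$ buys us: it forces $N_i$ to have a single maximal $RG$-sublattice (namely $\mathfrak{m}M$), so the chain of sublattices does not branch and $\ell$ genuinely decreases. If one dropped the uniseriality hypothesis, the sublattice structure below the $N_i$ could ramify, and $M$, $N_1$, $N_2$ would in general no longer exhaust the isomorphism classes of $R$-forms of $V$.
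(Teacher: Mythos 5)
Your proof is correct and follows essentially the same route as the paper: exactly two maximal sublattices via Lemma~\ref{lemma max sub} and Corollary~\ref{cor p max sublattices}, each $N_i$ having $\mathfrak{m}M$ as its unique maximal sublattice (from the uniseriality hypothesis), and then a chain/induction argument showing the full-rank sublattices are exactly $\mathfrak{m}^j M$, $\mathfrak{m}^j N_1$, $\mathfrak{m}^j N_2$, with non-isomorphism detected by the pairwise non-isomorphic reductions modulo $\mathfrak{m}$. Your identification of the Loewy series (with the labelling $M/N_i\cong D_i$, so $N_1/\mathfrak{m}N_1$ has head $D_2$ and socle $D_1$) is the right one; the displayed Loewy series in the paper's proof appear to have $D_1$ and $D_2$ inadvertently swapped, though this is immaterial to the argument since the only point used is that each $N_i/\mathfrak{m}N_i$ has a unique maximal submodule.
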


\begin{proof}
Let $N$ be a maximal $RG$-sublattice of $M$. By Lemma~\ref{lemma max sub}, $\mathfrak{m}M\subseteq N$, since $R$ is local. 
Hence, by Corollary~\ref{cor p max sublattices}, the $RG$-lattice $M$ has precisely two maximal sublattices $N_1$ and $N_2$ with
$M/N_i \cong D_i$, for $i\in\{1,2\}$.

Now suppose that $N_1/\mathfrak{m}N_1$ and $N_2/\mathfrak{m}N_2$ are uniserial $kG$-modules. Then they must have Loewy series
$$N_1/\mathfrak{m}N_1=\begin{bmatrix} D_1\\D_2 \end{bmatrix}\quad \text{ and }\quad N_2/\mathfrak{m}N_2=\begin{bmatrix} D_2\\D_1 \end{bmatrix}\,.$$
By Corollary~\ref{cor p max sublattices}, for $i\in\{1,2\}$, the $RG$-lattice $N_i$ has a unique maximal sublattice, which must then be $\mathfrak{m}M$. As in the proof of Proposition~\ref{prop max forms 2}(a)
we deduce that
the $RG$-sublattices of $M$ of rank $\rk_R(M)$ are precisely those in $\{\mathfrak{m}^jM,\mathfrak{m}^jN_1,\mathfrak{m}^jN_2: j\in\NN_0\}$, which by Remark~\ref{rem forms all in one} contain representatives of the isomorphism classes of $R$-forms of $V$. 
The $RG$-lattices $N_1,N_2$ and $M$ are pairwise non-isomorphic, since their reductions modulo $\mathfrak{m}$
are pairwise non-isomorphic as $kG$-modules.
Since $\mathfrak{m}^jL\cong L$, for every $RG$-lattice $L$
and every $j\in\NN$, the assertion of the proposition now follows.
\end{proof}

\section{Exterior Powers}\label{sec exterior}

In this short section we collect a number of important properties of exterior powers of $RG$-lattices, most of which
can be found in \cite[Section 9.8]{Rotman2002}.
They will be crucial for our study of Specht modules labelled by hook partitions in subsequent sections.

\begin{noth}\label{noth exterior}{\bf Exterior powers of lattices and modules.}\,
As before, let $R$ be a principal ideal domain, let $G$ be  a finite group, and let $M$ be an $RG$-lattice of rank $n\in\NN$.

\smallskip

(a)\, The {\sl exterior algebra} $\bigwedge(M)$ of $M$ is a graded $R$-algebra, with multiplication denoted by
$$\bigwedge(M)\times\bigwedge(M)\to \bigwedge(M)\,,\; (x,y)\mapsto x\wedge y\,.$$
For $k\geq 0$, the $k$-th homogeneous component of $\bigwedge(M)$ is denoted by $\bigwedge^k(M)$, and is called
the {\sl $k$-th exterior power of $M$}.

Suppose that $\{b_1,\ldots,b_n\}$ is an $R$-basis of $M$, and let $k\in\NN$. If $k\leq n$, then $\bigwedge^k(M)$ is 
an $RG$-lattice with $R$-basis $\{b_{i_1}\wedge\cdots\wedge b_{i_k}:1\leq i_1<\cdots < i_k\leq n\}$; the $RG$-action on $\bigwedge^k(M)$ is given by
$$g\cdot (m_1\wedge\cdots\wedge m_k)=gm_1\wedge\cdots\wedge gm_k\,,$$
for $g\in G$ and $m_1,\ldots,m_k\in M$. If $k>n$ then $\bigwedge^k(M)=\{0\}$. Thus, for every $k\in\NN_0$, the exterior power
$\bigwedge^k(M)$ is an $RG$-lattice of $R$-rank $\binom{n}{k}$, where $\binom{n}{k}:=0$, for $k>n$.

We also recall that $\wedge$ is alternating, that is, for $k\in\{2,\ldots,n\}$, $m_1,\ldots,m_k\in M$ and $1\leq i<j\leq k$, one has
$m_1\wedge\cdots \wedge m_k=-(m_1\wedge \cdots \wedge m_{i-1}\wedge m_j\wedge m_{i+1}\wedge\cdots\wedge m_{j-1}\wedge m_i\wedge m_{j+1}\wedge\cdots \wedge m_k)$.

\smallskip

(b)\, Let $M$ and $N$ be $R$-lattices, and let $\phi:M\to N$ be an $R$-linear map. As well, let $k\in\NN_0$. Then there is a unique
$R$-linear map $\bigwedge^k(\phi):\bigwedge^k(M)\to \bigwedge^k(N)$ such that 
$$\bigwedge^k(\phi)(m_1\wedge\cdots\wedge m_k)=\phi(m_1)\wedge\cdots\wedge \phi(m_k)\,,$$
for all $m_1,\ldots,m_k\in M$. If $M$ and $N$ are also $RG$-lattices and $\phi$ is an $RG$-homomorphism, then so is
$\bigwedge^k(\phi)$.

\smallskip

(c)\, Exterior powers commute with taking the dual, that is, if $M$ is an $RG$-lattice an $k\in\NN_0$ , then there is 
an $RG$-isomorphism $\bigwedge^k(M^*)\cong (\bigwedge^k(M))^*$. 
For $k>n$, both modules are $\{0\}$.
For $k\leq n$, let $\{b_1,\ldots,b_n\}$
be an $R$-basis of $M$, and consider the $R$-linear map $\bigwedge^k(M^*)\to (\bigwedge^k(M))^*$ mapping
the basis element $b_{i_1}^*\wedge\cdots\wedge b_{i_k}^*$ of $\bigwedge^k(M^*)$ to the
basis element $(b_{i_1}\wedge\cdots\wedge b_{i_k})^*$ of $(\bigwedge^k(M))^*$, for $1\leq i_1<\cdots<i_k\leq n$.
This map is obviously bijective, and it is routine to check that it is also an $RG$-homomorphism.

\smallskip

(d)\, Consider principal ideal domains $S$ and $R$ and a unitary ring homomorphism $\rho:R\to S$. Let 
further $M$ be an $R$-lattice with $R$-basis $\{b_1,\ldots,b_n\}$. In analogy to \ref{noth change rings}(a), we obtain an 
isomorphism of $S$-lattices $S\otimes_R \bigwedge^k(M)\to \bigwedge^k(S\otimes_R M)$ mapping
the basis element $1\otimes (b_{i_1}\wedge\cdots\wedge b_{i_k})$ to the basis element 
$(1\otimes b_{i_1})\wedge\cdots\wedge (1\otimes b_{i_k})$, for $1\leq i_1<\cdots <i_k\leq n$. If $M$ is
an $RG$-lattice, then this yields an isomorphism of $SG$-lattices. As an immediate consequence, we mention the following:
\end{noth} 

\begin{prop}\label{prop exterior forms}
Let $R$ be a principal ideal domain and $K$ its field of fractions. Let further $V$ be a $KG$-module, and let $M$ be
an $R$-form of $V$. For every $k\in\NN$, the exterior power $\bigwedge^k(M)$ is then an $R$-form of the $KG$-module
$\bigwedge^k(V)$.
\end{prop}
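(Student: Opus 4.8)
The plan is to use the definition of an $R$-form together with the base-change compatibility of exterior powers recorded in \ref{noth exterior}(d). Recall that, by \ref{noth change rings}(c), saying that $M$ is an $R$-form of $V$ means precisely that $M$ is an $RG$-lattice with $KM = K\otimes_R M \cong V$ as $KG$-modules and $\rk_R(M) = \dim_K(V)$. So I must produce a $KG$-isomorphism $K\otimes_R \bigwedge^k(M) \cong \bigwedge^k(V)$ and check the rank condition.

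First I would apply \ref{noth exterior}(d) in the special case $S = K$, $R = R$, with $\rho\colon R\to K$ the inclusion map (this is a unitary ring homomorphism since $K$ is the field of fractions of $R$). That result gives an isomorphism of $KG$-lattices
\[
K\otimes_R \bigwedge\nolimits^k(M) \;\xrightarrow{\ \sim\ }\; \bigwedge\nolimits^k(K\otimes_R M)\,.
\]
Second, I would invoke the fixed $KG$-isomorphism $K\otimes_R M \cong V$ and apply the functor $\bigwedge^k(-)$ to it; by \ref{noth exterior}(b), a $KG$-isomorphism $K\otimes_R M \to V$ induces a $KG$-homomorphism $\bigwedge^k(K\otimes_R M) \to \bigwedge^k(V)$, and since the same construction applied to the inverse isomorphism produces a two-sided inverse, this is a $KG$-isomorphism $\bigwedge^k(K\otimes_R M) \cong \bigwedge^k(V)$. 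Composing the two displayed isomorphisms yields $K\otimes_R \bigwedge^k(M) \cong \bigwedge^k(V)$ as $KG$-modules.

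Third, I would verify the rank/dimension bookkeeping. By \ref{noth exterior}(a), $\bigwedge^k(M)$ is an $RG$-lattice of $R$-rank $\binom{n}{k}$ where $n = \rk_R(M)$; likewise $\bigwedge^k(V)$ has $K$-dimension $\binom{n}{k}$ where $n = \dim_K(V)$. Since $\rk_R(M) = \dim_K(V)$ by hypothesis, these agree, so $\rk_R(\bigwedge^k(M)) = \dim_K(\bigwedge^k(V))$. Together with the $KG$-isomorphism established above, this is exactly the assertion that $\bigwedge^k(M)$ is an $R$-form of $\bigwedge^k(V)$.

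There is essentially no serious obstacle here: the proof is a direct assembly of the naturality properties collected in \ref{noth exterior}, and the only mild point of care is making sure the base-change isomorphism of \ref{noth exterior}(d) is being applied with $S = K$ and that it is $G$-equivariant, which is precisely what the last sentence of \ref{noth exterior}(d) asserts. One could alternatively phrase the whole argument on bases — picking an $R$-basis $\{b_1,\dots,b_n\}$ of $M$, noting $\{b_{i_1}\wedge\cdots\wedge b_{i_k} : 1\le i_1<\cdots<i_k\le n\}$ is simultaneously an $R$-basis of $\bigwedge^k(M)$ and (after tensoring) a $K$-basis of $\bigwedge^k(KM) \cong \bigwedge^k(V)$ — but invoking \ref{noth exterior}(d) is cleaner and avoids repeating that computation.
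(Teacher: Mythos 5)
Your proof is correct and is exactly the argument the paper intends: the paper states Proposition~\ref{prop exterior forms} as an ``immediate consequence'' of \ref{noth exterior}(d) without writing out details, and your three steps (base change via \ref{noth exterior}(d) with $S=K$, functoriality of $\bigwedge^k$ applied to $K\otimes_R M\cong V$, and the rank count from \ref{noth exterior}(a)) are precisely what fills in that implicit proof.
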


\begin{prop}\label{prop exterior}
Let $M$ and $N$ be $R$-lattices, and let $k\in\NN$. Moreover, let $\phi:M\to N$ be an $R$-linear map.

\smallskip

{\rm (a)}\, If $\phi$ is injective (respectively, surjective), then also the $R$-linear map
$\bigwedge^k(\phi):\bigwedge^k(M)\to \bigwedge^k(N)$ is injective (respectively, surjective).

\smallskip

{\rm (b)}\, Suppose that $N = M$, and let $\rk_R(M) = n\in\NN$. Then one has $\det(\bigwedge^k(\phi))=\det(\phi)^{\binom{n-1}{k-1}}$.

\smallskip

{\rm (c)}\, Suppose that $\rk_R(M)=\rk_R(N)=n\in\NN$. Suppose further that $\{v_1,\ldots,v_n\}$ is an $R$-basis of  $M$ and $\{w_1,\ldots,w_n\}$ is
an $R$-basis of $N$ such that the matrix of $\phi$ with respect to these bases is a diagonal matrix with diagonal entries
$d_1,\ldots,d_n\in R$. Then, with respect to the $R$-bases $\{v_{i_1}\wedge\cdots\wedge v_{i_k}: 1\leq i_1<\ldots <i_k\leq n\}$
and $\{w_{i_1}\wedge\cdots\wedge w_{i_k}: 1\leq i_1<\ldots <i_k\leq n\}$ of $\bigwedge^k(M)$ and $\bigwedge^k(N)$, respectively, 
the $R$-linear map $\bigwedge^k(\phi)$ is represented by a diagonal matrix with diagonal entries
$d_{i_1}\dotsm d_{i_k}$, for $1\leq i_1<\ldots,i_k\leq n$.
\end{prop}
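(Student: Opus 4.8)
The plan is to prove the three assertions (a), (b), (c) by direct computations with wedge products of basis vectors, relying only on the multilinearity and the alternating property of $\w$ recalled in \ref{noth exterior}(a), together with the functoriality in \ref{noth exterior}(b). I would treat (c) first, since (a) and (b) both follow cleanly from it after a preliminary reduction.

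First I would prove (c). Given the $R$-bases $\{v_1,\ldots,v_n\}$ of $M$ and $\{w_1,\ldots,w_n\}$ of $N$ with $\phi(v_i)=d_iw_i$, apply $\bigwedge^k(\phi)$ to a basis element $v_{i_1}\w\cdots\w v_{i_k}$ with $1\leq i_1<\cdots<i_k\leq n$. By the defining property in \ref{noth exterior}(b),
\[
\bigwedge\nolimits^k(\phi)(v_{i_1}\w\cdots\w v_{i_k})=\phi(v_{i_1})\w\cdots\w\phi(v_{i_k})=d_{i_1}\cdots d_{i_k}\,(w_{i_1}\w\cdots\w w_{i_k}),
\]
which is exactly $d_{i_1}\cdots d_{i_k}$ times the corresponding basis element of $\bigwedge^k(N)$; hence the matrix of $\bigwedge^k(\phi)$ in these bases is diagonal with the asserted entries. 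This step is essentially immediate.

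Next I would prove (b). By the theory of the Smith normal form (as already invoked in the proof of Lemma~\ref{lemma aligned bases}), there are invertible matrices $U_1,U_2\in\mathrm{GL}_n(R)$ and a diagonal matrix $S=\operatorname{diag}(d_1,\ldots,d_n)$ with $U_2 T U_1=S$, where $T$ is the matrix of $\phi$ in some basis; equivalently, after changing the $R$-bases of the source and target copies of $M$ separately, $\phi$ is represented by $S$. Here I must be slightly careful: part (b) asserts an equality of elements of $R$, namely $\det(\bigwedge^k(\phi))=\det(\phi)^{\binom{n-1}{k-1}}$, not merely of ideals, so I should argue that changing bases multiplies both sides by the same unit. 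Concretely, $\det(\phi)=\det(U_2)^{-1}\det(U_1)^{-1}\prod_i d_i$, and by the functoriality $\bigwedge^k(\psi_1\circ\phi\circ\psi_2)=\bigwedge^k(\psi_1)\circ\bigwedge^k(\phi)\circ\bigwedge^k(\psi_2)$ together with part (c) applied to the invertible maps, $\det(\bigwedge^k(\phi))$ picks up the corresponding unit factors coming from $\bigwedge^k(U_1),\bigwedge^k(U_2)$; since $\det(\bigwedge^k(A))=\det(A)^{\binom{n-1}{k-1}}$ holds for any $A$ by the diagonal computation, these unit factors match up as required. It then remains to verify the exponent in the diagonal case: by (c), $\det(\bigwedge^k(\phi))=\prod_{1\leq i_1<\cdots<i_k\leq n} d_{i_1}\cdots d_{i_k}$, and a counting argument shows that each $d_i$ appears among these products exactly $\binom{n-1}{k-1}$ times (the number of $k$-subsets of $\{1,\ldots,n\}$ containing a fixed index), so the product equals $(d_1\cdots d_n)^{\binom{n-1}{k-1}}=\det(S)^{\binom{n-1}{k-1}}$. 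I expect the bookkeeping of units in this reduction to be the only genuinely fiddly point, though it is routine.

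Finally I would prove (a). For injectivity: if $\phi$ is injective, then $\phi(M)$ is a free $R$-submodule of $N$, and one may extend an $R$-basis $\{v_1,\ldots,v_m\}$ of a complement-free presentation so that, after applying Smith normal form as above, $\phi$ sends basis vectors $v_i\mapsto d_iw_i$ with all $d_i\neq 0$; then by (c) the map $\bigwedge^k(\phi)$ sends basis elements to nonzero scalar multiples of distinct basis elements, hence is injective. For surjectivity: if $\phi$ is surjective then, $R$ being a PID, $\phi$ splits, so there is an $R$-linear section $\sigma:N\to M$ with $\phi\circ\sigma=\mathrm{id}_N$; applying $\bigwedge^k(-)$ and using functoriality gives $\bigwedge^k(\phi)\circ\bigwedge^k(\sigma)=\bigwedge^k(\mathrm{id}_N)=\mathrm{id}_{\bigwedge^k(N)}$, so $\bigwedge^k(\phi)$ is surjective. (Alternatively, surjectivity is immediate from the description of $\bigwedge^k(\phi)$ on wedges of generators.) This completes the proof.
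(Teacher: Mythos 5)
The paper's own ``proof'' of this proposition consists entirely of citations: \cite[III, \S 7, no.~8--9]{Bourbaki1970} for (a), \cite[\S 38]{Aitken1939} for the Sylvester--Franke identity (b), and the remark that (c) is clear. Your argument for (c) is exactly what the paper has in mind, and your two arguments for (a) (elementary-divisor normal form for injectivity; the splitting of a surjection onto a free module over a PID for surjectivity, followed by functoriality) are both sound. One small remark: part (c) as stated requires $\rk_R M=\rk_R N$, whereas in your injectivity argument the two ranks may differ; the wedge computation carries over unchanged, but you should not cite (c) verbatim there.

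For (b), however, there is a genuine circularity you have not dispatched. After reducing via Smith normal form to $U_2TU_1=S$ diagonal, you need $\det(\bigwedge^k(U_i))=\det(U_i)^{\binom{n-1}{k-1}}$ for the invertible, non-diagonal matrices $U_1,U_2$ in order for the unit factors on the two sides to cancel. You justify this by asserting that ``$\det(\bigwedge^k(A))=\det(A)^{\binom{n-1}{k-1}}$ holds for any $A$ by the diagonal computation''; but the diagonal computation covers only diagonal $A$, and for general $A$ this is precisely statement (b), which you are in the middle of proving. The gap is fillable: either track that the Smith algorithm produces $U_1,U_2$ as products of transvections, signed permutation matrices, and diagonal unit matrices and verify the identity for each of those three types (the transvection and permutation cases need separate, if short, arguments), or regard $\det(\bigwedge^k(A))=\det(A)^{\binom{n-1}{k-1}}$ as a polynomial identity with integer coefficients in the entries of $A$, prove it over $\overline{\QQ(x_{ij})}$ where the generic matrix is diagonalizable, and specialize. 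As written, though, your reduction assumes the conclusion for the change-of-basis matrices.
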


\begin{proof}
Assertion (a) can be found in \cite[III,\S7, no. 8--9]{Bourbaki1970}, assertion (b) can be found in \cite[\S 38]{Aitken1939}. Assertion (c) is 
clear.
\end{proof}

In consequence of Proposition~\ref{prop exterior}(a), whenever we have $RG$-lattices $M$ and $N$ with $N\subseteq M$, we
may view $\bigwedge^k(N)$ as an $RG$-sublattice of $\bigwedge^k(M)$, for every $k\in\NN_0$. 
The next result will be important in Section~\ref{sec p odd}.

\begin{lemma}\label{lemma exterior index}
Suppose that all proper factor rings of $R$ are finite. Let $M$ and $N$ be $RG$-lattices of rank $n\in\NN$ such that $N\subseteq M$. 
For $k\in\NN$, one then has
$$\left[\bigwedge^k(M):\bigwedge^k(N)\right]=[M:N]^{\binom{n-1}{k-1}}\,.$$
\end{lemma}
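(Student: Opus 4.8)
The plan is to reduce to the diagonal case using the Smith normal form and then invoke Proposition~\ref{prop exterior}(c) together with Proposition~\ref{prop det}. First I would record that, since $N\subseteq M$ have the same rank $n$ and $R$ is a principal ideal domain, there exist $R$-bases $\{v_1,\ldots,v_n\}$ of $M$ and $\{w_1,\ldots,w_n\}$ of $N$ together with elements $d_1,\ldots,d_n\in R\smallsetminus\{0\}$ such that $w_i=d_i v_i$ for all $i$; this is exactly the Smith normal form applied to the inclusion map, as already used in the proof of Lemma~\ref{lemma aligned bases}(a). Equivalently, the $R$-endomorphism $\phi:M\to M$ defined by $\phi(v_i)=w_i=d_iv_i$ satisfies $\phi(M)=N$ and is represented by the diagonal matrix $\operatorname{diag}(d_1,\ldots,d_n)$.

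Next I would apply $\bigwedge^k(\cdot)$. By Proposition~\ref{prop exterior}(c), with respect to the induced bases $\{v_{i_1}\wedge\cdots\wedge v_{i_k}:1\leq i_1<\cdots<i_k\leq n\}$ of $\bigwedge^k(M)$ and $\{w_{i_1}\wedge\cdots\wedge w_{i_k}:1\leq i_1<\cdots<i_k\leq n\}$ of $\bigwedge^k(N)$, the map $\bigwedge^k(\phi)$ is represented by the diagonal matrix with entries $d_{i_1}\cdots d_{i_k}$ over all $k$-subsets $\{i_1<\cdots<i_k\}$ of $\{1,\ldots,n\}$. Since $\bigwedge^k(N)$ is identified with an $RG$-sublattice of $\bigwedge^k(M)$ (via Proposition~\ref{prop exterior}(a)) and $\bigwedge^k(\phi)$ has image $\bigwedge^k(N)$, Proposition~\ref{prop det} applies: because all proper factor rings of $R$ are finite,
\[
\left[\bigwedge^k(M):\bigwedge^k(N)\right]=\left|R\big/\bigl(\det(\bigwedge^k(\phi))\bigr)\right|=\left|R\Big/\Bigl(\prod_{1\leq i_1<\cdots<i_k\leq n}d_{i_1}\cdots d_{i_k}\Bigr)\right|.
\]
By Proposition~\ref{prop exterior}(b) the determinant of $\bigwedge^k(\phi)$ equals $\det(\phi)^{\binom{n-1}{k-1}}=(d_1\cdots d_n)^{\binom{n-1}{k-1}}$, and again by Proposition~\ref{prop det}, $|R/(\det(\phi))|=[M:N]$, so the right-hand side is $[M:N]^{\binom{n-1}{k-1}}$, which is the desired identity. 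Alternatively, one can verify combinatorially that each index $i\in\{1,\ldots,n\}$ occurs in exactly $\binom{n-1}{k-1}$ of the $k$-subsets, whence $\prod_{i_1<\cdots<i_k}d_{i_1}\cdots d_{i_k}=(d_1\cdots d_n)^{\binom{n-1}{k-1}}$ up to a unit, giving the same conclusion.

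There is essentially no hard obstacle here: the only mild subtlety is to be careful that the diagonalization produced by Smith normal form is a genuine pair of $R$-bases (not merely generating sets) so that $\phi$ is an honest $R$-endomorphism of $M$ with $\phi(M)=N$ and the determinant computations are legitimate; this is guaranteed because $R$ is a PID and $M,N$ have equal finite rank. I would also note explicitly that $k\leq n$ may be assumed, since for $k>n$ both exterior powers vanish and both sides of the claimed equation are $1$, the empty product; and that the hypothesis "$R$ has all proper factor rings finite" is precisely what licenses the passage from order ideals to the group-theoretic index via Proposition~\ref{prop det}.
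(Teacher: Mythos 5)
Your proof is correct and takes essentially the same route as the paper: take an $R$-endomorphism $\phi$ of $M$ with $\phi(M)=N$, observe that $\bigwedge^k(\phi)$ has image $\bigwedge^k(N)$, and then combine Proposition~\ref{prop det} with Proposition~\ref{prop exterior}(b). The only difference is that you first pass to a Smith normal form and invoke Proposition~\ref{prop exterior}(c), but this diagonalization is not actually needed: once $\phi$ carries a basis of $M$ to a basis of $N$, the identity $\bigwedge^k(\phi)(\bigwedge^k M)=\bigwedge^k N$ follows immediately by looking at the induced bases of the exterior powers, and then Proposition~\ref{prop exterior}(b) already gives the determinant formula without any combinatorics about $k$-subsets.
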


\begin{proof}
  Let $\phi$ be an $R$-endomorphism of $M$ with $\phi(M) = N$. Suppose that $\{v_1,\dotsc,v_n\}$ is an $R$-basis of $M$.
  If for $i\in\{1,\ldots,n\}$, we set $w_i:= \phi(v_i)$. Then $\{w_1,\dotsc,w_n\}$ is an $R$-basis of $N$, since $\phi(M) = N$.
  As $\bigwedge^k(\phi)(v_{i_1} \wedge \dotsb \wedge v_{i_n}) = w_{i_1} \wedge
  \dotsb \wedge w_{i_n}$, for $1 \leq
  i_1 < \dotsb < i_n \leq n$, and since $\{w_{i_1} \wedge \dotsb \wedge w_{i_n}:1 \leq i_1 < \dotsb < i_n \leq n\}$ is an $R$-basis of $\bigwedge^k(N)$, we have that $\bigwedge^k(\phi)$ is an $R$-endomorphism of $\bigwedge^k(M)$ with
  $\bigwedge^k(\phi)(\bigwedge^k(M)) = \bigwedge^k(N)$.  The assertion of the lemma is now a
  consequence of Proposition~\ref{prop det} and Proposition~\ref{prop
    exterior}(b).
\end{proof}

The next theorem will be crucial in the proof of our main result in Section~\ref{sec p odd}, but should also be of independent interest. We do not expect Theorem~\ref{thm exterior forms iso} to be new. However, we did not find an appropriate reference in the literature, and thus provide a proof here.

\begin{thm}\label{thm exterior forms iso}
Let $R$ be a principal ideal domain an $K$ its field of fractions. Moreover, let
$V$ be an absolutely simple $KG$-module with $R$-forms $M$ and $N$. Suppose that $k\in\{1,\ldots,\dim_K(V)-1\}$ is such that
also $\bigwedge^k(V)$ is an absolutely simple $KG$-module. Then one has $M\cong N$ if and only if $\bigwedge^k(M)\cong \bigwedge^k(N)$, as $RG$-lattices.
\end{thm}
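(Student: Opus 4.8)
The plan is to prove the contrapositive of the "hard" direction by quantifying how far a sublattice sits inside a fixed reference form, using the index formula from Lemma~\ref{lemma exterior index}. First I would reduce to the local case. By Remark~\ref{rem forms all in one} and Proposition~\ref{prop rep iso}, two $R$-forms of an absolutely simple module are isomorphic if and only if their localizations at every maximal ideal are isomorphic, so it suffices to treat the case where $R$ is a discrete valuation ring with maximal ideal $\mathfrak m = (p)$; the statement for general principal ideal domains then follows by checking the condition $\bigwedge^k(M_{\mathfrak m}) \cong \bigwedge^k(N_{\mathfrak m})$ prime by prime. In this local setting, again by Remark~\ref{rem forms all in one}, we may assume $N \subseteq M$ with $[M:N]$ a power of $p$, say $[M:N] = p^a$ with $a \in \NN_0$.

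The easy direction is immediate: if $M \cong N$, then applying the functor $\bigwedge^k(-)$ gives $\bigwedge^k(M) \cong \bigwedge^k(N)$ directly from \ref{noth exterior}(b). For the converse, suppose $\bigwedge^k(M) \cong \bigwedge^k(N)$. Since $N \subseteq M$, Proposition~\ref{prop exterior}(a) lets us view $\bigwedge^k(N) \subseteq \bigwedge^k(M)$, and by Lemma~\ref{lemma exterior index} we have
\[
\left[\bigwedge^k(M) : \bigwedge^k(N)\right] = [M:N]^{\binom{n-1}{k-1}} = p^{a\binom{n-1}{k-1}},
\]
where $n = \dim_K(V)$. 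Now here is the key point: $\bigwedge^k(V)$ is absolutely simple by hypothesis, and $\bigwedge^k(N)$, $\bigwedge^k(M)$ are both $R$-forms of it by Proposition~\ref{prop exterior forms}. If $\bigwedge^k(M) \cong \bigwedge^k(N)$ as $RG$-lattices, then there is an $RG$-isomorphism between them; composing with the inclusion $\bigwedge^k(N) \hookrightarrow \bigwedge^k(M)$ yields an injective $RG$-endomorphism $\theta$ of $\bigwedge^k(M)$ with image $\bigwedge^k(N)$. By Schur's Lemma over $K$ (using that $\bigwedge^k(V)$ is absolutely simple, so $\End_{KG}$ is $K$), $\theta$ acts on $K \otimes_R \bigwedge^k(M)$ as a scalar $\lambda \in K^\times$; hence $\det(\theta) = \lambda^{\binom{n}{k}}$ up to a unit, and $\left[\bigwedge^k(M):\bigwedge^k(N)\right] = |R/(\det\theta)| = |R/(\lambda)|^{\binom{n}{k}}$ by Proposition~\ref{prop det}. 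Comparing $p$-adic valuations of the two expressions for the index forces $\binom{n}{k}$ to divide $a\binom{n-1}{k-1}$; writing $\binom{n}{k} = \frac{n}{k}\binom{n-1}{k-1}$ and cancelling, this says $n \mid ak$, so $\nu_p(a) \geq \nu_p(n) - \nu_p(k)$ — this is not yet quite enough, so instead I would argue directly: the valuation of the index of $\bigwedge^k(N)$ in $\bigwedge^k(M)$ must be a multiple of $\binom{n}{k}$ since it equals $\binom{n}{k}\nu_p(\lambda)$, and it equals $a\binom{n-1}{k-1}$, so $a\binom{n-1}{k-1} \equiv 0 \pmod{\binom{n}{k}}$, equivalently $\frac{ak}{n} \in \ZZ$, i.e. $n \mid ak$.

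The remaining work is to deduce $M \cong N$ from a divisibility constraint on $a = \nu_p([M:N])$, and this is where I expect the real obstacle to lie, since in general $n \mid ak$ does not force $a = 0$. The resolution should come from exploiting the scalar structure more carefully on the original module: since $V$ is absolutely simple and $N \subseteq M$ with $M/N$ torsion, I would instead factor the inclusion $N \hookrightarrow M$ through the reference form and use that, by Proposition~\ref{prop det} applied to $M$ directly, $[M:N] = |R/(\det\psi)|$ for an $R$-endomorphism $\psi$ of $M$ realizing $N$; but $\psi$ need not be a scalar since $N$ need not be isomorphic to $M$. The correct approach is therefore to run the scalar argument purely on the exterior powers and then invoke a structural input about forms of $V$ itself. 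Concretely: $\bigwedge^k(M) \cong \bigwedge^k(N)$ gives, via the scalar $\lambda$, that $\bigwedge^k(N) = \lambda\bigwedge^k(M)$ inside $K\otimes_R\bigwedge^k(M)$ — that is, $\bigwedge^k(N)$ is a scalar multiple of $\bigwedge^k(M)$. Now I claim this forces $N$ to be a scalar multiple of $M$ inside $K \otimes_R M = V$: indeed if $\{v_i\}$ is an $R$-basis of $M$ aligned with $N$ (Lemma~\ref{lemma aligned bases}(a)) so that $N$ has basis $\{p^{e_i}v_i\}$ with $e_i \in \{0,1\}$ and $\#\{i : e_i = 1\} = s = \dim_k(M/N)$ after possibly iterating, then $\bigwedge^k(N)$ has basis elements $p^{e_{i_1}+\cdots+e_{i_k}}v_{i_1}\wedge\cdots\wedge v_{i_k}$, and for this to be a uniform scalar multiple $p^c$ of a lattice we need all the exponents $e_{i_1}+\cdots+e_{i_k}$ equal as $\{i_1,\dots,i_k\}$ ranges over all $k$-subsets, which (for $1 \le k \le n-1$) forces all $e_i$ equal, hence $e_i \equiv 0$ (giving $N = M$) or $e_i \equiv 1$ (giving $N = pM \cong M$). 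Either way $M \cong N$, completing the proof. The main subtlety to be careful about is the alignment step: one must reduce to the case where $M/N$ is elementary abelian (handle a general torsion quotient by the invariant-factor decomposition of Lemma~\ref{lemma aligned bases}(a) / Smith normal form, so that the diagonal entries are powers of $p$) and then run the combinatorial argument on exponent vectors, using Proposition~\ref{prop exterior}(c) to keep the exterior power in diagonal form.
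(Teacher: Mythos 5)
Your proof, in its final form, is correct and takes a genuinely different route from the paper's. The paper observes that $\Hom_{RG}(M,N)$ and $\Hom_{RG}(\bigwedge^kM,\bigwedge^kN)$ are rank-one $R$-lattices, picks a generator $\psi$ of the former, diagonalizes it via Smith normal form as $\operatorname{diag}(d_1,\dotsc,d_n)$, and compares the diagonal entries $d_{i_1}\dotsm d_{i_k}$ of $\bigwedge^k(\psi)$ against those of $r\phi$ (where $\phi$ is the given isomorphism, represented diagonally with unit entries); after concluding that all $d_i$ are associate, it must still invoke Lemma~\ref{lem:hom1}(b) to deduce $d_1\in R^\times$. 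Your argument instead reduces to $N\subseteq M$, composes the isomorphism $\bigwedge^kM\to\bigwedge^kN$ with the inclusion back into $\bigwedge^kM$, and applies Schur's Lemma to the resulting endomorphism: since $\bigwedge^kV$ is absolutely simple, it acts as a scalar $\lambda\in K$, so $\bigwedge^kN = \lambda\bigwedge^kM$, and reading off this equality of sublattices in the Smith-normal-form basis forces all invariant factors $d_i$ of $N$ in $M$ to be associate (here $1\leq k\leq n-1$ is used to compare $k$-subsets differing in one element), whence $N = d_1M\cong M$ outright and Lemma~\ref{lem:hom1}(b) is never needed. Three points to tidy up. First, the opening reduction to a discrete valuation ring is both unnecessary and, as written, unjustified: Proposition~\ref{prop local} establishes the local-global principle only for $R=\ZZ$, not for an arbitrary PID; fortunately your Smith normal form argument runs verbatim over any PID, so simply drop the reduction. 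Second, the middle paragraph comparing valuations of indices is, as you acknowledge yourself, a dead end and should be cut from a final writeup. Third, the phrase "reduce to the case where $M/N$ is elementary abelian" in your last paragraph should read "apply Smith normal form," since the latter directly yields aligned bases with arbitrary invariant factors $d_1\mid\dotsm\mid d_n$ (not only exponents in $\{0,1\}$), and your exponent-matching argument on the products $d_{i_1}\dotsm d_{i_k}$ works for these as stated.
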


\begin{proof}
By assumption, the $K$-vector spaces $\End_{KG}(V)$ and $\End_{KG}(\bigwedge^k(V))$ are of dimension one.
Since $K$ is a torsion-free $R$-module, we further have $K$-vector space isomorphisms
$$K \otimes_R \Hom_{RG}(M, N) \cong \Hom_{KG}(K\otimes_R M, K\otimes_R N) \cong \End_{KG}(V)$$ and 
$$K \otimes_R \Hom_{RG}\left(\bigwedge^k(M), \bigwedge^k(N)\right) \cong \End_{KG}\left(\bigwedge^k(V)\right)\,,$$
by \cite[Theorem 1.11.12]{Nagao1989} and \ref{noth exterior}(c). Thus $\Hom_{RG}(M,N)$ and $\Hom_{RG}(\bigwedge^k(M),\bigwedge^k(N))$
are $R$-lattices of rank one each. Suppose that $\phi:\bigwedge^k(M)\to \bigwedge^k(N)$ is an $RG$-isomorphism.
By Lemma~\ref{lem:hom1}, $\phi$ spans $\Hom_{RG}(\bigwedge^k(M),\bigwedge^k(N))$ as an $R$-module. 

Now let $\{\psi\}$ be an $R$-basis of $\Hom_{RG}(M,N)$. We shall show that $\psi$ is an $RG$-isomorphism.
In order to do so, it suffices to verify that $\psi$ is an $R$-isomorphism. 

Since $\phi$ spans $\Hom_{RG}(\bigwedge^k(M),\bigwedge^k(N))$, there is some $r\in R$ such that $\bigwedge^k(\psi)=r\phi$.
By appealing to the Smith normal form, we may choose $R$-bases of $M$ and $N$ in such a way that the matrix 
$A\in\Mat_{n\times n}(R)$ representing $\psi$ with respect to these bases is diagonal, say $A=\mathrm{diag}(d_1,\ldots,d_n)$.
Since $\psi$ also induces a non-zero $KG$-homomorphism $KM\to KN$ and $KM\cong V\cong KN$ is simple, $\psi$ actually
induces a $KG$-isomorphism $KM\to KN$. Thus we must have $d_i\neq 0$, for
all $i\in\{1,\ldots,n\}$. 

By Proposition~\ref{prop exterior}(c), the $R$-linear map $\bigwedge^k(\psi)$ can be represented by
a diagonal matrix with diagonal entries $d_{i_1}\cdots d_{i_k}$, for $1\leq i_1<\cdots <i_k\leq n$. Hence also $\phi$
can be represented by a diagonal matrix, say $D:=\mathrm{diag}(u_1,\ldots,u_s)\in\Mat_{s\times s}(R)$, where $s:=\binom{n}{k}$. 
Since $\phi$ is an $RG$-isomorphism, we have $u_1\cdots u_s=\det(D)\in R^\times$, thus $u_1,\ldots,u_s\in R^\times$.
Since $r\phi=\bigwedge^k(\psi)$, we may suppose that $d_1\cdots d_k=r u_1$. Moreover, for every
$j\in\{k+1,\ldots,n\}$ and every $i\in\{1,\ldots,k\}$, the element $d_1\cdots d_{i-1}d_j d_{i+1}\cdots d_k$ is a diagonal
entry of the fixed matrix representing $\bigwedge^k(\psi)$, so that $d_1\cdots d_{i-1}d_j d_{i+1}\cdots d_k=ru_{s_{i,j}}$,
for some $s_{i,j}\in\{1,\ldots,s\}$. This in turns implies
$$\frac{d_j}{d_i} = \frac{d_1 \dotsm d_{i-1}d_j d_{i+1} \dotsm d_k}{d_1\dotsm d_k}= \frac{r u_{s_{i,j}}}{r u_1} \in R^\times\,,$$
for $j\in\{k+1,\ldots,n\}$ and $i\in\{1,\ldots,k\}$. 
This shows that any two entries of $A$ only differ by a unit in $R$. Thus $A/d_1\in\Mat_{n\times n}(R)$.
Therefore, the $KG$-homomorphism $\psi/d_1\in \Hom_{KG}(KM,KN)$  is the extension of an element in $\Hom_{RG}(M,N)$.
By Lemma~\ref{lem:hom1}, this
implies $d_1\in R^\times$, and we conclude $\det(A)\in R^\times$, so that $\psi$ is indeed an $R$-isomorphism, thus an $RG$-isomorphism,
as desired.

\smallskip

Conversely, if $M\cong N$, then clearly $\bigwedge^k(M)\cong \bigwedge^k(N)$, by Proposition~\ref{prop exterior}(a).
\end{proof}


\section{Specht lattices}\label{sec specht}

Throughout this section, let $n\in\NN$. The symmetric group of degree $n$ will be denoted by $\mathfrak{S}_n$.
For background on the representation theory of symmetric groups and the known results stated below, we refer
the reader to \cite{James1978,Sagan2001}.

By $\mathcal{P}(n)$ we denote the set of partitions of $n$. If $p$ is a prime number and $\lambda\in\mathcal{P}(n)$ is
such that each non-zero part of $\lambda$ occurs at most $p-1$ times, then $\lambda$ is called {\sl $p$-regular}.

\begin{noth}\label{noth tableaux}{\bf Young tableaux and tabloids.}\,
(a)\,  Given $\lambda=(\lambda_1,\ldots,\lambda_l)\in\mathcal{P}(n)$, let $t$ be a 
{\sl $\lambda$-tableau}; recall that $t$ is obtained by taking the Young diagram
$$[\lambda]:=\{(i,j)\in\NN\times\NN: 1\leq i\leq l,\, 1\leq j\leq \lambda_i\}$$
and replacing each node bijectively by a number in $\{1,\ldots,n\}$.

The {\sl conjugate partition} $\lambda'$ is the partition of $n$ whose Young diagram is obtained by transposing $[\lambda]$. For every
$\lambda$-tableau $t$, we thus obtain a $\lambda'$-tableau $t'$ by transposing $t$.

A $\lambda$-tableau $t$ is called {\sl standard} if its entries strictly increase along every row from left to right, and down 
every column from top to bottom.

\smallskip

(b)\, Let $\lambda=(\lambda_1,\ldots,\lambda_l)\in\mathcal{P}(n)$. The symmetric group $\mathfrak{S}_n$ acts naturally
on the set of $\lambda$-tableaux. For every $\lambda$-tableau $t$, one denotes by $R_t$ and $C_t$ its {\sl row stabilizer} and {\sl column stabilizer}, respectively. That is, $R_t$ contains the permutations in $\mathfrak{S}_n$ fixing the rows of $t$ setwise, and
$C_t$ contains the permutations fixing the columns of $t$ setwise. For every $\pi\in \mathfrak{S}_n$, one obviously has
$R_{\pi t}=\pi R_t \pi^{-1}$ as well as $C_{\pi t}=\pi C_t \pi^{-1}$. Furthermore, $C_t\cap R_t=\{1\}$.

For every $\lambda$-tableau $t$, one defines
$$\rho_t:=\sum_{\sigma\in R_t}\sigma \in \ZZ \mathfrak{S}_n\quad \text{ and }\quad \kappa_t:=\sum_{\sigma\in C_t}\sgn(\sigma) \sigma \in \ZZ \mathfrak{S}_n\,.$$

(c)\, The set $\{\sigma\cdot t:\sigma\in R_t\}$ is called the {\sl $\lambda$-tabloid corresponding to $t$}, and will be denoted by 
$\{t\}$. 
If $\{t\}$ contains a standard $\lambda$-tableau, then $\{t\}$ is called a {\sl standard $\lambda$-tabloid}.
The set of $\lambda$-tabloids will be denoted by $\mathcal{T}(\lambda)$.
\end{noth}

\begin{noth}\label{noth Young Specht}{\bf Young permutation modules and Specht modules.}\,
Let $\lambda\in\mathcal{P}(n)$.

(a)\, The $\mathfrak{S}_n$-action on the set of $\lambda$-tableaux induces a transitive $\mathfrak{S}_n$-action on the
set $\mathcal{T}(\lambda)$ of $\lambda$-tabloids. The resulting permutation $\ZZ \mathfrak{S}_n$-lattice will be denoted
by $M^\lambda_\ZZ$, and is called a {\sl Young permutation $\ZZ\mathfrak{S}_n$-lattice}. We shall call $\mathcal{T}(\lambda)$ the
{\sl standard basis} of $M^\lambda_\ZZ$.

For every $\lambda$-tableau $t$, one defines the corresponding {\sl $\lambda$-polytabloid}
$$e_t:=\kappa_t\cdot\{t\}=\sum_{\sigma\in C_t} \sgn(\sigma) \{\sigma t\}\in M^\lambda_\ZZ\,.$$
If $t$ is a standard $\lambda$-tableau, then $e_t$ is called a {\sl standard $\lambda$-polytabloid}.
The definition of $e_t$ depends on the tableau $t$. However, if $\pi\in\mathfrak{S}_n$, then
one has $\pi\cdot e_t=e_{\pi t}$.
This shows that if $s$ is also a $\lambda$-tableau, then the
cyclic $\ZZ\mathfrak{S}_n$-sublattices of $M^\lambda_\ZZ$ generated by $e_t$ and $e_s$, respectively, coincide; one
calls $_{\ZZ\mathfrak{S}_n}\langle e_t\rangle\subseteq M^\lambda_\ZZ$ the {\sl Specht  $\ZZ\mathfrak{S}_n$-lattice} corresponding to $\lambda$ and denotes it by $S^\lambda_\ZZ$. In consequence of \cite[Section 8]{James1978}, the standard $\lambda$-polytabloids
form a $\ZZ$-basis of $S^\lambda_\ZZ$.

\smallskip

(b)\,  Let $R$ be any principal ideal domain, which is naturally a $(\ZZ,\ZZ)$-bimodule. The $R\mathfrak{S}_n$-lattices
  $R\otimes_\ZZ M^\lambda_\ZZ$ and  $R\otimes_\ZZ S^\lambda_\ZZ$ will be denoted by $M^\lambda_R$ and $S^\lambda_R$, respectively. 
  Let $\iota:S^\lambda_\ZZ\to M^\lambda_\ZZ$ be the inclusion map.
  Then the $R\mathfrak{S}_n$-homomorphism $\id_R\otimes \iota: S^\lambda_R\to M^\lambda_R$ is always injective, independently of $R$. 
  To see this, write $e_t$, for  a standard $\lambda$-tableau $t$, as a $\ZZ$-linear combination of tabloids. Then $\{t\}$ occurs with coefficient 1, and every other
 tabloid occurring is strictly smaller than $\{t\}$  in the total order on tabloids defined in \cite[Definition 3.10]{James1978}; see \cite[Lemma 8.3]{James1978}. Moreover, each tabloid occurring in $e_t$ has coefficient 1 or $-1$. For $\lambda\in\mathcal{P}(n)$, one calls $S^\lambda_R$ the {\sl Specht $R\mathfrak{S}_n$-lattice labelled by $\lambda$.}
 
\smallskip

(c)\, By $\sgn$ we denote the $\ZZ \mathfrak S_n$-lattice of rank one on which $\sigma \in \mathfrak S_n$ acts by multiplication with $\sgn(\sigma)$.
Then, for every $\lambda\in\mathcal{P}(n)$, there is an isomorphism of $\ZZ \mathfrak S_n$-lattices $S^{\lambda'}_\ZZ \otimes \sgn \cong (S^\lambda_\ZZ)^\ast$, which by extension of scalars induces an isomorphism
$S^{\lambda'}_R \otimes \sgn_R \cong (S^\lambda_R)^\ast$ for any principal ideal domain $R$; for a proof, see 
Proposition~\ref{prop dual sign}.
\end{noth}

\begin{rem}\label{rem specht basis}
In consequence of \ref{noth tableaux}(b), we may view the Specht $R\mathfrak{S}_n$-lattice $S^\lambda_R$ as an $R\mathfrak{S}_n$-sublattice of $M^\lambda_R$ and the set of standard $\lambda$-polytabloids as an $R$-basis
of $S^\lambda_R$, for every $\lambda\in\mathcal{P}(n)$. As well, $S^\lambda_R={}_{R\mathfrak{S}_n}\langle e_t\rangle$,
for every $\lambda$-polytabloid $t$.
\end{rem}

In later sections it will be important to investigate the structure of Specht lattices over various coefficient rings and fields.
To this end, we mention the following well-known properties of Specht modules over fields.

\begin{thm}\label{thm Specht properties}
Let $F$ be a field.

\smallskip

{\rm (a)}\, If $\mathrm{char}(F)=0$ then the Specht $F\mathfrak{S}_n$-modules $S^\lambda_F$, for $\lambda\in\mathcal{P}(n)$,
yield a set of representatives of the isomorphism classes of absolutely simple $F\mathfrak{S}_n$-modules.
Moreover, every Specht $F\mathfrak{S}_n$-module is self-dual.

\smallskip

{\rm (b)}\, If $\mathrm{char}(F)=p>0$ and if $\lambda\in\mathcal{P}(n)$ is $p$-regular, then
the Specht module $S^\lambda_F$ has a unique simple quotient module $D^\lambda_F$.
If $\lambda$ varies over the set of $p$-regular partitions of $n$, then $D^\lambda_F$ varies over a set of representatives of
the isomorphism classes of absolutely simple $F\mathfrak{S}_n$-modules. Moreover, every
simple $F\mathfrak{S}_n$-module is self-dual.

\smallskip

{\rm (c)}\, If $\mathrm{char}(F)\geq 3$, then the Specht $F\mathfrak{S}_n$-modules $S^\lambda_F$, for
$\lambda\in\mathcal{P}(n)$, are pairwise non-isomorphic and absolutely indecomposable. Moreover, for
$\lambda\in\mathfrak{S}_n$, one has $\End_{F\mathfrak{S}_n}(S^\lambda_F)\cong F$.

\smallskip

{\rm (d)}\, If $\mathrm{char}(F)=2$ and if $\lambda\in\mathcal{P}(n)$ is $2$-regular, then one has
$\End_{F\mathfrak{S}_n}(S^\lambda_F)\cong F$; in particular, $S^\lambda_F$ is then absolutely indecomposable.
\end{thm}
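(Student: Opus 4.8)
The plan is to obtain all four parts from James' bilinear-form machinery \cite{James1978}. Recall that $M^\lambda_F$ carries a symmetric $F\mathfrak{S}_n$-invariant bilinear form $\langle\,,\rangle$ for which the standard basis $\mathcal{T}(\lambda)$ is orthonormal, and that James' Submodule Theorem asserts that every $F\mathfrak{S}_n$-submodule $U$ of $M^\lambda_F$ satisfies $S^\lambda_F\subseteq U$ or $U\subseteq(S^\lambda_F)^\perp$. Running the standard argument --- and the same argument over any extension field of $F$, using that the form, $S^\lambda_F$ and hence $(S^\lambda_F)^\perp$ are compatible with extension of scalars --- shows that $D^\lambda_F:=S^\lambda_F/(S^\lambda_F\cap(S^\lambda_F)^\perp)$ is either $\{0\}$ or absolutely simple, that it is nonzero precisely when $\lambda$ is $p$-regular (with every partition counting as $0$-regular), and that, in that case, $S^\lambda_F\cap(S^\lambda_F)^\perp=\Rad(S^\lambda_F)$ is the unique maximal submodule of $S^\lambda_F$, so that $D^\lambda_F=\Hd(S^\lambda_F)$; see \cite{James1978}. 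The form descends to a non-degenerate invariant bilinear form on $D^\lambda_F$, whence $D^\lambda_F$ is self-dual whenever it is nonzero.

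For (a) I would first note that over $\QQ$ the form on $M^\lambda_\QQ$ is positive definite, its Gram matrix in the basis $\mathcal{T}(\lambda)$ being the identity, hence its restriction to $S^\lambda_\QQ$ is non-degenerate; since the Gram matrix of the standard polytabloids has integer entries, its determinant is a nonzero integer, and therefore $S^\lambda_F\cap(S^\lambda_F)^\perp=\{0\}$ for every field $F$ of characteristic $0$. Thus $S^\lambda_F=D^\lambda_F$ is absolutely simple and self-dual. To see that these modules exhaust the absolutely simple $F\mathfrak{S}_n$-modules and are pairwise non-isomorphic, I would invoke (i) the implication $\Hom_{F\mathfrak{S}_n}(S^\lambda_F,M^\mu_F)\ne\{0\}\Rightarrow\mu\unlhd\lambda$ in the dominance order, together with $S^\lambda_F\hookrightarrow M^\lambda_F$: a symmetry argument forces $S^\lambda_F\cong S^\mu_F\Rightarrow\lambda=\mu$; and (ii) the fact that $|\mathcal{P}(n)|$ equals the number of conjugacy classes of $\mathfrak{S}_n$, which --- $F$ being a splitting field here --- equals the number of isomorphism classes of simple $F\mathfrak{S}_n$-modules. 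Part (b) is the same bookkeeping with $D^\lambda_F$ in place of $S^\lambda_F$: the $D^\lambda_F$ for $p$-regular $\lambda$ are nonzero, absolutely simple, self-dual, pairwise non-isomorphic by a dominance argument applied to their occurrences as composition factors of the $M^\mu_F$, and their number equals the number of $p$-regular partitions of $n$, which equals the number of $p$-regular conjugacy classes of $\mathfrak{S}_n$ and hence the number of simple $F\mathfrak{S}_n$-modules; every simple module, being some $D^\lambda_F$, is self-dual.

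The core of (c) and (d) is the identity $\End_{F\mathfrak{S}_n}(S^\lambda_F)\cong F$. Granting it, a module with field endomorphism ring has no nontrivial idempotents, and by flat base change (\ref{noth change rings}(e)) the same holds after any field extension, so $S^\lambda_F$ is absolutely indecomposable; pairwise non-isomorphism in (c) is the dominance argument of the previous paragraph applied over an arbitrary field. When $\lambda$ is $p$-regular --- automatic under the hypotheses of (d), and covering part of (c) --- I would argue as follows: composing $\varphi\in\End_{F\mathfrak{S}_n}(S^\lambda_F)$ with the projection onto $\Hd(S^\lambda_F)=D^\lambda_F$ yields an element of $\Hom_{F\mathfrak{S}_n}(S^\lambda_F,D^\lambda_F)\cong\End_{F\mathfrak{S}_n}(D^\lambda_F)\cong F$, hence an $F$-algebra homomorphism $\End_{F\mathfrak{S}_n}(S^\lambda_F)\to F$; if a nonzero $\varphi$ lay in its kernel, then $\varphi(S^\lambda_F)\subseteq\Rad(S^\lambda_F)$ would be a nonzero quotient of $S^\lambda_F$, hence would have head $D^\lambda_F$, forcing $[S^\lambda_F:D^\lambda_F]\ge 2$ and contradicting $[S^\lambda_F:D^\lambda_F]=1$ \cite{James1978}. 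For the remaining case of (c), namely $p$-singular $\lambda$ with $p\ge 3$, the assertion is James' indecomposability theorem \cite{James1978}, where the hypothesis $p\ne 2$ enters through a computation showing that certain entries of the Gram matrix of $\langle\,,\rangle$ on $S^\lambda_F$ do not vanish. I expect this characteristic-$2$ dichotomy --- the reason (c) fails for $p=2$ and one must restrict to $p$-regular $\lambda$ in (d) --- to be the only genuinely delicate ingredient; everything else is formal manipulation of the bilinear form, the dominance order, and the standard count of simple $F\mathfrak{S}_n$-modules.
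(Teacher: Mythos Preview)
Your proposal is correct and essentially reconstructs the arguments from James' monograph that the paper simply cites: the paper's own proof consists of nothing more than pointers to \cite[Theorem~4.12, Theorem~11.5]{James1978} for (a) and (b) and to \cite[Corollary~13.17]{James1978} for (c) and (d). One small sharpening: for the $p$-singular case of (c) you appeal to ``James' indecomposability theorem'', but what is actually needed (and what Corollary~13.17 in \cite{James1978} provides for $\mathrm{char}(F)\neq 2$) is the stronger statement $\Hom_{F\mathfrak{S}_n}(S^\lambda_F,M^\lambda_F)\cong F$; composing with the inclusion $S^\lambda_F\hookrightarrow M^\lambda_F$ then embeds $\End_{F\mathfrak{S}_n}(S^\lambda_F)$ into this one-dimensional space, giving $\End_{F\mathfrak{S}_n}(S^\lambda_F)\cong F$ directly.
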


\begin{proof}
Assertions (a) and (b) can be found in \cite[Theorem 4.12, Theorem 11.5]{James1978}. Assertions (c) and (d) follow from
\cite[Corollary 13.17]{James1978}.
\end{proof}

It should be emphasized that Specht modules over fields of characteristic 2 are, in general, not indecomposable; the first examples
of decomposable Specht modules can be found in work of G. Murphy \cite{Murphy1980}. 
This lack of knowledge also causes problems when trying to determine the isomorphism classes of $\ZZ$-forms
of Specht $\QQ \mathfrak{S}_n$-modules. We shall come back to this at the end of Section~\ref{sec p 2}.

\medskip

Next we shall focus on Specht lattices labelled by hook partitions, that is, partitions of the form $(n-k,1^k)$, for
$k\in\{0,\ldots,n-1\}$. As before, for every $\lambda\in\mathcal{P}(n)$ and every
principal ideal domain $R$, we consider the Young permutation $R\mathfrak{S}_n$-lattice $M^\lambda_R$ with its standard
$R$-basis $\mathcal{T}(\lambda)$.

\begin{noth}{\bf Hook partitions and exterior powers.}\,\label{noth hooks} Let $n\geq 2$.

(a)\, Consider the partition $(n-1,1)$ of $n$. Each $(n-1,1)$-tabloid $\{t\}$ is uniquely determined by the entry in the second
row of any $(n-1,1)$-tableau in $\{t\}$. In this way, one identifies the set $\mathcal{T}((n-1,1))$ with the set $\{\b{1},\ldots,\b{n}\}$, and
the $\mathfrak{S}_n$-action on $\mathcal{T}((n-1,1))$ corresponds just to the natural $\mathfrak{S}_n$-action on $\{\b{1},\ldots,\b{n}\}$.

Similarly, every $(n-1,1)$-polytabloid is uniquely determined by the first column of the underlying tableau. If $i,j\in\{1,\ldots,n\}$ are such
that $i\neq j$ and if $t$ is any $(n-1,1)$-tableau with first column entries $i$ and $j$, then we shall denote the corresponding 
polytabloid by $e(i,j)$. Note that $e(i,j)=\b{j}-\b{i}\in M^{(n-1,1)}_R$. Note further that $e(1,2),\ldots, e(1,n)$ are precisely the standard $(n-1,1)$-polytabloids.

One calls $M^{(n-1,1)}_R$ the {\sl natural permutation $R\mathfrak{S}_n$-lattice}, and $S^{(n-1,1)}_R$ the {\sl natural Specht $R\mathfrak{S}_n$-lattice}.

\smallskip

(b)\, Set $M:=M^{(n-1,1)}_R$ and $S:=S^{(n-1,1)}_R$. Let $k\in\{1,\ldots,n\}$. We have the $R\mathfrak{S}_n$-lattice
$\bigwedge^k(M)$ with $R$-basis $\{\b{i_1}\wedge\cdots\wedge \b{i_k}: 1\leq i_1<\cdots <i_k\leq n\}$. By Proposition~\ref{prop exterior}, we can also
regard $\bigwedge^k(S)$ as an $R\mathfrak{S}_n$-sublattice of $\bigwedge^k(M)$, and $\bigwedge^k(S)$ has $R$-basis
$\{e(1,i_1)\wedge\cdots\wedge e(1,i_k): 2\leq i_1<\cdots <i_k\leq n\}$. For $2\leq i_1<\cdots<i_k\leq n$, we set
$$b(i_1,\ldots,i_k):=e(1,i_1)\wedge\cdots\wedge e(1,i_k)=(\b{i_1}-\b{1})\wedge\cdots\wedge (\b{i_k}-\b{1})\,.$$

\smallskip

Now let $k\in\{0,\ldots,n-1\}$, and consider the {\sl hook partition} $(n-k,1^k)$ of $n$. As in the case $k=1$, every $(n-k,1^k)$-polytabloid
is determined by the first-column entries of the underlying tableau. If $i_1,\ldots,i_{k+1}\in\{1,\ldots,n\}$ are pairwise distinct, then we denote
the $(n-k,1^k)$-polytabloid corresponding to the $(n-k,1^k)$-tableau with first-column entries $i_1,\ldots,i_{k+1}$ by
$e(i_1,\ldots,i_{k+1})$. With this convention, the standard $(n-k,1^k)$-polytabloids are precisely those of the form
$e(1,i_1,\ldots,i_k)$, for $2\leq i_1<\cdots <i_k\leq n$. Moreover, the following $R$-linear map defines an $R\mathfrak{S}_n$-isomorphism:
\begin{equation}\label{eqn hook iso}
\bigwedge^k(S^{(n-1,1)}_R)\to S^{(n-k,1^k)}_R\,, \; e(1,i_1)\wedge\cdots\wedge e(1,i_k)\mapsto e(1,i_1,\ldots,i_k)\,,
\end{equation}
where $2\leq i_1<\cdots <i_k\leq n$; for a proof see \cite[Proposition 2.3]{Muller2007}, which is stated under the assumption
that $R$ is a field, but works for every principal ideal domain.
In particular, $S^{(n-k,1^k)}_R$ has $R$-rank $\binom{n-1}{k}$, for $k\in\{0,\ldots,n-1\}$.

It has proved to be very useful to
identify the Specht lattice $S^{(n-k,1^k)}_R$ with the exterior power $\bigwedge^k(S^{(n-1,1)}_R)$ via the isomorphism (\ref{eqn hook iso}), and we shall exploit this repeatedly in the following.
\end{noth}

\begin{rem}\label{rem hook heads}
It is clear from the definition, that $S^{(n)}_R$ is isomorphic to the trivial $R\mathfrak{S}_n$-lattice, and
$S^{(1^n)}_R$ is isomorphic to the sign $R\mathfrak{S}_n$-lattice.

Below we summarize some known results concerning the structure of
hook Specht modules over fields of characteristic $p\geq 3$. 
\end{rem}

\begin{thm}[\protect{\cite[Theorem 23.7, Theorem 24.1]{James1978}}]\label{thm hooks}
Let $F$ be a field of characteristic $p\geq 3$. 

\smallskip

{\rm (a)}\, Suppose that $p\nmid n$. Then the Specht modules $S^{(n-k,1^k)}_F$, for $k\in\{0,\ldots,n-1\}$, are
pairwise non-isomorphic absolutely simple $F\mathfrak{S}_n$-modules.

\smallskip

{\rm (b)}\, Suppose that $p\mid n$. For $k\in\{1,\ldots,n-2\}$, the Specht $F\mathfrak{S}_n$-module $S^{(n-k,1^k)}_F$
has a unique simple submodule $D(k)_F$, and the quotient module $\overline{D}(k)_F:=S^{(n-k,1^k)}_F/D(k)_F$ is also simple and
not isomorphic to $D(k)_F$. 
Moreover, one has $D(1)_F\cong F$, $\overline{D}(k-1)_F\cong D(k)_F$ for $k\in\{2,\ldots,n-2\}$, and $\overline{D}(n-2)_F\cong S^{(1^n)}_F=:D(n-1)_F$.
\end{thm}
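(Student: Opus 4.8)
This is a theorem of Peel, stated as \cite[Theorem~23.7, Theorem~24.1]{James1978}; we indicate how one would prove it. Fix a field $F$ with $\mathrm{char}(F)=p\geq 3$, and write $V:=S^{(n-1,1)}_F$, so that $V=\ker(\theta)$ inside $M^{(n-1,1)}_F$, where $\theta\colon M^{(n-1,1)}_F\to F$, $\mathbf i\mapsto 1$, is the augmentation. By the $F\mathfrak S_n$-isomorphism~(\ref{eqn hook iso}) we have $S^{(n-k,1^k)}_F\cong\bigwedge^k(V)$, so it suffices to describe the $F\mathfrak S_n$-module structure of $\bigwedge^k(V)$ for $0\leq k\leq n-1$. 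The plan is a simultaneous induction on $n$, the two statements for $\mathfrak S_n$ being fed by those for $\mathfrak S_{n-1}$.

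\emph{Base case $k\leq 1$.} The module $V$ carries the $\mathfrak S_n$-invariant symmetric bilinear form obtained by restricting the standard form on $M^{(n-1,1)}_F$ (for which the tabloids are orthonormal); in the polytabloid basis $e(1,2),\dotsc,e(1,n)$ its Gram matrix is $I_{n-1}+J$, where $J$ has all entries $1$, so the Gram determinant is $n$. If $p\nmid n$ this form is nondegenerate, so by James's submodule theorem $V$ is simple (it equals $D^{(n-1,1)}_F$), and it is absolutely simple because $\End_{F\mathfrak S_n}(V)=F$ by Theorem~\ref{thm Specht properties}(c). If $p\mid n$, the radical of the form is $\langle\sum_i\mathbf i\rangle\cong F$, yielding an exact sequence $0\to F\to V\to D^{(n-1,1)}_F\to 0$ in which $D^{(n-1,1)}_F$ is absolutely simple of dimension $n-2$; this $D^{(n-1,1)}_F$ is not trivial, since every transposition acts on it with determinant $-1$, and the sequence is non-split because $\End_{F\mathfrak S_n}(V)=F$.

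\emph{Inductive step, case $p\mid n$.} Then $p\nmid n-1$, so $V\downarrow_{\mathfrak S_{n-1}}$ is isomorphic to the Young permutation $F\mathfrak S_{n-1}$-module $M^{(n-2,1)}_F\cong F\oplus S^{(n-2,1)}_F$, with the trivial summand corresponding to $\langle\sum_i\mathbf i\rangle$; hence $D^{(n-1,1)}_F\downarrow_{\mathfrak S_{n-1}}\cong S^{(n-2,1)}_F$, which is absolutely simple. Applying $\bigwedge^k$ to $0\to F\to V\to D^{(n-1,1)}_F\to 0$ gives the exact sequence
$$0\to\textstyle\bigwedge^{k-1}(D^{(n-1,1)}_F)\to\bigwedge^k(V)\to\bigwedge^k(D^{(n-1,1)}_F)\to 0,$$
and by part~(a) applied to $\mathfrak S_{n-1}$ (available by induction, since $p\nmid n-1$) every $\bigwedge^j(D^{(n-1,1)}_F)=\bigwedge^j(S^{(n-2,1)}_F)$ with $0\leq j\leq n-2$ is absolutely simple. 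Thus $\bigwedge^k(V)=S^{(n-k,1^k)}_F$ has composition length $2$ for $1\leq k\leq n-2$; as $\End_{F\mathfrak S_n}(S^{(n-k,1^k)}_F)=F$ it is uniserial with nonisomorphic composition factors, and the displayed sequence identifies $\Soc(\bigwedge^k(V))=\bigwedge^{k-1}(D^{(n-1,1)}_F)=:D(k)_F$ and $\Hd(\bigwedge^k(V))=\bigwedge^k(D^{(n-1,1)}_F)=:\overline D(k)_F$. Comparing the sequence at levels $k-1$ and $k$ gives $\overline D(k-1)_F\cong D(k)_F$; at $k=1$ the submodule is $\bigwedge^0(D^{(n-1,1)}_F)=F$, i.e.\ $D(1)_F\cong F$; and for $k=n-2$ the head $\bigwedge^{n-2}(D^{(n-1,1)}_F)$ is one-dimensional with transpositions acting by $-1$, hence $\cong\sgn=S^{(1^n)}_F$.

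\emph{The main obstacle: part~(a).} It remains to upgrade ``composition length $\leq 2$'' to ``simple'' when $p\nmid n$. One cannot do this via the Gram determinant, which acquires spurious factors of $k!$ once $k\geq p$, and the restricted form on $S^{(n-k,1^k)}_F$ can even vanish identically. Instead I would argue by contradiction: were $\bigwedge^k(V)$ reducible, then---being indecomposable, as $\End=F$---it would be a non-split extension $0\to D'\to\bigwedge^k(V)\to D''\to 0$ of nonisomorphic simples, so $\operatorname{Ext}^1_{F\mathfrak S_n}(D'',D')\neq 0$. Since $n=[\mathfrak S_n:\mathfrak S_{n-1}]$ is invertible in $F$, restriction is injective on $\operatorname{Ext}^1$, so $\bigwedge^k(V)\downarrow_{\mathfrak S_{n-1}}$ would again be a non-split extension; but $\bigwedge^k(V)\downarrow_{\mathfrak S_{n-1}}\cong\bigwedge^k(M^{(n-2,1)}_F)$ decomposes, via $0\to S^{(n-2,1)}_F\to M^{(n-2,1)}_F\to F\to 0$ and the inductive description of the $\mathfrak S_{n-1}$-modules $\bigwedge^j(S^{(n-2,1)}_F)$, into a sum of modules whose socles and heads are known, and matching up composition factors forces the extension to split---a contradiction. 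The subtle part is carrying out this matching uniformly; when $p\mid n-1$ the module $M^{(n-2,1)}_F$ is no longer semisimple, so the restricted summands are themselves uniserial of length $2$ or $3$, and pinning down socles, heads and multiplicities carefully enough to exclude a non-split extension in every case is precisely Peel's argument---this is where I expect the real work to lie. Finally, the pairwise nonisomorphism asserted in~(a) follows because each $S^{(n-k,1^k)}_F$ is then irreducible, so its Brauer character is the restriction of $\chi^{(n-k,1^k)}$ to $p$-regular classes, and these are pairwise distinct.
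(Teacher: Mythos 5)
The paper does not prove Theorem~\ref{thm hooks}; it states it as a known result with a citation to James's book, so there is no in-paper proof to compare against. Your reconstruction has the right architecture: identifying $S^{(n-k,1^k)}_F$ with $\bigwedge^k(V)$ for $V=S^{(n-1,1)}_F$, the Gram-determinant computation for $k\le 1$, the $G$-equivariant short exact sequence $0\to\bigwedge^{k-1}(D)\to\bigwedge^k(V)\to\bigwedge^k(D)\to 0$ coming from the $1$-dimensional trivial submodule when $p\mid n$, and the use of $\End_{F\mathfrak S_n}(S^{(n-k,1^k)}_F)=F$ together with non-isomorphic composition factors (distinguished by restriction to $\mathfrak S_{n-1}$, which covers the $\binom{n-2}{k-1}=\binom{n-2}{k}$ case) to get uniseriality. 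The derivation of part~(b) from part~(a) applied to $\mathfrak S_{n-1}$, where $p\nmid n-1$, is correct, and so is the Brauer-character argument for pairwise non-isomorphism (the transposition class is $p$-regular since $p$ is odd, and the trace on $\bigwedge^kV$ at a transposition is $\binom{n-2}{k}-\binom{n-2}{k-1}$).

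The gap is exactly the one you flag, so let me make it precise. Your phrase ``upgrade composition length $\le 2$'' is only justified when $p\nmid n-1$: there $\bigwedge^k(V)\!\downarrow_{\mathfrak S_{n-1}}\cong\bigwedge^k(S^{(n-2,1)}_F)\oplus\bigwedge^{k-1}(S^{(n-2,1)}_F)$ is a sum of two simples, so the restricted extension class vanishes and injectivity of restriction on $\Ext^1$ (using $n\in F^\times$) forces the $\mathfrak S_n$-extension to split. But when $p\mid n-1$ you have established no length bound at all: $V\!\downarrow_{\mathfrak S_{n-1}}\cong M^{(n-2,1)}_F$ is a uniserial module of Loewy length $3$ with layers $F,\;D^{(n-2,1)}_F,\;F$, and (by the inductive part~(b)) the $\bigwedge^j(S^{(n-2,1)}_F)$ are themselves uniserial of length $2$. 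So $\bigwedge^k(V)\!\downarrow$ has several composition factors with multiplicities, neither splits nor has an obvious socle/head description, and ruling out a nontrivial extension class requires the careful case analysis you explicitly defer to Peel. So the proposal is a correct and well-motivated outline rather than a complete proof; since the paper itself treats the theorem as a black box citation, the outline is reasonable as context, but it should not be read as self-contained.
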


As an immediate consequence, we also mention the following, which will be needed in Section~\ref{sec p odd}.

\begin{cor}\label{cor dims simples}
Let $F$ be a field of characteristic $p\geq 3$. For $k\in\{1,\ldots,n-2\}$, the simple $F\mathfrak{S}_n$-module
$D(k)$ in Theorem~\ref{thm hooks}(b) has dimension $\binom{n-2}{k-1}$.
\end{cor}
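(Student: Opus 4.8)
The plan is to compute the dimension of $D(k)_F$ by induction on $k$, using the exact sequences implicit in Theorem~\ref{thm hooks}(b) together with the known $F$-dimension $\binom{n-1}{k}$ of the Specht module $S^{(n-k,1^k)}_F$ (recorded in \ref{noth hooks}). First I would observe that since $p\mid n$, Theorem~\ref{thm hooks}(b) gives for each $k\in\{1,\ldots,n-2\}$ a short exact sequence of $F\mathfrak{S}_n$-modules
\begin{equation*}
0\to D(k)_F\to S^{(n-k,1^k)}_F\to \overline{D}(k)_F\to 0\,,
\end{equation*}
whence $\dim_F D(k)_F+\dim_F\overline{D}(k)_F=\binom{n-1}{k}$. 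Combining this with the isomorphisms $\overline{D}(k-1)_F\cong D(k)_F$ for $k\in\{2,\ldots,n-2\}$ yields the recursion $\dim_F D(k)_F=\binom{n-1}{k-1}-\dim_F D(k-1)_F$ for $k\geq 2$.

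Next I would fix the base case. For $k=1$ we have $D(1)_F\cong F$ by Theorem~\ref{thm hooks}(b), so $\dim_F D(1)_F=1=\binom{n-2}{0}$, which is the claimed formula. Then I would run the induction: assuming $\dim_F D(k-1)_F=\binom{n-2}{k-2}$, the recursion gives
\begin{equation*}
\dim_F D(k)_F=\binom{n-1}{k-1}-\binom{n-2}{k-2}=\binom{n-2}{k-1}\,,
\end{equation*}
where the last equality is Pascal's rule. This completes the induction and establishes $\dim_F D(k)_F=\binom{n-2}{k-1}$ for all $k\in\{1,\ldots,n-2\}$.

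As a consistency check one can verify the top end: for $k=n-2$ the formula predicts $\dim_F D(n-2)_F=\binom{n-2}{n-3}=n-2$, and indeed $\dim_F \overline{D}(n-2)_F=\dim_F S^{(1^n)}_F=1$, so $\dim_F D(n-2)_F=\binom{n-1}{n-2}-1=(n-1)-1=n-2$, in agreement. There is no real obstacle here; the only mild subtlety is bookkeeping with the index shifts in Theorem~\ref{thm hooks}(b) (making sure that the submodule $D(k)_F$ of $S^{(n-k,1^k)}_F$ is identified with the quotient $\overline{D}(k-1)_F$ of the previous Specht module, and that the induction starts correctly at $k=1$ rather than $k=0$). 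Everything else is a one-line application of Pascal's rule.
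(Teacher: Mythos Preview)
Your proof is correct and is exactly the argument the paper has in mind: the corollary is stated without proof as an immediate consequence of Theorem~\ref{thm hooks}(b) and the rank formula $\rk_R S^{(n-k,1^k)}_R=\binom{n-1}{k}$ from \ref{noth hooks}, and your induction with Pascal's rule is the natural way to unpack that.
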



\section{The case $p\geq 3$}\label{sec p odd}

In this section, let $p$ be an odd prime. We shall determine a set of representatives of the isomorphism classes
of $\ZZ_p$-forms of the Specht $\QQ_p\mathfrak{S}_n$-modules $S^{(n-k,1^k)}_{\QQ_p}$, for
$k\in\{1,\ldots,n-2\}$. For ease of notation, for $k\in\{0,\ldots,n-1\}$, we set
$S(k)_R:=S^{(n-k,1^k)}_R$, for every principal ideal domain $R$ under consideration. Moreover, we
identify $S(k)_R$ with the exterior power $\bigwedge^k(S(1)_R)$ via the $R\mathfrak{S}_n$-isomorphism (\ref{eqn hook iso})
in \ref{noth hooks}.

The aim of this section is to prove Theorem~\ref{thm forms bijection p} below. Together with the results
of Craig \cite{Craig1976} and Plesken \cite[Theorem 5.1]{Plesken1977} concerning the isomorphism classes
of $\ZZ$-forms of $S(1)_{\QQ}$ and $S(n-2)_{\QQ}$, this will enable us to prove Theorem~\ref{thm intro}(a) in Section~\ref{sec proofs}.

\begin{thm}\label{thm forms bijection p}
Let $n \in \NN$ with $n \geq 3$, and let $k \in \{1,\dotsc,n-2\}$. Moreover let $p \geq 3$ be a prime number.
If $L_1,\ldots,L_s$ are representatives of the $\ZZ_p\mathfrak{S}_n$-isomorphism
classes of $\ZZ_p$-forms of $S(1)_{\QQ_p}$, then $\bigwedge^k(L_1),\ldots,\bigwedge^k(L_s)$ are
representatives of the $\ZZ_p\mathfrak{S}_n$-isomorphism
classes of $\ZZ_p$-forms of $S(k)_{\QQ_p}$; in particular, the number of isomorphism classes of $\ZZ_p$-forms
of $S(1)_{\QQ_p}$ and the number of isomorphism classes of $\ZZ_p$-forms
of $S(k)_{\QQ_p}$ coincide.
\end{thm}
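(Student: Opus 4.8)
The plan is to verify three things: that each $\bigwedge^k(L_i)$ is a $\ZZ_p$-form of $S(k)_{\QQ_p}$, that these $s$ lattices are pairwise non-isomorphic, and that every $\ZZ_p$-form of $S(k)_{\QQ_p}$ is isomorphic to one of them. The first is immediate: extending the $\ZZ_p\mathfrak{S}_n$-isomorphism~(\ref{eqn hook iso}) to $\QQ_p$ identifies $S(k)_{\QQ_p}$ with $\bigwedge^k(S(1)_{\QQ_p})$, and then $\bigwedge^k(L_i)$ is a $\ZZ_p$-form of the latter by Proposition~\ref{prop exterior forms}. For the second, $S(1)_{\QQ_p}$ and $S(k)_{\QQ_p}\cong\bigwedge^k(S(1)_{\QQ_p})$ are absolutely simple by Theorem~\ref{thm Specht properties}(a), and $1\le k\le n-2<n-1=\dim_{\QQ_p}S(1)_{\QQ_p}$, so Theorem~\ref{thm exterior forms iso} gives $\bigwedge^k(L_i)\cong\bigwedge^k(L_j)$ precisely when $L_i\cong L_j$. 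Hence $S(k)_{\QQ_p}$ has at least $s$ isomorphism classes of $\ZZ_p$-forms, and it remains only to show it has at most $s$; the final "in particular" then follows at once. By Remark~\ref{rem forms all in one} this is the assertion that every full-rank $\ZZ_p\mathfrak{S}_n$-sublattice of $\Lambda:=S(k)_{\ZZ_p}=\bigwedge^k(S(1)_{\ZZ_p})$ is isomorphic to some $\bigwedge^k(L_i)$.

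The case $p\nmid n$ is easy: then $S(1)_{\FF_p}$ and $S(k)_{\FF_p}$ are absolutely simple by Theorem~\ref{thm hooks}(a), so by Proposition~\ref{prop notdivides}(b) each of $S(1)_{\QQ_p}$ and $S(k)_{\QQ_p}$ has, up to isomorphism, a unique $\ZZ_p$-form, namely $L_1=S(1)_{\ZZ_p}$ and $\bigwedge^k(L_1)=\Lambda$; thus $s=1$ and there is nothing more to do. Assume henceforth that $p\mid n$; then, by Theorem~\ref{thm hooks}(b) and Corollary~\ref{cor dims simples}, the reduction $S(j)_{\FF_p}$ is uniserial of length $2$ with simple head $\overline D(j)_{\FF_p}$ of dimension $\binom{n-2}{j}$ and simple socle $D(j)_{\FF_p}\cong\overline D(j-1)_{\FF_p}$ of dimension $\binom{n-2}{j-1}$, for every $j\in\{1,\dots,n-2\}$.

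For $p\mid n$ the plan is to feed in the classification of the $\ZZ_p$-forms of $S(1)_{\QQ_p}$ due to Craig and Plesken, which provides $s=\nu_p(n)+1$ and, realised inside $S(1)_{\ZZ_p}$, a nested family of representatives $L^{(0)}=S(1)_{\ZZ_p}\supsetneq L^{(1)}\supsetneq\cdots\supsetneq L^{(s-1)}$ of the isomorphism classes, together with explicit information on the indices $[L^{(i)}:L^{(i+1)}]$ and on the $\FF_p\mathfrak{S}_n$-module structure of the reductions $L^{(i)}/pL^{(i)}$. Applying $\bigwedge^k$ gives a corresponding family of $\ZZ_p$-forms of $S(k)_{\QQ_p}$ whose pairwise indices are computed by Lemma~\ref{lemma exterior index}. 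One then argues by induction on the index of a full-rank sublattice $N$ of $\Lambda$ that $N\cong\bigwedge^k(L^{(i)})$ for some $i$: if $N=\Lambda$ this is clear; otherwise, since $\Hd(S(k)_{\FF_p})=\overline D(k)_{\FF_p}$ is absolutely simple of multiplicity one, Corollary~\ref{cor p max sublattices} shows that $\Lambda$ has a unique maximal sublattice $\Lambda_1$, so $N\subseteq\Lambda_1$, and one identifies $\Lambda_1$ up to isomorphism with $\bigwedge^k(L^{(1)})$. The identification uses Lemma~\ref{lemma aligned bases} to compute in adapted bases together with the fact that, for a maximal sublattice $L'$ of a form $L$ of $S(1)_{\QQ_p}$ with $d=\dim_{\FF_p}(L/L')$, the quotient $\bigwedge^k(L)/\bigwedge^k(L')$ has $\ZZ_p$-module type $\bigoplus_{r\ge 1}(\ZZ_p/p^r)^{\binom{n-1-d}{k-r}\binom{d}{r}}$ and its $\FF_p\mathfrak{S}_n$-composition factors are built from $L/L'$ and exterior powers of $L'/pL$; one sees in particular that $\bigwedge^k(L^{(1)})$ is a scalar multiple of $\Lambda_1$ (for example $\bigwedge^{n-2}(L^{(1)})$ is $p^{n-3}\Lambda_1$ when $k=n-2$), which lets the inductive hypothesis be applied inside $\bigwedge^k(L^{(1)})$ with $L^{(1)}$ in the role of $S(1)_{\ZZ_p}$. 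This produces exactly the $s$ classes $\bigwedge^k(L^{(0)}),\dots,\bigwedge^k(L^{(s-1)})$ and no others.

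The decisive and most delicate step is this inductive identification of sublattices --- in effect, transporting the combinatorics of the $S(1)$-chain through the $k$-th exterior power. It is not routine because, already for $S(1)_{\QQ_p}$ with $\nu_p(n)\ge 2$, the intermediate $\ZZ_p$-forms reduce modulo $p$ to decomposable (indeed semisimple) $\FF_p\mathfrak{S}_n$-modules, so the set of full-rank sublattices of $\Lambda$ need not be totally ordered and Proposition~\ref{prop max forms 2} cannot be invoked directly; one must instead track, branch by branch, how a maximal-sublattice chain of $N$ distributes among the exterior powers $\bigwedge^k(L^{(i)})$ and their reductions. It is precisely here that the identification $S(k)_{\ZZ_p}=\bigwedge^k(S(1)_{\ZZ_p})$ does its work: Lemma~\ref{lemma exterior index} controls all the indices that arise, and Theorem~\ref{thm exterior forms iso} guarantees that the $s$ lattices $\bigwedge^k(L_i)$ one ends up with remain pairwise non-isomorphic, so that the count of isomorphism classes comes out to be exactly $s$.
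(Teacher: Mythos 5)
Your overall strategy matches the paper's: verify that each $\bigwedge^k(L_i)$ is a form via (\ref{eqn hook iso}) and Proposition~\ref{prop exterior forms}, use Theorem~\ref{thm exterior forms iso} for pairwise non-isomorphism, dispose of $p\nmid n$ via Proposition~\ref{prop notdivides}, and then, for $p\mid n$, run an induction along a chain of maximal sublattices from $S(k)_{\ZZ_p}$ down to a given form $N$. So the architecture is right, and you correctly identify where the difficulty lies.

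But the step you flag as ``decisive and most delicate'' is exactly what the paper proves in Proposition~\ref{prop hook forms}, and your proposal does not actually supply that argument. You need, for \emph{every} form $M$ of $S(1)_{\QQ_p}$ (not just $S(1)_{\ZZ_p}$, but also the intermediate $L^{(i)}$ whose reduction mod $p$ is the semisimple $D(1)\oplus D(2)$), a precise description of the maximal $\ZZ_p\mathfrak{S}_n$-sublattices of $\bigwedge^k(M)$ in terms of those of $M$. When $M$ has two maximal sublattices $N_1$ (index $p$) and $N_2$ (index $p^{n-2}$), one shows that $\bigwedge^k(N_1)$ and $\bigwedge^k(N_2)/p^{k-1}$ are the two maximal sublattices of $\bigwedge^k(M)$. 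When $\binom{n-2}{k-1}\neq\binom{n-2}{k}$ the index computation (your Lemma~\ref{lemma exterior index} observation) suffices, but when $\binom{n-2}{k-1}=\binom{n-2}{k}$ — which happens precisely when $n$ is odd and $k=(n-1)/2$ — the two candidate sublattices have the same index and an extra argument is needed to show they are distinct. The paper does this by choosing aligned bases as in Lemma~\ref{lemma aligned bases} and exhibiting an element of $\bigwedge^k(N_1)$ not in $\bigwedge^k(N_2)/p^{k-1}$; this step genuinely uses $k<n-2$ to choose a $k$-subset avoiding two specified indices, and your sketch nowhere provides it. Without this, ``tracking branch by branch'' is an intention rather than a proof, and the induction does not close.

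Two smaller remarks. First, you fold $k=n-2$ into the general argument, whereas the paper proves Proposition~\ref{prop hook forms} only for $k\le n-3$ and handles $k=n-2$ separately via $S(n-2)\cong S(1)\otimes\sgn$ (Proposition~\ref{prop dual sign}); your route can be made to work for $k=n-2$ because the equal-index case never arises there, but that observation has to be made explicit. Second, the explicit $\ZZ_p$-module type $\bigoplus_r(\ZZ_p/p^r)^{\binom{n-1-d}{k-r}\binom{d}{r}}$ for $\bigwedge^k(L)/\bigwedge^k(L')$ is correct but unnecessary: the paper only needs the total index and the known uniserial structure of $S(k)_{\FF_p}$ to pin down the maximal sublattice when there is one.
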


As the case $p \nmid n$ will be dealt with by Proposition~\ref{prop notdivides} and Theorem~\ref{thm Specht properties}(a), we shall from now on suppose that $p \mid n$, for the remainder of this section.

\begin{rem}\label{rem possible Loewy}
Let $k\in\{1,\ldots,n-2\}$.
Recall from Theorem~\ref{thm hooks} that the $\FF_p\mathfrak{S}_n$-module $S(k)_{\FF_p}\cong S(k)_{\ZZ_p}/pS(k)_{\ZZ_p}$ is uniserial with Loewy series
$$S(k)_{\FF_p}\sim\begin{bmatrix} D(k+1)\\D(k) \end{bmatrix}\,,$$
where $D(k)\not\cong D(k+1)$. Thus, for every $\ZZ_p$-form $L$ of the $\QQ_p\mathfrak{S}_n$-module $S(k)_{\QQ_p}$, we know
that the $\FF_p\mathfrak{S}_n$-module $L/pL$ has composition factors $D(k)$ and $D(k+1)$ as well; in particular, there are the following three possibilities for
the Loewy series of $L/pL$:
$$L/pL\sim\begin{bmatrix} D(k+1)\\D(k) \end{bmatrix}\quad \text{ or }\quad  L/pL\sim\begin{bmatrix} D(k)\\D(k+1) \end{bmatrix}\quad\text{ or }\quad L/pL\cong D(k)\oplus D(k+1)\,;$$
in particular, by Proposition~\ref{prop p max sublattices}, $L$ has either one or two $\ZZ_p\mathfrak{S}_n$-sublattices $N$ such that $pL\subsetneq N\subsetneq L$, and these are
precisely the maximal  $\ZZ_p\mathfrak{S}_n$-sublattices of $L$. 
\end{rem}

As an application of Lemma~\ref{lemma possible Loewy} we now obtain:

\begin{prop}\label{prop possible Loewy}
Let $k\in\{1,\ldots,n-2\}$, and let $p\mid n$.
Let $N$ be a $\ZZ_p$-form of $S(1)_{\QQ_p}$. Then $\bigwedge^k(N)$ is a $\ZZ_p$-form of $S(k)_{\QQ_p}$. Moreover
\smallskip

{\rm (a)}\, the following are equivalent

\quad {\rm (i)}\, $N/pN\sim\begin{bmatrix} D(2)\\D(1) \end{bmatrix}$;

\quad {\rm (ii)}\, $N\cong S(1)_{\ZZ_p}$;

\quad {\rm (iii)}\, $ \bigwedge^k(N)\cong S(k)_{\ZZ_p}$;

\quad {\rm (iv)}\, $\bigwedge^k(N)/p\bigwedge^k(N)\sim\begin{bmatrix} D(k+1)\\D(k) \end{bmatrix}$;

\smallskip

{\rm (b)}\, the following are equivalent

\quad {\rm (i)}\, $N/pN\sim\begin{bmatrix} D(1)\\D(2) \end{bmatrix}$;

\quad {\rm (ii)}\,  $N\cong S(1)_{\ZZ_p}^*$;

\quad {\rm (iii)}\, $ \bigwedge^k(N)\cong S(k)_{\ZZ_p}^*$;

\quad {\rm (iv)}\,  $\bigwedge^k(N)/p\bigwedge^k(N)\sim\begin{bmatrix} D(k)\\D(k+1) \end{bmatrix}$;

\smallskip

{\rm (c)}\, one has
$$N/pN\cong D(1)\oplus D(2)\Leftrightarrow  \bigwedge^k(N)/p\bigwedge^k(N)\cong D(k)\oplus D(k+1)\,.$$
\end{prop}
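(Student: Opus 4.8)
The plan is to derive all three parts mechanically from two earlier results: Lemma~\ref{lemma possible Loewy}, which pins down the $\ZZ_p$-forms of an absolutely simple self-dual $\QQ_p\mathfrak{S}_n$-module whose distinguished form has uniserial reduction of Loewy length two modulo $p$, and Theorem~\ref{thm exterior forms iso}, which states that $\bigwedge^k(-)$ is injective on isomorphism classes of $\ZZ_p$-forms whenever both $V$ and $\bigwedge^k(V)$ are absolutely simple. As preliminary bookkeeping I would first record, using Proposition~\ref{prop exterior forms} and the hook isomorphism (\ref{eqn hook iso}) (valid over any principal ideal domain), that $\bigwedge^k(N)$ is a $\ZZ_p$-form of $\bigwedge^k(S(1)_{\QQ_p})\cong S(k)_{\QQ_p}$, that $\bigwedge^k(S(1)_{\ZZ_p})\cong S(k)_{\ZZ_p}$ as $\ZZ_p\mathfrak{S}_n$-lattices, and, combining this with \ref{noth exterior}(c), that $\bigwedge^k(S(1)_{\ZZ_p}^*)\cong(\bigwedge^k(S(1)_{\ZZ_p}))^*\cong S(k)_{\ZZ_p}^*$. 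Since $S(1)_{\QQ_p}$ is absolutely simple and self-dual of dimension $n-1$ (Theorem~\ref{thm Specht properties}(a)) and $\bigwedge^k(S(1)_{\QQ_p})\cong S(k)_{\QQ_p}$ is absolutely simple with $1\leq k\leq n-2=\dim_{\QQ_p}(S(1)_{\QQ_p})-1$, Theorem~\ref{thm exterior forms iso} applies to $V=S(1)_{\QQ_p}$ and the given $k$.

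For part (a) I would establish the chain of equivalences (i)\,$\Leftrightarrow$\,(ii)\,$\Leftrightarrow$\,(iii)\,$\Leftrightarrow$\,(iv). The equivalence (i)\,$\Leftrightarrow$\,(ii) is Lemma~\ref{lemma possible Loewy}(b), applied with $W=S(1)_{\QQ_p}$, distinguished form $S(1)_{\ZZ_p}$ and $(D_1,D_2)=(D(2),D(1))$; its hypotheses hold because $S(1)_{\FF_p}$ is uniserial with Loewy series $\begin{bmatrix}D(2)\\D(1)\end{bmatrix}$ (Remark~\ref{rem possible Loewy} for $k=1$) and $D(1),D(2)$ are absolutely simple, self-dual (Theorem~\ref{thm Specht properties}(b)) and pairwise non-isomorphic (Theorem~\ref{thm hooks}(b)). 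The equivalence (ii)\,$\Leftrightarrow$\,(iii) is Theorem~\ref{thm exterior forms iso} together with $\bigwedge^k(S(1)_{\ZZ_p})\cong S(k)_{\ZZ_p}$. The equivalence (iii)\,$\Leftrightarrow$\,(iv) is Lemma~\ref{lemma possible Loewy}(b) once more, now with $W=S(k)_{\QQ_p}$, distinguished form $S(k)_{\ZZ_p}$ (whose reduction is uniserial with Loewy series $\begin{bmatrix}D(k+1)\\D(k)\end{bmatrix}$ by Remark~\ref{rem possible Loewy}) and the form $L=\bigwedge^k(N)$. Part (b) runs in complete parallel, using Lemma~\ref{lemma possible Loewy}(c) in place of (b) for the two outer equivalences and $\bigwedge^k(S(1)_{\ZZ_p}^*)\cong S(k)_{\ZZ_p}^*$ for the middle one; here one also notes that $S(1)_{\ZZ_p}^*$ is a $\ZZ_p$-form of $S(1)_{\QQ_p}$ by self-duality of the latter and \ref{noth change rings}(d).

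For part (c) I would argue by elimination. By the Brauer--Nesbitt theorem (Proposition~\ref{prop notdivides}(a)) and Remark~\ref{rem possible Loewy}, the reductions $N/pN$ and $\bigwedge^k(N)/p\bigwedge^k(N)$ each have their two composition factors with multiplicity one, so $N/pN$ is exactly one of $\begin{bmatrix}D(2)\\D(1)\end{bmatrix}$, $\begin{bmatrix}D(1)\\D(2)\end{bmatrix}$, $D(1)\oplus D(2)$, and $\bigwedge^k(N)/p\bigwedge^k(N)$ is exactly one of $\begin{bmatrix}D(k+1)\\D(k)\end{bmatrix}$, $\begin{bmatrix}D(k)\\D(k+1)\end{bmatrix}$, $D(k)\oplus D(k+1)$. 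Parts (a) and (b) match up the first two options on each side, so the third options must correspond as well, which is precisely the assertion of (c).

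I do not expect a genuine obstacle, since the proposition is merely an assembly of Lemma~\ref{lemma possible Loewy}, Theorem~\ref{thm exterior forms iso} and the isomorphism (\ref{eqn hook iso}). The only point requiring care is the bookkeeping of which simple module plays the role of $D_1$ and which of $D_2$ in the two invocations of Lemma~\ref{lemma possible Loewy} --- once over $\ZZ_p$ for $S(1)$ and once over $\ZZ_p$ for $S(k)$ --- and the verification of the dimension and absolute-simplicity conditions demanded by Theorem~\ref{thm exterior forms iso}, both of which are routine given the facts collected in Sections~\ref{sec exterior} and~\ref{sec specht}.
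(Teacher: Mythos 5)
Your proof is correct and follows essentially the same route as the paper's: both derive the result from Lemma~\ref{lemma possible Loewy} (applied once for $S(1)$ and once for $S(k)$), Theorem~\ref{thm exterior forms iso} for the middle link, and Theorem~\ref{thm hooks} together with the hook isomorphism (\ref{eqn hook iso}) to identify the reductions. Your organization into explicit chains (i)$\Leftrightarrow$(ii)$\Leftrightarrow$(iii)$\Leftrightarrow$(iv) and the elimination argument for (c) are just a slightly cleaner presentation of the same reasoning; the paper proves (c) directly by showing semisimplicity on one side excludes both lattices $S(\cdot)_{\ZZ_p}$ and $S(\cdot)_{\ZZ_p}^*$ on the other, which is logically equivalent to your elimination.
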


\begin{proof}
Let $N$ be a $\ZZ_p$-form of $S(1)_{\QQ_p}$. By Proposition~\ref{prop exterior forms}, we  
know that $L:=\bigwedge^k(N)$ is a $\ZZ_p$-form of $\bigwedge^k(S(1)_{\QQ_p})\cong S(k)_{\QQ_p}$.

To prove (a), suppose that $N/pN$ has Loewy series $\begin{bmatrix} D(2)\\D(1) \end{bmatrix}$. Then $N\cong S(1)_{\ZZ_p}$, by
Lemma~\ref{lemma possible Loewy}. This implies $\bigwedge^k(N)\cong S(k)_{\ZZ_p}$, by \ref{noth hooks}(b), and then
$\bigwedge^k(N)/p\bigwedge^k(N)$ has Loewy series $\begin{bmatrix} D(k+1)\\ D(k)\end{bmatrix}$, by Theorem~\ref{thm hooks}.
Conversely, if $\bigwedge^k(N)/p\bigwedge^k(N)$ has Loewy series $\begin{bmatrix} D(k+1)\\ D(k)\end{bmatrix}$,
then $\bigwedge^k(N)\cong S(k)_{\ZZ_p}$, by Lemma~\ref{lemma possible Loewy} again. 
So $\bigwedge^k(N)\cong\bigwedge^k(S(1)_{\ZZ_p})$, which forces $N\cong S(1)_{\ZZ_p}$, by Theorem~\ref{thm exterior forms iso}.

Analogously one obtains (b).

Lastly suppose that $N/pN$ is semisimple, that is, $N/pN\cong D(1)\oplus D(2)$.
Then $S(1)_{\ZZ_p}\not\cong N\not\cong S(1)_{\ZZ_p}^*$, by Lemma~\ref{lemma possible Loewy}. Hence
$S(k)_{\ZZ_p}\cong \bigwedge^k(S(1)_{\ZZ_p})\not\cong \bigwedge^k(N)\not\cong \bigwedge^k(S(1)_{\ZZ_p}^*)\cong \bigwedge^k(S(1)_{\ZZ_p})^*\cong S(k)_{\ZZ_p}^*$, by Theorem~\ref{thm exterior forms iso} and \ref{noth hooks}(b).
Therefore, Lemma~\ref{lemma possible Loewy} and Remark~\ref{rem possible Loewy} show that $\bigwedge^k(N)/p\bigwedge^k(N)\cong D(k)\oplus D(k+1)$.
Conversely, if $\bigwedge^k(N)/p\bigwedge^k(N)\cong D(k)\oplus D(k+1)$ then we 
get $S(k)_{\ZZ_p}\not\cong \bigwedge^k(N)\not\cong S(k)_{\ZZ_p}^*$, thus
$S(1)_{\ZZ_p}\not\cong N\not\cong S(1)_{\ZZ_p}^*$, by Theorem~\ref{thm exterior forms iso}, and then
$N/pN\cong D(1)\oplus D(2)$, by Lemma~\ref{lemma possible Loewy}.
\end{proof}

\begin{nota}\label{nota forms divide}
Suppose that $G$ is any finite group and $M$ is a $\ZZ_p G$-lattice. 
Suppose further that $N$ is a $\ZZ_pG$-sublattice of $M$ such that $N\subseteq p^iM$, for some $i\geq 1$. 
Then $\{p^{-i}x: x \in N\}\subseteq \QQ_pM$
is also a $\ZZ_pG$-sublattice of $M$ isomorphic to $N$. We shall use the notation $N/p^i:=\{p^{-i}x: x \in N\}$.
Note that $N/p^i\cong p^i(N/p^i)= N$ as $\ZZ_pG$-lattices.
\end{nota}

In the proof of the next proposition we shall have to assume $k<n-3$. However,
this is not really a restriction, since the case $k=n-2$ has been dealt with anyway by Plesken and Craig, as we shall 
see in the proof of Theorem~\ref{thm forms bijection p}.

\begin{prop}\label{prop hook forms}
Let $p\mid n$, and let $k\in\{1,\ldots,n-3\}$.
Let $M$ be a $\ZZ_p$-form of $S(1)_{\QQ_p}$.

\smallskip

{\rm (a)}\,   If $M$ has only one maximal $\ZZ_p\mathfrak{S}_n$-sublattice $N$ and if $N$ has index $p$ in $M$, then 
$\bigwedge^k(N)$ is the only maximal $\ZZ_p\mathfrak{S}_n$-sublattice of $\bigwedge^k(M)$.

\smallskip

{\rm (b)}\,   If $M$ has only one maximal $\ZZ_p\mathfrak{S}_n$-sublattice $N$ and if $N$ has index $p^{n-2}$ in $M$, then $\bigwedge^k(N)/p^{k-1}$ is the only maximal $\ZZ_p\mathfrak{S}_n$-sublattice of $\bigwedge^k(M)$.

\smallskip

{\rm (c)}\,  If $M$ has two maximal $\ZZ_p\mathfrak{S}_n$-sublattices $N_1$ and $N_2$ of index $p$ and $p^{n-2}$, respectively, then $\bigwedge^k(N_1)$ and $\bigwedge^k(N_2)/p^{k-1}$ are the maximal $\ZZ_p\mathfrak{S}_n$-sublattices of $\bigwedge^k(M)$.
Moreover $\bigwedge^k(N_1) \neq \bigwedge^k(N_2)/p^{k-1}$.

\smallskip
\noindent
In particular, every maximal $\ZZ_p\mathfrak{S}_n$-sublattice of $\bigwedge^k(M)$ is isomorphic to $\bigwedge^k(N)$ for some maximal $\ZZ_p\mathfrak{S}_n$-sublattice $N$ of $M$.
\end{prop}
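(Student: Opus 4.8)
The plan is to run through the three cases (a), (b), (c), which by Remark~\ref{rem possible Loewy} are exhaustive for an arbitrary $\ZZ_p$-form $M$ of $S(1)_{\QQ_p}$: there $M/pM$ is either uniserial with composition factors $D(1),D(2)$ in one of the two orders, or semisimple equal to $D(1)\oplus D(2)$; since $\dim D(1)=1$ and $\dim D(2)=n-2$ by Corollary~\ref{cor dims simples}, and $n\geq 4$ because $1\leq k\leq n-3$, the situations ``one maximal sublattice of index $p$'', ``one maximal sublattice of index $p^{n-2}$'' and ``two maximal sublattices of indices $p$ and $p^{n-2}$'' are precisely (a), (b), (c). Before the case analysis I would isolate two facts about exterior powers. (1) If $N\subseteq M$ with $[M:N]=p$, then $p\bigwedge^k(M)\subseteq\bigwedge^k(N)$: by Lemma~\ref{lemma aligned bases}(a) with $s=1$ pick a $\ZZ_p$-basis $v_1,\dots,v_{n-1}$ of $M$ with $N={}_{\ZZ_p}\langle v_1,\dots,v_{n-2},pv_{n-1}\rangle$, and observe that every standard basis vector $p\,(v_{i_1}\wedge\cdots\wedge v_{i_k})$ of $p\bigwedge^k(M)$ lies in $\bigwedge^k(N)$, putting the factor $p$ onto $v_{n-1}$ when $n-1\in\{i_1,\dots,i_k\}$ and onto an arbitrary factor otherwise. (2) If $N={}_{\ZZ_p}\langle x\rangle+pM$ for some $x\in M$ --- the shape of any $\ZZ_p\mathfrak{S}_n$-sublattice with $pM\subseteq N$ and $\dim_{\FF_p}(N/pM)=1$, in particular of any maximal sublattice of index $p^{n-2}$ --- then $p^k\bigwedge^k(M)\subseteq\bigwedge^k(N)\subseteq p^{k-1}\bigwedge^k(M)$: the first inclusion is clear from $pM\subseteq N$, and for the second, writing $n_i=a_ix+pm_i$ and expanding $n_1\wedge\cdots\wedge n_k$, every summand containing two factors $x$ vanishes, so each surviving summand carries at least $k-1$ factors of $p$. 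Hence $\bigwedge^k(N)/p^{k-1}$ (Notation~\ref{nota forms divide}) satisfies $p\bigwedge^k(M)\subseteq\bigwedge^k(N)/p^{k-1}\subseteq\bigwedge^k(M)$, and by Lemma~\ref{lemma exterior index} it has index $p^{(n-2)\binom{n-2}{k-1}-(k-1)\binom{n-1}{k}}=p^{\binom{n-2}{k}}=p^{\dim D(k+1)}$ in $\bigwedge^k(M)$, the binomial identity being an easy consequence of Pascal's rule.

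For (a): $M$ has a unique maximal sublattice, so $M/pM$ is uniserial; its index being $p\neq p^{n-2}$, Proposition~\ref{prop possible Loewy} gives $M\cong S(1)_{\ZZ_p}^{\ast}$, hence $\bigwedge^k(M)\cong S(k)_{\ZZ_p}^{\ast}$ with $\bigwedge^k(M)/p\bigwedge^k(M)$ uniserial of head $D(k)$, so $\bigwedge^k(M)$ has a unique maximal sublattice. By fact~(1), $\bigwedge^k(N)\supseteq p\bigwedge^k(M)$, and by Lemma~\ref{lemma exterior index} it has index $p^{\binom{n-2}{k-1}}=p^{\dim D(k)}$; the only submodule of the uniserial module $\bigwedge^k(M)/p\bigwedge^k(M)$ of that codimension is its socle, which is maximal, so $\bigwedge^k(N)$ is the unique maximal sublattice. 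For (b): the hypothesis forces $M\cong S(1)_{\ZZ_p}$ by Proposition~\ref{prop possible Loewy}, so $\bigwedge^k(M)\cong S(k)_{\ZZ_p}$ with $\bigwedge^k(M)/p\bigwedge^k(M)\cong S(k)_{\FF_p}$ uniserial of head $D(k+1)$; thus $\bigwedge^k(M)$ has a unique maximal sublattice $P$, of index $p^{\dim D(k+1)}$. Writing $N={}_{\ZZ_p}\langle x\rangle+pM$ and using fact~(2), $\bigwedge^k(N)/p^{k-1}$ is a proper full-rank sublattice of $\bigwedge^k(M)$ of index $p^{\dim D(k+1)}$; since every proper full-rank sublattice is contained in a maximal one, hence in $P$, comparing indices gives $\bigwedge^k(N)/p^{k-1}=P$.

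For (c): here $M/pM\cong D(1)\oplus D(2)$, so $\bigwedge^k(M)/p\bigwedge^k(M)\cong D(k)\oplus D(k+1)$ by Proposition~\ref{prop possible Loewy}, and by Corollary~\ref{cor p max sublattices} $\bigwedge^k(M)$ has exactly two maximal sublattices $P_1,P_2$ with $\bigwedge^k(M)/P_1\cong D(k)$ and $\bigwedge^k(M)/P_2\cong D(k+1)$. Since $M/N_1\cong D(1)$ and $M/N_2\cong D(2)$, facts~(1) and~(2) give $p\bigwedge^k(M)\subseteq\bigwedge^k(N_1)$ of index $p^{\dim D(k)}$ and $p\bigwedge^k(M)\subseteq\bigwedge^k(N_2)/p^{k-1}$ of index $p^{\dim D(k+1)}$; as a submodule of $D(k)\oplus D(k+1)$ of codimension $\dim D(k)$ (resp.\ $\dim D(k+1)$) must be the $D(k+1)$- (resp.\ $D(k)$-) homogeneous component, which is maximal because the two simples are non-isomorphic, we obtain $\bigwedge^k(N_1)=P_1$ and $\bigwedge^k(N_2)/p^{k-1}=P_2$; these differ since $D(k)\not\cong D(k+1)$, which is the ``moreover'' clause. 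The final assertion is then immediate: in each of (a), (b), (c) every maximal sublattice of $\bigwedge^k(M)$ equals $\bigwedge^k(N)$ or $\bigwedge^k(N)/p^{k-1}\cong\bigwedge^k(N)$ for some maximal sublattice $N$ of $M$.

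The step I expect to be the main obstacle is fact~(2) together with the index computation it feeds: one must see that wedging $k$ elements of $N={}_{\ZZ_p}\langle x\rangle+pM$ necessarily produces at least $k-1$ powers of $p$, and then verify $(n-2)\binom{n-2}{k-1}-(k-1)\binom{n-1}{k}=\binom{n-2}{k}$ so that the rescaled lattice $\bigwedge^k(N)/p^{k-1}$ carries exactly the index $p^{\dim D(k+1)}$ of the relevant maximal sublattice. Once this numerics is settled, everything else is bookkeeping with the uniserial resp.\ semisimple reductions, Lemma~\ref{lemma exterior index} and Corollary~\ref{cor p max sublattices}.
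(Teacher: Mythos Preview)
Your treatment of (a) and (b) is correct and close to the paper's; your fact~(2) (expanding $(a_1x+pm_1)\wedge\cdots\wedge(a_kx+pm_k)$ and noting that any term with two factors $x$ vanishes) is a clean alternative to the paper's aligned-basis computation, and the index identity $(n-2)\binom{n-2}{k-1}-(k-1)\binom{n-1}{k}=\binom{n-2}{k}$ is exactly what the paper uses.

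The genuine gap is in~(c). You argue that a submodule of $D(k)\oplus D(k+1)$ of codimension $\dim D(k)$ must be the $D(k+1)$-component. But the proper nonzero submodules of $D(k)\oplus D(k+1)$ are exactly $D(k)$ and $D(k+1)$, and when $\dim D(k)=\dim D(k+1)$, that is when $\binom{n-2}{k-1}=\binom{n-2}{k}$, i.e.\ $n=2k+1$, both have the same dimension and your codimension count no longer singles one out. In that case all you have shown is $\bigwedge^k(N_1),\,\bigwedge^k(N_2)/p^{k-1}\in\{P_1,P_2\}$, and the ``moreover'' clause $\bigwedge^k(N_1)\neq\bigwedge^k(N_2)/p^{k-1}$ remains unproved. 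This case genuinely occurs under the hypotheses (e.g.\ $p=5$, $n=5$, $k=2$).

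The paper closes this gap not by dimension-counting but by an explicit element: using Lemma~\ref{lemma aligned bases} to choose compatible bases $\{v_i\}$ of $M$, $\{w_i\}$ of $N_2$ (with $w_1=v_1$, $w_i=pv_i$ for $i\geq 2$) and $\{u_i\}$ of $N_1$ (with $u_j=pv_j$ for one $j$ and $u_i=v_i+\lambda_iv_j$ otherwise), one picks indices $1\leq i_1<\cdots<i_k\leq n-1$ avoiding both $1$ and $j$, which is possible precisely because $k\leq n-3$, and checks that $u_{i_1}\wedge\cdots\wedge u_{i_k}\in\bigwedge^k(N_1)$ cannot lie in $\bigwedge^k(N_2)/p^{k-1}$ by comparing the coefficient of $v_{i_1}\wedge\cdots\wedge v_{i_k}$. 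This is where the hypothesis $k\leq n-3$ is actually used; your argument never invokes it, which is another signal that something is missing.
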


\begin{proof}
(a)\, Suppose that $M$ has a unique maximal sublattice $N$. Then, by Proposition~\ref{prop p max sublattices}, the $\FF_p\mathfrak{S}_n$-module $M/pM$ has a simple head. If $N$ has index $p$ in $M$ then $\dim_{\FF_p}(\Hd(M/pM))=1$, by Proposition~\ref{prop p max sublattices}.
Thus Proposition~\ref{prop possible Loewy} implies $M\cong S(1)_{\ZZ_p}^*$ and $\bigwedge^k(M)\cong S(k)_{\ZZ_p}^*$. So by
Theorem~\ref{thm hooks} and
Proposition~\ref{prop p max sublattices} again, $\bigwedge^k(M)$ also has a unique maximal sublattice, which must have index
$p^{\dim_{\FF_p}(D(k))}=p^{\binom{n-2}{k-1}}$; in particular, there is a unique sublattice
of $\bigwedge^k(M)$ of index $p^{\binom{n-2}{k-1}}$. Moreover, by Lemma~\ref{lemma exterior index},
$\bigwedge^k(N)$ is a $\ZZ_p\mathfrak{S}_n$-sublattice of $\bigwedge^k(M)$ of index $p^{\binom{n-2}{k-1}}$ as well. This proves (a).

\smallskip

(b)\, In analogy to (a) we deduce that in this case we must have $M\cong S(1)_{\ZZ_p}$, $\bigwedge^k(M)\cong S(k)_{\ZZ_p}$,
and $\bigwedge^k(M)$ has a unique maximal sublattice $\tilde{N}$. Moreover $\tilde{N}$ has index $p^{\dim_{\FF_p}(D(k+1))}=p^{\binom{n-2}{k}}$.

We now want to show that $p^k \bigwedge^k(M) \subseteq \bigwedge^k(N) \subseteq p^{k-1} \bigwedge^k(M)$.
As $M/N$ is an $\FF_p$-vector space of dimension $n - 2$, by Lemma~\ref{lemma aligned bases}, we can find $\ZZ_p$-bases 
$\{v_1,\dotsc,v_{n-1}\}$ and $\{w_1,\dotsc,w_{n-1}\}$ of $M$ and $N$, respectively, such that
 $w_1 = v_1$ and $w_i = p v_i$, for $2 \leq i \leq n -1$.
Thus, for $1 \leq i_1 < \dotsb < i_k \leq n - 1$, we have
\[ p^k( v_{i_1} \wedge \dotsb \wedge v_{i_k} ) = \begin{cases} w_{i_1} \wedge \dotsb \wedge w_{i_k}, \quad &\text{ if $i_1\neq 1$} \\ p (w_{i_1} \wedge \dotsb \wedge w_{i_k}), \quad & \text{ otherwise,} \end{cases} \]
and
\[ w_{i_1} \wedge \dotsb \wedge w_{i_k} = \begin{cases} p^{k-1} (v_{i_1} \wedge \dotsb \wedge v_{i_k}), \quad &\text{ if $i_1=1$} \\
p^k (v_{i_1} \wedge \dotsb \wedge v_{i_k}), \quad &\text{ otherwise\,;} \end{cases} \]
in particular, $p^k( v_{i_1} \wedge \dotsb \wedge v_{i_k} )\in\bigwedge^k(N)$ and $w_{i_1} \wedge \dotsb \wedge w_{i_k}\in p^{k-1}\bigwedge^k(M)$.
Hence we obtain $p^k \bigwedge^k(M) \subseteq \bigwedge^k(N) \subseteq p^{k-1} \bigwedge^k(M)$, that is,
\[ p \bigwedge^k(M) \subseteq \frac{\bigwedge^k(N)}{p^{k-1}} \subseteq \bigwedge^k(M). \]
By Lemma~\ref{lemma exterior index}, the index of $\bigwedge^k(N)/p^{k-1}$ in $\bigwedge^k(M)$ is
\[ \frac{[ \bigwedge^k(M) : \bigwedge^k(N) ]}{(p^{k-1})^{{n-1}\choose k}} = p^{(n-2){{n-2}\choose {k-1}} - (k - 1) {{n-1}\choose k}}.\]
A quick calculation reveals that
$${(n-2){{n-2}\choose {k-1}} - (k - 1) {{n-1}\choose k}} = {{n-2} \choose k}\,,$$
and we conclude that $\tilde N$ and $\bigwedge^k(N)/p^{k-1}$ have the same index in $\bigwedge^k(M)$.
Since $\tilde N$ is unique with this index, we have $\tilde N = \bigwedge^k(N)/p^{k-1}$.

\smallskip

(c)\, Using Proposition~\ref{prop p max sublattices} and Lemma~\ref{lemma possible Loewy} we see that $\bigwedge^k(M)$ has 
exactly two maximal $\ZZ_p\mathfrak{S}_n$-sublattices $\tilde N_1$ and $\tilde N_2$. 
If $\tilde N_1$ and $\tilde N_2$ have different index in $\bigwedge^k(M)$, then we may argue as in the proof of (a) and (b) above to show that $\{ \bigwedge^k(N_1), \bigwedge^k(N_2)/p^{k-1} \} = \{\tilde N_1, \tilde N_2\}$.
If $\tilde N_1$ and $\tilde N_2$ have the same index in $\bigwedge^k(M)$, that is, if ${n - 2 \choose
k-1} = {n-2\choose k}$, then we only get $\{ \bigwedge^k(N_1),
\bigwedge^k(N_2)/p^{k-1} \} \subseteq \{ \tilde N_1,\tilde N_2\}$.  To get
equality, it suffices to show that $\bigwedge^k(N_1) \neq \bigwedge^k(N_2)/p^{k-1}$.

We first apply Lemma~\ref{lemma aligned bases}(a) to get $\ZZ_p$-bases $\{v_1,\dotsc,v_{n-1}\}$ and $\{w_1,\dotsc,w_{n-1}\}$ of $M$ and $N_2$, respectively, as in the proof of~(b).
Since $N_1$ has index $p$ in $M$, we can then apply Lemma~\ref{lemma aligned bases}(b) to find a $\ZZ_p$-basis $\{u_1,\dotsc,u_{n-1}\}$ of $N_1$ such that $u_j = p v_j$ for some $1 \leq j \leq n - 1$, and $u_i = v_i + \lambda_i v_j$ for $i\neq j$ and 
suitable $\lambda_1,\ldots,\lambda_{n-1} \in \ZZ_p$.
Now, since $k<n-2$, we can choose $1 \leq i_1 < \dotsb < i_k \leq n - 1$ with $1, j \not\in \{ i_1,\dotsc,i_k \}$ 
and consider the element
\[ \alpha = u_{i_1} \wedge \dotsb \wedge u_{i_k}
= (v_{i_1} + \lambda_{i_1} v_j) \wedge \dotsb \wedge (v_{i_k} + \lambda_{i_k} v_j)
= (v_{i_1} \wedge \dotsb \wedge v_{i_k}) + w \in\bigwedge^k(N_1)\,, \]
where $w$ does not involve a basis element of the form $v_{i_1} \wedge \dotsb \wedge v_{i_k}$.
To show that $\alpha \notin \bigwedge^k(N_2)/p^{k-1}$ let us assume the contrary.
Then, for $1\leq j_1<\cdots <j_k\leq n-1$, there exists $\lambda_{j_1,\dotsc,j_k} \in \ZZ_p$ such that we can write
\begin{align*}
\alpha &= (v_{i_1} \wedge \dotsb \wedge v_{i_k}) + w
= \sum_{1 \leq j_1 < \dotsb < j_k \leq n} \frac{\lambda_{j_1,\dotsc,j_k}}{p^{k-1}} \cdot w_{j_1}\wedge \dotsb \wedge w_{j_k} \\
&= \sum_{1 < j_1 < \dotsb < j_k \leq n} \lambda_{j_1,\dotsc,j_k} \cdot p \cdot v_{j_1}\wedge \dotsb \wedge v_{j_k} +
\sum_{1 = j_1 < \dotsb < j_k \leq n} \lambda_{j_1,\dotsc,j_k} \cdot v_{j_1}\wedge \dotsb \wedge v_{j_k}\,.
\end{align*}
Comparing the coefficients at $v_{i_1} \wedge \dotsb \wedge v_{i_k}$ we see that 
$1=\lambda_{j_1,\ldots,j_k}\cdot p$, which is not possible.
This completes the proof of (c).
\end{proof}

We are now  in the position to prove Theorem~\ref{thm forms bijection p}.

\begin{proof}[\textsl{\textbf{Proof of Theorem~\ref{thm forms bijection p}}}]
Let first $p \nmid n$. Then, for $k\in\{2,\ldots,n-2\}$, by Proposition~\ref{prop notdivides} and Theorem~\ref{thm hooks}(a), both $S(1)_{\QQ_p}$ and $S(k)_{\QQ_p}$ have only one $\ZZ_p$-form up to isomorphism. Thus the assertion follows in this case.

Now let $p \mid n$ and $k < n - 2$. Let further $L_1,\ldots,L_s$ be representatives of the isomorphism classes of $\ZZ_p$-forms
of $S(1)_{\QQ_p}$. 
The exterior powers $\bigwedge^k(L_1),\ldots,\bigwedge^k(L_s)$
are $\ZZ_p$-forms of $\bigwedge^k(S(1)_{\QQ_p})\cong S(k)_{\QQ_p}$, by Proposition~\ref{prop exterior forms}. 
By Theorem~\ref{thm exterior forms iso} and Theorem~\ref{thm Specht properties}(a),  $\bigwedge^k(L_1),\ldots,\bigwedge^k(L_s)$ are pairwise non-isomorphic.

Conversely, let $L$ be any $\ZZ_p$-form of $S(k)_{\QQ_p}$, and let $M$ be any $\ZZ_p$-form of $S(1)_{\QQ_p}$.
Then, by Remark~\ref{rem forms all in one}, $L$ is isomorphic to a $\ZZ_p\mathfrak{S}_n$-sublattice of the $\ZZ_p$-form 
$\bigwedge^k(M)$ of $S(k)_{\QQ_p}$, and we may thus suppose that 
$L\subseteq \bigwedge^k(M)$. Then there exists an $l\in\NN_0$ and $\ZZ_p\mathfrak{S}_n$-sublattices
$N_1,\ldots,N_l$ of $\bigwedge^k(M)$ such that
$$L=N_l\subsetneq N_{l-1}\subsetneq \cdots \subsetneq N_0=\bigwedge^k(M)$$
and such that $N_{i+1}$ is
maximal in $N_i$, for $i\in\{0,\ldots,l-1\}$. We need to show that $L\cong \bigwedge^k(L_j)$, for some $j\in\{1,\ldots,s\}$. To do
so we argue by induction in $l$. If $l=0$ then the assertion is trivial. So let $l>0$. By induction,
for every $i\in\{0,\ldots,l-1\}$, there is some $j_i\in\{1,\ldots,s\}$ such that $N_i\cong \bigwedge^k(L_{j_i})$. 
We may suppose that $N_{l-1} =\bigwedge^k(L_{j_{l-1}})$. Thus, by Proposition~\ref{prop hook forms},
there is some $j_l\in\{1,\ldots,s\}$ such that $L=N_l\cong \bigwedge^k(L_{j_l})$
or $L=N_l\cong \bigwedge^k(L_{j_l})/p^{k-1}$. Since $L_{j_l}\cong \bigwedge^k(L_{j_l})/p^{k-1}$, the assertion of the
theorem follows, for $1\leq k\leq n-3$. 
 
 It remains to deal with the case $k=n-2$. But, by Theorem~\ref{thm Specht properties}(a) and Proposition~\ref{prop dual sign} (see also \cite[Theorem 6.7]{James1978}),
 we have $S(n-2)_{\QQ_p}\cong S(1)_{\QQ_p}\otimes \sgn_{\QQ_p}$.
From this we deduce that $L_1\otimes\sgn_{\ZZ_p},\ldots, L_s\otimes \sgn_{\ZZ_p}$
must be representatives of the isomorphism classes of $\ZZ_p$-forms of $S(n-2)_{\QQ_p}$.
On the other hand, by Theorem~\ref{thm exterior forms iso}  and Proposition~\ref{prop exterior forms} again, 
$\bigwedge^k(L_1),\ldots,\bigwedge^k(L_s)$ are pairwise non-isomorphic $\ZZ_p$-forms of $S(n-2)_{\QQ_p}$, hence
have to be representatives of the isomorphism classes of such forms as well.
This completes the proof of the theorem.
\end{proof}


\section{The case $p=2$}\label{sec p 2}

In this section we now turn to the case where $p=2$ and $n\geq 4$. We shall determine a set of representatives of the isomorphism classes
of $\ZZ_2$-forms of the Specht $\QQ_2\mathfrak{S}_n$-module $S^{(n-2,1^2)}_{\QQ_2}$, in the case where $n\not\equiv 0\pmod{4}$,
thereby proving Theorem~\ref{thm intro}(b).
We retain the notation introduced in \ref{noth hooks}.
For ease of notation, we shall use the following convention throughout this section: we set $S_R:=S^{(n-2,1^2)}_R$, for 
every principal ideal domain $R$ under consideration. We shall also regard $S_R$ as an $R\mathfrak{S}_n$-sublattice
of $\bigwedge^2(M^{(n-1,1)})$ as in \ref{noth hooks}(b), and work with our
standard $R$-basis $\{b(i_1,i_2): 2\leq i_1<i_2\leq n\}$ of $S_R$, where
$$b(i_1,i_2)=(\b{i_1}-\b{1})\wedge (\b{i_2}-\b{1})=(\b{i_1}\wedge \b{i_2})+(\b{1}\wedge \b{i_1})-(\b{1}\wedge\b{i_2})\,,$$
for $2\leq i_1<i_2\leq n$.

\begin{rem}\label{rem Young modules}
In the proof of the next proposition, we shall use a result of M\"uller and Orlob on 
Young modules in \cite{Muller2011}. Suppose that $\lambda\in\mathcal{P}(n)$, and let $F$ be any field. 
In any given indecomposable direct sum decomposition of the $F\mathfrak{S}_n$-module $M^\lambda_F$, there
is a unique indecomposable direct  summand containing $S^\lambda_F$ as a submodule. This $F\mathfrak{S}_n$-module
is unique up to isomorphism and is called the {\sl Young $F\mathfrak{S}_n$-module} labelled by $\lambda$;  see \cite{Erdmann2001, Grabmeier1985}. It is usually
denoted by $Y^\lambda_F$.
\end{rem}

\begin{prop}\label{prop S in char 2}
Let $F$ be a field of characteristic $2$, and let $n\geq 5$.

\smallskip

{\rm (a)}\, If $n\equiv 1\pmod{4}$, then $S_F$ is uniserial with Loewy series
$$\begin{bmatrix}
F\\
D^{(n-2,2)}_F\\
F\\
\end{bmatrix}\,.$$

\smallskip

{\rm (b)}\, If $n\equiv 3\pmod{4}$, then $S_F\cong F\oplus D^{(n-2,2)}_F$.

\smallskip

{\rm (c)}\, If $n\equiv 2\pmod{4}$, then $S_F$ is uniserial with Loewy series
$$\begin{bmatrix}
F\\
D^{(n-2,2)}_F\\
F\\
D^{(n-1,1)}_F\\
\end{bmatrix}\,.$$

\smallskip

{\rm (d)}\, If $n\equiv 0\pmod{4}$, then $S_F$ is indecomposable with Loewy series
$$\begin{bmatrix}
F\oplus D^{(n-2,2)}_F\\
D^{(n-1,1)}_F
\end{bmatrix}\,.$$
\end{prop}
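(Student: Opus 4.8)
The plan is to analyze the $F\mathfrak{S}_n$-module $S_F = S^{(n-2,1^2)}_F$ for $\operatorname{char}(F) = 2$ by exploiting the exterior-power description $S_F \cong \bigwedge^2(S^{(n-1,1)}_F)$ from \ref{noth hooks}, together with known facts about the natural Specht module $S^{(n-1,1)}_F$ and the permutation module $M^{(n-1,1)}_F$ in characteristic $2$. First I would record the structure of $S^{(n-1,1)}_F$ in characteristic $2$: when $n$ is odd it is simple ($S^{(n-1,1)}_F = D^{(n-1,1)}_F$ has dimension $n-1$) and when $n$ is even it is uniserial with socle $F$ and head $D^{(n-1,1)}_F$ of dimension $n-2$ (James, \cite{James1978}, §23--24). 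One also needs the dimensions and labels of the relevant simple modules: $\dim D^{(n-2,2)}_F$ via the hook-length / $2$-regular theory, and the fact that the composition factors of $S_F$ (independent of characteristic, via Brauer--Nesbitt as in Proposition~\ref{prop notdivides}) are exactly $\{F, D^{(n-2,2)}_F\}$ when $n \equiv 1,3 \pmod 4$ and $\{F, F, D^{(n-2,2)}_F, D^{(n-1,1)}_F\}$ when $n$ is even; the $2$-adic valuation of $n$ governs how many copies of the trivial module appear and whether $D^{(n-1,1)}_F$ survives.

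The main computational engine I would use is the Young module result of M\"uller--Orlob \cite{Muller2011} cited in Remark~\ref{rem Young modules}: $S^{(n-2,1^2)}_F$ sits inside $Y^{(n-2,1^2)}_F$, and the Loewy structure of this Young module in characteristic $2$ is determined in \cite{Muller2011}. So the strategy for each congruence class of $n \bmod 4$ is: (i) compute the composition factors of $S_F$ from the characteristic-zero Specht module and the known decomposition matrix entries for two-part and hook partitions; (ii) use self-duality of $S_F$ — which holds because $S^{(n-2,1^2)}_F$ is self-dual in characteristic $2$? — actually it is \emph{not} self-dual, but $(S^\lambda_F)^* \cong S^{\lambda'}_F \otimes \operatorname{sgn}$ by \ref{noth Young Specht}(c), and in characteristic $2$ the sign is trivial, so $(S^{(n-2,1^2)}_F)^* \cong S^{(3,1^{n-3})}_F$; this gives a duality I would instead extract from the self-duality of the Young module and of the simple modules (Theorem~\ref{thm Specht properties}(b)); (iii) pin down the socle and radical series by combining the submodule-lattice constraints coming from $\operatorname{Hom}$-spaces between $S_F$ and the relevant simples, using that $F = S^{(n)}_F$ is the trivial module and $S^{(n-1,1)}_F$ embeds or surjects appropriately. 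For part (b), $n \equiv 3 \pmod 4$: here $\nu_2(n) = 0$ forces the trivial module to split off, and one checks $\operatorname{Hom}_{F\mathfrak{S}_n}(F, S_F) \neq 0$ and $\operatorname{Hom}_{F\mathfrak{S}_n}(S_F, F) \neq 0$ with the two maps composing to an isomorphism, giving the direct sum decomposition $S_F \cong F \oplus D^{(n-2,2)}_F$; one must also verify $D^{(n-2,2)}_F$ is the complementary composition factor and is simple, which again is James's theory.

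For the uniserial cases (a) and (c) I would proceed as follows. In case (a), $n \equiv 1 \pmod 4$, the composition factors are $F, D^{(n-2,2)}_F, F$ each once; to show uniseriality I would show $\operatorname{soc}(S_F) = F$ (one-dimensional), equivalently $\dim \operatorname{Hom}_{F\mathfrak{S}_n}(F, S_F) = 1$, which can be checked by a direct computation with the polytabloid basis $b(i_1,i_2)$ — the fixed points of $\mathfrak{S}_n$ on $S_F$ are spanned by $\sum_{2 \le i_1 < i_2 \le n} b(i_1,i_2)$ (or a similar symmetric element), and a parity count shows this space is one-dimensional when $n \equiv 1 \pmod 4$ and zero-dimensional when $n \equiv 3 \pmod 4$; dually $\dim \operatorname{Hom}(S_F, F) = 1$. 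Then since $D^{(n-2,2)}_F$ occurs once and the socle and head are both $F$, the middle layer is forced and the module is uniserial (any nontrivial submodule not equal to the whole contains the socle $F$ and is contained in $\operatorname{rad}$; the only candidate is the preimage of $\operatorname{soc}(S_F/\operatorname{soc})$). In case (c), $n \equiv 2 \pmod 4$, there are four composition factors $F, D^{(n-2,2)}_F, F, D^{(n-1,1)}_F$; here the extra subtlety is the placement of $D^{(n-1,1)}_F$ at the bottom, which I would get from the Young module structure in \cite{Muller2011} (the Young module $Y^{(n-2,1^2)}_F$ for $n \equiv 2 \pmod 4$ has the stated socle layer $D^{(n-1,1)}_F$ and $S_F$ is the submodule cutting off the top), combined with the socle computation showing $\operatorname{soc}(S_F) = D^{(n-1,1)}_F$ is simple and the fixed-point space argument placing a copy of $F$ just above the next layer.

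The hard part will be establishing uniseriality rigorously rather than just getting the composition factors and the socle/head: one must rule out that an intermediate radical layer is a direct sum of two simples, i.e. one must show each $\operatorname{rad}^i(S_F)/\operatorname{rad}^{i+1}(S_F)$ is simple. I expect this to follow most cleanly from the M\"uller--Orlob description of the Young module $Y^{(n-2,1^2)}_F$ together with the fact (Theorem~\ref{thm Specht properties}(d), since $(n-2,1^2)$ need not be $2$-regular — so instead via self-duality of the Young module and Theorem~\ref{thm Specht properties}(b)) that the relevant endomorphism rings are as small as possible; the explicit polytabloid computations with the basis $\{b(i_1,i_2)\}$ serve as the backup to nail down the one-dimensionality of the socle and head in each case. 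I would organize the proof as four cases, treating (b) first (easiest, a splitting argument), then (a) and (c) by the socle/cosocle plus Young-module method, and finally (d) by the same method noting that when $4 \mid n$ the head $F \oplus D^{(n-2,2)}_F$ is no longer uniserial because both an extension and the fixed-point count change.
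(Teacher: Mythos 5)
You have correctly identified the crucial reference — the M\"uller--Orlob determination of the Young module $Y^{(n-2,1^2)}_F$ — but you have not realized that it does essentially \emph{all} the work by itself. The paper's proof is a single step: \cite{Muller2011} gives the \emph{complete} submodule lattice of $Y^{(n-2,1^2)}_F$ in characteristic $2$, and since $S_F$ is a submodule of $Y^{(n-2,1^2)}_F$ of dimension $\binom{n-1}{2}$, one simply observes that the lattice has a \emph{unique} submodule of that dimension, identifies $S_F$ with it, and reads off the Loewy series in each congruence class of $n \bmod 4$. No separate composition-factor count, no fixed-point calculation with the polytabloids $b(i_1,i_2)$, no duality argument, and no case-by-case uniseriality check are required: the submodule lattice already encodes all of that. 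By contrast, your plan treats M\"uller--Orlob as one among several tools and proposes a fair amount of independent verification (Brauer--Nesbitt composition-factor counts, $\Hom(F,S_F)$ parity arguments on the polytabloid basis, the $(S^\lambda)^*\cong S^{\lambda'}\otimes\sgn$ duality), which is more work than is needed.

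Two specific caveats. First, your self-duality detour (``is $S_F$ self-dual? --- actually it is not'') reveals a real conceptual wobble; in fact $S_F$ is \emph{not} self-dual here (that is precisely why the Loewy series in (a) and (c) are asymmetric, and why $T_3\cong S^*$ needs separate treatment in Proposition \ref{prop T dual}), so building duality into the main argument is risky. Second, your uniseriality argument for case (a) (socle and head both simple $\Rightarrow$ the middle layer is forced) does go through for three composition factors, but the analogous reasoning for case (c) with four factors would need the Young-module data anyway to place $D^{(n-1,1)}_F$ correctly, at which point you are back to citing \cite{Muller2011} as the sole engine. The polytabloid fixed-point computations you sketch are correct and \emph{do} appear in the paper — but in Lemma \ref{lemma U} and its sequels, where they serve the entirely different purpose of constructing the explicit $\ZZ_2$-sublattices $S_1, S_2$, not of proving this Proposition.
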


\begin{proof}
The Specht module $S_F$ is isomorphic to a submodule of the Young module $Y^{(n-2,1^2)}_F$. 
In \cite[Theorem 1.1]{Muller2011} the complete submodule lattice of $Y^{(n-2,1^2)}_F$ has been determined.
From this one obtains that $Y^{(n-2,1^2)}_F$ has  a unique submodule of dimension $\dim_F(S_F)=\binom{n-1}{2}$,
and this module has the claimed Loewy series.
\end{proof}

We now start to construct representatives of the isomorphism classes of $\ZZ_2$-forms of
$S_{\ZZ_2}$ along the lines of Proposition~\ref{prop p max sublattices}, analyzing
the $2$-modular reduction of $S_{\ZZ_2}$. We shall deal with the case where $n$ is odd first,
and mention the following useful observation beforehand.

\begin{lemma}\label{lemma perm basis}
Suppose that $F$ is a field of characteristic $2$, and let $k\in\{1,\ldots,n\}$. Then $\bigwedge^k(M^{(n-1,1)}_F)$ is 
a transitive permutation $F\mathfrak{S}_n$-module and is isomorphic to the induced module
$\ind_{\mathfrak{S}_k\times\mathfrak{S}_{n-k}}^{\mathfrak{S}_n}(F)$. 
Furthermore,
$$\mathcal{T}_k:=\{\b{i_1}\wedge\cdots\wedge \b{i_k}: 1\leq i_1<\cdots <i_k\leq n\}$$
is a transitive permutation basis of $\bigwedge^k(M^{(n-1,1)}_F)$.
\end{lemma}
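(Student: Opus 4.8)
The plan is to trace the $\mathfrak{S}_n$-action on the distinguished $F$-basis $\mathcal{T}_k$ of $\bigwedge^k(M^{(n-1,1)}_F)$ directly, the only real input being that $-1=1$ in $F$. Recall from \ref{noth hooks}(a) that $M^{(n-1,1)}_F$ has $F$-basis $\{\b{1},\ldots,\b{n}\}$ permuted by $\mathfrak{S}_n$ via $\sigma\cdot\b{i}=\b{\sigma(i)}$, and from \ref{noth exterior}(a) that $\sigma\cdot(\b{i_1}\wedge\cdots\wedge\b{i_k})=\b{\sigma(i_1)}\wedge\cdots\wedge\b{\sigma(i_k)}$ for $1\le i_1<\cdots<i_k\le n$. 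First I would reorder the factors on the right so that their subscripts increase; by the alternating property of $\wedge$ this introduces a factor $\sgn(\tau)$, where $\tau\in\mathfrak{S}_k$ is the permutation sorting $(\sigma(i_1),\ldots,\sigma(i_k))$. Since $\mathrm{char}(F)=2$ we have $\sgn(\tau)=1$ in $F$, so $\sigma\cdot(\b{i_1}\wedge\cdots\wedge\b{i_k})=\b{j_1}\wedge\cdots\wedge\b{j_k}\in\mathcal{T}_k$, where $\{j_1,\ldots,j_k\}=\sigma(\{i_1,\ldots,i_k\})$ written in increasing order. Hence $\mathfrak{S}_n$ permutes $\mathcal{T}_k$, i.e.\ $\mathcal{T}_k$ is a permutation basis.

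Next I would identify this action. The assignment $\b{i_1}\wedge\cdots\wedge\b{i_k}\mapsto\{i_1,\ldots,i_k\}$ is an $\mathfrak{S}_n$-equivariant bijection from $\mathcal{T}_k$ onto the set of $k$-element subsets of $\{1,\ldots,n\}$, with $\mathfrak{S}_n$ acting on the latter in the evident way; this action is transitive, so $\mathcal{T}_k$ is a transitive permutation basis and $\bigwedge^k(M^{(n-1,1)}_F)$ is a transitive permutation $F\mathfrak{S}_n$-module. To pin down its isomorphism type I would compute the stabilizer of the basis vector $\b{1}\wedge\cdots\wedge\b{k}$: its orbit has size $\binom{n}{k}$, so the stabilizer has order $k!(n-k)!$; it visibly contains the Young subgroup $\mathfrak{S}_{\{1,\ldots,k\}}\times\mathfrak{S}_{\{k+1,\ldots,n\}}\cong\mathfrak{S}_k\times\mathfrak{S}_{n-k}$ --- here one uses once more that $\sgn(\sigma)=1$ in $F$ for $\sigma\in\mathfrak{S}_{\{1,\ldots,k\}}$, so such $\sigma$ fixes $\b{1}\wedge\cdots\wedge\b{k}$ rather than merely scaling it --- and this subgroup already has order $k!(n-k)!$, so the two coincide. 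Since a transitive permutation module is isomorphic to the module induced from the trivial module of any point stabilizer, this gives $\bigwedge^k(M^{(n-1,1)}_F)\cong\ind_{\mathfrak{S}_k\times\mathfrak{S}_{n-k}}^{\mathfrak{S}_n}(F)$, as desired.

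There is no substantial obstacle here; the single point that genuinely requires $\mathrm{char}(F)=2$ --- and the only place where care is needed --- is that the alternating signs coming from reordering wedge factors must vanish in $F$, both when checking that $\mathfrak{S}_n$ permutes $\mathcal{T}_k$ and when checking that the Young subgroup fixes the chosen basis vector. Everything else is routine orbit--stabilizer bookkeeping for permutation modules.
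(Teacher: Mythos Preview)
Your proof is correct and follows essentially the same approach as the paper's own proof, which is much terser: the paper simply observes that in characteristic~$2$ the basis $\mathcal{T}_k$ is a transitive $\mathfrak{S}_n$-set and that the stabilizer of $\b{1}\wedge\cdots\wedge\b{k}$ is ``obviously'' $\mathfrak{S}_k\times\mathfrak{S}_{n-k}$. You have spelled out exactly the details the paper leaves implicit, including the two places where the vanishing of signs in characteristic~$2$ is used.
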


\begin{proof}
Since $\mathrm{char}(F)=2$, it is clear that the $F$-basis $\mathcal{T}_k$ of $\bigwedge^k(M^{(n-1,1)}_F)$
is a transitive $\mathfrak{S}_n$-set. Moreover, the stabilizer of $\b{1}\wedge\cdots\wedge \b{k}$ in $\mathfrak{S}_n$
is obviously $\mathfrak{S}_k\times\mathfrak{S}_{n-k}$, whence the claim.
\end{proof}

\begin{lemma}\label{lemma U}
Let $n\geq 4$, let $F$ be a field of characteristic $2$, and consider the following $F$-subspaces of $S_{F}$:
\begin{align*}
U_1&:=\left\{\sum_{2\leq i_1<i_2\leq n} \beta(i_1,i_2) b(i_1,i_2): \beta(i_1,i_2)\in F,\, \sum_{2\leq i_1<i_2\leq n} \beta(i_1,i_2)=0\right\}\,,\\
U_2&:=\left\{\beta\sum_{2\leq i_1<i_2\leq n}b(i_1,i_2):\beta\in F\right\}\,.
\end{align*}

\smallskip

{\rm (a)}\, The $F$-vector space $U_1$ is an $F\mathfrak{S}_n$-submodule of $S_{F}$ of codimension $1$.

\smallskip

{\rm (b)}\, If $n$ is odd, then $U_2$ is the unique trivial $F\mathfrak{S}_n$-submodule of $S_{F}$.

\smallskip

{\rm (c)}\, If $n\equiv 1\pmod{4}$, then $\{0\}\subseteq U_2\subseteq U_1\subseteq S_{F}$ is the unique composition series
of $S_F$; in particular, $S_F/U_1\cong F\cong U_2$ and $U_1/U_2\cong D^{(n-2,2)}_F$.

\smallskip

{\rm (d)}\, If $n\equiv 3\pmod{4}$, then $S_F=U_1\oplus U_2$; in particular, $U_1\cong D^{(n-2,2)}_F$.
\end{lemma}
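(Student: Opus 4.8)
The plan is to derive all four parts from two inputs: the observation that the $F$-linear map $\epsilon\colon S_F\to F$ determined by $\epsilon(b(i_1,i_2))=1$ for all $2\le i_1<i_2\le n$ (so that $U_1=\ker\epsilon$) is in fact $\mathfrak S_n$-equivariant, and the module structure of $S_F$ supplied by Proposition~\ref{prop S in char 2}. For (a), recall from \ref{noth hooks}(b) and Proposition~\ref{prop exterior}(a) that $S_F$ is an $F\mathfrak S_n$-submodule of $\bigwedge^2(M^{(n-1,1)}_F)$, and that by Lemma~\ref{lemma perm basis} the set $\{\b{i}\wedge\b{j}:1\le i<j\le n\}$ is a permutation $F\mathfrak S_n$-basis of $\bigwedge^2(M^{(n-1,1)}_F)$; hence the augmentation $a\colon\bigwedge^2(M^{(n-1,1)}_F)\to F$, $\b{i}\wedge\b{j}\mapsto 1$, is an $F\mathfrak S_n$-homomorphism. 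Writing $b(i_1,i_2)=\b{i_1}\wedge\b{i_2}+\b{1}\wedge\b{i_1}-\b{1}\wedge\b{i_2}$, a combination of three \emph{distinct} permutation basis vectors, one reads off $a(b(i_1,i_2))=1+1-1=1$, so that $a|_{S_F}=\epsilon$. Thus $U_1=\ker\epsilon$ is an $F\mathfrak S_n$-submodule, and it has codimension $1$ since $\epsilon$ is surjective (already $\epsilon(b(2,3))=1$). This proves (a).

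For (b)--(d) I would first analyse the generator $w:=\sum_{2\le i_1<i_2\le n}b(i_1,i_2)$ of $U_2$. Expanding each summand as above and collecting terms in the basis $\{\b{i}\wedge\b{j}\}$, and using that $F$ has characteristic $2$, the coefficient of $\b{1}\wedge\b{k}$ works out to $(n-k)+(k-2)=n-2$ for every $k\in\{2,\dots,n\}$, while the coefficient of $\b{i}\wedge\b{j}$ with $2\le i<j\le n$ is $1$; hence for $n$ odd one obtains $w=\sum_{1\le i<j\le n}\b{i}\wedge\b{j}$, the sum over the entire permutation basis, which is visibly fixed by $\mathfrak S_n$. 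So $U_2=Fw$ is a trivial $F\mathfrak S_n$-submodule of $S_F$, one-dimensional since $w\neq 0$. For its uniqueness I invoke Proposition~\ref{prop S in char 2}: if $n\equiv 1\pmod 4$ then $S_F$ is uniserial with socle $\cong F$, so the submodules of $S_F$ are totally ordered and the only one isomorphic to $F$ is the minimal one, which therefore equals $U_2$; if $n\equiv 3\pmod 4$ then $S_F\cong F\oplus D^{(n-2,2)}_F$ with $D^{(n-2,2)}_F$ a non-trivial simple module (its label $(n-2,2)$ being $2$-regular and different from $(n)$, cf.\ Theorem~\ref{thm Specht properties}(b)), hence $(D^{(n-2,2)}_F)^{\mathfrak S_n}=\{0\}$ and every $\mathfrak S_n$-fixed vector of $S_F$ lies in the $F$-summand. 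In either case $U_2$ is the unique trivial submodule, which is (b).

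For (c) and (d) I would locate $U_2$ relative to $U_1$ via $\epsilon(w)=\binom{n-1}{2}\cdot 1_F$. If $n\equiv 1\pmod 4$ then $\binom{n-1}{2}$ is even, so $U_2\subseteq U_1$; since $S_F$ is uniserial and $U_1$ is a proper submodule of codimension $1$, the totally ordered chain of submodules forces $U_1=\Rad(S_F)$, and as $U_2$ is the minimal submodule the chain $\{0\}\subsetneq U_2\subsetneq U_1\subsetneq S_F$ is the (unique, by uniseriality) composition series, with layers $S_F/U_1\cong F$, $U_1/U_2\cong D^{(n-2,2)}_F$ and $U_2\cong F$ read off from Proposition~\ref{prop S in char 2}(a); this gives (c). If $n\equiv 3\pmod 4$ then $\binom{n-1}{2}$ is odd, so $\epsilon(w)\neq 0$ and hence $U_1\cap U_2=\{0\}$; comparing dimensions, $\dim_F U_1+\dim_F U_2=\big(\binom{n-1}{2}-1\big)+1=\dim_F S_F$, so $S_F=U_1\oplus U_2$, whence $U_1\cong S_F/U_2$, which by Proposition~\ref{prop S in char 2}(b) has the single composition factor $D^{(n-2,2)}_F$; so $U_1\cong D^{(n-2,2)}_F$, which is (d). The only mildly delicate step is the coefficient bookkeeping for $w$ — in particular checking that the value $n-2$ at $\b{1}\wedge\b{k}$ holds uniformly, boundary indices $k=2,n$ included — together with keeping in mind that $D^{(n-2,2)}_F$ is genuinely non-trivial; the remainder is just assembling Proposition~\ref{prop S in char 2}, Lemma~\ref{lemma perm basis}, and elementary linear algebra.
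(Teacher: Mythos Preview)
Your proof is correct and follows essentially the same route as the paper: both identify $U_1$ as the kernel of the restriction to $S_F$ of the augmentation map on the transitive permutation module $\bigwedge^2(M^{(n-1,1)}_F)$, compute $w=\sum b(i_1,i_2)=\sum_{1\le i<j\le n}\b{i}\wedge\b{j}$ for $n$ odd, and then use the parity of $\binom{n-1}{2}$ together with Proposition~\ref{prop S in char 2} for (c) and (d). The only minor difference is in the uniqueness part of (b): the paper observes directly that $U_2$ equals the unique trivial submodule $V_2$ of the ambient transitive permutation module, so uniqueness inside $S_F$ follows without invoking Proposition~\ref{prop S in char 2}; your argument via the Loewy structure is equally valid but slightly less self-contained.
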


\begin{proof}
Set $M:=\bigwedge^2(M^{(n-1,1)}_F)$. By Lemma~\ref{lemma perm basis},  $M$ is a permutation $F\mathfrak{S}_n$-module with transitive permutation basis
$\mathcal{T}_2$.  Hence 
$$V_1:=\left\{\sum_{1\leq i_1<i_2\leq n}\gamma(i_1,i_2)\, \b{i_1}\wedge\b{i_2}: \sum_{1\leq i_1<i_2\leq n}\gamma(i_1,i_2)=0\right\}$$
is the unique $F\mathfrak{S}_n$-submodule of $M$ with trivial factor module. Moreover,
$$V_2:=\left\{\gamma\sum_{1\leq i_1<i_2\leq n} \b{i_1}\wedge\b{i_2}: \gamma\in F\right\}$$
is the unique trivial submodule of $M$.

\smallskip

(a)\, It is clear that $U_1$ has codimension 1 in $S$, since it is the kernel
of the $F$-epimorphism
$$S\to F\,,\; \sum_{2\leq i_1<i_2\leq n}\beta(i_1,i_2)b(i_1,i_2)\mapsto\sum_{2\leq i_1<i_2\leq n}\beta(i_1,i_2)\,.$$
We show that $U_1=V_1\cap S$, so that $U_1$ is an $F\mathfrak{S}_n$-submodule of $S$. 

So let $x:=\sum_{2\leq i_1<i_2\leq n}\beta(i_1,i_2)b(i_1,i_2)\in U_1$. We write $x$ as an $F$-linear combination of the
permutation basis
$\mathcal{T}_2$ of $M$. For $i\in\{2,\ldots,n\}$, we set
$$\beta(1,i):=\sum_{j=i+1}^n\beta(i,j)-\sum_{j=2}^{i-1}\beta(j,i)\in F\,.$$
Observe that, for $2\leq i_1<i_2\leq n$, we see $\beta(i_1,i_2)$ as a summand of $\beta(1,i_1)$, and $-\beta(i_1,i_2)$ as
a summand of $\beta(1,i_2)$.
Hence we obtain $x=\sum_{1\leq j_1<j_2\leq n}\beta(j_1,j_2)\, \b{j_1}\wedge\b{j_2}$, and
$$\sum_{1\leq j_1<j_2\leq n}\beta(j_1,j_2)=\sum_{2\leq i_1<i_2\leq n}\beta(i_1,i_2)+\sum_{i=2}^n\beta(1,i)=0+0=0\,,$$
so that $x\in V_1$.

This proves $U_1\subseteq V_1\cap S$. Since, for instance, $b(n-1,n)\notin V_1$, we have  $S\not\subseteq V_1$. Thus, since  $V_1$ has codimension 1 in $M$, also
$V_1\cap S$ has codimension 1 in $S$. Hence $\dim_F(U_1)=\dim_F(V_1\cap S)$, and $U_1=V_1\cap S$.

\smallskip

(b)\, Let $n$ be odd. It suffices to show that we have $U_2=V_2$, which holds, since
\begin{align*}
\sum_{2\leq i_1<i_2\leq n}b(i_1,i_2)&=\sum_{2\leq i_1<i_2\leq n}(\b{i_1}\wedge \b{i_2}+\b{1}\wedge \b{i_1}+\b{1}\wedge\b{i_2})\\
&=\sum_{2\leq i_1<i_2\leq n}\b{i_1}\wedge\b{i_2}+ (n-2)\sum_{i=2}^n\b{1}\wedge\b{i}=\sum_{1\leq j_1<j_2\leq n}\b{j_1}\wedge\b{j_2}\,.
\end{align*}

\smallskip

(c)\, If $n\equiv 1\pmod{4}$, then $\binom{n-1}{2}$ is even, so that $U_2\subseteq U_1$. We thus get
a strictly ascending chain of $F\mathfrak{S}_n$-submodules $\{0\}\subsetneq U_2\subsetneq U_1\subsetneq S$, and the assertion follows from
Proposition~\ref{prop S in char 2}(a).

\smallskip

(d)\, Lastly, if $n\equiv 3\pmod{4}$, then $\binom{n-1}{2}$ is odd, so that $U_2\not\subseteq U_1$. Hence, $U_1+U_2=U_1\oplus U_2=S$,
comparing dimensions.
By (b), we have $U_2\cong F$, thus $U_1\cong D^{(n-2,2)}_F$, by Proposition~\ref{prop S in char 2}(b).
\end{proof}

\begin{lemma}\label{lemma S1}
Let $n\geq 4$, and let $S:=S_{\ZZ_2}$. Let further
$$S_1:=\left\{\sum_{2\leq i_1<i_2\leq n}\alpha(i_1,i_2) b(i_1,i_2):  \alpha(i_1,i_2)\in\ZZ_2, \sum_{2\leq i_1<i_2\leq n}\alpha(i_1,i_2)\equiv 0\pmod{2} \right\}\subseteq S\,.$$
Then

\smallskip

{\rm (a)}\, $S_1$ is a maximal $\ZZ_2\mathfrak{S}_n$-sublattice of $S$; in particular, $2S\subseteq S_1$. Moreover, 
$S_1/2S\cong U_1$ as $\FF_2\mathfrak{S}_n$-module;

\smallskip

{\rm (b)}\, a $\ZZ_2$-basis
of $S_1$ is given by 
\begin{equation}\label{eqn basis S1} 
\{b(i_1,i_2)+b(n-1,n), 2b(n-1,n): 2\leq i_1<i_2\leq n\,, (i_1,i_2)\neq (n-1,n)\}\,.
\end{equation}
\end{lemma}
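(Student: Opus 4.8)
The plan is to realize $S_1$ as the kernel of a single $\ZZ_2\mathfrak{S}_n$-homomorphism $S\to\FF_2$, which makes part~(a) almost immediate, and then to verify part~(b) by a direct linear-algebra computation.

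For part~(a), I would consider the composite
\[
\varepsilon\colon\ S\ \xrightarrow{\ \rho\ }\ S/2S=S_{\FF_2}\ \xrightarrow{\ \lambda\ }\ \FF_2,
\]
where $\rho$ is reduction modulo $2$ and $\lambda$ is the ``sum of coordinates'' functional with respect to the standard basis $\{b(i_1,i_2)\}$. By Lemma~\ref{lemma U}(a), applied over $F=\FF_2$, the map $\lambda$ is an $\FF_2\mathfrak{S}_n$-epimorphism onto the trivial module with kernel $U_1$; hence $\varepsilon$ is a $\ZZ_2\mathfrak{S}_n$-homomorphism onto the trivial $\FF_2\mathfrak{S}_n$-module, and by construction $S_1=\ker\varepsilon$. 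Since $\varepsilon(b(2,3))=1$ (this is where $n\geq 4$ is used), $\varepsilon$ is surjective, so $S/S_1\cong\FF_2$ is a simple $\FF_2\mathfrak{S}_n$-module and $2S=\ker\rho\subseteq\ker\varepsilon=S_1$. By Lemma~\ref{lemma max sub}(b) (with $\mathfrak m=2\ZZ_2$) this already shows that $S_1$ is a maximal $\ZZ_2\mathfrak{S}_n$-sublattice of $S$ and, in particular, $2S\subseteq S_1$. Finally, since $\rho$ is surjective one has $S_1/2S=\rho(S_1)=\ker\lambda=U_1$, which is the asserted isomorphism of $\FF_2\mathfrak{S}_n$-modules.

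For part~(b), I would first observe that all the listed vectors lie in $S_1$, because the coordinate sums of $b(i_1,i_2)+b(n-1,n)$ and of $2b(n-1,n)$ are both equal to $2$; moreover there are exactly $\binom{n-1}{2}=\rk_{\ZZ_2}(S_1)$ of them. To see that they form a $\ZZ_2$-basis of $S_1$, take an arbitrary $x=\sum_{2\leq i_1<i_2\leq n}\alpha(i_1,i_2)\,b(i_1,i_2)\in S_1$ and write $\sum\alpha(i_1,i_2)=2c$ with $c\in\ZZ_2$; since then $\alpha(n-1,n)-\sum_{(i_1,i_2)\neq(n-1,n)}\alpha(i_1,i_2)=2\alpha(n-1,n)-2c$, one obtains
\[
x=\sum_{(i_1,i_2)\neq(n-1,n)}\alpha(i_1,i_2)\bigl(b(i_1,i_2)+b(n-1,n)\bigr)+\bigl(\alpha(n-1,n)-c\bigr)\cdot 2b(n-1,n),
\]
so the set spans $S_1$. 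Linear independence follows at once by comparing, in a putative $\ZZ_2$-relation, first the coefficients of the $b(i_1,i_2)$ with $(i_1,i_2)\neq(n-1,n)$ and then the coefficient of $b(n-1,n)$. Alternatively, ordering the basis $\{b(i_1,i_2)\}$ so that $b(n-1,n)$ comes last, the transition matrix to the listed set is block lower triangular with determinant $2$, whence by Proposition~\ref{prop det} its $\ZZ_2$-span has index $2=[S:S_1]$ in $S$ and must coincide with $S_1$.

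I do not anticipate any real obstacle. The only step that genuinely relies on the surrounding theory is the $\mathfrak{S}_n$-equivariance of the coordinate-sum map, which is exactly the content of Lemma~\ref{lemma U}(a) and the reason for routing $\varepsilon$ through $S_{\FF_2}$ rather than arguing on $S$ directly; the remaining small-$n$ bookkeeping (existence of the pair $(n-1,n)$ with $n-1\geq 2$ and of a pair with odd coordinate sum) causes no trouble once $n\geq 4$.
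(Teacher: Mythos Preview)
Your proof is correct and follows essentially the same route as the paper: both identify $S_1$ as the preimage of $U_1$ under the reduction map $S\to S/2S\cong S_{\FF_2}$ (you phrase this via the composite $\varepsilon=\lambda\circ\rho$, the paper via $S_1=\pi^{-1}(U_1)$), invoke Lemma~\ref{lemma max sub} for maximality, and verify the basis in~(b) by the same spanning computation.
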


\begin{proof}
We have the canonical projection
$$\pi:S\to S/2S\cong S_{\FF_2}\,,$$
which is a $\ZZ_2\mathfrak{S}_n$-epimorphism.
By Lemma~\ref{lemma U}, we have the $\FF_2\mathfrak{S}_n$-submodule $U_1$ of $S_{\FF_2}$, and $S_{\FF_2}/U_1\cong \FF_2$.
Obviously, $S_1$ is the preimage of the $U_1$ under $\pi$. Consequently, by Lemma~\ref{lemma max sub}, $S_1$ is a maximal $\ZZ_2\mathfrak{S}_n$-sublattice
of $S$. 

It is clear that the elements in (\ref{eqn basis S1}) are linearly independent over $\ZZ_2$ and contained in $S_1$.
If $x=\sum_{2\leq i_1<i_2\leq n}\beta(i_1,i_2)b(i_1,i_2)\in S_1$, then we have $\sum_{2\leq i_1<i_2\leq n}\beta(i_1,i_2)=2\alpha$,
for some $\alpha\in\ZZ_2$.
Setting $y:=\sum_{(j_1,j_2)\neq (n-1,n)}\beta(j_1,j_2)(b(j_1,j_2)+b(n-1,n))$ we deduce that $x-y=2\alpha b(n-1,n)$, hence
$x$ is a $\ZZ_2$-linear combination of the elements in (\ref{eqn basis S1}).
\end{proof}

\begin{lemma}\label{lemma S2}
Let $n\geq 5$ be odd. Let further
$$S_2:=\left\{ \sum_{2\leq i_1<i_2\leq n} \alpha(i_1,i_2)b(i_1,i_2): \alpha(i_1,i_2)\equiv \alpha (n-1,n)\pmod{2}\,,\text{ for all } i_1,i_2 \right\}\subseteq S\,.$$
Then

\smallskip

{\rm (a)}\, $S_2$ is a $\ZZ_2\mathfrak{S}_n$-sublattice of $S$ such that $2S\subseteq S_2$. Moreover, $S_2/2S\cong U_2$ as
$\FF_2\mathfrak{S}_n$-module;

\smallskip

{\rm (b)}\, one has
$$S_2=\left\{\alpha\sum_{2\leq i_1<i_2\leq n}b(i_1,i_2):\alpha\in \ZZ_2\right\}+2S\,;$$

\smallskip

{\rm (c)}\, a $\ZZ_2$-basis of $S_2$ is given by
\begin{equation}\label{eqn basis S2}
\left\{  \sum_{2\leq j_1<j_2\leq n}b(j_1,j_2),\, 2b(i_1,i_2): 2\leq i_1<i_2\leq n\,, (i_1,i_2)\neq (n-1,n) \right\}\,.
\end{equation}
\end{lemma}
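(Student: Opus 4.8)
The plan is to identify $S_2$ with the full preimage, under reduction modulo $2$, of the $\FF_2\mathfrak{S}_n$-submodule $U_2$ of $S_{\FF_2}$ produced in Lemma~\ref{lemma U}; once this is done, all three parts follow by elementary bookkeeping with the standard basis. Write $S:=S_{\ZZ_2}$ and let $\pi\colon S\to S/2S$ be the canonical $\ZZ_2\mathfrak{S}_n$-epimorphism, identifying $S/2S$ with $S_{\FF_2}$ via \ref{noth change rings}(f) so that $\pi$ carries each $b(i_1,i_2)$ to the basis vector of the same name. Put $v:=\sum_{2\le i_1<i_2\le n}b(i_1,i_2)$. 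For $x=\sum\alpha(i_1,i_2)\,b(i_1,i_2)\in S$, the image $\pi(x)=\sum\overline{\alpha(i_1,i_2)}\,b(i_1,i_2)$ lies in $U_2$ precisely when all residues $\overline{\alpha(i_1,i_2)}\in\FF_2$ agree, i.e.\ when all $\alpha(i_1,i_2)$ are congruent to one another modulo $2$ --- which is exactly the condition defining $S_2$. Hence $S_2=\pi^{-1}(U_2)$. Since $n$ is odd, $U_2$ is an $\FF_2\mathfrak{S}_n$-submodule of $S_{\FF_2}$ by Lemma~\ref{lemma U}(b), so $S_2$ is a $\ZZ_2\mathfrak{S}_n$-sublattice of $S$ with $2S=\ker(\pi)\subseteq S_2$, and $S_2/2S=\pi^{-1}(U_2)/\ker(\pi)\cong\pi(S_2)=U_2$ as $\FF_2\mathfrak{S}_n$-modules. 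This gives part (a).

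For part (b), I would argue that $v\in S_2$ (all its coefficients equal $1$) and $2S\subseteq S_2$ by (a), whence $\ZZ_2 v+2S\subseteq S_2$; conversely, if $x=\sum\alpha(i_1,i_2)\,b(i_1,i_2)\in S_2$ and $\varepsilon\in\{0,1\}$ denotes the common parity of its coefficients, then every coefficient of $x-\varepsilon v$ is even, so $x-\varepsilon v\in 2S$ and $x\in\ZZ_2 v+2S$. For part (c), set $B:=\{v\}\cup\{2b(i_1,i_2): 2\le i_1<i_2\le n,\ (i_1,i_2)\neq(n-1,n)\}$; this is a subset of $S_2$ of cardinality $\binom{n-1}{2}=\rk_{\ZZ_2}(S_2)$, the rank equality holding since $2S\subseteq S_2\subseteq S$. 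Reading off the coefficient of $b(n-1,n)$ --- which occurs only in $v$ --- any $\ZZ_2$-linear relation among the elements of $B$ forces the coefficient of $v$ to vanish, and then all the remaining coefficients vanish, so $B$ is $\ZZ_2$-linearly independent. For spanning, $2S$ is $\ZZ_2$-generated by $\{2b(i_1,i_2): 2\le i_1<i_2\le n\}$, and $2b(n-1,n)=2v-\sum_{(i_1,i_2)\neq(n-1,n)}2b(i_1,i_2)\in{}_{\ZZ_2}\langle B\rangle$, so $2S\subseteq{}_{\ZZ_2}\langle B\rangle$; combined with (b) this yields $S_2=\ZZ_2 v+2S={}_{\ZZ_2}\langle B\rangle$, proving (c).

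The only point calling for real care is the identification $S_2=\pi^{-1}(U_2)$: the $\mathfrak{S}_n$-stability of $S_2$ should not be verified by a direct computation with permutations acting on the $b(i_1,i_2)$, but instead extracted, as above, from Lemma~\ref{lemma U}, which already carries out the substantive work. Everything past that point is linear algebra over $\ZZ_2$, so I anticipate no genuine obstacle.
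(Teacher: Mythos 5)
Your proof is correct and follows essentially the same route as the paper's: identify $S_2$ as the preimage $\pi^{-1}(U_2)$ of the trivial submodule $U_2$ from Lemma~\ref{lemma U}(b) to get the $\ZZ_2\mathfrak{S}_n$-sublattice structure and the isomorphism $S_2/2S\cong U_2$, verify part (b) by parity of coefficients, and for part (c) check containment and linear independence of the given set and express $2b(n-1,n)=2\sum b(j_1,j_2)-\sum_{(j_1,j_2)\neq(n-1,n)}2b(j_1,j_2)$ to conclude spanning. You simply spell out the details the paper compresses into ``easily verified'' and ``clear''.
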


\begin{proof}
Let $n$ be odd. It is easily verified that $S_2=\{\alpha\sum_{2\leq i_1<i_2\leq n}b(i_1,i_2):\alpha\in \ZZ_2\}+2S$. 
By Lemma~\ref{lemma U}(b), we have the unique trivial submodule $U_2$ of $S_{\FF_2}$. Moreover, 
$S_2$ is the preimage of $U_2$ under the canonical $\ZZ_2\mathfrak{S}_n$-epimorphism
$$\pi:S\to S/2S\cong S_{\FF_2}\,.$$
So $S_2$ is a $\ZZ_2\mathfrak{S}_n$-sublattice of $S$.

It is clear that the elements in (\ref{eqn basis S2}) are linearly independent over $\ZZ_2$ and contained in $S_2$. Since
$\{2b(i_1,i_2): 2\leq i_1<i_2\leq n\}$ is a $\ZZ_2$-basis of $2S$, it suffices to show that $2b(n-1,n)$ is a 
$\ZZ_2$-linear combination of the elements in (\ref{eqn basis S2}). But this is clear, since
$2b(n-1,n)=2\sum_{2\leq i_1<i_2\leq n}b(i_1,i_2)-\sum_{(j_1,j_2)\neq (n-1,n)} 2b(j_1,j_2)$. This completes the proof of the lemma.
\end{proof}

\begin{lemma}\label{lemma S submodules}
Let $n\geq 5$, and let $S_1$ be the $\ZZ_2\mathfrak{S}_n$-sublattice of $S$ defined in Lemma~\ref{lemma S1}.
If $n$ is odd, then let further $S_2$ be the  $\ZZ_2\mathfrak{S}_n$-sublattice of $S$ defined in
Lemma~\ref{lemma S2}. Then

\smallskip

{\rm (a)}\, the $\FF_2\mathfrak{S}_n$-module $S_1/2S_1$ does not have a trivial factor module;

\smallskip

{\rm (b)}\, if $n$ is odd, then the $\FF_2\mathfrak{S}_n$-module $S_2/2S_2$ does not have  a trivial submodule.
\end{lemma}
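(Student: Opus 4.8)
The plan is to prove both parts by an explicit calculation with the $\ZZ_2$-bases constructed in Lemma~\ref{lemma S1} and Lemma~\ref{lemma S2}, using only the action of $\mathfrak S_n$ on the polytabloids $b(i_1,i_2)=(\b{i_1}-\b 1)\wedge(\b{i_2}-\b 1)$. In fact only the Young subgroup $\mathfrak S_{\{2,\dots,n\}}$ of permutations fixing $1$ is needed: it acts on the $b(i_1,i_2)$ by signed permutation, $\sigma\cdot b(i_1,i_2)=\pm\,b(\sigma\{i_1,i_2\})$, the sign recording whether $\sigma$ reverses the order of $\{i_1,i_2\}$. I would also isolate the following elementary fact, used in both parts. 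Write $X$ for the set of $2$-element subsets of $\{2,\dots,n\}$; then, for $n\ge5$, the $\mathfrak S_{n-1}$-module $\FF_2^{X}/\FF_2$ has no nonzero $\mathfrak S_{n-1}$-fixed point. Indeed, if $f\in\FF_2^X$ represents such a fixed point, then for each transposition $(a\,b)$ in $\mathfrak S_{\{2,\dots,n\}}$ the difference $(a\,b)\cdot f-f$ is a constant function, which vanishes at $\{a,b\}$ since $(a\,b)$ fixes $\{a,b\}$ setwise; so it is zero, $f$ is fixed by every transposition, hence by $\mathfrak S_{n-1}$, hence constant.

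For part~(b), a trivial submodule of $S_2/2S_2$ is a nonzero $\mathfrak S_n$-fixed vector $v$; I would show the stronger statement that $S_2/2S_2$ has no nonzero $\mathfrak S_{\{2,\dots,n\}}$-fixed vector. Using the $\ZZ_2$-basis of Lemma~\ref{lemma S2}(c), write $v=a\,\overline\Sigma+w$ uniquely, where $a\in\FF_2$, $\Sigma=\sum_{2\le j_1<j_2\le n}b(j_1,j_2)$ and $w$ lies in the span $W_0$ of the classes $\overline{2b(i_1,i_2)}$; since $\sum_{(i_1,i_2)}2b(i_1,i_2)=2\Sigma\in 2S_2$, these classes span $W_0\cong\FF_2^X/\FF_2$ as an $\mathfrak S_{\{2,\dots,n\}}$-module (the signs disappear modulo $2S_2$, as $4b(P)\in 2S_2$). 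Modulo $2S_2$ one finds $\sigma\cdot\overline\Sigma=\overline\Sigma+(\text{the sum of the }\overline{2b(j_1,j_2)}\text{ over the pairs }\{j_1,j_2\}\text{ whose order }\sigma\text{ reverses})$, which follows from $\sigma\Sigma-\Sigma=-2\sum(\text{those }b(j_1,j_2))$; for $\sigma=(2\,3)$ this correction is $\overline{2b(2,3)}\ne0$. Imposing $\sigma v=v$ for $\sigma=(2\,3)$ and comparing coefficients — using a $2$-subset fixed by $(2\,3)$, available since $n\ge5$, to pin down the constant ambiguity from $\sum_{(i_1,i_2)}\overline{2b(i_1,i_2)}=0$ — gives $a=0$; then $v=w$ is $\mathfrak S_{\{2,\dots,n\}}$-fixed in $W_0\cong\FF_2^X/\FF_2$, so $w=0$ by the fact above, and $v=0$. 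The hypothesis that $n$ be odd enters only through Lemma~\ref{lemma S2}, which needs it to ensure that $S_2$ is a $\ZZ_2\mathfrak S_n$-submodule at all.

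For part~(a), a trivial factor module of $S_1/2S_1$ is the same as a nonzero $\mathfrak S_n$-invariant $\ZZ_2$-linear map $\phi\colon S_1\to\FF_2$, and I would show even that any $\mathfrak S_{\{2,\dots,n\}}$-invariant such $\phi$ is zero. I replace the basis of Lemma~\ref{lemma S1}(b) by the basis $c'(i_1,i_2):=b(i_1,i_2)-b(n-1,n)$ (for $(i_1,i_2)\ne(n-1,n)$) together with $d:=2b(n-1,n)$; the point is that, with the convention $c'(n-1,n):=0$, the Young subgroup acts in $S_1/2S_1$ by $\sigma\cdot\overline{c'(i_1,i_2)}=\overline{c'(\sigma\{i_1,i_2\})}+\overline{c'(\sigma\{n-1,n\})}+\mu_\sigma(\{i_1,i_2\})\,\overline d$, where $\mu_\sigma(\{i_1,i_2\})$ records whether $\sigma$ reverses exactly one of the pairs $\{i_1,i_2\}$, $\{n-1,n\}$. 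Put $y'_{i_1i_2}:=\phi(c'(i_1,i_2))$ and $z:=\phi(d)$. Taking $\sigma=(2\,3)$ and $\{i_1,i_2\}=\{2,3\}$ — so $\sigma$ fixes $\{2,3\}$ and $\{n-1,n\}$ as sets but reverses $\{2,3\}$, which uses $n\ge5$ — the identity $\phi(\sigma c'(2,3))=\phi(c'(2,3))$ reads $y'_{2,3}=y'_{2,3}+z$, whence $z=0$. With $z=0$ the relations become $y'_{i_1i_2}=y'_{\sigma\{i_1,i_2\}}+y'_{\sigma\{n-1,n\}}$, which says the family $(y'_{i_1i_2})$ represents an $\mathfrak S_{n-1}$-fixed element of $\FF_2^X/\FF_2$; by the fact above it is constant, hence identically $0$ since $y'_{n-1,n}=0$. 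Thus $\phi=0$.

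The part I expect to need the most care is not the strategy but the bookkeeping in the two ``rewrite in the adapted basis'' steps: tracking the signs in $\sigma\cdot b(i_1,i_2)$, checking that each intermediate vector really lies in $S_1$, respectively $S_2$, and treating the distinguished pair $(n-1,n)$ correctly; it is precisely here that $n\ge5$ is used (as is, via Lemma~\ref{lemma S2}, the parity of $n$ in part~(b)). A couple of low-rank configurations, e.g.\ $n=5$ where $\{n-1,n\}=\{4,5\}$ overlaps the indices $2,3$ used above, are dealt with by the same identities after minor relabelling.
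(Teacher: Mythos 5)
Your proposal is correct, and it is a genuinely different and in places cleaner argument than the paper's. The paper proves part~(a) by an exhaustive case analysis: starting from a hypothetical epimorphism $\varphi$, it locates a basis element $b(i_1,i_2)+b(n-1,n)$ with $\varphi\bigl(\overline{b(i_1,i_2)+b(n-1,n)}\bigr)=1$ and then splits into three cases according to $\{i_1,i_2\}\cap\{n-1,n\}$, driving each to a contradiction by evaluating $\varphi$ after applying specific transpositions. For part~(b), the paper first shows that a fixed vector must lie in $2S$, so it descends to a trivial submodule of $2S/2S_2\cong S/S_2$; ruling that out then invokes the M\"uller--Orlob structure of $S_{\FF_2}$ (Proposition~\ref{prop S in char 2} via Lemma~\ref{lemma U}) and requires a separate treatment of $n\equiv1$ and $n\equiv3\pmod4$.

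Your proof replaces both of these by a single elementary ``key fact'' --- that $\FF_2^X/\FF_2$ has no nonzero $\mathfrak S_{n-1}$-fixed vector when $X$ is the set of $2$-subsets of $\{2,\dots,n\}$ --- and reduces each part to it by an explicit change of basis and a one-step computation with $\sigma=(2\,3)$ to kill the extra coordinate ($z=\phi(d)$ in~(a), the $\overline\Sigma$-coefficient $a$ in~(b)). I checked the formula $\sigma\cdot\overline{c'(i_1,i_2)}=\overline{c'(\sigma\{i_1,i_2\})}+\overline{c'(\sigma\{n-1,n\})}+\mu_\sigma(\{i_1,i_2\})\,\overline d$ case by case (including the degenerate cases $\sigma\{i_1,i_2\}=\{n-1,n\}$ and $\sigma\{n-1,n\}=\{n-1,n\}$), and the identification $2S/2S_2\cong\FF_2^X/\FF_2$ with the disappearance of signs modulo $2S_2$; both are right. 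The upshots of your approach are worth noting: it is uniform in $n$ (no $n\bmod4$ distinction); it avoids any reliance on Proposition~\ref{prop S in char 2} in part~(b), so the lemma becomes independent of the Young-module input; and it proves slightly more, namely that there is no nonzero $\mathfrak S_{\{2,\dots,n\}}$-fixed vector (resp.\ equivariant map), not merely no $\mathfrak S_n$-fixed one. One very small remark: your closing caveat about ``$\{n-1,n\}=\{4,5\}$ overlapping the indices $2,3$'' when $n=5$ is unnecessary --- $\{4,5\}$ and $\{2,3\}$ are disjoint, and the argument runs unchanged; what $n\ge5$ actually buys you is exactly what you used, namely the existence of a $2$-subset of $\{4,\dots,n\}$ and the disjointness of $\{2,3\}$ from $\{n-1,n\}$.
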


\begin{proof}
Assume first that $S_1/2 S_1$ has a trivial factor module, so that there is an $\FF_2\mathfrak{S}_n$-epimorphism
$$\varphi:S_1/2S_1\to \FF_2\,.$$
Thus $\varphi(\sigma x+2S_1)=\sigma\varphi(x+2S_1)=\varphi(x+2S_1)$, for all $x\in S_1$ and all $\sigma\in\mathfrak{S}_n$. 
In the following, let 
$${}^{-}: S_1\to S_1/2S_1$$
be the canonical epimorphism, and recall the $\ZZ_2$-basis of $S_1$ from (\ref{eqn basis S1}). If 
$\varphi(\overline{b(i_1,i_2)+b(n-1,n)})=0$, for all $2\leq i_1<i_2\leq n$ with $(i_1,i_2)\neq (n-1,n)$, then also
$$0=(n-1,n)\varphi(\overline{b(n-3,n-2)+b(n-1,n)})=\varphi(\overline{b(n-2,n-3)-b(n-1,n)})\,,$$ 
and then $0=\varphi(\overline{2b(n-1,n)})$.
But this would imply $\varphi=0$, a contradiction. 
Thus, we have some $2\leq i_1<i_2\leq n$ with $(i_1,i_2)\neq (n-1,n)$ such
that $\varphi(\overline{b(i_1,i_2)+b(n-1,n)})=1$. We distinguish three cases:

\smallskip

{\it Case 1:} $\{i_1,i_2\}\cap \{n-1,n\}=\emptyset$. Then we have
$1=(i_1,i_2)\varphi(\overline{b(i_1,i_2)+b(n-1,n)})=\varphi(\overline{-b(i_1,i_2)+b(n-1,n)})$, hence
$0=\varphi(\overline{2b(n-1,n)})$. 
Moreover, 
\begin{align*}
1&= (i_2,n-1)\varphi(\overline{b(i_1,i_2)+b(n-1,n)})=  \varphi(\overline{b(i_1,n-1)+b(i_2,n)})\\
&=(i_2,n)\varphi(\overline{b(i_1,n-1)+b(i_2,n)})=\varphi(\overline{b(i_1,n-1)-b(i_2,n)})\\
&=\varphi(\overline{b(i_1,n-1)+b(n-1,n)})-\varphi(\overline{b(i_2,n)+b(n-1,n)})\,.
\end{align*}
But then
\begin{align*}
\varphi(\overline{b(i_1,n-1)+b(n-1,n)})&=(i_1,i_2)(n-1,n)\varphi(\overline{b(i_1,n-1)+b(n-1,n)})\\
&=\varphi(\overline{b(i_2,n)-b(n-1,n)})\\
&=\varphi(\overline{b(i_2,n)+b(n-1,n)})-\varphi(\overline{2b(n-1,n)})\\
&=\varphi(\overline{b(i_2,n)+b(n-1,n)})\,,
\end{align*}
which yields the contradiction $1=0$.


\smallskip

{\it Case 2:} $i_2=n$, $i_1<n-1$. 
Then
\begin{align*}
\varphi(\overline{b(i_1,n-1)-b(n-1,n)})&=(n-1,n)\varphi(\overline{b(i_1,n)+b(n-1,n)})=1\\
&=(i_1,n)\varphi(\overline{b(i_1,n)+b(n-1,n)})=\varphi(\overline{-b(i_1,n)-b(i_1,n-1)})\\
&=\varphi(\overline{b(i_1,n)+b(i_1,n-1)})\,,
\end{align*}
and hence
\begin{align*}
0&=\varphi(\overline{b(i_1,n)+b(i_1,n-1)})-\varphi(\overline{b(i_1,n-1)-b(n-1,n)})\\
&=\varphi(\overline{b(i_1,n)+b(n-1,n)})=1\,,
\end{align*}
a contradiction.

\smallskip

{\it Case 3:} $i_2=n-1$. Let $j\in\{2,\ldots,n\}\smallsetminus\{i_1,n-1,n\}$. This time we get
\begin{align*}
\varphi(\overline{b(j,n-1)+b(n-1,n)})&=(i_1,j)\varphi(\overline{b(i_1,n-1)+b(n-1,n)})=1\\
&=(j,n)\varphi(\overline{b(i_1,n-1)+b(n-1,n)})=\varphi(\overline{b(i_1,n-1)-b(j,n-1)})\,,
\end{align*}
hence we derive the contradiction
\begin{align*}
0&=\varphi(\overline{b(i_1,n-1)-b(j,n-1)})+\varphi(\overline{b(j,n-1)+b(n-1,n)})\\
&=\varphi(\overline{b(i_1,n-1)+b(n-1,n)})=1\,.
\end{align*}

This finally proves that $S_1/2S_1$ cannot have a trivial factor module.

\medskip

Next let $n$ be odd, and assume that $S_2/2S_2$ has a trivial submodule. Then there is some $x\in S_2\smallsetminus 2S_2$ such that $x-\sigma\cdot x\in  2S_2$, for all $\sigma\in\mathfrak{S}_n$. We show that this forces $x\in 2S$. To this end, we write $x=\sum_{2\leq i_1<i_2\leq n}\alpha(i_1,i_2)b(i_1,i_2)$, for uniquely determined $\alpha(i_1,i_2)\in\ZZ_2$.
By Lemma~\ref{lemma S2}, we have $\alpha(i_1,i_2)\equiv \alpha(n-1,n)\pmod{2}$, for all $2\leq i_1<i_2\leq n$.
Thus, if $\alpha(n-1,n)\equiv 0\pmod{2}$, then $x\in 2S$. So assume that $\alpha(i_1,i_2)\in \ZZ_2^\times$, for
all $2\leq i_1<i_2\leq n$. Then we may further assume that $\alpha(n-1,n)=1$, and get
$$x-(n-1,n)x=2b(n-1,n)+\sum_{i=2}^{n-2}(\alpha(i,n-1)-\alpha(i,n))b(i,n-1) +\sum_{i=2}^{n-2}(\alpha(i,n)-\alpha(i,n-1))b(i,n)\,;$$
in particular,  the coefficient of $x-(n-1,n)x$ at the basis element $b(n-3,n-2)$ of $S$ is 0, the coefficient at $b(n-1,n)$ is 2.
But, since $x-(n-1,n)x\in 2S_2$, Lemma~\ref{lemma S2} would then imply $2\equiv 0\pmod{4}$, a contradiction.
Hence $x\in 2S$, so that $x+2S_2$ also spans a trivial submodule of the $\FF_2\mathfrak{S}_n$-module $2S/2S_2\cong S/S_2$.

We now distinguish two cases: if $n\equiv 1 \pmod{4}$, then, by Lemma~\ref{lemma U} and Proposition~\ref{prop S in char 2},
we have $2S\subseteq S_2\subseteq S_1\subseteq S$ and $S/S_2\cong (S/2S)/(S_2/2S)\cong S_{\FF_2}/U_2$ is uniserial
with simple socle $D^{(n-2,2)}_{\FF_2}$; in particular, $S/S_2$ does not have  a trivial submodule, a contradiction.

If $n\equiv 3\pmod{4}$, then, by Lemma~\ref{lemma U}, we have
$$S/S_2\cong S_{\FF_2}/U_2\cong U_1\cong D^{(n-2,2)}_{\FF_2}\not\cong \FF_2\,,$$
and we obtain again a contradiction.

This completes the proof of the lemma.
\end{proof}

\begin{lemma}\label{lemma S submodules 2}
Let $n\geq 5$ be such that $n\equiv 1\pmod{4}$, and let $S_1$ and $S_2$ be the $\ZZ_2\mathfrak{S}_n$-sublattices of $S$ defined in Lemma~\ref{lemma S1}
and Lemma~\ref{lemma S2}, respectively. 
Then $2S/2S_1$ is the unique trivial submodule of the $\FF_2\mathfrak{S}_n$-module $S_1/2S_1$, and
$(S_2/2S_2)/(2S/2S_2)$ is the unique trivial factor module of the $\FF_2\mathfrak{S}_n$-module $S_2/2S_2$.
\end{lemma}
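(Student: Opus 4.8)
The entire statement is a direct computation with the explicit $\ZZ_2$-bases of $S_1$ and $S_2$ furnished by Lemmas~\ref{lemma S1} and~\ref{lemma S2}. Throughout write $f_i := e(1,i) = \b i - \b 1$, so that $b(i_1,i_2) = f_{i_1}\wedge f_{i_2}$ for $2 \le i_1 < i_2 \le n$, and set $w := \sum_{2 \le i_1 < i_2 \le n} b(i_1,i_2) \in S_2$. The single fact underlying everything is that for the transposition $\sigma = (2,3)$ one has the identity
\[
\sigma w - w = -2\,b(2,3).
\]
Indeed $\sigma$ fixes each $f_i$ with $i \ge 4$ and interchanges $f_2$ and $f_3$, hence permutes the summands of $w$ among themselves, pairing $b(2,i) \leftrightarrow b(3,i)$ for $i \ge 4$ and fixing the pairs inside $\{4,\dotsc,n\}$, while sending $b(2,3) = f_2 \wedge f_3$ to $f_3 \wedge f_2 = -b(2,3)$. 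I shall also use that, directly from the definition of $S_1$ in Lemma~\ref{lemma S1},
\[
2S_1 = \Bigl\{ \textstyle\sum e(i_1,i_2)\,b(i_1,i_2) : e(i_1,i_2) \in 2\ZZ_2,\ \sum e(i_1,i_2) \in 4\ZZ_2 \Bigr\}.
\]

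For the first assertion, note that $2S_1 \subseteq 2S \subseteq S_1$ with $[S:S_1] = 2$, so $2S/2S_1 \cong \FF_2$ is a one-dimensional, hence trivial, $\FF_2\mathfrak{S}_n$-submodule of $S_1/2S_1$; I claim it is the only one. First, $(S_1/2S_1)/(2S/2S_1) \cong S_1/2S \cong U_1$ by Lemma~\ref{lemma S1}, and $U_1$ is uniserial with simple socle $U_2 \cong \FF_2$ by Lemma~\ref{lemma U}(c); hence any trivial submodule $X$ of $S_1/2S_1$ has image contained in $U_2$, so $X + 2S/2S_1 \subseteq S_2/2S_1$. Since $S_2 \subseteq S_1$ (because $U_2 \subseteq U_1$ for $n \equiv 1 \pmod 4$) and $\dim_{\FF_2}(S_2/2S_1) = 2$, a trivial submodule $X \ne 2S/2S_1$ would force $S_2/2S_1$ to be a sum of trivial modules, i.e.\ $\mathfrak{S}_n$ would act trivially on $S_2/2S_1$. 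As $\mathfrak{S}_n$ already acts trivially on the one-dimensional module $S/S_1$, we have $(\sigma-1)(2S) \subseteq 2S_1$ for all $\sigma$, so together with $S_2 = \ZZ_2 w + 2S$ (Lemma~\ref{lemma S2}) this would give $(\sigma-1)w \in 2S_1$ for every $\sigma \in \mathfrak{S}_n$. But $(2,3)w - w = -2\,b(2,3)$, and by the description of $2S_1$ above this element does not lie in $2S_1$, since the sum of its coefficients equals $-2 \notin 4\ZZ_2$. This contradiction proves that $2S/2S_1$ is the unique trivial submodule of $S_1/2S_1$.

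For the second assertion, by Lemma~\ref{lemma S2} the canonical surjection $S_2 \twoheadrightarrow S_2/2S \cong U_2 \cong \FF_2$ has kernel $2S$, so it induces a surjection $S_2/2S_2 \twoheadrightarrow \FF_2$ with kernel $2S/2S_2$; thus $(S_2/2S_2)/(2S/2S_2)$ is a trivial factor module. To show it is the only one, identify $\Hom_{\FF_2\mathfrak{S}_n}(S_2/2S_2,\FF_2)$ with $\Hom_{\ZZ_2\mathfrak{S}_n}(S_2,\FF_2)$ and use the $\ZZ_2$-basis $\{w\} \cup \{2b(i_1,i_2) : (i_1,i_2)\ne(n-1,n)\}$ of $S_2$ from Lemma~\ref{lemma S2}(c). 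Any $\phi$ in this group is determined by $\phi(w)$ and the values $\phi(2b(i_1,i_2))$; since $\mathrm{Stab}_{\mathfrak{S}_n}(1) = \mathfrak{S}_{\{2,\dotsc,n\}}$ acts transitively on the pairs $\{i_1,i_2\}$ and sends $2b(i_1,i_2)$ to $\pm 2b(\,\cdot\,)$, the $\mathfrak{S}_n$-invariance of $\phi$ forces all these values to equal a single $\epsilon \in \FF_2$ (compatibly with the relation $2b(n-1,n) = 2w - \sum_{(i_1,i_2)\ne(n-1,n)} 2b(i_1,i_2)$). Applying $\sigma=(2,3)$ to $w$ and using $\sigma w - w = -2\,b(2,3)$ then yields $\phi(w) = \phi(\sigma w) = \phi(w) + \epsilon$, so $\epsilon = 0$, and hence $\phi$ is a scalar multiple of the canonical map $S_2 \to S_2/2S \cong \FF_2$. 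Therefore $\dim_{\FF_2}\Hom_{\FF_2\mathfrak{S}_n}(S_2/2S_2,\FF_2) = 1$, which means $S_2/2S_2$ has a unique trivial factor module, necessarily $(S_2/2S_2)/(2S/2S_2)$.

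The only genuinely non-formal point is the bookkeeping behind the identity $\sigma w - w = -2\,b(2,3)$ and the coefficient-sum description of $2S_1$; I would write those two out carefully and treat the rest as routine. Alternatively, the first assertion can be proved by computing $(S_1/2S_1)^{\mathfrak{S}_n}$ directly: running the same kind of computation with all adjacent transpositions $(a,a+1)$, $2 \le a \le n-1$, shows that $x = \sum d(i_1,i_2)\,b(i_1,i_2) \in S_1$ with $x + 2S_1$ fixed by $\mathfrak{S}_n$ must satisfy $d(a,a+1) \in 2\ZZ_2$ and $d(a,j) \equiv d(a+1,j) \pmod 2$ for all indices, and these congruences chain together to give $d(i_1,i_2) \in 2\ZZ_2$ for every pair, i.e.\ $x \in 2S$; this avoids invoking the module structure of $U_1$ at the cost of a longer index computation.
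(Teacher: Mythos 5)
Your proof is correct. The key identity $(2,3)w - w = -2b(2,3)$ is right, the coefficient-sum description of $2S_1$ is right, and the dimension and index computations check out ($\dim_{\FF_2}(S_2/2S_1) = 2$ follows from $[S_2:2S]=2$ and $[2S:2S_1]=[S:S_1]=2$).

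The argument is closely related to the paper's but is organized differently. The paper also does a coefficient computation with a single transposition (it uses $(n-1,n)$ rather than $(2,3)$ and obtains $x-(n-1,n)x = 2b(n-1,n) + \cdots$, the same kind of identity), so the underlying calculation is shared. Where you diverge: for the first assertion, the paper argues element-wise --- take any $x$ with $x+2S_1$ spanning a trivial submodule, use the uniseriality of $S/2S$ to force $x\in S_2$, then derive a contradiction from $x-(n-1,n)x\notin 2S_1$ if $x\notin 2S$ --- whereas you work at the module level, using a dimension count on the two-dimensional quotient $S_2/2S_1$ to reduce to showing $\mathfrak{S}_n$ does not act trivially on it; the two are equivalent, and your formulation makes the structural reason for uniqueness a bit more visible. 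The bigger difference is in the second assertion: the paper first invokes Lemma~\ref{lemma S submodules}(b) (no trivial submodule of $S_2/2S_2$) to pin down $2S_1/2S_2\cong D^{(n-2,2)}_{\FF_2}$ as the unique simple submodule, deduces it lies in $\ker\varphi$, and then does a targeted coefficient check; you instead compute $\Hom_{\FF_2\mathfrak{S}_n}(S_2/2S_2,\FF_2)\cong\Hom_{\ZZ_2\mathfrak{S}_n}(S_2,\FF_2)$ directly on the explicit $\ZZ_2$-basis of $S_2$ and show it is one-dimensional. Your version of the second part is shorter, self-contained, and avoids the dependence on Lemma~\ref{lemma S submodules}(b) altogether; the paper's version buys a concrete identification of the socle of $S_2/2S_2$ as a by-product, which feeds naturally into the uniseriality statement in Proposition~\ref{prop S uniserial}.
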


\begin{proof}
Suppose that $n\equiv 1\pmod{4}$. In consequence of Lemma~\ref{lemma S1} and Lemma~\ref{lemma S2}, 
we have $2S\subseteq S_2\subseteq S_1\subseteq S$, since $\binom{n-1}{2}$ is even.

We first show that $2S/2S_1$ is the unique trivial submodule of $S_1/2S_1$. By Lemma~\ref{lemma U}, we have
$$2S/2S_1\cong S/S_1\cong (S/2S)/(S_1/2S)\cong S_{\FF_2}/U_1\cong \FF_2\,,$$
so that $2S/2S_1$ is indeed a trivial submodule of $S_1/2S_1$. Now let $x\in S_1$ be such that $x+2S_1$ spans a trivial
submodule of $S_1/2S_1$, that is, $x\notin 2S_1$ and $x-\sigma x\in 2S_1$, for all $\sigma\in\mathfrak{S}_n$.  We need  to show
that $x\in 2S$. Assume not. Since $2S_1\subseteq 2S$, $x+2S$ then also spans  a trivial submodule of $S/2S$. By Proposition~\ref{prop S in char 2} and Lemma~\ref{lemma U}, the $\FF_2\mathfrak{S}_n$-module $S/2S$ is uniserial, and $S_2/2S\cong U_2$ is
its unique trivial submodule. Hence $x+2S\in S_2/2S$ and $x\in S_2$. So, by Lemma~\ref{lemma S2}, we have
$$x=\sum_{2\leq i_1<i_2\leq n}\alpha(i_1,i_2)b(i_1,i_2)\,,$$
for $\alpha(i_1,i_2)\in\ZZ_2$ such that $\alpha(i_1,i_2)\equiv \alpha(n-1,n)\pmod{2}$, for
all $2\leq i_1<i_2\leq n$. Since we are assuming $x\notin 2S$, we must have $\alpha(i_1,i_2)\in \ZZ_2^\times$, for all
$2\leq i_1<i_2\leq n$. We may thus suppose that $\alpha(n-1,n)=1$. Then we get
$$x-(n-1,n)x=2b(n-1,n)+\sum_{i=2}^{n-2}(\alpha(i,n-1)-\alpha(i,n))b(i,n-1) +\sum_{i=2}^{n-2}(\alpha(i,n)-\alpha(i,n-1))b(i,n)\,.$$
But then the sum of the coefficients at the basis elements of $S$ is $2\not\equiv 0\pmod{4}$, so that $x-(n-1,n)x\notin 2S_1$, a contradiction.

Consequently, we must have $x\in 2S$, as claimed.

\medskip

Now we show that $(S_2/2S_2)/(2S/2S_2)$ is the unique trivial factor module of $S_2/2S_2$. To this end, let 
$$\varphi:S_2/2S_2\to \FF_2$$
be an $\FF_2\mathfrak{S}_n$-epimorphism. We show that $\ker(\varphi)$ contains $2S/2S_2$. Since, by Lemma~\ref{lemma S2}, we
have
$$(S_2/2S_2)/(2S/2S_2)\cong S_2/2S\cong U_2\cong \FF_2\,,$$
this forces $\ker(\varphi)=2S/2S_2$, and we are done.

Let  ${}^-:S_2\to S_2/2S_2$ be the canonical epimorphism. Since $S_2/2S_2$ has composition factors $\FF_2$ with multiplicity 
2 and $D^{(n-2,2)}_{\FF_2}$ with multiplicity 1 and since, by Lemma ~\ref{lemma S submodules}, $S_2/2S_2$ does not have a 
trivial submodule, $S_2/2S_2$ has a unique simple $\FF_2\mathfrak{S}_n$-submodule, namely
$$2S_1/2S_2\cong S_1/S_2\cong (S_1/2S)/(S_2/2S)\cong U_1/U_2\cong D^{(n-2,2)}_{\FF_2}\,.$$
Thus $2S_1/2S_2\subseteq \ker(\varphi)$. Recall that $b(i_1,i_2)+b(n-1,n)\in S_1$, hence
$2b(i_1,i_2)+2b(n-1,n)\in 2S_1$
for all $2\leq i_1<i_2\leq n$. This implies 
$$0=\varphi(\overline{2b(i_1,i_2)+2b(n-1,n)})=\varphi(\overline{2b(i_1,i_2)})+\varphi(\overline{2b(n-1,n)})\,,$$
that is, $\varphi(\overline{2b(i_1,i_2)})=\varphi(\overline{2b(n-1,n)})$, for all $2\leq i_1<i_2\leq n$. Assume that
$\varphi(\overline{2b(n-1,n)})=1$. Then we get
\begin{align*}
\varphi(\overline{\sum_{2\leq i_1<i_2\leq n}b(i_1,i_2)})+1&=\varphi(\overline{\sum_{2\leq i_1<i_2\leq n}b(i_1,i_2)}-\overline{2b(n-1,1)})\\
&=\varphi(\overline{\sum_{(i_1,i_2)\neq (n-1,n)}b(i_1,i_2)-b(n-1,n)})\\
&=(n-1,n)\varphi(\overline{\sum_{2\leq i_1<i_2\leq n}b(i_1,i_2)})=\varphi(\overline{\sum_{2\leq i_1<i_2\leq n}b(i_1,i_2)})\,.
\end{align*}
a contradiction. Therefore, $\varphi(\overline{2b(i_1,i_2)})=0$, for all $2\leq i_1<i_2\leq n$, proving $2S/2S_2\subseteq \ker(\varphi)$.
\end{proof}

\begin{prop}\label{prop S uniserial}
Let $n\geq 5$ be odd, and let $S_1$ and $S_2$ be the $\ZZ_2\mathfrak{S}_n$-sublattices of $S$ defined in Lemma~\ref{lemma S1}
and Lemma~\ref{lemma S2}, respectively. Then the $\FF_2\mathfrak{S}_n$-modules $S_1/2S_1$ and $S_2/2S_2$ are uniserial.
More precisely, 

\smallskip

{\rm (a)}\, if $n\equiv 1\pmod{4}$, then $S_1/2S_2$ and $S_2/2S_2$ have the following Loewy series:
$$S_1/2S_1\sim\begin{bmatrix}
D^{(n-2,2)}_{\FF_2}\\
\FF_2\\
\FF_2
\end{bmatrix}\,,\quad
S_2/2S_2\sim\begin{bmatrix}
\FF_2\\
\FF_2\\
D^{(n-2,2)}_{\FF_2}
\end{bmatrix}\,;$$

\smallskip

{\rm (b)}\, if $n\equiv 3\pmod{4}$, then  $S_1/2S_2$ and $S_2/2S_2$ have the following Loewy series:
$$S_1/2S_1\sim\begin{bmatrix}
D^{(n-2,2)}_{\FF_2}\\
\FF_2
\end{bmatrix}\,,\quad
S_2/2S_2\sim\begin{bmatrix}
\FF_2\\
D^{(n-2,2)}_{\FF_2}
\end{bmatrix}\,.$$
\end{prop}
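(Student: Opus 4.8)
The plan is to derive everything from the composition-factor data of $S_1/2S_1$ and $S_2/2S_2$ together with the socle and head restrictions already established in Lemma~\ref{lemma S submodules} and Lemma~\ref{lemma S submodules 2}. First I would record the composition factors. For $i\in\{1,2\}$ one has $2S_i\subseteq 2S\subseteq S_i$, so $S_i/2S_i$ contains the submodule $2S/2S_i\cong S/S_i$ with quotient $S_i/2S\cong U_i$ (Lemma~\ref{lemma S1}, Lemma~\ref{lemma S2}); alternatively, since $S_1$, $S_2$ and $S=S_{\ZZ_2}$ are all $\ZZ_2$-forms of $S_{\QQ_2}$, one may quote Brauer--Nesbitt (Proposition~\ref{prop notdivides}(a)). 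Either way, combining Lemma~\ref{lemma U} with Proposition~\ref{prop S in char 2} gives: if $n\equiv 3\pmod 4$, then $S_1/2S_1$ and $S_2/2S_2$ each have length $2$ with composition factors $D^{(n-2,2)}_{\FF_2}$ and $\FF_2$; if $n\equiv 1\pmod 4$, then each has length $3$ with composition factors $D^{(n-2,2)}_{\FF_2},\FF_2,\FF_2$. Here I use that $D^{(n-2,2)}_{\FF_2}\not\cong\FF_2$, so that $D^{(n-2,2)}_{\FF_2}$ has multiplicity one.

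For part~(b) this is essentially enough, since a module of length $2$ is uniserial unless semisimple. If $S_1/2S_1$ were semisimple it would have a trivial quotient, contradicting Lemma~\ref{lemma S submodules}(a); hence $S_1/2S_1$ is uniserial, and as it has no trivial head its Loewy series must be $\begin{bmatrix}D^{(n-2,2)}_{\FF_2}\\ \FF_2\end{bmatrix}$. Dually, $S_2/2S_2$ is uniserial by Lemma~\ref{lemma S submodules}(b) and, having no trivial socle, has Loewy series $\begin{bmatrix}\FF_2\\ D^{(n-2,2)}_{\FF_2}\end{bmatrix}$.

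For part~(a), consider $S_1/2S_1$, of length $3$. Since it has no trivial quotient (Lemma~\ref{lemma S submodules}(a)) and $D^{(n-2,2)}_{\FF_2}$ occurs once, its head equals the simple module $D^{(n-2,2)}_{\FF_2}$, so $\Rad(S_1/2S_1)$ has length $2$ with both composition factors trivial. Being a nonzero submodule with only trivial composition factors, $\Rad(S_1/2S_1)$ has a nonzero trivial socle, which lies in $\Soc(S_1/2S_1)$ and so, by Lemma~\ref{lemma S submodules 2}, equals $2S/2S_1$; a copy of $D^{(n-2,2)}_{\FF_2}$ inside $\Soc(S_1/2S_1)$ would yield a colength-one submodule with trivial quotient, which is impossible, so $\Soc(S_1/2S_1)=2S/2S_1$ is simple. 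Therefore $\Rad(S_1/2S_1)$ has simple socle, hence is uniserial with layers $\FF_2,\FF_2$, and one checks that $S_1/2S_1$ is uniserial with Loewy series $\begin{bmatrix}D^{(n-2,2)}_{\FF_2}\\ \FF_2\\ \FF_2\end{bmatrix}$; the only remaining point is that $(S_1/2S_1)/\Soc(S_1/2S_1)$ has simple socle, which again holds since a trivial summand there would produce a trivial quotient of $S_1/2S_1$. The case of $S_2/2S_2$ is dual: Lemma~\ref{lemma S submodules}(b) forces $\Soc(S_2/2S_2)=D^{(n-2,2)}_{\FF_2}$, and Lemma~\ref{lemma S submodules 2} (uniqueness of the trivial quotient, together with the fact that a head $\FF_2\oplus D^{(n-2,2)}_{\FF_2}$ would drop a trivial module into the radical, hence into the socle) forces a simple trivial head, whence $S_2/2S_2$ is uniserial with Loewy series $\begin{bmatrix}\FF_2\\ \FF_2\\ D^{(n-2,2)}_{\FF_2}\end{bmatrix}$.

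Since all the substantive input already sits in the preceding lemmas, I do not expect a genuine obstacle here: what remains is the elementary module-theoretic observation that a three-layer module with simple socle, simple head, and a multiplicity-one non-trivial composition factor must be uniserial, and the only care required is to keep the socle/head bookkeeping straight and to use $D^{(n-2,2)}_{\FF_2}\not\cong\FF_2$ throughout.
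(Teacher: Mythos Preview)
Your proof is correct and follows essentially the same approach as the paper: both arguments determine the composition factors via Brauer--Nesbitt (or the explicit filtration $2S\subseteq S_i\subseteq S$) and then use the head/socle constraints from Lemma~\ref{lemma S submodules} and Lemma~\ref{lemma S submodules 2} to force uniseriality. The paper's own proof is terser, simply asserting that ``unique trivial submodule and no trivial factor module'' (resp.\ the dual condition) forces the claimed uniserial structure, whereas you spell out the intermediate steps about the radical and socle; this is a difference in level of detail, not in strategy.
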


\begin{proof}
(a)\, First suppose that $n\equiv 1\pmod{4}$. Since $S_1$ and $S_2$ are $\ZZ_2\mathfrak{S}_n$-lattices
of finite index in $S$, both of them are $\ZZ_2$-forms of $S_{\QQ_2}$. By Proposition~\ref{prop S in char 2}, 
the $\FF_2\mathfrak{S}_n$-modules $S_1/2S_1$ and $S_2/2S_2$ thus have three composition factors: $\FF_2$ with multiplicity 2, and
$D^{(n-2,2)}_{\FF_2}$ with multiplicity 1. In consequence
of Lemma~\ref{lemma S submodules} and Lemma~\ref{lemma S submodules 2}, we also know that
$S_1/2S_1$ has a unique trivial submodule and no trivial factor module, while $S_2/2S_2$ has a unique trivial factor module and
no trivial submodule. Therefore, $S_1/2S_1$ and $S_2/2S_2$ both have to be uniserial with the claimed Loewy series.

\smallskip

(b)\, If $n\equiv 3\pmod{4}$, then Proposition~\ref{prop S in char 2} shows that $S_1/2S_1$ and
$S_2/2S_2$ have composition factors $\FF_2$ and $D^{(n-2,2)}_{\FF_2}$, both occurring with multiplicity 1. Now Lemma~\ref{lemma S submodules} immediately implies the assertion.
\end{proof}

We are now in the position to establish the first main result of this section:

\begin{thm}\label{thm n odd}
Let $n\geq 5$ be odd, and let $S_1$ and $S_2$ be the $\ZZ_2\mathfrak{S}_n$-sublattices of $S:=S_{\ZZ_2}$
defined in Lemma~\ref{lemma S1} and Lemma~\ref{lemma S2}, respectively. Then $S$, $S_1$ and $S_2$ are representatives
of the isomorphism classes of $\ZZ_2$-forms of $S_{\QQ_2}$.
\end{thm}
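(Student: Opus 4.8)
The plan is to invoke Remark~\ref{rem forms all in one}, which reduces the assertion to showing that, up to isomorphism, $S$, $S_1$ and $S_2$ are exactly the full-rank $\ZZ_2\mathfrak{S}_n$-sublattices of $S = S_{\ZZ_2}$. I would split according to $n \bmod 4$, since by Proposition~\ref{prop S in char 2} the $\FF_2\mathfrak{S}_n$-module $S/2S$ is uniserial of Loewy length $3$ when $n \equiv 1 \pmod 4$ and is semisimple of length $2$ when $n \equiv 3 \pmod 4$. Beforehand I would record that $S$, $S_1$, $S_2$ are genuine $\ZZ_2$-forms of $S_{\QQ_2}$, being full-rank $\ZZ_2\mathfrak{S}_n$-sublattices of $S$ (Lemma~\ref{lemma S1}, Lemma~\ref{lemma S2}), and that the $\FF_2\mathfrak{S}_n$-modules $S/2S$, $S_1/2S_1$, $S_2/2S_2$ are pairwise non-isomorphic -- their heads resp.\ socles differ, using $D^{(n-2,2)}_{\FF_2} \not\cong \FF_2$ for $n \geq 5$ -- so that $S$, $S_1$, $S_2$ are pairwise non-isomorphic as $\ZZ_2\mathfrak{S}_n$-lattices.

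In the case $n \equiv 1 \pmod 4$, Proposition~\ref{prop S in char 2}(a) tells us $S/2S$ is uniserial with unique composition series $0 \subsetneq U_2 \subsetneq U_1 \subsetneq S/2S$, so by \ref{noth change rings}(f) the $\ZZ_2\mathfrak{S}_n$-sublattices $N$ with $2S \subseteq N \subseteq S$ are precisely $2S$, $S_2$, $S_1$, $S$, where $S_1$ and $S_2$ are identified with the preimages of $U_1$ and $U_2$ by Lemma~\ref{lemma S1}(a) and Lemma~\ref{lemma S2}(a). Each of these four lattices has uniserial reduction modulo $2$: for $S$ and for $2S$ (which is isomorphic to $S$) by Proposition~\ref{prop S in char 2}(a), and for $S_1$ and $S_2$ by Proposition~\ref{prop S uniserial}(a). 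Hence Proposition~\ref{prop max forms 2}(a) shows that the full-rank $\ZZ_2\mathfrak{S}_n$-sublattices of $S$ are totally ordered, and Proposition~\ref{prop max forms 2}(b) shows that $\{\, N : 2S \subsetneq N \subseteq S \,\} = \{ S_2, S_1, S \}$ contains a set of representatives of the isomorphism classes of $\ZZ_2$-forms of $S_{\QQ_2}$; since these three lattices are pairwise non-isomorphic, this set is itself such a set of representatives.

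In the case $n \equiv 3 \pmod 4$, Proposition~\ref{prop S in char 2}(b) together with Lemma~\ref{lemma U}(b),(d) gives $S/2S = U_1 \oplus U_2$ with $U_1 \cong D^{(n-2,2)}_{\FF_2}$ and $U_2 \cong \FF_2$, both absolutely simple by Theorem~\ref{thm Specht properties}(b) and mutually non-isomorphic. Then Proposition~\ref{prop max forms} applies: $S$ has exactly two maximal $\ZZ_2\mathfrak{S}_n$-sublattices, the preimages of $U_1$ and $U_2$, which by Lemma~\ref{lemma S1}(a) and Lemma~\ref{lemma S2}(a) are $S_1$ and $S_2$; since $S_1/2S_1$ and $S_2/2S_2$ are uniserial by Proposition~\ref{prop S uniserial}(b), the same proposition immediately yields that $S$, $S_1$, $S_2$ are representatives of the isomorphism classes of $\ZZ_2$-forms of $S_{\QQ_2}$.

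All of the real work -- the uniseriality of $S_1/2S_1$ and $S_2/2S_2$, and the fact that $S_1/2S_1$ has no trivial quotient while $S_2/2S_2$ has no trivial submodule -- has been carried out in Lemma~\ref{lemma S submodules}, Lemma~\ref{lemma S submodules 2} and Proposition~\ref{prop S uniserial}. Consequently the only things left to be careful about are the identification of $S_1$ and $S_2$ as preimages of $U_1$ and $U_2$ and the bookkeeping needed to check the hypotheses of Proposition~\ref{prop max forms 2} and Proposition~\ref{prop max forms}; I do not expect a genuine obstacle at this stage.
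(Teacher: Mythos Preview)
Your proposal is correct and follows essentially the same approach as the paper: splitting according to $n\bmod 4$, invoking the Loewy structure of $S/2S$ from Proposition~\ref{prop S in char 2} together with the uniseriality of $S_1/2S_1$ and $S_2/2S_2$ from Proposition~\ref{prop S uniserial}, and then applying Proposition~\ref{prop max forms 2} (for $n\equiv 1\pmod 4$) and Proposition~\ref{prop max forms} (for $n\equiv 3\pmod 4$). Your write-up is in fact slightly more explicit than the paper's in recording why $S$, $S_1$, $S_2$ are pairwise non-isomorphic, which is needed to pass from ``contains a set of representatives'' to ``is a set of representatives'' in the first case.
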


\begin{proof}
Suppose that $n\equiv 1\pmod{4}$ first. Then $S/2S$ is uniserial with composition series $\{0\}\subseteq U_2\subseteq U_1\subseteq S$, by
Lemma~\ref{lemma U}. Thus, by Lemma~\ref{lemma S1} and Lemma~\ref{lemma S2}, we deduce
that the only $\ZZ_2\mathfrak{S}_n$-sublattices of $S$ containing $2S\cong S$ are $2S$, $S_2$, $S_1$ and $S$. By Proposition~\ref{prop S uniserial} and Proposition~\ref{prop S in char 2}, $S/2S$, $S_1/2S_1$ and $S_2/2S_2$ are uniserial $\FF_2\mathfrak{S}_n$-modules.
Therefore, the assertion of the theorem follows from Proposition~\ref{prop max forms 2}.

\smallskip

If $n\equiv 3\pmod{4}$, then, by Proposition~\ref{prop S in char 2}, we know that $S/2S\cong S_{\FF_2}\cong \FF_2\oplus D^{(n-2,2)}_{\FF_2}$ as $\FF_2\mathfrak{S}_n$-module. Since $S$ is a $\ZZ_2$-form of $S_{\QQ_2}$, the assertion follows from Proposition~\ref{prop S uniserial} and Proposition~\ref{prop max forms}. 
\end{proof}

Next we turn to the case where $n\equiv 2\pmod{4}$, and start with the following immediate consequence of Proposition~\ref{prop S in char 2}(c).

\begin{lemma}\label{lemma T chain}
Let $n\geq 6$ be such that $n\equiv 2\pmod{4}$ and let $S:=S_{\ZZ_2}$.  There are uniquely determined $\ZZ_2\mathfrak{S}_n$-lattices
$T_1,T_2,T_3$ such that 
$$2S=:T_4\subseteq T_3\subseteq T_2\subseteq T_1\subseteq T_0:=S$$
and
such that $T_i$ is maximal in $T_{i-1}$, for $i\in\{1,\ldots,4\}$. Moreover, one has
$$S/T_1\cong \FF_2\,,\quad  T_1/T_2\cong D^{(n-2,2)}_{\FF_2}\,,\quad T_2/T_3\cong \FF_2\,,\quad T_3/2S\cong D^{(n-1,1)}_{\FF_2}$$
as $\FF_2\mathfrak{S}_n$-modules; in particular, $T_1=S_1$, where $S_1$ is the $\ZZ_2\mathfrak{S}_n$-sublattice of $S$
defined in Lemma~\ref{lemma S1}.
\end{lemma}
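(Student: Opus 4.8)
The plan is to reduce everything to Proposition~\ref{prop S in char 2}(c) via the bijection between $\ZZ_2\mathfrak{S}_n$-sublattices of $S$ containing $2S$ and $\FF_2\mathfrak{S}_n$-submodules of $S/2S$ recorded in \ref{noth change rings}(f). First I would invoke \ref{noth change rings}(f) to identify $S/2S$ with $S_{\FF_2}$; by Proposition~\ref{prop S in char 2}(c) this module is uniserial of Loewy length $4$, with composition factors, from top to bottom, $\FF_2$, $D^{(n-2,2)}_{\FF_2}$, $\FF_2$, $D^{(n-1,1)}_{\FF_2}$. In particular $S/2S$ has a \emph{unique} composition series
$$\{0\}=W_4\subsetneq W_3\subsetneq W_2\subsetneq W_1\subsetneq W_0=S/2S\,,$$
with $W_0/W_1\cong\FF_2$, $W_1/W_2\cong D^{(n-2,2)}_{\FF_2}$, $W_2/W_3\cong\FF_2$ and $W_3\cong D^{(n-1,1)}_{\FF_2}$.

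Next I would let $T_i$ be the unique $\ZZ_2\mathfrak{S}_n$-sublattice of $S$ with $T_i/2S=W_i$, for $i\in\{0,\ldots,4\}$; then $2S=T_4\subseteq T_3\subseteq T_2\subseteq T_1\subseteq T_0=S$. Since $2T_{i-1}\subseteq 2S\subseteq T_i$, the quotient $T_{i-1}/T_i$ is an $\FF_2\mathfrak{S}_n$-module, and $T_{i-1}/T_i\cong W_{i-1}/W_i$ is simple; consequently any $\ZZ_2\mathfrak{S}_n$-sublattice $N$ with $T_i\subseteq N\subseteq T_{i-1}$ has $N/T_i$ equal to $0$ or to $T_{i-1}/T_i$, so $T_i$ is a maximal $\ZZ_2\mathfrak{S}_n$-sublattice of $T_{i-1}$. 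Reading off the composition factors listed above yields the four stated isomorphisms. For uniqueness, given any chain $2S=T_4'\subseteq\cdots\subseteq T_0'=S$ with $T_i'$ maximal in $T_{i-1}'$, each $T_i'$ contains $2S$, hence is the preimage of a submodule $T_i'/2S$ of $S/2S$; since $2T_{i-1}'\subseteq 2S$, maximality of $T_i'$ in $T_{i-1}'$ forces $T_i'/2S$ to be a maximal $\FF_2\mathfrak{S}_n$-submodule of $T_{i-1}'/2S$, so $(T_i'/2S)_i$ is a composition series of $S/2S$, which by uniseriality equals $(W_i)_i$; thus $T_i'=T_i$ for all $i$.

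Finally I would identify $T_1$ with the sublattice $S_1$ of Lemma~\ref{lemma S1}: as $S/2S\cong S_{\FF_2}$ has simple one-dimensional head by Proposition~\ref{prop S in char 2}(c), its unique maximal submodule is $\Rad(S_{\FF_2})$, which has codimension $1$; by Lemma~\ref{lemma U}(a) the submodule $U_1$ also has codimension $1$ in $S_{\FF_2}$, hence $U_1=\Rad(S_{\FF_2})=W_1$, and Lemma~\ref{lemma S1}(a) then gives $S_1/2S\cong U_1=W_1=T_1/2S$, so $S_1=T_1$. The proof is essentially a bookkeeping exercise; the only points that need genuine care are the observation that being maximal among sublattices containing $2S$ already implies being a maximal sublattice of $T_{i-1}$ (which is exactly what the inclusions $2T_{i-1}\subseteq 2S\subseteq T_i$ provide) and the final matching of $W_1$ with $U_1$.
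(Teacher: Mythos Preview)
Your proposal is correct and is precisely the argument the paper has in mind: the lemma is stated there as an ``immediate consequence of Proposition~\ref{prop S in char 2}(c)'' without further proof, and you have simply spelled out the lattice--submodule correspondence and the uniseriality argument that make it immediate. One cosmetic remark: in the last step you write $S_1/2S\cong U_1$ and then conclude $S_1=T_1$; strictly speaking the isomorphism alone is not enough, but since Lemma~\ref{lemma S1}(a) already asserts that $S_1$ is a \emph{maximal} sublattice of $S$ containing $2S$, and you have shown $T_1$ is the unique such, the conclusion $S_1=T_1$ follows directly.
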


\begin{rem}\label{rem overZ}
Note that the $\ZZ_2$-forms defined in Lemmas~\ref{lemma S1}, \ref{lemma S2} and~\ref{lemma T chain} are all of the form $\hat{L}_{\ZZ_2}$, for some $\ZZ$-form $\hat{L}$ of $S_\QQ$ with $\hat{L}\subseteq S_{\ZZ}$ and $[S_{\ZZ}:\hat{L}]=[S_{\ZZ_2}:\hat{L}_{\ZZ_2}]$; in particular,
the indices $[S_{\ZZ}:\hat{L}]$ are $2$-powers.
While this is obvious for $S_1, S_2$, for $T_1,T_2,T_3$ this can be seen by lifting the unique composition series of $S_\ZZ/2S_\ZZ\cong S_{\ZZ_2}/2S_{\ZZ_2}$ along $\ZZ \to \FF_2$ to a chain of $\ZZ$-forms of $S_{\QQ}$ instead of lifting it along $\ZZ_2 \to \FF_2$ to a chain of $\ZZ_2$-forms
of $S_{\QQ_2}$. In this way, we, in particular, have $\FF_2\mathfrak{S}_n$-isomorphisms $\hat{S}_i/2\hat{S}_i\cong S_i/2S_i$
for $n\equiv 1\pmod{2}$ and $i\in\{1,2\}$, and $\hat{T}_j/2\hat{T}_j\cong T_j/2T_j$ for $n\equiv 2\pmod{4}$ and $j\in\{1,2,3\}$.

\end{rem}

Lemma~\ref{lemma T chain} thus provides us with $\ZZ_2$-forms $S,T_1,T_2,T_3$ of the Specht $\QQ_2\mathfrak{S}_n$-module
$S_{\QQ_2}$. Our aim now is to show that these form a set of representatives of the isomorphism classes of $\ZZ_2$-forms
of $S_{\QQ_2}$. 

\begin{rem}\label{rem dual S}
Recall that we are still identifying the Specht lattice $S=S_{\ZZ_2}^{(n-2,1^2)}$ with the exterior power 
$\bigwedge^2(S^{(n-1,1)}_{\ZZ_2})$, as in \ref{noth hooks}(b). As before, via this identification the standard $(n-2,1^2)$-polytabloid
$e(1,i_1,i_2)$, for $2\leq i_1<i_2\leq n$, corresponds to the basis element $b(i_1,i_2)$. 

In the following we shall also identify the dual Specht lattice $S^*$ with $(\bigwedge^2(S^{(n-1,1)}_{\ZZ_2}))^*$
via the isomorphism in \ref{noth exterior}(c).
So if $\{e(1,i_1,i_2)^*: 2\leq i_1<i_2\leq n\}$ denotes the $\ZZ_2$-basis of $S^*$ dual to the standard polytabloid basis of $S$ and
if $\{b(i_1,i_2)^*: 2\leq i_1<i_2\leq n\}$ denotes the $\ZZ_2$-basis of $(\bigwedge^2(S^{(n-1,1)}_{\ZZ_2}))^*$ dual to our standard 
basis of $\bigwedge^2(S^{(n-1,1)}_{\ZZ_2})$, then we have $e(1,i_1,i_2)^*=b(i_1,i_2)^*$, for all $2\leq i_1<i_2\leq n$.
\end{rem}

\begin{prop}\label{prop T dual}
Let $n\geq 6$ be such that $n\equiv 2\pmod{4}$, and let $S:=S_{\ZZ_2}$. With the notation of
Lemma~\ref{lemma T chain}, the $\ZZ_2\mathfrak{S}_n$-lattice $T_3$ is isomorphic to the dual $\ZZ_2\mathfrak{S}_n$-lattice
$S^*$.
\end{prop}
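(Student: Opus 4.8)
The plan is to realise $S^{\ast}$ as a full-rank $\ZZ_2\mathfrak{S}_n$-sublattice of $S$ lying between $2S$ and $S$, and then to single it out as $T_3$ among $T_0,\dots,T_4$ by inspecting the head of its $2$-modular reduction. First I would record that reduction: by \ref{noth change rings}(d) one has $S^{\ast}/2S^{\ast}\cong(S/2S)^{\ast}=(S_{\FF_2})^{\ast}$, and by Proposition~\ref{prop S in char 2}(c) together with the self-duality of all simple $\FF_2\mathfrak{S}_n$-modules (Theorem~\ref{thm Specht properties}(b)), $(S_{\FF_2})^{\ast}$ is uniserial with Loewy layers, read from the top, $D^{(n-1,1)}_{\FF_2}$, $\FF_2$, $D^{(n-2,2)}_{\FF_2}$, $\FF_2$. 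In particular $\Hd(S^{\ast}/2S^{\ast})\cong D^{(n-1,1)}_{\FF_2}$.

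Since $S_{\QQ_2}$ is self-dual (Theorem~\ref{thm Specht properties}(a)), $S^{\ast}$ is itself a $\ZZ_2$-form of $S_{\QQ_2}$. Base-changing Wildon's $\ZZ\mathfrak{S}_n$-monomorphism $(S^{(n-2,1^2)}_{\ZZ})^{\ast}\hookrightarrow S^{(n-2,1^2)}_{\ZZ}$ (the appendix) along the flat ring map $\ZZ\to\ZZ_2$ gives a $\ZZ_2\mathfrak{S}_n$-monomorphism $\Theta\colon S^{\ast}\to S$; set $M:=\Theta(S^{\ast})$, so $M\cong S^{\ast}$. Using the explicit form of $\Theta$ on the standard polytabloid basis $\{e(1,i_1,i_2)\}$ and the dual basis $\{b(i_1,i_2)^{\ast}\}=\{e(1,i_1,i_2)^{\ast}\}$ from Remark~\ref{rem dual S}, I would check that $2\,e(1,i_1,i_2)\in M$ for all $2\le i_1<i_2\le n$; as these span $2S$ over $\ZZ_2$, this gives $2S\subseteq M\subseteq S$. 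Because $S_{\FF_2}$ is uniserial, Lemma~\ref{lemma T chain} shows that the $\ZZ_2\mathfrak{S}_n$-sublattices of $S$ containing $2S$ are exactly $S=T_0\supsetneq T_1\supsetneq T_2\supsetneq T_3\supsetneq T_4=2S$, so $M=T_i$ for some $i\in\{0,\dots,4\}$. (As a cross-check, dualizing the chain of Lemma~\ref{lemma T chain} and using $(2S)^{\ast}=\tfrac12 S^{\ast}$ realises $2T_3^{\ast}$ as the unique maximal $\ZZ_2\mathfrak{S}_n$-sublattice of $S^{\ast}$, consistently with the first paragraph.)

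To pin down $i=3$ I would compute $\Hd(T_j/2T_j)$. From $2T_j\subseteq 2S\subseteq T_j\subseteq S$ one sees that $T_j/2T_j$ is an extension of $T_j/2S$, a bottom section of the uniserial $S_{\FF_2}$, by the submodule $2S/2T_j\cong S/T_j$, a top section of $S_{\FF_2}$. Hence $\Hd(T_0/2T_0)=\Hd(T_4/2T_4)=\FF_2$; $\Hd(T_1/2T_1)=D^{(n-2,2)}_{\FF_2}$, using that $T_1=S_1$ has no trivial reduction-head by Lemma~\ref{lemma S submodules}(a); $\Hd(T_2/2T_2)$ contains no copy of $D^{(n-1,1)}_{\FF_2}$, because this simple module is not a composition factor of $S/T_2$ and occurs only as the socle, not as a quotient, of $T_2/2S$; and $\Hd(T_3/2T_3)$ does contain $D^{(n-1,1)}_{\FF_2}$, since $2S$ is a maximal $\ZZ_2\mathfrak{S}_n$-sublattice of $T_3$ with $T_3/2S\cong D^{(n-1,1)}_{\FF_2}$ (Lemma~\ref{lemma T chain}). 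Thus $D^{(n-1,1)}_{\FF_2}$ appears in $\Hd(T_i/2T_i)$ only for $i=3$, and comparing with the first paragraph forces $M=T_3$, i.e.\ $T_3\cong S^{\ast}$. The one genuinely non-formal point is the containment $2S\subseteq\Theta(S^{\ast})$ in the second paragraph: this is where one has to use the combinatorics of Wildon's embedding rather than merely its existence, and it is the only step that does not reduce to the uniserial structures already established.
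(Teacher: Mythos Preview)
Your outline is correct and follows essentially the same route as the paper: embed $S^{\ast}$ into $S$ via Wildon's map, verify the image lies between $2S$ and $S$, and then single out $T_3$ among the $T_i$ by looking at which reduction has $D^{(n-1,1)}_{\FF_2}$ in its head. The paper organises the last step slightly differently (it first excludes $T_0,T_4$ by noting $S_{\FF_2}$ is not self-dual, then compares heads of $T_j/2S$ rather than of $T_j/2T_j$), but this is the same idea; your version is fine, though your claim $\Hd(T_1/2T_1)=D^{(n-2,2)}_{\FF_2}$ needs one more line: Lemma~\ref{lemma S submodules}(a) only excludes the trivial module from the head, and you should also note (exactly as you did for $T_2$) that $D^{(n-1,1)}_{\FF_2}$ is not a composition factor of $S/T_1$, so any map $T_1/2T_1\to D^{(n-1,1)}_{\FF_2}$ would factor through $T_1/2S$, whose head is $D^{(n-2,2)}_{\FF_2}$.

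The one place where your proposal is genuinely thinner than the paper is the step you yourself flag: showing $2S\subseteq\Theta(S^{\ast})$. The paper does not check $2\,e(1,i_1,i_2)\in M$ directly for each basis element; instead it writes out $\varphi(b(i,j)^{\ast})=(n-3)!\,f(i,j)$ explicitly in the $b$-basis, sets $T:={}_{\ZZ_2}\langle f(i,j)\rangle\cong S^{\ast}$ (the factor $(n-3)!$ is a $2$-adic unit since $n$ is even), and then proves the single identity
\[
n\,b(2,3)=3f(2,3)+\sum_{k=4}^{n}\bigl(f(2,k)-f(3,k)\bigr)\in T,
\]
which, because $n\equiv 2\pmod 4$ makes $n/2$ a $2$-adic unit, gives $2b(2,3)\in T$; cyclicity of $2S$ then yields $2S\subseteq T$. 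If you attempt the check ``for all $(i_1,i_2)$'' directly you will end up rediscovering an identity of this shape, so it is worth writing down.
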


\begin{proof}
Let $\lambda:=(n-2,1^2)$ and consider the $\ZZ_2\mathfrak{S}_n$-monomorphism $\varphi:=\varphi^\lambda_{\ZZ_2}:S^*\to S$ in
Corollary~\ref{cor phi ZZ_p}. In consequence of Example~\ref{expl hook dual}, for $2\leq i<j\leq n$, we get:
\begin{align*}
\varphi(b(i,j)^*)&=\varphi(e(1,i,j)^*)=\sum_{\sigma\in\mathfrak{S}(\{1,\ldots,n\}\smallsetminus\{i,j\})}\sigma\cdot e(1,i,j)\\
&=\sum_{\sigma\in\mathfrak{S}(\{1,\ldots,n\}\smallsetminus\{i,j\})}\sigma\cdot b(i,j)\\
&=\sum_{\sigma\in\mathfrak{S}(\{1,\ldots,n\}\smallsetminus\{i,j\})}\sigma\cdot ((\b{i}\wedge\b{j})+(\b{1}\wedge\b{i})-(\b{1}\wedge\b{j})))\\
&=(n-3)!\mathop{\sum_{k=1}^n}_{i\neq k\neq j}((\b{i}\wedge\b{j})+(\b{k}\wedge\b{i})-(\b{k}\wedge\b{j})))\,.
\end{align*}
For $2\leq i<j\leq n$, we set
$$f(i,j):=\mathop{\sum_{k=1}^n}_{i\neq k\neq j}((\b{i}\wedge\b{j})+(\b{k}\wedge\b{i})-(\b{k}\wedge\b{j})))\in S\,.$$
Then the elements $f(i,j)\in S$, with $2\leq i<j\leq n$, are linearly independent over $\ZZ_2$, and span a $\ZZ_2\mathfrak{S}_n$-sublattice $T$ of $S$ isomorphic to $\varphi(S^*)\cong S^*$. It remains to verify that $2S\subseteq T$.

By  \ref{noth Young Specht}(b), $2S$ is a cyclic $\ZZ_2\mathfrak{S}_n$-module generated by any of the basis elements $2b(i,j)$, where $2\leq i<j\leq n$. Thus, it suffices to show that $2b(2,3)\in T$. In the following, we shall show that
\begin{equation}\label{eqn f(2,3)}
n\, b(2,3)=3f(2,3)+\sum_{k=4}^n(f(2,k)-f(3,k))\in T\,.
\end{equation}
Since $n\equiv 2\pmod{4}$, we have $n=2u$, for some $u\in \ZZ_2^\times$, so that (\ref{eqn f(2,3)}) then implies $2b(2,3)\in T$ as well.
By definition,
\begin{align*}
f(2,3)&=b(2,3)+\sum_{k=4}^n((\b{2}\wedge\b{3})+(\b{k}\wedge\b{2})-(\b{k}\wedge\b{3}))\\
&=b(2,3)+\sum_{k=4}^n((\b{3}\wedge\b{k})+(\b{2}\wedge\b{3})-(\b{2}\wedge\b{k}))\\
&=b(2,3)+\sum_{k=4}^n(b(3,k)+b(2,3)-b(2,k))=(n-2)b(2,3)+\sum_{k=4}^n(-b(2,k)+b(3,k))\,.
\end{align*}
For $k\in\{4,\ldots,n\}$, we have $(3,k)\cdot b(2,3)=b(2,k)$, hence also $(3,k)\cdot b(2,3)^*=b(2,k)^*$, by Example~\ref{expl hook dual}, and 
$(n-3)!f(2,k)=\varphi(b(2,k)^*)=(n-3)! (3,k)\cdot f(2,3)$ as well as $f(2,k)=(2,k)\cdot f(2,3)$. Analogously, for $k\in\{4,\ldots,n\}$,
we have $(2,3)\cdot b(2,k)=b(3,k)$, thus $(2,3)\cdot b(2,k)^*=b(3,k)^*$ and $(2,3)\cdot f(2,k)=f(3,k)$. From this we now deduce the
following
\begin{align*}
f(2,k)&=(n-2)b(2,k)-b(2,3)-\sum_{i=4}^{k-1}b(2,i)-\sum_{i=k+1}^nb(2,i)-\sum_{i=3}^{k-1}b(i,k)+\sum_{i=k+1}^nb(k,i)\\
f(3,k)&=(n-2)b(3,k)+b(2,3)-\sum_{i=4}^{k-1}b(3,i)-\sum_{i=k+1}^nb(3,i)-\sum_{i=2}^{k-1}b(i,k)+\sum_{i=k+1}^nb(k,i)\,.
\end{align*}
Using these equations one now verifies (\ref{eqn f(2,3)}). 

Therefore, we have now shown that $2S\subseteq T\cong S^*$. It remains to prove that actually $T=T_3$. 
By Proposition~\ref{prop S in char 2}, the $\FF_2\mathfrak{S}_n$-module $S_{\FF_2}$ is not self-dual. Thus, by \ref{noth change rings}(d) the $\ZZ_2\mathfrak{S}_n$-lattice $S$ cannot be self-dual either. In particular, $S\neq T\neq 2S$.
So, from Lemma~\ref{lemma T chain}, we deduce that $T= T_i$, for some $i\in\{1,2,3\}$. Set $D:=D^{(n-2,2)}_{\FF_2}$ and 
$D':=D^{(n-1,1)}_{\FF_2}$, for the remainder of this proof.

By Proposition~\ref{prop S in char 2}, the $\FF_2\mathfrak{S}_n$-module $S/2S\cong S_{\FF_2}$ has simple socle
$D'$, so that $T/2T\cong (S/2S)^*$ has simple head $(D')^*\cong D'$. Since $T/2S\neq\{0\}$ and
$T/2S\cong (T/2T)/(2S/2T)$, also $T/2S$ has a simple factor module isomorphic to $D'$. 
Since $S/2S$ is uniserial, by Proposition~\ref{prop S in char 2}, so are $T_i/2S$, for $i\in\{1,2,3\}$. More precisely,
Lemma~\ref{lemma T chain} shows that $T_1/2S$ is uniserial with head isomorphic to $D\not\cong D'$, and $T_2/2S$ is uniserial
with head isomorphic to $\FF_2$. Therefore, we must have $T=T_3$, and the proof of the proposition is complete. 
\end{proof}

\begin{cor}\label{cor T uniserial}
Let $n\geq 6$ be  such that $n\equiv 2\pmod{4}$, and let $S:=S_{\ZZ_2}$. 
Let $T_1$, $T_2$ and $T_3$ be the $\ZZ_2\mathfrak{S}_n$-sublattices of $S$ given in Lemma~\ref{lemma T chain}. Then the
$\FF_2\mathfrak{S}_2$-modules $T_1/2T_1$, $T_2/2T_2$ and $T_3/2T_3$ are uniserial with the following Loewy series:
$$T_1/2T_1\sim\begin{bmatrix} D^{(n-2,2)}_{\FF_2}\\\FF_2\\D^{(n-1,1)}_{\FF_2}\\\FF_2 \end{bmatrix}\,,\quad 
T_2/2T_2\sim\begin{bmatrix} \FF_2\\ D^{(n-1,1)}_{\FF_2}\\\FF_2\\D^{(n-2,2)}_{\FF_2}\end{bmatrix}\,,\quad 
T_3/2T_3\sim\begin{bmatrix} D^{(n-1,1)}_{\FF_2}\\\FF_2\\D^{(n-2,2)}_{\FF_2}\\\FF_2 \end{bmatrix}\,.
$$
\end{cor}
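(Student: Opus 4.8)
The plan is to treat the three lattices in turn. Since each $T_i$ has finite (hence $2$-power) index in $S=S_{\ZZ_2}$, it is a $\ZZ_2$-form of $S_{\QQ_2}$, so by Proposition~\ref{prop notdivides}(a) together with Proposition~\ref{prop S in char 2}(c) each $T_i/2T_i$ has composition factors $\FF_2$ (twice), $D^{(n-2,2)}_{\FF_2}$ and $D^{(n-1,1)}_{\FF_2}$, and in particular composition length $4$. For $T_3$ the statement is immediate: by Proposition~\ref{prop T dual} we have $T_3\cong S^*$, so $T_3/2T_3\cong(S/2S)^*=(S_{\FF_2})^*$ by \ref{noth change rings}(d),(f); since $S_{\FF_2}$ is uniserial with the series of Proposition~\ref{prop S in char 2}(c) and each of $\FF_2$, $D^{(n-2,2)}_{\FF_2}$, $D^{(n-1,1)}_{\FF_2}$ is self-dual (Theorem~\ref{thm Specht properties}(b)), dualising reverses the Loewy layers and leaves them unchanged, giving exactly the claimed uniserial structure of $T_3/2T_3$.

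Next I would identify the $\ZZ_2$-duals of $T_1$ and $T_2$. As $2S\subseteq T_3$, the module $S/T_3$ is an $\FF_2\mathfrak{S}_n$-module and a quotient of the uniserial module $S/2S$, hence uniserial of length $3$; therefore the $\ZZ_2\mathfrak{S}_n$-sublattices between $T_3$ and $S$ form the chain $T_3\subseteq T_2\subseteq T_1\subseteq S$. Dualising the chain of Lemma~\ref{lemma T chain} and using $T_3=S^*$ (so $T_3^*=S$) gives $T_3\subseteq T_1^*\subseteq T_2^*\subseteq S$ with $[T_1^*:T_3]=[S:T_1]=2$, while $[T_1:T_3]=[T_1:T_2]\cdot[T_2:T_3]\geq 4$. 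Comparing with the chain above forces $T_1^*=T_2$, hence $T_2^*=T_1$.

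The main point is the structure of $T_1/2T_1$. I would use the chain $2T_1\subseteq 2S\subseteq T_1$: multiplication by $2$ yields $2S/2T_1\cong S/T_1\cong\FF_2$, and $(T_1/2T_1)/(2S/2T_1)\cong T_1/2S$. Since $2S\subseteq T_1\subseteq S$ and $S/2S$ is uniserial with head $\FF_2\cong S/T_1$, the submodule $T_1/2S$ is $\Rad(S/2S)$, hence uniserial with Loewy series (top to bottom) $D^{(n-2,2)}_{\FF_2},\FF_2,D^{(n-1,1)}_{\FF_2}$; in particular $\Soc(T_1/2S)=T_3/2S\cong D^{(n-1,1)}_{\FF_2}$ is simple. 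The key claim is that $\Soc(T_1/2T_1)=2S/2T_1$ is simple. Indeed, if $W/2T_1$ is a simple submodule of $T_1/2T_1$ with $W/2T_1\neq 2S/2T_1$, then $W/2T_1$ meets $2S/2T_1$ trivially and hence maps isomorphically onto a simple submodule of $T_1/2S$; this forces $W/2T_1\cong D^{(n-1,1)}_{\FF_2}$ and $W+2S=T_3$, and then $W\cap 2S=2T_1$ gives $T_3/2T_1\cong(W/2T_1)\oplus(2S/2T_1)\cong D^{(n-1,1)}_{\FF_2}\oplus\FF_2$, contradicting that $T_3/2T_1$, being a quotient of the uniserial module $T_3/2T_3$, is indecomposable. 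Thus $T_1/2T_1$ has simple socle $2S/2T_1\cong\FF_2$ and uniserial quotient $T_1/2S$, so $T_1/2T_1$ is uniserial with Loewy series $D^{(n-2,2)}_{\FF_2},\FF_2,D^{(n-1,1)}_{\FF_2},\FF_2$. Finally $T_2/2T_2\cong T_1^*/2T_1^*\cong(T_1/2T_1)^*$ by \ref{noth change rings}(d),(f), and dualising reverses the Loewy layers and leaves them unchanged, yielding the asserted series for $T_2/2T_2$.

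The genuinely delicate step is ruling out a copy of $D^{(n-1,1)}_{\FF_2}$ inside $\Soc(T_1/2T_1)$; the point is to force the relevant sublattice $W$ to satisfy $W+2S=T_3$ \emph{exactly}, so that the contradiction can be drawn from the uniseriality of $T_3/2T_3$ already obtained from Proposition~\ref{prop T dual}. Everything else is bookkeeping with the chain of Lemma~\ref{lemma T chain}, elementary duality of $\ZZ_2$-lattices, and the standard fact that quotients and submodules of uniserial modules are uniserial.
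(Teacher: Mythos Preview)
Your proof is correct and uses the same core ingredients as the paper (Proposition~\ref{prop T dual} for $T_3$, then the chain $2T_i\subseteq 2S\subseteq T_3\subseteq T_i$ together with the already-known uniseriality of $S/2S$ and $T_3/2T_3$), but it is organised differently. The paper treats $T_1$ and $T_2$ by two parallel direct arguments: for $T_2$ it shows that the submodule $U:=T_3/2T_2$ of $T_2/2T_2$ (uniserial of length $3$, being a quotient of $T_3/2T_3$) must equal $\Rad(T_2/2T_2)$, ruling out $\Rad(T_2/2T_2)=\Rad(U)$ via a second filtration; it then says the analogous socle argument handles $T_1$. Only afterwards, in Remark~\ref{rem summing up}(b), does the paper observe $T_1^\ast\cong T_2$ as a \emph{consequence} of the corollary and Theorem~\ref{thm n 2 mod 4}. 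You instead prove $T_1^\ast\cong T_2$ first by an index comparison inside the totally ordered set of lattices between $T_3$ and $S$, and then deduce the $T_2$-case from the $T_1$-case by duality. This is a neat shortcut that avoids repeating the argument.

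One small point of rigour: you write ``$T_3=S^\ast$ (so $T_3^\ast=S$)'' and then place $T_1^\ast,T_2^\ast$ literally between $T_3$ and $S$. Of course Proposition~\ref{prop T dual} only gives $T_3\cong S^\ast$. To make this precise, fix the (unique up to scalar) $\mathfrak{S}_n$-invariant nondegenerate bilinear form $b$ on $S_{\QQ_2}$; since $S_{\QQ_2}$ is absolutely simple and self-dual (Theorem~\ref{thm Specht properties}(a)) this form is symmetric, so $L\mapsto L^{\sharp}:=\{x:b(x,L)\subseteq\ZZ_2\}$ is an inclusion-reversing involution on full lattices with $L^{\sharp}\cong L^\ast$. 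Normalising so that $S^{\sharp}=T_3$ gives $T_3^{\sharp}=S$ automatically, and then your chain argument goes through verbatim with $T_i^{\sharp}$ in place of $T_i^\ast$.
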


\begin{proof}
Let $D:=D^{(n-2,2)}_{\FF_2}$ and $D':=D^{(n-1,1)}_{\FF_2}$. 
Recall that every simple $\FF_2\mathfrak{S}_n$-module is self-dual.
By Proposition~\ref{prop S in char 2}, the $\FF_2\mathfrak{S}_n$-module
$S/2S\cong S_{\FF_2}$ is uniserial with Loewy series
$$S/2S\sim\begin{bmatrix}  \FF_2\\ D\\\FF_2\\D'\end{bmatrix}\,.$$
By Proposition~\ref{prop T dual} and \ref{noth change rings}(d), we further know that $T_3/2T_3\cong (S/2S)^*$, so that also $T_3/2T_3$
is  uniserial with Loewy series as claimed. It remains to treat $T_1/2T_1$ and $T_2/2T_2$. To this end, we 
first observe the following:

\smallskip

(i)\, Since 
$(T_3/2T_3)/(2T_1/2T_3)\cong T_3/2T_1\subseteq T_1/2T_1$, the $\FF_2\mathfrak{S}_n$-module $T_1/2T_1$ has
a uniserial submodule with Loewy series
$$\begin{bmatrix}  D'\\\FF_2\end{bmatrix}\,;$$
the corresponding factor module is isomorphic to $T_1/T_3\subseteq S/T_3\cong (S/2S)/(T_3/2S)$, hence  uniserial as well with Loewy series
$$\begin{bmatrix}  D\\\FF_2\end{bmatrix}\,.$$

\smallskip

(ii)\, We have $\FF_2\cong 2S/2T_1\subseteq T_1/2T_1$, and the factor module $(T_1/2T_1)/(2S/2T_1)\cong T_1/2S$
is uniserial with Loewy series
$$\begin{bmatrix}  D\\\FF_2\\D'\end{bmatrix}\,.$$

\smallskip

Analogously,

\smallskip

(iii)\; $T_2/2T_2$ has a uniserial $\FF_2\mathfrak{S}_n$-submodule $V$ with Loewy series
$$\begin{bmatrix}  \FF_2\\D\end{bmatrix}\,,$$
and the corresponding factor module is uniserial with Loewy series
$$\begin{bmatrix}  \FF_2\\D'\end{bmatrix}\,;$$

\smallskip

(iv)\, $T_2/2T_2$ has the uniserial $\FF_2\mathfrak{S}_n$-submodule $U:=T_3/2T_2$ with Loewy series
$$\begin{bmatrix}  D'\\\FF_2\\D\end{bmatrix}\,,$$
and $(T_2/2T_2)/(T_3/2T_2)\cong T_2/T_3\cong \FF_2$.

\smallskip

In consequence of (iv), we either have $U=\Rad(T_2/2T_2)$ or $\Rad(U)=\Rad(T_2/2T_2)$. In the second case, we
would have $(T_2/2T_2)/\Rad(T_2/2T_2)\cong D'\oplus \FF_2$, forcing $T_2/2T_2$ to have Loewy series
$$\begin{bmatrix}  D'\oplus \FF_2\\ \FF_2\\D\end{bmatrix}\,.$$
By (iii) we also get
$$\Rad(T_2/2T_2)/(\Rad(T_2/2T_2)\cap V)\cong (\Rad(T_2/2T_2)+V)/V\subseteq \Rad((T_2/2T_2)/V)\cong D'\,.$$
On the other hand, the factor module $\Rad(T_2/2T_2)/(\Rad(T_2/2T_2)\cap V)$ 
either has to be $\{0\}$, or has a composition factor isomorphic to $D$ or $\FF_2$. Since $\FF_2\not\cong D'\not\cong D$, 
we must have $\Rad(T_2/2T_2)/(\Rad(T_2/2T_2)\cap V)=\{0\}$, thus $\Rad(T_2/2T_2)\subseteq V$. Comparing $\FF_2$-dimensions
this implies $\Rad(T_2/2T_2)=V$, so that $(T_2/2T_2)/V\cong D'\oplus \FF_2$, which is not uniserial, a contradiction to (iii).
Therefore, we conclude that $U=\Rad(T_2/2T_2)$, and the assertion concerning the Loewy series of $T_2/2T_2$ follows. 

Similarly, considering socle series instead of Loewy series one shows that also $T_1/2T_1$ is uniserial with the claimed Loewy series.  
\end{proof}

\begin{thm}\label{thm n 2 mod 4}
Let $n\geq 6$ be  such that $n\equiv 2\pmod{4}$, and let $S:=S_{\ZZ_2}$. 
Let $T_1$, $T_2$ and $T_3$ be the $\ZZ_2\mathfrak{S}_n$-sublattices of $S$ given in Lemma~\ref{lemma T chain}.
Then
$S,T_1,T_2$ and $T_3$ are representatives of the isomorphism classes of $\ZZ_2$-forms of the Specht
$\QQ_2\mathfrak{S}_n$-module 
$S_{\QQ_2}$.
\end{thm}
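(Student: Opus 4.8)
The plan is to derive the theorem as a direct application of Proposition~\ref{prop max forms 2}, with $R=\ZZ_2$, maximal ideal $\mathfrak m=(2)$, and $M=S=S_{\ZZ_2}$, which is a $\ZZ_2$-form of $V:=S_{\QQ_2}$. By Proposition~\ref{prop S in char 2}(c) the $\FF_2\mathfrak{S}_n$-module $S/2S\cong S_{\FF_2}$ is uniserial, so its submodules form a chain; lifting this chain along $\ZZ_2\to\FF_2$ is precisely what Lemma~\ref{lemma T chain} records, and hence the $\ZZ_2\mathfrak{S}_n$-sublattices $N$ with $2S\subseteq N\subseteq S$ are exactly $S=T_0$, $T_1$, $T_2$, $T_3$ and $2S=T_4$. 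For each of these one needs $N/2N$ to be uniserial: for $N=S$ this is Proposition~\ref{prop S in char 2}(c); for $N\in\{T_1,T_2,T_3\}$ it is Corollary~\ref{cor T uniserial}; and for $N=2S$ it holds because $2S\cong S$ as $\ZZ_2\mathfrak{S}_n$-lattices. Thus the hypothesis of Proposition~\ref{prop max forms 2}(a) is met.

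Consequently the set of full-rank $\ZZ_2\mathfrak{S}_n$-sublattices of $S$ is totally ordered, and every such sublattice is of the form $2^iN$ for some $i\in\NN_0$ and some $N$ with $2S\subsetneq N\subseteq S$, i.e.\ $N\in\{S,T_1,T_2,T_3\}$. By Proposition~\ref{prop max forms 2}(b) the set $\{S,T_1,T_2,T_3\}$ contains a set of representatives of the isomorphism classes of $\ZZ_2$-forms of $V$; since $2^iL\cong L$ for every $\ZZ_2\mathfrak{S}_n$-lattice $L$ and every $i$, this already shows that every $\ZZ_2$-form of $V$ is isomorphic to one of $S,T_1,T_2,T_3$ (this uses Remark~\ref{rem forms all in one} to identify the $\ZZ_2$-forms of $V$ with the full-rank sublattices of $S$).

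It then remains to verify that $S,T_1,T_2,T_3$ are pairwise non-isomorphic. Since $L\cong L'$ as $\ZZ_2\mathfrak{S}_n$-lattices forces $L/2L\cong L'/2L'$ as $\FF_2\mathfrak{S}_n$-modules, it suffices to separate the four reductions. By Proposition~\ref{prop S in char 2}(c) and Corollary~\ref{cor T uniserial} all four are uniserial, and reading off heads and socles: $S/2S$ has head $\FF_2$ and socle $D^{(n-1,1)}_{\FF_2}$; $T_1/2T_1$ has head $D^{(n-2,2)}_{\FF_2}$; $T_2/2T_2$ has head $\FF_2$ and socle $D^{(n-2,2)}_{\FF_2}$; and $T_3/2T_3$ has head $D^{(n-1,1)}_{\FF_2}$. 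As $\FF_2$, $D^{(n-2,2)}_{\FF_2}$ and $D^{(n-1,1)}_{\FF_2}$ are pairwise non-isomorphic, no two of these modules coincide, and the theorem follows.

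In effect the real work has already been done in the preparatory results: the genuinely hard input is the uniseriality of $T_i/2T_i$ in Corollary~\ref{cor T uniserial}, which in turn relies on the identification $T_3\cong S^\ast$ in Proposition~\ref{prop T dual}, hence on Wildon's embedding $(S^\lambda_{\ZZ})^\ast\hookrightarrow S^\lambda_{\ZZ}$. Granting those, the present theorem is a short bookkeeping argument; the only point needing a little care is confirming that the list $\{S,T_1,T_2,T_3\}$ contains no repetitions, which is exactly the head/socle comparison isolated as the last step above.
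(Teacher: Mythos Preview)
Your proof is correct and follows essentially the same route as the paper: invoke Proposition~\ref{prop max forms 2} after checking, via Proposition~\ref{prop S in char 2}(c) and Corollary~\ref{cor T uniserial}, that each $N$ with $2S\subseteq N\subseteq S$ has uniserial reduction, and then separate $S,T_1,T_2,T_3$ by their Loewy series. You spell out a couple of details the paper leaves implicit (the trivial case $N=2S$ and the explicit head/socle comparison), but the argument is the same.
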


\begin{proof}
By Lemma~\ref{lemma T chain}, $S$, $T_1$, $T_2$ and $T_3$ are the only $\ZZ_2\mathfrak{S}_n$-sublattices of $S$
properly containing $2S$. By Proposition~\ref{prop S in char 2} and Corollary~\ref{cor T uniserial}, the 
$\FF_2\mathfrak{S}_n$-modules $S/2S$, $T_1/2T_1$, $T_2/2T_2$ and $T_3/2T_3$ are uniserial and pairwise
non-isomorphic; in particular, the $\ZZ_2\mathfrak{S}_n$-lattices $S,T_1,T_2$ and $T_3$ are pairwise non-isomorphic.
Now Proposition~\ref{prop max forms 2} yields the assertion of the theorem.
\end{proof}

\begin{rem}\label{rem summing up}
(a)\, We would like to mention that the definition of the $\ZZ_2\mathfrak{S}_n$-lattices
$S_1$ and $S_2$ in Lemma~\ref{lemma S1} and Lemma~\ref{lemma S2}, respectively, as well as our proofs 
of Lemma~\ref{lemma S submodules} and Lemma~\ref{lemma S submodules 2} are reminiscent of 
\cite[Satz (I.11)]{Plesken1974}. There the $\ZZ$-forms of a certain absolutely simple $\QQ[\mathfrak{S}_2\wr\mathfrak{S_n}]$-module have been determined.

\smallskip

(b)\, Since $S(2)_{\QQ_2}$ is self-dual, by Theorem~\ref{thm Specht properties}(a),
for every $\ZZ_2$-form $L$ of $S(2)_{\QQ_2}$, the dual lattice $L^*$ has to be a $\ZZ_2$-form of $S(2)_{\QQ_2}$
as well. 

Suppose first that $n$ is odd. Then the natural Specht module $S(1)_{\QQ_2}$ is
absolutely simple and self-dual, by Theorem~\ref{thm Specht properties}, and the Specht
$\FF_2\mathfrak{S}_n$-module $S(1)_{\FF_2}\cong S(1)_{\ZZ_2}/2 S(1)_{\ZZ_2}$ is also
absolutely simple, by \cite[Theorem 23.7]{James1978}. Thus, by Proposition~\ref{prop notdivides},
$S(1)_{\ZZ_2}$ is the only $\ZZ_2$-form of $S(1)_{\QQ_2}$ and 
$S(1)_{\ZZ_2}\cong (S(1)_{\ZZ_2})^*$.
So, by \ref{noth change rings}(d), also $S(2)_{\ZZ_2}\cong \bigwedge^2(S(1)_{\ZZ_2})$ is self-dual.
By Proposition~\ref{prop S uniserial}, the $\FF_2\mathfrak{S}_n$-modules $S_1/2S_1$ and $S_2/2S_2$ are not self-dual,
thus the $\ZZ_2\mathfrak{S}_n$-lattices $S_1$ and $S_2$ are  not self-dual either. Hence we must have $S_1\cong S_2^*$, by
Theorem~\ref{thm n odd}.

\smallskip

Now suppose that $n\equiv 2\pmod{4}$. By Proposition~\ref{prop T dual}, we know that $(S(2)_{\ZZ_2})^*\cong T_3$.
Corollary~\ref{cor T uniserial} shows that $T_1/2T_1$ and $T_2/2T_2$ are not self-dual, thus $T_1$ and $T_2$ are not self-dual.
By Theorem~\ref{thm n 2 mod 4}, this gives $T_1\cong T_2^*$.

\smallskip

(c)\, In consequence of Theorem~\ref{thm n odd} and Theorem~\ref{thm n 2 mod 4}, as far as the
determination of the isomorphism classes of $\ZZ_2$-forms of $S(2)_{\QQ_2}$ is concerned, the case
$n\equiv 0\pmod{4}$ remains open so far. The Loewy series of the $\FF_2\mathfrak{S}_n$-module
$S(2)_{\FF_2}$ in Proposition~\ref{prop S in char 2}(d) already indicates that the lattice of 
$\ZZ_2\mathfrak{S}_n$-sublattices of $S(2)_{\ZZ_2}$ of full rank will have a much more complicated structure in this case.

As well, for $k\geq 3$, we are currently neither able to determine representatives
of the isomorphism classes of $\ZZ_2$-forms of $S(k)_{\QQ_2}$ nor their number. 
However, based on computer calculations with MAGMA \cite{MAGMA}, we would like to state the following conjecture.
Using the algorithms developed in \cite{Hofmann2016b}, we have checked part (a)  of the conjecture for $n\leq 52$,
and part (b) for $n\leq 23$.
\end{rem}

\begin{conj}\label{conj}
Let $n \in \NN$,  and let $n \geq 5$. For $k\in\{1,\ldots,n-2\}$, denote by $h_2(k)$ the number of isomorphism classes of $\ZZ_2$-forms
of $S(k)_{\QQ_2}$.

\smallskip

(a)\, If $n \equiv 0 \pmod{4}$ and $k\in\{2,n-3\}$, then $h_2(k) = 3\nu_2(n) + 1$.

\smallskip

(b)\, For $k\in\{3,n-4\}$, the following holds:

\smallskip 

\quad {\rm (i)} If $n \geq 7$ is odd, then $h_2(k) = 3$.

\quad {\rm (ii)} If $n \equiv 2\pmod{4}$, then $h_2(k) = 8$.

\quad {\rm (iii)} If $n \equiv 0 \pmod{4}$, then $h_2(k) = 9\nu_2(n) + 1$.

\end{conj}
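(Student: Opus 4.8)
\noindent\textbf{A strategy towards Conjecture~\ref{conj}.}\, Since Conjecture~\ref{conj} is currently open, we only outline how one might approach it, extending the method used above for $k\in\{1,2\}$. First, it suffices to treat $k=2$ in~(a) and $k=3$ in~(b): by Proposition~\ref{prop dual sign} one has $S^{(n-k,1^k)}_{\ZZ_2}\otimes\sgn_{\ZZ_2}\cong (S^{(k+1,1^{n-k-1})}_{\ZZ_2})^\ast$, and since tensoring with $\sgn_{\ZZ_2}$ and dualizing each permute the isomorphism classes of $\ZZ_2$-forms of a fixed $\QQ_2\mathfrak{S}_n$-module, this gives $h_2(n-3)=h_2(2)$ and $h_2(n-4)=h_2(3)$. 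By Remark~\ref{rem forms all in one}, for each remaining case it then suffices to classify, up to $\ZZ_2\mathfrak{S}_n$-isomorphism, the full-rank sublattices of the fixed Specht lattice $S(k)_{\ZZ_2}$.

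The tools are exactly those of Sections~\ref{sec p odd} and~\ref{sec p 2}. The plan is: (i) determine the submodule lattice of the $\FF_2\mathfrak{S}_n$-module $S(k)_{\FF_2}$ and, more generally, the Loewy series of $M/2M$ for every $\ZZ_2\mathfrak{S}_n$-sublattice $M$ with $2S(k)_{\ZZ_2}\subseteq M\subseteq S(k)_{\ZZ_2}$; (ii) write down explicit $\ZZ$-forms of $S(k)_\QQ$ inside $S(k)_\ZZ$ --- combinatorial analogues of the lattices $S_1,S_2,T_1,T_2,T_3$ of Lemmas~\ref{lemma S1}, \ref{lemma S2} and~\ref{lemma T chain}, built from the standard polytabloid basis --- and identify the self-dual ones and the ``dual'' forms with the help of Wildon's embedding $(S^{(n-k,1^k)}_{\ZZ_2})^\ast\hookrightarrow S^{(n-k,1^k)}_{\ZZ_2}$ (see \cite{Wildon2002} and the appendix); (iii) show, by induction on the length of a chain of successive maximal sublattices --- exactly as in the proof of Theorem~\ref{thm forms bijection p}, and using Corollary~\ref{cor p max sublattices} together with Propositions~\ref{prop max forms 2} and~\ref{prop max forms} to control the maximal sublattices at each step --- that every full-rank sublattice of $S(k)_{\ZZ_2}$ is isomorphic to one of the lattices from~(ii); and (iv) verify pairwise non-isomorphism of the resulting representatives by comparing their reductions $M/2M$ via the Brauer--Nesbitt invariant (Proposition~\ref{prop notdivides}(a)).

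For part~(a) the structural input required in step~(i) is within reach: Proposition~\ref{prop S in char 2}(d) already gives the Loewy series of $S(2)_{\FF_2}$ when $4\mid n$, and the complete submodule lattice of the ambient Young module $Y^{(n-2,1^2)}_{\FF_2}$ is known by~\cite[Theorem 1.1]{Muller2011}. The shape of the answer, $3\nu_2(n)+1$, suggests a recursion in $\nu_2(n)$ in which each further power of $2$ dividing $n$ inserts one more ``layer'' of three new isomorphism classes into the poset of full-rank sublattices of $S(2)_{\ZZ_2}$ (the same formula evaluated at $\nu_2(n)=1$ returns the four classes $S,T_1,T_2,T_3$ of Theorem~\ref{thm n 2 mod 4}, a consistency check); isolating this layered structure and showing that it closes up would be the heart of the argument.

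For part~(b) the analogous structural result for $S(3)_{\FF_2}=\bigwedge^3(S^{(n-1,1)}_{\FF_2})$ --- namely its submodule lattice, case-split according to $n$ odd, $n\equiv 2\pmod{4}$ and $n\equiv 0\pmod{4}$ --- does not appear in the literature, and establishing it is the main obstacle. One would presumably need either the submodule lattice of the Young module $Y^{(n-3,1^3)}_{\FF_2}$ (or of $M^{(n-3,1^3)}_{\FF_2}$) in the style of~\cite{Muller2011}, or an exterior-power analysis of $\bigwedge^3(S^{(n-1,1)}_{\ZZ_2})$ combined with the known $2$-modular decomposition numbers for hook Specht modules; from such an analysis the predicted counts ($3$ for $n$ odd, $8$ for $n\equiv 2\pmod{4}$, and $9\nu_2(n)+1$ for $4\mid n$) should emerge much as the values $h_2(2)=3$ and $h_2(2)=4$ did in Theorems~\ref{thm n odd} and~\ref{thm n 2 mod 4}. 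The reason this structural detour seems unavoidable is that, for $p=2$, the functor $\bigwedge^k$ need not preserve the number of isomorphism classes of forms --- already $h_2(2)=3\neq 1=h_2(1)$ for $n$ odd --- so, in contrast to Theorem~\ref{thm exterior forms iso} and Theorem~\ref{thm forms bijection p}, one cannot simply transport a fixed list of forms of $S(1)_{\QQ_2}$ through $\bigwedge^k$; any inductive scheme must keep track of the extra forms that arise as $k$ grows, and that is where we expect the real difficulty to lie.
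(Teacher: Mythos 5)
You have correctly recognised that the statement in question is a \emph{conjecture}, not a theorem: the paper offers no proof, only a computational verification (via MAGMA, for $n\le 52$ in part~(a) and $n\le 23$ in part~(b)). There is thus no argument in the paper to compare your proposal against, and you are appropriately candid that what you give is a strategy sketch rather than a proof. The sketch itself is accurate in its details. The reduction to $k=2$, respectively $k=3$, via Proposition~\ref{prop dual sign} is correct --- conjugating $(n-k,1^k)$ yields $(k+1,1^{n-k-1})$, so $h_2(n-3)=h_2(2)$ and $h_2(n-4)=h_2(3)$ --- and is exactly the observation recorded in Remark~\ref{rem transpose bij}. Passing to full-rank sublattices of $S(k)_{\ZZ_2}$ via Remark~\ref{rem forms all in one} and analysing maximal sublattices layer by layer is precisely the method of Sections~\ref{sec p odd} and~\ref{sec p 2}. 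Your diagnosis of the central obstruction --- for $k\ge 3$, the lack of structural information on $S(k)_{\FF_2}$ and on the $\ZZ_2\mathfrak{S}_n$-sublattices of $S(k)_{\ZZ_2}$; for $k=2$ and $4\mid n$, the non-uniserial Loewy structure in Proposition~\ref{prop S in char 2}(d) --- matches the paper's own explanation in Remark~\ref{rem summing up}(c). Your consistency check that the formula $3\nu_2(n)+1$ of part~(a) returns $4$ at $\nu_2(n)=1$, agreeing with Theorem~\ref{thm n 2 mod 4}, is a genuine sanity check, and your explanation of why the $p\ge 3$ argument via exterior powers fails at $p=2$ (Theorem~\ref{thm exterior forms iso} still gives injectivity of $\bigwedge^k$ on isomorphism classes, but surjectivity fails since already $h_2(1)=1<3=h_2(2)$ for odd $n$) is correct.

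One small imprecision worth flagging: in step~(iv) you cite Proposition~\ref{prop notdivides}(a) as the device for distinguishing forms, but the Brauer--Nesbitt theorem only guarantees that all $\ZZ_2$-forms have the same composition factors upon reduction; it is not itself a separating invariant. What actually distinguishes the forms in Theorems~\ref{thm n odd} and~\ref{thm n 2 mod 4} is the full isomorphism type (in practice, the Loewy series) of $M/2M$, computed via Proposition~\ref{prop S uniserial} and Corollary~\ref{cor T uniserial}. With that caveat, your proposal gives a fair and accurate assessment of what a proof of the conjecture would require; it simply does not, and does not claim to, supply one.
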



\section{Proofs of the main results}\label{sec proofs}

This section is now devoted to the proofs of Theorem~\ref{thm intro} and Corollary~\ref{cor intro}, and to establishing a
precise version of Theorem~\ref{thm intro expl}.
To do so, we shall apply Theorems~\ref{thm  forms bijection p}, \ref{thm n odd} and \ref{thm n 2 mod 4}, and
invoke the results of Plesken~\cite[Satz (I.9), Korollar (I.10)]{Plesken1974}, \cite[Theorem 5.1]{Plesken1977}
and Craig \cite{Craig1976}, who determined the isomorphism classes of
$\ZZ$-forms of the natural Specht $\QQ\mathfrak{S}$-module $S(1)_{\QQ}=S^{(n-1,1)}_{\QQ}$. In fact, 
Craig works with the Specht module $S(1)_\QQ\otimes\sgn_\QQ\cong S(n-2)_{\QQ}=S^{(2,1^{n-2})}_{\QQ}$ rather than $S(1)_{\QQ}$, and Plesken starts out with the dual lattice $S(1)_\ZZ^*$. 
 Translated into our terminology, the result reads as follows:
 
 \begin{thm}\label{thm craig}
 Let $n\in\NN$ be such that $n\geq 3$, and let $M:=S(1)_{\ZZ}$. For every divisor $d\in \NN$ of $n$, let
 $$M_d:={}_\ZZ\langle \sum_{i=2}^n b(i)\rangle +dM\subseteq M\,.$$
 Then  
 \smallskip
 
 {\rm (a)}\, $\{M_d: d\mid n\}$ is a set of representatives of the isomorphism classes of $\ZZ$-forms of $S(1)_{\QQ}$;
 
 \smallskip
 
 {\rm (b)}\, for every $d\mid n$, one has $[M:M_d]=d^{n-2}$;

 \smallskip
 
 {\rm (c)}\, if $p\leq n$ is a prime number, then $\{(M_{p^i})_{\ZZ_p}: 0\leq i\leq \nu_p(n)\}$ is a set of representatives of the
 isomorphism classes of $\ZZ_p$-forms of $S(1)_{\QQ_p}$.
 \end{thm}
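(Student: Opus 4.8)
First I would check that $M_d$ really is a $\ZZ\mathfrak{S}_n$-sublattice of $M$ of full rank. Writing $v:=\sum_{i=2}^n b(i)=\sum_{i=1}^n\b{i}-n\,\b{1}$, for every $\sigma\in\mathfrak{S}_n$ one computes $\sigma v-v=-n(\b{\sigma(1)}-\b{1})=-n\,b(\sigma(1))\in nM\subseteq dM$, using $d\mid n$ (with the convention $b(1):=0$); hence $\sigma M_d=\ZZ(\sigma v)+dM\subseteq M_d$, and since $M_d\supseteq dM$ it has $\ZZ$-rank $n-1=\rk_\ZZ(M)$, so $M_d$ is a $\ZZ$-form of $S(1)_\QQ$. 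For part (b) I would use the chain $dM\subseteq M_d\subseteq M$: here $[M:dM]=d^{\,n-1}$, while $M_d/dM$ is the cyclic subgroup of $M/dM\cong(\ZZ/d)^{n-1}$ generated by the image of $v$, which with respect to the basis $\{b(2),\dots,b(n)\}$ equals $(1,\dots,1)$ and hence has order exactly $d$; therefore $[M:M_d]=d^{\,n-1}/d=d^{\,n-2}$. In particular $[M:M_d]$ is divisible only by primes dividing $n$, and for every prime $p$ one has $(M_d)_{\ZZ_p}=\ZZ_p\,v+p^{\nu_p(d)}M_{\ZZ_p}=(M_{p^{\nu_p(d)}})_{\ZZ_p}$ with $\nu_p(d)\leq\nu_p(n)$.

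For part (a) I would invoke the classifications of Craig and of Plesken. Craig determines the $\ZZ$-forms of $S^{(2,1^{n-2})}_\QQ\cong S(1)_\QQ\otimes\sgn_\QQ$ and Plesken those of $(S(1)_\ZZ)^*$; since $L\mapsto L\otimes\sgn$ and $L\mapsto L^*$ induce bijections on isomorphism classes of $\ZZ$-forms of the respective modules, their results transport to the assertion that every $\ZZ$-form of $S(1)_\QQ$ is isomorphic to one of finitely many explicit forms, of which exactly $d(n)$ occur up to isomorphism. The remaining point is the bookkeeping identification of those explicit forms with the lattices $M_d$ defined here; granting it, since there are $d(n)$ divisors $d\mid n$ and every $\ZZ$-form is some $M_d$ while only $d(n)$ classes exist, the $M_d$ are forced to be pairwise non-isomorphic and to exhaust all classes, which is (a).

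Part (c) would then follow from (a), (b), and the local–global machinery of Section~\ref{sec forms}. Fix a prime $p\leq n$. If $p\nmid n$, then $S(1)_{\FF_p}=M_{\ZZ_p}/pM_{\ZZ_p}$ is absolutely simple (Theorem~\ref{thm hooks}(a) for $p\geq 3$, Remark~\ref{rem summing up}(b) for $p=2$) and $S(1)_{\QQ_p}$ is absolutely simple and self-dual by Theorem~\ref{thm Specht properties}(a), so Proposition~\ref{prop notdivides} shows that $S(1)_{\QQ_p}$ has a unique $\ZZ_p$-form, namely $M_{\ZZ_p}=(M_{p^0})_{\ZZ_p}$; this settles (c) in this case, since $\nu_p(n)=0$. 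If $p\mid n$, put $\nu:=\nu_p(n)$. By Proposition~\ref{prop rep iso p} together with the completeness part of (a), every $\ZZ_p$-form of $S(1)_{\QQ_p}$ is isomorphic to $(M_d)_{\ZZ_p}=(M_{p^{\nu_p(d)}})_{\ZZ_p}$ for some $d\mid n$, hence to one of the $\nu+1$ lattices $(M_{p^i})_{\ZZ_p}$, $0\leq i\leq\nu$, so $h_p(S(1)_\QQ)\leq\nu+1$. On the other hand, Corollary~\ref{cor number local} and the case $p\nmid n$ just treated give $d(n)=\prod_{q\mid n}h_q(S(1)_\QQ)\leq\prod_{q\mid n}(\nu_q(n)+1)=d(n)$, whence $h_q(S(1)_\QQ)=\nu_q(n)+1$ for every prime $q\mid n$. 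Therefore the $\nu+1$ lattices $(M_{p^i})_{\ZZ_p}$ are pairwise non-isomorphic and form a complete set of representatives of the $\ZZ_p$-forms of $S(1)_{\QQ_p}$, which is (c). As a by-product, via Proposition~\ref{prop local} this also reproves that the $M_d$ are pairwise non-isomorphic, since $M_d\cong M_{d'}$ would force $\nu_q(d)=\nu_q(d')$ for all $q\mid n$, i.e. $d=d'$.

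The main obstacle is the identification in part (a) of the $M_d$ with the representatives that Craig and Plesken write down (after sign-twisting, respectively dualizing). If one prefers not to rely on their explicit lists, one must instead prove directly that the $M_d$ are pairwise non-isomorphic — equivalently, that the $\ZZ_p$-forms $(M_{p^i})_{\ZZ_p}$, $0\leq i\leq\nu_p(n)$, are pairwise non-isomorphic — which is essentially re-deriving Plesken's classification and is not formal: the reductions $(M_{p^i})_{\ZZ_p}/p(M_{p^i})_{\ZZ_p}$ are in general semisimple, so the Loewy series do not separate the forms, and one has to analyse the poset of full-rank $\ZZ_p\mathfrak{S}_n$-sublattices of $M_{\ZZ_p}$ directly, sitting inside the permutation lattice $M^{(n-1,1)}_{\ZZ_p}$.
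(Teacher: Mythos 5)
Your computation of part (b) is correct and only cosmetically different from the paper's, which cites Proposition~\ref{prop det} with the explicit basis $\{\sum_{i=2}^n b(i),\, db(2),\ldots,db(n-1)\}$ of $M_d$; your two-step reasoning $[M:M_d]=[M:dM]/[M_d:dM]=d^{n-1}/d$ is a perfectly good alternative. Likewise your localization identity $(M_d)_{\ZZ_p}=(M_{p^{\nu_p(d)}})_{\ZZ_p}$ is correct and matches the paper's calculation in (c).

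The genuine gap is in your part (a), and it propagates to your part (c). For (a) you must show that the $d(n)$ lattices $M_d$ are pairwise non-isomorphic, but you never establish this: you defer it to an unexecuted ``bookkeeping identification'' of the $M_d$ with the explicit representatives written down by Craig and Plesken, and then observe that the only alternative you see --- analysing the sublattice poset of $M_{\ZZ_p}$ inside the permutation module --- amounts to re-deriving Plesken's classification. In fact the paper sidesteps both: it shows that $\sum_{i=2}^n b(i)\in M_d\smallsetminus aM$ for every integer $a$ with $|a|\geq 2$, so $M_d\not\subseteq aM$, and then invokes \cite[Proposition 2.3]{Plesken1977}, a ready-made criterion asserting that full-rank sublattices of $M$ not contained in any proper integer multiple $aM$ are pairwise non-isomorphic (not merely representatives of distinct classes matched against an explicit list). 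This criterion, together with the count $j(S(1)_\QQ)=d(n)$ from \cite[Korollar (I.10)]{Plesken1974}, gives (a) directly and with no identification step. Your (c) then opens with ``together with the completeness part of (a)'' to bound $h_p\leq\nu_p(n)+1$, and the subsequent counting $d(n)=\prod_q h_q$ does not repair this: without the completeness of (a) (equivalently, pairwise non-isomorphism of the $M_d$) you have no upper bound on $h_p$, so the argument is circular. Once (a) is secured via Plesken's criterion, your (c) works; the paper's own route through Proposition~\ref{prop local} and Proposition~\ref{prop rep iso p} is essentially what you describe.
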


\begin{proof}
(a)\, It is easily checked that each $M_d$, for $d\mid n$, is a $\ZZ G$-sublattice of $M$ with $\ZZ$-basis
$\{\sum_{i=2}^n b(i), db(2),\ldots, d b(n-1)\}$. Thus each $M_d$ is a $\ZZ$-form of $S(1)_{\QQ}$, by Remark~\ref{rem forms all in one}.
Moreover, if $d\mid n$ and $a\in\ZZ\smallsetminus\{1,-1\}$, then $M_d$ is not contained in $a M$, since
then $\sum_{i=2}^nb(i)\notin aM$. Thus, by \cite[Proposition 2.3]{Plesken1977}, the $\ZZ \mathfrak{S}_n$-lattices
in $\{M_d: d\mid n\}$ are pairwise non-isomorphic. But, by \cite[Korollar (I.10)]{Plesken1974}, the number of
isomorphism classes of $\ZZ$-forms of $S(1)_{\QQ}$ equals the number of positive divisors of of $n$, whence (a).

\smallskip

Part (b) follows immediately from Proposition~\ref{prop det} using the $\ZZ$-basis of $M_d$ just mentioned.

\smallskip

To prove (c), let $d\mid n$, and let $p$ be a prime number such that $d=p^i q$, for some $q,i\in\NN$ with $p\nmid q$.
Then $dM\subseteq p^iM$, hence also $M_d\subseteq M_{p^i}$ and $(M_d)_{\ZZ_p}\subseteq (M_{p^i})_{\ZZ_p}\subseteq M_{\ZZ_p}$. As well, 
$(M_{\ZZ_p}:(M_d)_{\ZZ_p})=d^{n-2}\cdot \ZZ_p=p^i\cdot\ZZ_p=(M_{\ZZ_p}:(M_{p^i})_{\ZZ_p})$. 
Thus Proposition~\ref{prop det} gives 
$(M_d)_{\ZZ_p}=(M_{p^i})_{\ZZ_p}$.
On the other hand, in consequence of (a) and Proposition~\ref{prop  local}, we obtain
that the $\ZZ_p \mathfrak{S}_n$-lattices in $\{(M_{p^i})_{\ZZ_p}: 0\leq i\leq \nu_p(n)\}$
have to be pairwise non-isomorphic, since $(M_{p^i})_{\ZZ_q}=M_{\ZZ_q}$, for
all $i\in\{0,\ldots,\nu_p(n)\}$ and every prime number $q\neq p$.
Now Proposition~\ref{prop rep iso p} implies the assertion in (c).
\end{proof}

\begin{rem}\label{rem transpose bij}
By Theorem~\ref{thm Specht properties} and Proposition~\ref{prop dual sign}, we know that $S(k)_{\QQ}\cong S(n-k+1)_\QQ\otimes\sgn_\QQ$ as well as $S(k)_{\QQ_p}\cong S(n-k+1)_{\QQ_p}\otimes \sgn_{\QQ_p}$, for every
$k\in\{1,\ldots,n-2\}$ and every prime number $p$. These isomorphisms entail a
bijection between the set of isomorphism classes of $\ZZ$-forms of $S(k)_\QQ$ and the set of isomorphism classes of $\ZZ$-forms
of $S(n-k+1)_\QQ$, as well as a bijection between the set of isomorphism classes of $\ZZ_p$-forms of $S(k)_{\QQ_p}$ and the set of isomorphism classes of $\ZZ_p$-forms
of $S(n-k+1)_{\QQ_p}$. 

Hence, together with Theorem~\ref{thm craig} we now get:
\end{rem}

\begin{proof}[\textsl{\textbf{Proof of Theorem~\ref{thm intro}}}]
Part~(a) follows from Theorem~\ref{thm forms bijection p} and Theorem~\ref{thm craig}(c).
Part~(b) follows from Theorem~\ref{thm n odd} and Theorem~\ref{thm n 2 mod 4}.
\end{proof}

\begin{proof}[\textsl{\textbf{Proof of Corollary~\ref{cor intro}}}]
If $n\in\NN$ has prime factor decomposition $n=p_1^{a_1}\cdots p_r^{a_r}$ and if $d(n)$ denotes
the number of divisors of $n$ in $\NN$, then $d(n)=\prod_{i=1}^r(a_i+1)$.
Thus the corollary follows from Theorem~\ref{thm intro} together with Corollary~\ref{cor number local}.
\end{proof}

We conclude by proving  Theorem~\ref{thm expl}. Here we shall use the following notation:
for every prime number $p\geq 3$, we set
$$X_p:=\left\{\bigwedge^2(M_{p^i}): 0\leq i\leq \nu_p(n)\right\}\,.$$ 
Furthermore, we set
$$X_2:=\begin{cases}
\{S(2)_{\ZZ},\hat{S}_1,\hat{S}_2\}&\text{ if } n\equiv 1\pmod{2}\\
\{S(2)_{\ZZ},\hat{T}_1,\hat{T}_2,\hat{T}_3\}&\text{ if } n\equiv 2\pmod{4}\,;
\end{cases}$$
here $\hat{S}_1,\hat{S}_2,\hat{T}_1,\hat{T}_2,\hat{T}_3$ are the $\ZZ$-forms of $S(2)_{\QQ}$ in Remark~\ref{rem overZ}.
With this notation, we have

\begin{thm}\label{thm expl}
Let $n\geq 5$ be such that $n \not\equiv 0 \pmod{4}$, and let $L$ be a $\ZZ$-form of $S(2)_{\QQ}$.
Let $\{p_1,\ldots,p_g\}$ be the set of prime divisors of $n$.
Then for each $j\in\{1,\ldots,g\}$, there exists a unique $N_{p_j} \in X_{p_j}$ such that
$$L\cong \bigcap_{j=1}^g N_{p_j}\,.$$
\end{thm}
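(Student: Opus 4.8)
The plan is to recognize Theorem~\ref{thm expl} as a direct application of the abstract machinery set up in Section~\ref{sec forms}, specifically Proposition~\ref{prop rep iso}, once we have assembled the required local data. First I would record the global hypotheses: $V:=S(2)_{\QQ}=S^{(n-2,1^2)}_{\QQ}$ is absolutely simple by Theorem~\ref{thm Specht properties}(a), and $M:=S(2)_{\ZZ}$ is a distinguished $\ZZ$-form of $V$. Let $\{p_1,\dotsc,p_g\}$ be the set of prime divisors of $n$; since $S(2)_\QQ$ has only one $\ZZ_q$-form up to isomorphism for every prime $q\nmid n$ (by Proposition~\ref{prop notdivides} and Theorem~\ref{thm Specht properties}, as the reduction is $p$-regular Specht-type for $q\geq 3$, $q\nmid n$, and similarly handled for $q=2$, $n$ odd), the primes dividing $|\mathfrak S_n|$ but not $n$ contribute trivially, and one checks that $\{p_1,\dotsc,p_g\}$ is exactly the relevant index set for Proposition~\ref{prop rep iso}.

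The core of the argument is to verify, for each $p\in\{p_1,\dotsc,p_g\}$, that $X_p$ (as defined just before the statement) satisfies the two hypotheses of Proposition~\ref{prop rep iso}: every $N\in X_p$ is a $\ZZ$-form of $V$ contained in $M$ with $[M:N]$ a $p$-power, and $\{N_{\ZZ_p}: N\in X_p\}$ is a set of representatives of the isomorphism classes of $\ZZ_p$-forms of $S(2)_{\QQ_p}$. For odd $p\mid n$: here $X_p=\{\bigwedge^2(M_{p^i}): 0\le i\le \nu_p(n)\}$. By Theorem~\ref{thm craig}(a) each $M_{p^i}\subseteq S(1)_\ZZ$ is a $\ZZ$-form of $S(1)_\QQ$ with $[S(1)_\ZZ:M_{p^i}]=p^{i(n-2)}$ a $p$-power, so by Proposition~\ref{prop exterior forms} and Lemma~\ref{lemma exterior index}, $\bigwedge^2(M_{p^i})$ is a $\ZZ$-form of $\bigwedge^2 S(1)_\QQ\cong S(2)_\QQ$ inside $\bigwedge^2 S(1)_\ZZ\cong S(2)_\ZZ=M$, with $p$-power index $p^{i(n-2)\binom{n-2}{1}}$. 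Passing to $\ZZ_p$ and using $\bigwedge^2((M_{p^i})_{\ZZ_p})\cong (\bigwedge^2 M_{p^i})_{\ZZ_p}$ (from \ref{noth exterior}(d)) together with Theorem~\ref{thm craig}(c) (which says $\{(M_{p^i})_{\ZZ_p}:0\le i\le\nu_p(n)\}$ represents the $\ZZ_p$-forms of $S(1)_{\QQ_p}$) and Theorem~\ref{thm forms bijection p} (the bijection $L\mapsto\bigwedge^2(L)$ on isomorphism classes of $\ZZ_p$-forms for $p\geq 3$), we conclude $\{N_{\ZZ_p}:N\in X_p\}$ represents the $\ZZ_p$-forms of $S(2)_{\QQ_p}$, as required.

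For $p=2$: the definition of $X_2$ uses the $\ZZ$-forms $\hat S_1,\hat S_2$ (when $n$ is odd) or $\hat T_1,\hat T_2,\hat T_3$ (when $n\equiv 2\pmod 4$) from Remark~\ref{rem overZ}, which are by construction contained in $S(2)_\ZZ=M$ with $2$-power index, and which satisfy $\hat S_i/2\hat S_i\cong S_i/2S_i$, $\hat T_j/2\hat T_j\cong T_j/2T_j$. Hence their $\ZZ_2$-completions are exactly $S_i$ resp.\ $T_j$ (as full-rank sublattices of $S(2)_{\ZZ_2}$ are determined by their $2$-adic completions, cf.\ Remark~\ref{rem forms all in one} and the discussion in \ref{noth local}), and Theorem~\ref{thm n odd} resp.\ Theorem~\ref{thm n 2 mod 4} tells us that $\{S(2)_{\ZZ_2}, S_1, S_2\}$ resp.\ $\{S(2)_{\ZZ_2}, T_1, T_2, T_3\}$ represents the isomorphism classes of $\ZZ_2$-forms of $S(2)_{\QQ_2}$. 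Thus $\{N_{\ZZ_2}:N\in X_2\}$ is the required set of representatives. With all hypotheses of Proposition~\ref{prop rep iso} verified, that proposition yields that $\{N_{p_1}\cap\dotsb\cap N_{p_g}: N_{p_j}\in X_{p_j}\}$ is a set of representatives of the isomorphism classes of $\ZZ$-forms of $S(2)_\QQ$; in particular the given $\ZZ$-form $L$ is isomorphic to $\bigcap_{j=1}^g N_{p_j}$ for a suitable choice, and uniqueness of the $N_{p_j}$ follows because distinct tuples give non-isomorphic intersections (again part of the conclusion of Proposition~\ref{prop rep iso}).

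The main obstacle I anticipate is bookkeeping rather than conceptual: one must be careful that the index set $\{p_1,\dotsc,p_g\}$ of prime divisors of $n$ really suffices in Proposition~\ref{prop rep iso}, i.e.\ that for primes $q$ dividing $n!$ but not $n$ there is a unique $\ZZ_q$-form — this is where Proposition~\ref{prop notdivides}(b) must be combined with the fact that the relevant $2$-modular (when $q=2$, $n$ odd) or $q$-modular reduction of $S(2)$ is absolutely simple, which in turn requires invoking Theorem~\ref{thm hooks}(a) for $q\geq 3$ and the $n$-odd case of Theorem~\ref{thm Specht properties}(b)/\cite[Theorem 23.7]{James1978} for $q=2$. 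A second, smaller subtlety is ensuring the identification $\bigwedge^2(M_{p^i})\subseteq M$ is literally the inclusion coming from Proposition~\ref{prop exterior}(a) under the fixed isomorphism (\ref{eqn hook iso}), so that the intersection $\bigcap_j N_{p_j}$ is taken inside one ambient lattice $M$; this is routine given \ref{noth hooks}(b) but should be stated explicitly.
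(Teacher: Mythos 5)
Your overall strategy is correct and matches the paper's terse proof: reduce to Proposition~\ref{prop rep iso}, and verify its hypotheses locally using Theorems~\ref{thm craig}, \ref{thm forms bijection p}, \ref{thm n odd}, \ref{thm n 2 mod 4} and Remark~\ref{rem overZ}. However, there is a genuine error in your opening paragraph that makes the argument internally inconsistent.

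You claim that $S(2)_{\QQ}$ has only one $\ZZ_q$-form up to isomorphism for every prime $q\nmid n$, citing Proposition~\ref{prop notdivides}, and assert this is ``similarly handled for $q=2$, $n$ odd.'' This is false. By Theorem~\ref{thm n odd}, when $n\geq 5$ is odd there are exactly \emph{three} isomorphism classes of $\ZZ_2$-forms of $S(2)_{\QQ_2}$, not one. The reason Proposition~\ref{prop notdivides}(b) does not apply is that its hypothesis fails: $S^{(n-2,1^2)}_{\FF_2}$ is not absolutely simple when $n$ is odd (it has Loewy length $3$ when $n\equiv 1\pmod 4$ and is a direct sum of two simples when $n\equiv 3\pmod 4$, by Proposition~\ref{prop S in char 2}(a),(b)). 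Your later paragraph then correctly invokes $X_2$ with three elements $\{S(2)_\ZZ, \hat S_1, \hat S_2\}$ for $n$ odd, flatly contradicting the earlier uniqueness claim.

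This matters because it obscures a genuine subtlety in the theorem's index set. With $\{p_1,\dotsc,p_g\}$ read literally as ``prime divisors of $n$,'' the prime $2$ is excluded when $n$ is odd, and then $\prod_j |X_{p_j}| = d(n)$, which undercounts the $3d(n)$ isomorphism classes guaranteed by Corollary~\ref{cor intro}(a). The correct reading (needed for Proposition~\ref{prop rep iso} to apply, since $2$ divides $|\mathfrak S_n|$) is to take $\{p_1,\dotsc,p_g\}$ to always include $2$, i.e.\ the primes $p$ with $|X_p|>1$; for all other prime divisors $q$ of $n!$ one sets $X_q=\{M\}$, which is justified by Proposition~\ref{prop notdivides}(b) combined with Theorem~\ref{thm hooks}(a) (these $q$ are odd and $q\nmid n$). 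Your proof must not invoke Proposition~\ref{prop notdivides} for $q=2$, $n$ odd; rather $p=2$ is always one of the nontrivial local places, with $X_2$ supplied by Theorem~\ref{thm n odd} or~\ref{thm n 2 mod 4} plus Remark~\ref{rem overZ}, exactly as in your final paragraph.

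Once that is repaired, the rest of your verification — the index and completion computations for odd $p\mid n$ via Theorem~\ref{thm craig}, Proposition~\ref{prop exterior forms}, Lemma~\ref{lemma exterior index} and Theorem~\ref{thm forms bijection p}, and the $p=2$ case via Remark~\ref{rem overZ} — is correct, and Proposition~\ref{prop rep iso} delivers both existence and uniqueness of the tuple $(N_{p_j})_j$ with $L\cong\bigcap_j N_{p_j}$.
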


\begin{proof}
This follows from Theorems~\ref{thm forms bijection p}, \ref{thm n odd} and~\ref{thm n 2 mod 4}, together with Proposition~\ref{prop rep iso}.
\end{proof}

By Remark~\ref{rem transpose bij}, Theorem~\ref{thm expl} also provides representatives of the
isomorphism classes of $\ZZ$-forms of $S(n-3)_\QQ$. 


\begin{appendix}

\section{Dual Specht lattices}\label{sec dual}

Let $n\in\NN$, and let $\lambda$ be a partition of $n$.
The aim of this appendix is to construct an explicit $\ZZ\mathfrak{S}_n$-monomomorphism 
$$\varphi^\lambda_\ZZ:(S^\lambda_\ZZ)^*\to S^\lambda_\ZZ\,.$$
Since $S^\lambda_{\QQ}$ is self-dual, it is clear
that such a monomorphism has to exist, by Remark~\ref{rem forms all in one}. The particular monomorphism $\varphi^\lambda_\ZZ$ 
in Theorem~\ref{thm dual}, which has been essential in the proof of Proposition~\ref{prop T dual}, can be found in unpublished work of Wildon \cite{Wildon2002}. It can be obtained by a close inspection of the arguments in the proof of \cite[Theorem 6.7]{James1978}.
In the following we aim to work out the details of the construction of $\varphi^\lambda_\ZZ$ following the lines of \cite{Wildon2002}.

We start by summarizing some properties of bilinear forms of permutation modules that we shall need
throughout this appendix.

\begin{noth}\label{noth bilinear}{\bf Permutation modules and bilinear forms.}
Let $G$ be any finite group, let $R$ be a principal ideal domain and $K$ its field of fractions.
Suppose that $L$ is a permutation $RG$-lattice with permutation basis $\{\omega_1,\ldots,\omega_k\}$.

\smallskip

(a)\, Again, denote by $\{\omega_1^*,\ldots,\omega_k^*\}$ the dual $R$-basis of $L^*$. Moreover, consider the canonical
$R$-bilinear form
\begin{equation}\label{eqn bilinear}
\langle\cdot\mid\cdot\rangle: L\times L\to R\,, \,(\omega_i,\omega_j)\mapsto \delta_{ij}\,,
\end{equation}
which is symmetric, non-degenerate and $G$-invariant. 
Whenever $N$ is an $RG$-sublattice of $L$, we have the $RG$-sublattice $N^\circ:=\{f\in L^*: f(x)=0,\text{ for all } x\in N\}$ of
$L^*$ as well as the $RG$-sublattice $N^\perp:=\{x\in L:\langle x\mid y\rangle=0,\text{ for all } y\in N\}$ of $L^*$. 
The $RG$-isomorphism
$$\psi:L\to L^*, \, x\mapsto (y\mapsto \langle x\mid y\rangle)$$
maps $N^\perp$ to $N^\circ$.

\smallskip

(b)\, Now suppose that $(\omega_1,\ldots,\omega_k)$ is a transitive permutation basis of $L$. Then $L$ is clearly a cyclic $RG$-lattice,
generated by any basis element $\omega_i$. Since $\psi(\omega_i)=\omega_i^*$, for $i\in\{1,\ldots,k\}$, also
$L^*$ is a cyclic $RG$-lattice, generated by any $\omega_i^*$. Note that if $i,j\in\{1,\ldots,k\}$ and $g\in G$ are such that
$g\omega_i=\omega_j$, then also $g \omega_i^*=\omega_j^*$.

\smallskip

(c)\, Lastly, note that the $R$-bilinear form (\ref{eqn bilinear}) naturally extends to a symmetric, non-degenerate $G$-invariant $K$-bilinear form
on $KL$, which we denote by $\langle\cdot\mid\cdot\rangle_K$. Thus, if $N$ is an $RG$-sublattice of $L$, then we have
the $RG$-sublattice $N^\perp$ of $L$, on the one hand, and the $KG$-submodule $(KN)^\perp$ of $KL$, on the other hand.
The next lemma explains how these modules are related.
\end{noth}

\begin{lemma}\label{lemma perp}
Let $G$ be a finite group, and let $R$ be a principal ideal domain with field of fractions $K$.
Let further $L$ be a permutation $RG$-lattice, and let $N$ be an $RG$-sublattice of $L$.
With the notation of \ref{noth bilinear}, one has
$(KN)^\perp=KN^\perp$; in particular,
$$\rk_R(L)=\rk_R(N)+\rk_R(N^\perp)\,.$$
\end{lemma}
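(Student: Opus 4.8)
The plan is to prove the identity $(KN)^\perp = KN^\perp$ by a direct double inclusion inside $KL$, and then deduce the rank formula from the non-degeneracy of the extended form $\langle\cdot\mid\cdot\rangle_K$. First I would record two preliminary observations. With respect to the $K$-basis $\{1\otimes\omega_1,\ldots,1\otimes\omega_k\}$ of $KL$, the Gram matrix of $\langle\cdot\mid\cdot\rangle_K$ is the identity matrix, so $\langle\cdot\mid\cdot\rangle_K$ is non-degenerate on $KL$. Moreover, since $R$ is a principal ideal domain, every $R$-submodule of $L$ — in particular $N$ and $N^\perp$ — is free of finite rank, and for such a submodule $N'$ one has $\dim_K(KN') = \rk_R(N')$: an $R$-basis of $N'$ consists of $R$-linearly independent elements of the free module $L$, hence, after clearing denominators, stays $K$-linearly independent in $KL = K\otimes_R L$, and it clearly $K$-spans $KN'$.

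For the inclusion $KN^\perp \subseteq (KN)^\perp$: if $x\in N^\perp$, then $\langle x\mid z\rangle = 0$ for all $z\in N$, and $K$-bilinearity upgrades this to $\langle x\mid z'\rangle_K = 0$ for all $z'\in KN$; since $(KN)^\perp$ is a $K$-subspace, $KN^\perp\subseteq (KN)^\perp$ follows. For the reverse inclusion, take $x\in (KN)^\perp$ and clear denominators: write $x = b^{-1}y$ with $y\in L$ and $0\neq b\in R$. For each $z\in N\subseteq KN$ we then have $0 = \langle x\mid z\rangle_K = b^{-1}\langle y\mid z\rangle$, where $\langle y\mid z\rangle\in R$ because $y,z\in L$; hence $\langle y\mid z\rangle = 0$, so $y\in N^\perp$ and $x = b^{-1}y\in KN^\perp$. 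This proves $(KN)^\perp = KN^\perp$.

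For the rank statement I would invoke the standard linear-algebra fact that, for a non-degenerate bilinear form on the finite-dimensional space $KL$ and a $K$-subspace $W := KN$, one has $\dim_K(KL) = \dim_K(W) + \dim_K(W^\perp)$; this holds because the composite $KL\to (KL)^\ast\to W^\ast$, where the first map is the isomorphism induced by non-degeneracy and the second is restriction, is surjective with kernel $W^\perp$. Combining this with $\dim_K(KL) = \rk_R(L)$, $\dim_K(KN) = \rk_R(N)$, and $\dim_K\big((KN)^\perp\big) = \dim_K(KN^\perp) = \rk_R(N^\perp)$ from the preliminary observations, we obtain $\rk_R(L) = \rk_R(N) + \rk_R(N^\perp)$.

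There is no substantial obstacle here; the argument is entirely formal. The one point requiring a little care is the denominator-clearing step, where one must use that the pairing of two elements of $L$ already lies in $R$, so that the vanishing of $b^{-1}\langle y\mid z\rangle$ in $K$ forces the vanishing of $\langle y\mid z\rangle$ in $R$ — which is immediate from the definition of $\langle\cdot\mid\cdot\rangle$ on the permutation lattice $L$ in \ref{noth bilinear}.
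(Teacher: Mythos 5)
Your proof is correct and takes essentially the same approach as the paper: both directions of $(KN)^\perp=KN^\perp$ are established by clearing denominators and using that the pairing of two elements of $L$ already lies in $R$, and the rank formula is then deduced from the dimension formula for the non-degenerate extended form on $KL$. The paper merely leaves the non-degeneracy and the dimension formula implicit, whereas you spell them out, but the substance is identical.
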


\begin{proof}
Let $x\in N^\perp$, and let $y\in KN$. Then $\alpha y\in N$, for some $\alpha\in R$ with $\alpha\neq 0$. For $\beta\in K$ we get
$$\langle \beta x\mid y\rangle_K = \beta\alpha^{-1}\langle x\mid \alpha y\rangle_K=\beta\alpha^{-1}\langle x\mid \alpha y\rangle=0\,,$$
which shows that $\beta x\in (KN)^\perp$. 

Conversely, let $x\in (KN)^\perp$, and let $y\in N$. Moreover, let $\alpha\in R$ be such that $\alpha\neq 0$ and $\alpha x\in L$.
Then
$$\langle \alpha x\mid y\rangle=\langle \alpha x\mid y\rangle_K=\alpha\langle x\mid y\rangle_K=0\,,$$
thus $\alpha x\in N^\perp$ and $x=\alpha^{-1}\alpha x\in KN^\perp$.

This proves $(KN)^\perp=KN^\perp$, and we deduce
\begin{align*}
\rk_R(L)&=\dim_K(KL)=\dim_K(KN)+\dim_K((KN)^\perp)\\
&=\dim_K(KN)+\dim_K(KN^\perp)=\rk_R(N)+\rk_R(N^\perp)\,.
\end{align*}
\end{proof}

For the remainder of this appendix we consider $M:=M^\lambda_\ZZ$ and
$S:=S^\lambda_\ZZ$.
As in \ref{noth bilinear}, we denote by $\langle \cdot\mid \cdot\rangle$ the non-degenerate symmetric $\mathfrak{S}_n$-invariant $\ZZ$-bilinear form on $M$ with respect to the standard basis $\mathcal{T}(\lambda)$.
As an immediate consequence of the $\mathfrak{S}_n$-invariance of this bilinear form, one has the following well-known fact:

\begin{lemma}\label{lemma sign}
Let $t$ be any $\lambda$-tableau, and let $x,y\in M$. Let $\kappa_t$ and $\rho_t$
be as in \ref{noth tableaux}(b). Then
$$\langle \kappa_tx|y\rangle=\langle x|\kappa_ty\rangle \quad\text{and} \quad \langle \rho_tx|y\rangle=\langle x|\rho_ty\rangle\,.$$
\end{lemma}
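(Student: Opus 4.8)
The statement is an immediate formal consequence of two facts already recorded in the excerpt: the bilinear form $\langle\cdot\mid\cdot\rangle$ on $M=M^\lambda_\ZZ$ defined with respect to the standard basis $\mathcal{T}(\lambda)$ is $\mathfrak{S}_n$-invariant, i.e. $\langle \sigma u\mid \sigma v\rangle=\langle u\mid v\rangle$ for all $\sigma\in\mathfrak{S}_n$ and $u,v\in M$; and the row and column stabilizers $R_t$, $C_t$ are subgroups of $\mathfrak{S}_n$, so that the map $\sigma\mapsto\sigma^{-1}$ is a bijection of each of them onto itself. The plan is simply to expand $\rho_t$ and $\kappa_t$ as the indicated group-algebra sums, push each individual group element across the form using invariance, and then reindex by $\sigma\mapsto\sigma^{-1}$.

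\textbf{The row-stabilizer identity.} First I would treat $\rho_t=\sum_{\sigma\in R_t}\sigma$. For each $\sigma\in R_t$, invariance of the form (applied with $\sigma^{-1}$) gives $\langle \sigma x\mid y\rangle=\langle \sigma^{-1}\sigma x\mid \sigma^{-1}y\rangle=\langle x\mid \sigma^{-1}y\rangle$. Summing over $\sigma\in R_t$ and using $\ZZ$-bilinearity in the second argument yields
$$\langle \rho_t x\mid y\rangle=\sum_{\sigma\in R_t}\langle x\mid \sigma^{-1}y\rangle=\Bigl\langle x\,\Big|\,\sum_{\sigma\in R_t}\sigma^{-1}y\Bigr\rangle\,.$$
Since $R_t$ is a subgroup, $\{\sigma^{-1}:\sigma\in R_t\}=R_t$, so $\sum_{\sigma\in R_t}\sigma^{-1}=\rho_t$ in $\ZZ\mathfrak{S}_n$, and the right-hand side equals $\langle x\mid\rho_t y\rangle$, as desired.

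\textbf{The column-stabilizer identity.} The argument for $\kappa_t=\sum_{\sigma\in C_t}\sgn(\sigma)\sigma$ is identical except for tracking the sign. By the same invariance step,
$$\langle \kappa_t x\mid y\rangle=\sum_{\sigma\in C_t}\sgn(\sigma)\langle x\mid \sigma^{-1}y\rangle=\Bigl\langle x\,\Big|\,\sum_{\sigma\in C_t}\sgn(\sigma)\,\sigma^{-1}y\Bigr\rangle\,.$$
Reindexing the inner sum by $\tau=\sigma^{-1}$ (a bijection of $C_t$) and using $\sgn(\tau^{-1})=\sgn(\tau)$ gives $\sum_{\sigma\in C_t}\sgn(\sigma)\sigma^{-1}=\sum_{\tau\in C_t}\sgn(\tau)\tau=\kappa_t$, whence $\langle\kappa_t x\mid y\rangle=\langle x\mid\kappa_t y\rangle$. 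There is no real obstacle here; the only point worth stating carefully is that $\mathfrak{S}_n$-invariance of the form forces $\sigma^{-1}$ (not $\sigma$) to appear after moving an element to the other side, and that this is harmless because $R_t$ and $C_t$ are closed under inversion and $\sgn$ is constant on $\{\tau,\tau^{-1}\}$.
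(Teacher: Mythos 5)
Your proof is correct, and it is precisely the classical argument: the paper does not write out a proof but simply refers to \cite[Lemma 2.4.1]{Sagan2001}, where the same reasoning (push each permutation across the $\mathfrak{S}_n$-invariant form, then reindex the sum over the subgroup by $\sigma\mapsto\sigma^{-1}$, noting $\sgn(\sigma^{-1})=\sgn(\sigma)$ for the $\kappa_t$ case) is carried out.
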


\begin{proof}
See the proof of \cite[Lemma 2.4.1]{Sagan2001}.
\end{proof}

The next result has already been stated in \cite[Lemma 2]{Wildon2002}, and is an immediate generalization of the 
classical Submodule Theorem \cite{James1976}; we include a short proof for completeness.

\begin{thm}[Integral Version of the Submodule Theorem]\label{thm submodule}
Let $U$ be a $\ZZ\mathfrak{S}_n$-sublattice of $M$. If $U\not\subseteq S^\perp$, then there is some $m\in\NN$ such that
$mS\subseteq U$.
\end{thm}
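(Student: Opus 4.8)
The plan is to mimic James's classical argument for the Submodule Theorem, but keeping track of denominators so that the conclusion holds over $\ZZ$ rather than over a field. First I would recall the two standard ingredients. The first is that for any $\lambda$-tableau $t$ and any $x\in M$ one has $\kappa_t\cdot x = c_{t}(x)\cdot e_t$ for a scalar $c_t(x)\in\ZZ$; indeed, writing $x$ as a $\ZZ$-combination of tabloids $\{s\}$, one checks that $\kappa_t\{s\}$ is either $0$ or $\pm e_t$, so $\kappa_t x$ is an integer multiple of $e_t$. The second is Lemma~\ref{lemma sign}: $\langle \kappa_t x\mid y\rangle = \langle x\mid \kappa_t y\rangle$ for all $x,y\in M$.

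Next I would exploit the hypothesis $U\not\subseteq S^\perp$. By definition $S^\perp=\{x\in M:\langle x\mid e_s\rangle = 0\text{ for all }\lambda\text{-polytabloids }e_s\}$ (using that the $e_s$ span $S$), so there is some $u\in U$ and some $\lambda$-tableau $t$ with $\langle u\mid e_t\rangle\neq 0$. Fix such a $t$, and set $m:=\langle u\mid e_t\rangle\in\ZZ\smallsetminus\{0\}$. Now consider the element $\kappa_t\cdot u\in U$ (it lies in $U$ because $U$ is a $\ZZ\mathfrak{S}_n$-sublattice). By the first ingredient, $\kappa_t u = c\cdot e_t$ for some $c\in\ZZ$. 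Applying Lemma~\ref{lemma sign} and the fact that $\kappa_t$ is, up to the sign issue, "idempotent-like" along the column directions — more precisely $\langle \kappa_t u\mid \{t\}\rangle = \langle u\mid \kappa_t\{t\}\rangle = \langle u\mid e_t\rangle = m$ — we get $\langle c\,e_t\mid \{t\}\rangle = m$. Since $\langle e_t\mid\{t\}\rangle = 1$ (the tabloid $\{t\}$ appears in $e_t$ with coefficient $1$ and no other tabloid of $e_t$ equals $\{t\}$), this gives $c = m$. Hence $m\,e_t = \kappa_t u\in U$.

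Finally, since $S = {}_{\ZZ\mathfrak{S}_n}\langle e_t\rangle$ (every $\lambda$-polytabloid generates the Specht lattice, by \ref{noth Young Specht}(a) and Remark~\ref{rem specht basis}), and $U$ is a $\ZZ\mathfrak{S}_n$-submodule containing $m\,e_t$, we conclude $mS = {}_{\ZZ\mathfrak{S}_n}\langle m\,e_t\rangle\subseteq U$, which is exactly the assertion with this $m\in\NN$ (replace $m$ by $|m|$ if necessary, noting $mS=(-m)S$).

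\textbf{Main obstacle.} The delicate point is the bookkeeping in the computation $\langle u\mid \kappa_t\{t\}\rangle = \langle u\mid e_t\rangle$ together with $c=m$: one must be careful that $\kappa_t\{t\}=e_t$ exactly (not up to sign), which holds with the conventions of \ref{noth tableaux}, and that the pairing $\langle e_t\mid\{t\}\rangle$ equals $1$, which rests on the description of the coefficients of tabloids in a polytabloid recalled in \ref{noth Young Specht}(b). Once these normalizations are pinned down, the integral statement follows from the field-theoretic argument essentially verbatim; no genuinely new idea is needed beyond avoiding division.
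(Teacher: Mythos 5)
Your proof is correct and follows essentially the same route as the paper's: both hinge on $\kappa_t u$ being an integer multiple of $e_t$, on the adjointness $\langle \kappa_t x\mid y\rangle=\langle x\mid\kappa_t y\rangle$ from Lemma~\ref{lemma sign}, and on $e_t$ generating $S$ over $\ZZ\mathfrak{S}_n$. The only (minor) difference is that you argue directly from $U\not\subseteq S^\perp$ and pin the scalar down explicitly as $c=\langle u\mid e_t\rangle$, whereas the paper cites Sagan for a rational scalar, clears denominators, and handles the alternative case ($\kappa_t u=0$ for all $u,t$) by showing it forces $U\subseteq S^\perp$ — a contrapositive rearrangement of the same computation.
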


\begin{proof}
Recall that $e_t\in S$ denotes the $\lambda$-polytabloid labelled by $t$.
Let $u\in U$. Then, by \cite[Corollary 2.4.3]{Sagan2001}, there is some $q\in\QQ$ such that $\kappa_t\cdot u=q e_t$. Thus
$r\kappa_tu=me_t$, for some $r,m\in \ZZ$ such that $m\geq 0$ and $r\neq 0$. If $m\neq 0$, then we have
$mS={}_{\ZZ\mathfrak{S}_n}\langle me_t\rangle\subseteq U$. 

So suppose now that $\kappa_tu= 0$, for all $u\in U$ and every $\lambda$-tableau $t$, so that
$$0=\langle \kappa_tu|\{t\}\rangle=\langle u|\kappa_t\{t\}\rangle =\langle u|e_t\rangle\,,$$  
by Lemma~\ref{lemma sign}. This shows that $U\subseteq S^\perp$ in this case.
\end{proof}

Throughout this section, we now fix a $\lambda$-tableau $t$. The corresponding $\lambda'$-tableau obtained by transposing $t$ will again be denoted
by $t'$ as before. Moreover, by $\sgn$ we denote the $\ZZ\mathfrak{S}_n$-lattice of rank one on which
$\sigma\in\mathfrak{S}_n$ acts by multiplication with $\sgn(\sigma)$. The respective $\QQ\mathfrak{S}_n$-module
will be denoted by $\sgn_\QQ$.

\begin{lemma}\label{lemma f}
There is a $\ZZ\mathfrak{S}_n$-epimorphism
$$f_t:M\to S^{\lambda'}_\ZZ\otimes \sgn$$
such that $f_t(\{t\})=e_{t'}\otimes 1$ and $\ker(f_t)=S^\perp$.
\end{lemma}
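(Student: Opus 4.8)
The plan is to construct the map $f_t$ directly on the standard basis $\mathcal{T}(\lambda)$ of $M=M^\lambda_\ZZ$ and then verify that it has the three claimed properties: $\mathfrak{S}_n$-equivariance, the normalization $f_t(\{t\})=e_{t'}\otimes 1$, and $\ker(f_t)=S^\perp$. Since $M$ is the permutation $\ZZ\mathfrak{S}_n$-lattice on the transitive set $\mathcal{T}(\lambda)$ and $\mathfrak{S}_n$ acts transitively on $\lambda$-tableaux, it suffices to prescribe $f_t(\{s\})$ for one $\lambda$-tableau and extend by equivariance; one must then check that this is well-defined, i.e.\ that the value is invariant under the stabilizer $R_s$ of the tabloid $\{s\}$. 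The natural candidate, following \cite[Theorem 6.7]{James1978} and \cite{Wildon2002}, is
\[
f_t\colon M\to S^{\lambda'}_\ZZ\otimes\sgn,\qquad \{\sigma t\}\mapsto \sgn(\sigma)\,(e_{\sigma t'}\otimes 1)=\sigma\cdot(e_{t'}\otimes 1),
\]
where we use $(\sigma t)'=\sigma t'$ and that in $S^{\lambda'}_\ZZ\otimes\sgn$ one has $\sigma\cdot(e_{t'}\otimes 1)=\sgn(\sigma)(e_{\sigma t'}\otimes 1)$.

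First I would check well-definedness: if $\{\sigma t\}=\{\tau t\}$, then $\tau^{-1}\sigma\in R_t$, so it is enough to see that $\pi\cdot(e_{t'}\otimes 1)=e_{t'}\otimes 1$ for every $\pi\in R_t$. Now $\pi\in R_t$ means $\pi$ permutes within each row of $t$, hence within each column of $t'$, so $\pi\in C_{t'}$; thus $\pi\cdot e_{t'}=\sgn(\pi)e_{t'}$ (since $e_{t'}=\kappa_{t'}\{t'\}$ and $\kappa_{t'}$ absorbs column permutations with a sign), and $\pi\cdot(e_{t'}\otimes 1)=\sgn(\pi)e_{t'}\otimes\sgn(\pi)1=e_{t'}\otimes 1$. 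This gives a well-defined $\ZZ$-linear map, $\mathfrak{S}_n$-equivariant by construction, surjective because its image contains the cyclic generator $e_{t'}\otimes 1$ of $S^{\lambda'}_\ZZ\otimes\sgn$, and it satisfies $f_t(\{t\})=e_{t'}\otimes 1$.

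For the kernel, I expect the identification $\ker(f_t)=S^\perp$ to be the real content. One inclusion: for a general tabloid $\{u\}$, writing $\{u\}=\{\sigma t\}$, one computes $\langle e_t\mid\{u\}\rangle$ against $\langle f_t(\{u\})\mid e_{t'}\otimes 1\rangle$ using the non-degenerate $\mathfrak{S}_n$-invariant forms on $M^\lambda$ and $M^{\lambda'}$ and Lemma~\ref{lemma sign}: one shows $f_t(x)=0$ forces $\langle e_s\mid x\rangle=0$ for every $\lambda$-tableau $s$ (apply equivariance to reduce to $s=t$), hence $x\in S^\perp$ since $S$ is generated as a $\ZZ\mathfrak{S}_n$-module by the $e_s$. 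For the reverse inclusion $S^\perp\subseteq\ker(f_t)$: the map $f_t$ is, up to the isomorphism $S^{\lambda'}_\ZZ\otimes\sgn\cong(S^\lambda_\ZZ)^*$ from \ref{noth Young Specht}(c) composed with $\psi\colon M\xrightarrow{\ \sim\ }M^*$ of \ref{noth bilinear}(a), essentially the restriction-to-$S$ followed by the quotient map $M^*\to S^*$; concretely one checks that $\langle f_t(x)\mid e_{\sigma t'}\otimes 1\rangle = \pm\langle x\mid e_{\sigma t}\rangle$ for all $\sigma$, so $x\in S^\perp$ (i.e.\ $x\perp e_s$ for all $\lambda$-tableaux $s$) implies $f_t(x)$ pairs to zero against a generating set of $S^{\lambda'}_\ZZ\otimes\sgn$, hence $f_t(x)=0$ by non-degeneracy of the form on $M^{\lambda'}$. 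The main obstacle is getting the sign bookkeeping in this last pairing identity exactly right — tracking how $\kappa_t$, the transpose $t\mapsto t'$, the $\sgn$-twist, and Lemma~\ref{lemma sign} interact — but this is a finite, purely combinatorial verification modeled on James' proof of \cite[Theorem 6.7]{James1978}, and no new idea beyond careful computation is required.
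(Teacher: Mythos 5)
Your construction of $f_t$, the well-definedness check via $R_t=C_{t'}$, the verification $f_t(\{t\})=e_{t'}\otimes 1$, and the surjectivity argument all coincide with the paper's proof. For the kernel, however, you take a genuinely different route: the paper shows $f_t(e_t)\neq 0$ by the explicit computation $\langle\rho_{t'}\kappa_{t'}\{t'\}\mid\{t'\}\rangle=|R_{t'}|$, invokes the integral Submodule Theorem (Theorem~\ref{thm submodule}) to conclude $\ker(f_t)\subseteq S^\perp$, and then finishes by comparing ranks via Lemma~\ref{lemma perp} and using torsion-freeness of $S^{\lambda'}_\ZZ\otimes\sgn$ — a ``soft'' argument that avoids any global pairing identity. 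Your plan instead is to prove both inclusions simultaneously from a single bilinear identity, which is a reasonable and arguably more symmetric strategy.

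The issue is that the identity you write down is false. You claim $\langle f_t(x)\mid e_{\sigma t'}\otimes 1\rangle=\pm\langle x\mid e_{\sigma t}\rangle$, but already for $x=\{t\}$ and $\sigma=1$ the left-hand side is $\langle e_{t'}\mid e_{t'}\rangle=|C_t|$ while the right-hand side is $\langle\{t\}\mid e_t\rangle=1$; the discrepancy is the non-unit factor $|C_t|$, not a mere sign. Since your argument for $S^\perp\subseteq\ker(f_t)$ needs the coefficients relating the two pairings to be units in $\ZZ$, this is a real obstruction, not bookkeeping. The correct statement pairs $f_t(x)$ against \emph{tabloids} rather than polytabloids: $\langle f_t(x)\mid\{\sigma t'\}\otimes 1\rangle=\sgn(\sigma)\,\langle x\mid e_{\sigma t}\rangle$. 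With that version your strategy does go through, and indeed more cleanly than you anticipated: since $\{\{\sigma t'\}\}_\sigma$ is the permutation basis of $M^{\lambda'}_\ZZ$, vanishing of all these pairings gives $f_t(x)=0$ directly from non-degeneracy of the form on $M^{\lambda'}_\ZZ$, with no need to appeal to $S^{\lambda'}_\ZZ\cap(S^{\lambda'}_\ZZ)^\perp=\{0\}$. You should still verify the corrected identity (restricting to $x=\{\pi t\}$, both sides are supported on the single $\tau\in C_{\sigma t}$ with $\{\pi t\}=\{\tau\sigma t\}$, respectively on the single $\gamma\in C_{t'}$ with $\pi^{-1}\sigma\in\gamma R_{t'}$), but this is now the finite check you promised rather than a wrong formula.
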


\begin{proof}
As in the proof of \cite[Theorem 6.7]{James1978}, we set
$$f_t(\{\pi t\}):=\sgn(\pi) (e_{\pi t'}\otimes 1)\,,$$
for $\pi\in\mathfrak{S}_n$, 
and then extend this map $\ZZ$-linearly. 
Note that $\sgn(\pi) (e_{\pi t'}\otimes 1)=\pi\cdot (e_{t'}\otimes 1)=\pi\rho_t(\{t'\}\otimes 1)$, for $\pi\in\mathfrak{S}_n$.
We first show that $f_t$ is well defined. To this end, let $\pi_1,\pi_2\in\mathfrak{S}_n$ be such that
$\{\pi_1 t\}=\{\pi_2 t\}$, that is, $\pi_1^{-1}\pi_2\in R_t=C_{t'}$. Hence
$$\pi_1^{-1}\pi_2(e_{t'}\otimes 1)=(\pi_1^{-1}\pi_2 e_{t'})\otimes \sgn(\pi_1^{-1}\pi_2)=(\sgn(\pi_1^{-1}\pi_2) e_{t'})\otimes\sgn(\pi_1^{-1}\pi_2)=e_{t'}\otimes 1\,,$$
and
\begin{align*}
f_t(\pi_2 t\})&=f_t(\{\pi_1\pi_1^{-1}\pi_2 t\})=(\pi_1\pi_1^{-1}\pi_2)\cdot(e_{t'}\otimes 1)\\
&=\pi_1(\pi_1^{-1}\pi_2 (e_{t'}\otimes 1))=\pi_1(e_{t'}\otimes 1)=f_t(\{\pi_1 t\})\,.
\end{align*}
This implies that $f_t$ is well defined. By definition, $f_t$ is also a $\ZZ\mathfrak{S}_n$-homomorphism, and we have
$f_t(\{t\})=e_{t'}\otimes 1$. 
Since $S^{\lambda'}_\ZZ\otimes \sgn={}_{\ZZ}\langle e_{\pi t'}\otimes 1: \pi\in\mathfrak{S}_n\rangle$, we deduce that
$f_t$ is, in fact, a $\ZZ\mathfrak{S}_n$-epimorphism. It remains to determine the kernel of $f_t$. Note that 
$f_t(e_t)= (\rho_{t'}\kappa_{t'}\{t'\})\otimes 1\in M^{\lambda'}_\ZZ\otimes \sgn$. Moreover, Lemma~\ref{lemma sign} gives
$$\langle \rho_{t'}\kappa_{t'} \{t'\}\mid\{t'\}\rangle=\langle \kappa_{t'}\{t'\}\mid\rho_{t'}\{t'\}\rangle =\langle e_{t'}\mid |R_{t'}| \{t'\}\rangle=|R_{t'}|\,.$$
So, writing $f_t(e_t)$ as a $\ZZ$-linear combination of the standard $\ZZ$-basis of $M^{\lambda'}_\ZZ\otimes \sgn$, 
the basis element $\{t'\}\otimes 1$ occurs with non-zero coefficient $|R_{t'}|$; in particular, $f_t(e_t)\neq 0$ as well as $f_t(m e_t)\neq 0$, for
all $m\in \NN$. Therefore, $mS\not\subseteq  \ker(f_t)$, for all $m\in \NN$, implying $\ker(f_t)\subseteq S^\perp$, by Theorem~\ref{thm submodule}.

To show that we actually have equality, note first that $\rk_\ZZ(\ker(f_t))= \rk_\ZZ(M)-\rk_\ZZ(S^{\lambda'}_\ZZ\otimes \sgn)=\rk_\ZZ(M)-\rk_\ZZ(S)$. On the other hand, Lemma~\ref{lemma perp} also gives
$\rk_\ZZ(S^\perp)=\rk_\ZZ(M)-\rk_\ZZ(S)$. Hence, $\rk_\ZZ(\ker(f_t))=\rk_\ZZ(S^\perp)$, so that $S^\perp/\ker(f_t)$ is a finite 
abelian group of order $r\in\NN$, say. If $r>1$ then there would be some $x\in S^\perp$ such that $f_t(x)\neq 0$, but
$0=f_t(rx)=rf_t(x)$, a contradiction. This shows that, indeed, $S^\perp=\ker(f_t)$.
\end{proof}

\begin{rem}\label{rem basis S}
Let $k:=\rk_\ZZ(S)$.
In the following it will be useful to replace the standard $\ZZ$-basis of $S=S^\lambda_\ZZ$ by a slightly modified one. In general, 
writing a standard $\lambda$-polytabloid as a $\ZZ$-linear combination of $\lambda$-tabloids, several standard $\lambda$-tabloids will occur with non-zero coefficients. However, the proof of \cite[Corollary 8.12]{James1978} shows that $S$ admits a $\ZZ$-basis $\{b_1,\ldots,b_k\}$
such that, for each $i\in\{1,\ldots,k\}$, there is a unique standard $\lambda$-tabloid $\{t_i\}$ such that $\langle b_i\mid \{t_i\}\rangle=1$
and $\langle b_i\mid \{s\}\rangle =0$ for every standard $\lambda$-tabloid $\{s\}\neq \{t_i\}$. Moreover, $\{t_i\}\neq \{t_j\}$, for
$i\neq j$.
Denote the dual basis of $S^*$ by $\{b_1^*,\ldots,b_k^*\}$.

For each $i\in\{1,\ldots,k\}$, consider the element $\{t_i\}^*\in M^*$ of the $\ZZ$-basis of $M^*$
that is dual to the tabloid basis of $M$. Then $\{t_i\}^*(b_j)=\delta_{ij}$, for $i,j\in\{1,\ldots,k\}$, so that $b_i^*$ is precisely the restriction
of
$\{t_i\}^*$ to $S$.
\end{rem}

\begin{lemma}\label{lemma g}
The map
$$g:M\to S^*,\, m\mapsto (x\mapsto \langle x\mid m\rangle)$$
is a $\ZZ\mathfrak{S}_n$-epimorphism with kernel $S^\perp$.
\end{lemma}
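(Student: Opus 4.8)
The plan is to show that the $\ZZ\mathfrak{S}_n$-linear map $g$ is well defined, is a homomorphism of $\ZZ\mathfrak{S}_n$-lattices, is surjective, and has kernel exactly $S^\perp$. First I would observe that for each fixed $m\in M$, the assignment $x\mapsto\langle x\mid m\rangle$ is indeed a $\ZZ$-linear functional on $S$ (by restriction of the bilinear form), so $g(m)\in S^*$; linearity of $g$ in $m$ is immediate from bilinearity of $\langle\cdot\mid\cdot\rangle$. To see that $g$ is $\mathfrak{S}_n$-equivariant, recall that for $f\in S^*$ and $\sigma\in\mathfrak{S}_n$ the action is $(\sigma f)(x)=f(\sigma^{-1}x)$; then for $m\in M$, $x\in S$ and $\sigma\in\mathfrak{S}_n$ we have $(g(\sigma m))(x)=\langle x\mid\sigma m\rangle=\langle\sigma^{-1}x\mid m\rangle=(g(m))(\sigma^{-1}x)=(\sigma\,g(m))(x)$, using the $\mathfrak{S}_n$-invariance of the form together with the fact that $\sigma$ is orthogonal for it; hence $g(\sigma m)=\sigma\,g(m)$.

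Next I would identify the kernel. By definition, $g(m)=0$ iff $\langle x\mid m\rangle=0$ for all $x\in S$, which is precisely the condition $m\in S^\perp$ as defined in \ref{noth bilinear}; so $\ker(g)=S^\perp$. For surjectivity, the cleanest route is to compare ranks: by Lemma~\ref{lemma perp} applied to the permutation lattice $M=M^\lambda_\ZZ$ with permutation basis $\mathcal{T}(\lambda)$ and the sublattice $S$, we have $\rk_\ZZ(M)=\rk_\ZZ(S)+\rk_\ZZ(S^\perp)$, so $\rk_\ZZ(M/S^\perp)=\rk_\ZZ(M/\ker(g))=\rk_\ZZ(S)=\rk_\ZZ(S^*)$. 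Thus $g$ induces an injection of $\ZZ$-lattices $M/S^\perp\hookrightarrow S^*$ between lattices of equal rank, so the image has finite index, say $r$. If $r>1$, then since $S^*$ is torsion-free, no element of $S^*$ of infinite order times a proper divisor lies in the image in a way that forces a contradiction; more directly, I would argue as in the proof of Lemma~\ref{lemma f}: an element $b\in S^*$ with $rb\in\operatorname{Im}(g)$ but $b\notin\operatorname{Im}(g)$ would be hit after scaling, and one checks using the modified basis $\{b_1,\dots,b_k\}$ of $S$ with dual basis $\{b_1^*,\dots,b_k^*\}$ from Remark~\ref{rem basis S} that each $b_i^*$ is the restriction of the tabloid-dual functional $\{t_i\}^*=g(\{t_i\})$ to $S$, hence $b_i^*\in\operatorname{Im}(g)$ for all $i$; therefore $\operatorname{Im}(g)$ contains the full $\ZZ$-basis $\{b_1^*,\dots,b_k^*\}$ of $S^*$ and $g$ is onto.

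The main obstacle, such as it is, is surjectivity: one cannot simply invoke non-degeneracy of $\langle\cdot\mid\cdot\rangle$ over $\ZZ$, because the Gram matrix of the form restricted to $S$ need not be unimodular, so $g$ restricted to $S$ alone could fail to be onto $S^*$. The point is that we are allowed to use all of $M$ as the domain, and the tabloid-dual functionals $\{t_i\}^*$ — which obviously lie in the image of $g$ — already restrict to the distinguished dual basis $\{b_i^*\}$ of $S^*$ provided by Remark~\ref{rem basis S}. So the genuinely new input is exactly that remark (itself a consequence of \cite[Corollary 8.12]{James1978}), and once it is in hand the surjectivity is a one-line check. The $\mathfrak{S}_n$-equivariance and the computation of the kernel are then formal. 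I would present the proof in the order: well-definedness and $\mathfrak{S}_n$-linearity; $\ker(g)=S^\perp$ by definition of $S^\perp$; surjectivity via Remark~\ref{rem basis S}, with Lemma~\ref{lemma perp} used only to record the rank identity $\rk_\ZZ(M)=\rk_\ZZ(S)+\rk_\ZZ(S^\perp)$ as a consistency check.
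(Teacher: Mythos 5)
Your proof is correct and follows essentially the same route as the paper: $\ZZ$-linearity and $\ker(g)=S^\perp$ are immediate from the definitions, $\mathfrak{S}_n$-equivariance comes from the $\mathfrak{S}_n$-invariance of the bilinear form, and surjectivity is read off from Remark~\ref{rem basis S}, since $g(\{t_i\})$ is precisely the restriction of $\{t_i\}^*$ to $S$, i.e.\ $b_i^*$, so the image of $g$ contains the dual basis $\{b_1^*,\dots,b_k^*\}$ of $S^*$. The detour through rank comparison and a hypothetical index $r>1$ is superfluous and a bit muddled, but you rightly abandon it in favor of the direct argument, which matches the paper's.
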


\begin{proof}
The map $g$ is clearly $\ZZ$-linear with kernel $S^\perp$, and due to the $\mathfrak{S}_n$-invariance of $\langle \cdot \mid \cdot \rangle$ the map $g$ is 
also a $\ZZ\mathfrak{S}_n$-homomorphism. Remark~\ref{rem basis S} shows, moreover, that $g$ is surjective.
\end{proof}

\begin{lemma}\label{lemma h}
There is a $\ZZ\mathfrak{S}_n$-epimorphism
$$h_t:M^{\lambda'}_\ZZ\otimes \sgn\to S$$
such that $h_t(\{t'\}\otimes 1)=e_t$. Moreover, the restriction of $h_t$ to $S^{\lambda'}_\ZZ\otimes \sgn$ is injective.
\end{lemma}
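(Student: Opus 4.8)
The map $h_t$ is essentially the ``transpose'' version of the epimorphism $f_t$ from Lemma~\ref{lemma f}, built on the conjugate tableau $t'$ in place of $t$. Concretely, I would define $h_t$ on the standard $\ZZ$-basis of $M^{\lambda'}_\ZZ$ by
$$h_t(\{\pi t'\}\otimes 1):=\sgn(\pi)\,e_{\pi t}=\pi\cdot e_t\,,\qquad \pi\in\mathfrak{S}_n,$$
and extend $\ZZ$-linearly. The first task is well-definedness: if $\{\pi_1 t'\}=\{\pi_2 t'\}$ then $\pi_1^{-1}\pi_2\in R_{t'}=C_t$, and since $\pi_1^{-1}\pi_2\cdot e_t=\kappa_t$-type manipulation gives $\pi_1^{-1}\pi_2\cdot e_t=\sgn(\pi_1^{-1}\pi_2)e_t$, we must also account for the sign twist coming from the $\otimes\sgn$ factor, exactly as in the proof of Lemma~\ref{lemma f}; the two sign contributions cancel and the value is independent of the representative. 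By construction $h_t$ is a $\ZZ\mathfrak{S}_n$-homomorphism with $h_t(\{t'\}\otimes 1)=e_t$, and since $S={}_{\ZZ\mathfrak{S}_n}\langle e_t\rangle=\langle \pi\cdot e_t:\pi\in\mathfrak{S}_n\rangle$ by Remark~\ref{rem specht basis}, the map $h_t$ is surjective onto $S$.

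The second and more substantial task is to show that $h_t$ restricted to $S^{\lambda'}_\ZZ\otimes\sgn$ is injective. Here I would follow the rank-counting strategy already used at the end of the proof of Lemma~\ref{lemma f}. By \ref{noth Young Specht}(c) (or directly Proposition~\ref{prop dual sign}), $S^{\lambda'}_\ZZ\otimes\sgn\cong (S^\lambda_\ZZ)^*$, so $\rk_\ZZ(S^{\lambda'}_\ZZ\otimes\sgn)=\rk_\ZZ(S)=\rk_\ZZ(h_t(M^{\lambda'}_\ZZ\otimes\sgn))$. Thus it suffices to show that the restriction of $h_t$ to $S^{\lambda'}_\ZZ\otimes\sgn$ is still surjective onto $S$; then source and target are free $\ZZ$-modules of equal finite rank and a surjection between them with torsion cokernel forces the kernel to be zero, just as in Lemma~\ref{lemma f}. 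To see the surjectivity of the restriction, note $S^{\lambda'}_\ZZ\otimes\sgn={}_\ZZ\langle e_{\pi t'}\otimes 1:\pi\in\mathfrak{S}_n\rangle$, and compute
$$h_t(e_{t'}\otimes 1)=h_t\Big(\kappa_{t'}\{t'\}\otimes 1\Big)=h_t\Big(\textstyle\sum_{\sigma\in C_{t'}}\sgn(\sigma)\{\sigma t'\}\otimes 1\Big)=\sum_{\sigma\in C_{t'}}\sgn(\sigma)^2\,\sigma\cdot e_t=|C_{t'}|\,\rho_{t}\kappa_{t}\{t\}\ ?$$
—more carefully, using $C_{t'}=R_t$ one gets $h_t(e_{t'}\otimes 1)=\rho_t\cdot e_t$, which is a non-zero integer multiple of a polytabloid inside $S$. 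In fact I would pair against the modified basis of Remark~\ref{rem basis S}: writing $h_t(e_{t'}\otimes 1)$ in terms of tabloids and using Lemma~\ref{lemma sign}, the coefficient at $\{t\}$ is $|R_t|\neq 0$, so $h_t(e_{t'}\otimes 1)\neq 0$ and hence $mS\not\subseteq\ker(h_t|_{S^{\lambda'}_\ZZ\otimes\sgn})$ for all $m\geq 1$. By the integral Submodule Theorem (Theorem~\ref{thm submodule}) applied inside $M^{\lambda'}_\ZZ\otimes\sgn$ (whose Specht sublattice is $S^{\lambda'}_\ZZ\otimes\sgn$ and whose ``$S^\perp$'' is the kernel of the analogue of $g$), the image $h_t(S^{\lambda'}_\ZZ\otimes\sgn)$ is a full-rank sublattice of $S$, hence the restriction of $h_t$ is injective by the rank argument.

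\textbf{Main obstacle.} The routine part is the well-definedness bookkeeping of signs; the genuinely delicate step is pinning down $\ker(h_t)$, i.e.\ proving that the restriction of $h_t$ to $S^{\lambda'}_\ZZ\otimes\sgn$ is injective rather than merely that $h_t$ itself is surjective. The cleanest route is the rank-equality argument above, which hinges on two inputs: the isomorphism $S^{\lambda'}_\ZZ\otimes\sgn\cong(S^\lambda_\ZZ)^*$ to identify the rank of the source, and a single explicit non-vanishing computation $h_t(e_{t'}\otimes 1)\neq 0$ (via the coefficient $|R_t|$ and Lemma~\ref{lemma sign}) to invoke Theorem~\ref{thm submodule}. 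One should also double-check that the ``transposed'' Submodule Theorem applies, i.e.\ that the Specht lattice sitting inside $M^{\lambda'}_\ZZ\otimes\sgn$ is precisely $S^{\lambda'}_\ZZ\otimes\sgn$ and that its orthogonal complement under the (sign-twisted) permutation form is the kernel of the corresponding map $g$; this is immediate from \ref{noth bilinear} and Lemma~\ref{lemma perp} applied to the permutation lattice $M^{\lambda'}_\ZZ$ after twisting by $\sgn$, which does not affect ranks. Alternatively, one can bypass $h_t$ entirely and \emph{define} $\varphi^\lambda_\ZZ$ as a suitable composite of $g^{-1}$-type maps with $f_t$ and $h_t$; but for the present statement the direct construction of $h_t$ together with the rank argument is the shortest path.
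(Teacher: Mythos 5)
Your construction of $h_t$ is correct in substance but phrased slightly differently from the paper's: the paper obtains $h_t$ as the composite $\left(M^{\lambda'}_\ZZ\otimes\sgn\xrightarrow{f_{t'}\otimes\id}S^\lambda_\ZZ\otimes\sgn\otimes\sgn\cong S^\lambda_\ZZ\right)$, which carries the well-definedness of $f_{t'}$ over for free, whereas you re-do the sign bookkeeping by hand. Both are fine, and the crucial non-vanishing computation — that the coefficient of $\{t\}$ in $h_t(e_{t'}\otimes 1)$ is $|R_t|\neq 0$ — is the same as the paper's. Two small slips in your displayed formulas: $\sgn(\pi)\,e_{\pi t}$ equals $\sgn(\pi)\,\pi\cdot e_t$, not $\pi\cdot e_t$; and there should be no factor $|C_{t'}|$ in front of $\rho_t\kappa_t\{t\}$ — the correct identity is $h_t(e_{t'}\otimes 1)=\sum_{\sigma\in R_t}\sigma\cdot e_t=\rho_t\kappa_t\{t\}$. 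You notice the problem and fall back to the coefficient-at-$\{t\}$ computation, which is what actually matters.

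Where you genuinely diverge from the paper is in the injectivity step, and your route is considerably heavier than necessary. The paper, having shown $\phi:=h_t|_{S^{\lambda'}_\ZZ\otimes\sgn}\neq 0$, simply invokes Proposition~\ref{prop extend}: a non-zero $\ZZ\mathfrak{S}_n$-homomorphism between $\ZZ$-forms of simple $\QQ\mathfrak{S}_n$-modules is automatically injective, because its $\QQ$-extension is a non-zero map between simple modules and hence an isomorphism by Schur's lemma. Your rank-counting strategy via the Submodule Theorem can be made to work, but as written it has a gap and some circularity: you announce you will show the restriction is ``surjective,'' then instead show the image is full rank, and you never actually derive full-rank image from Theorem~\ref{thm submodule}. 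To make that route rigorous you would need to (i) observe that $\ker(\phi)\otimes\sgn$ is a $\ZZ\mathfrak{S}_n$-sublattice of $S^{\lambda'}_\ZZ\subseteq M^{\lambda'}_\ZZ$ (or formulate a sign-twisted Submodule Theorem, as you flag), (ii) conclude from $\phi\neq 0$ that no $m\,S^{\lambda'}_\ZZ$ is contained in it, hence it lies in $(S^{\lambda'}_\ZZ)^\perp$, and (iii) argue that $S^{\lambda'}_\ZZ\cap (S^{\lambda'}_\ZZ)^\perp=\{0\}$ because the bilinear form is non-degenerate on $S^{\lambda'}_\QQ$. Step (iii) is not stated in your proposal and is the part that actually kills the kernel. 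The Schur-lemma route of the paper avoids all of this.
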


\begin{proof}
By Lemma~\ref{lemma f}, we have the $\ZZ\mathfrak{S}_n$-epimorphism
$f_{t'}:M^{\lambda'}_\ZZ\to S^\lambda_\ZZ\otimes \sgn$, inducing a $\ZZ\mathfrak{S}_n$-epimorphism
$f_{t'}\otimes \id:M^{\lambda'}_\ZZ\otimes \sgn \to S^\lambda_\ZZ\otimes \sgn\otimes \sgn$.
Finally, $S^\lambda_\ZZ\otimes \sgn\otimes \sgn\cong S^\lambda_\ZZ$, via the $\ZZ$-linear map
sending $e_s\otimes 1\otimes 1$ to $e_s$, for every standard $\lambda$-tableau $s$. In total, this yields a
$\ZZ\mathfrak{S}_n$-epimorphism $h_t:M^{\lambda'}_\ZZ\otimes \sgn\to S$ such that $h_t(\{t'\}\otimes 1)=e_t$, as desired.

\smallskip

It remains to show that $h_t$ restricts to an injective $\ZZ\mathfrak{S}_n$-homomorphism $S^{\lambda'}_\ZZ\otimes\sgn\to S^\lambda_\ZZ$. Denote this restriction by $\phi$; for ease of notation, we suppress the index $t$ for the time being.
Then we have
\begin{align*}
\phi(e_{t'}\otimes 1)&=h_t(e_{t'}\otimes 1)=h_t((\sum_{\sigma\in C_{t'}}\sgn(\sigma)\{\sigma t'\})\otimes 1)=h_t(\sum_{\sigma\in C_{t'}}\sigma\cdot (\{t'\}\otimes 1))\\
&=\sum_{\sigma\in C_{t'}}\sigma\cdot h_t(\{t'\}\otimes 1)=\sum_{\sigma\in C_{t'}}\sigma e_t\\
&=\sum_{\sigma\in C_{t'}}\sigma \sum_{\pi\in C_t}\sgn(\pi) \pi\{t\}=\sum_{\sigma\in R_t}\sum_{\pi\in C_t}\sgn(\pi)\{\sigma\pi t\}\in S_\ZZ^\lambda\subseteq M^\lambda_\ZZ\,.
\end{align*}
The coefficient at the basis element $\{t\}$ of $M^\lambda_\ZZ$ in $h_t(e_{t'}\otimes 1)$ equals
$$\mathop{\mathop{\mathop{\sum_{\sigma\in R_t}}}_{\pi\in C_t}}_{\sigma\pi\in R_t} \sgn(\pi)\,.$$
If $\sigma\in R_t$ and $\pi\in C_t$ are such that $\sigma\pi\in R_t$ then $\pi\in R_t\cap C_t=\{1\}$. Hence the coefficient
at $\{t\}$ in $h_t(e_{t'}\otimes 1)$ is $|R_t|\neq 0$; in particular, $\phi\neq 0$.

Now $\phi$ induces a $\QQ\mathfrak{S}_n$-homomorphism
$$\phi_\QQ:S^{\lambda'}_\QQ\otimes \sgn_\QQ\to S^\lambda_\QQ$$
such that $\phi_\QQ(e_{t'}\otimes 1)\neq 0$. Consequently, $\phi_\QQ$ is an isomorphism, since 
$S^{\lambda'}_\QQ\otimes \sgn_\QQ$ and $S^\lambda_\QQ$ are simple $\QQ\mathfrak{S}_n$-modules.
Since $S^{\lambda'}_\ZZ\otimes_\ZZ \sgn_\ZZ$ is a $\ZZ$-form of $S^{\lambda'}_\QQ\otimes \sgn_\QQ$ and 
$S^\lambda_\ZZ$ is a $\ZZ$-form of $S^\lambda_\QQ$, Proposition~\ref{prop extend} implies that $\phi$ has to be an injective $\ZZ\mathfrak{S}_n$-homomorphism, and the proof
of the lemma is complete.
\end{proof}


As an immediate consequence of Lemma~\ref{lemma f} and Lemma~\ref{lemma g} one has the following, which is \cite[Theorem 8.15]{James1978} in the case where $R$ is a field and \cite[Proposition 4.8]{Fayers2003} in the case where $R=\ZZ$:

\begin{prop}\label{prop dual sign}
For every $\lambda\in\mathcal{P}(n)$ and every principal ideal domain $R$, one has an isomorphism
$$S^{\lambda'}_R\otimes\sgn_R\cong (S^\lambda_R)^*$$
of $RG$-lattices.
\end{prop}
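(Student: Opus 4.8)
The plan is to first establish the isomorphism over $\ZZ$ and then deduce the general case by extension of scalars. Set $M := M^\lambda_\ZZ$ and $S := S^\lambda_\ZZ$, and let $S^\perp$ denote the orthogonal complement of $S$ in $M$ with respect to the standard symmetric $\mathfrak{S}_n$-invariant bilinear form from \ref{noth bilinear}. Fixing a $\lambda$-tableau $t$, Lemma~\ref{lemma f} provides a $\ZZ\mathfrak{S}_n$-epimorphism $f_t : M \to S^{\lambda'}_\ZZ \otimes \sgn$ with $\ker(f_t) = S^\perp$, while Lemma~\ref{lemma g} provides a $\ZZ\mathfrak{S}_n$-epimorphism $g : M \to (S^\lambda_\ZZ)^*$ with $\ker(g) = S^\perp$. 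Applying the first isomorphism theorem to each of these maps identifies both $S^{\lambda'}_\ZZ \otimes \sgn$ and $(S^\lambda_\ZZ)^*$ with the quotient $M/S^\perp$, and hence with one another, as $\ZZ\mathfrak{S}_n$-lattices.

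To pass to an arbitrary principal ideal domain $R$, regarded as a $(\ZZ,\ZZ)$-bimodule, I would tensor the isomorphism $S^{\lambda'}_\ZZ \otimes \sgn \cong (S^\lambda_\ZZ)^*$ with $R$ over $\ZZ$. On the left, \ref{noth change rings}(a) gives $R \otimes_\ZZ (S^{\lambda'}_\ZZ \otimes \sgn) \cong (R \otimes_\ZZ S^{\lambda'}_\ZZ) \otimes_R (R \otimes_\ZZ \sgn) = S^{\lambda'}_R \otimes \sgn_R$; on the right, \ref{noth change rings}(d) gives $R \otimes_\ZZ (S^\lambda_\ZZ)^* \cong (R \otimes_\ZZ S^\lambda_\ZZ)^* = (S^\lambda_R)^*$, which is legitimate since $S^\lambda_\ZZ$ is $\ZZ$-free of finite rank. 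Composing these isomorphisms yields the asserted $R\mathfrak{S}_n$-isomorphism $S^{\lambda'}_R \otimes \sgn_R \cong (S^\lambda_R)^*$.

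Since both Lemma~\ref{lemma f} and Lemma~\ref{lemma g} are already in hand, there is no genuine obstacle: the entire content of the proof is the observation that the two epimorphisms out of $M$ share the same kernel $S^\perp$, together with the routine fact that the $\ZZ$-dual commutes with the base change $\ZZ \to R$ for lattices and that $R \otimes_\ZZ \sgn = \sgn_R$. The only points meriting a moment's care are that the maps produced really are $\mathfrak{S}_n$-equivariant, which is part of the statements of the two lemmas, and the finite-freeness hypothesis needed to invoke \ref{noth change rings}(d), which holds because $S^\lambda_\ZZ$ is an $RG$-lattice.
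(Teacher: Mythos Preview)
Your proof is correct and follows essentially the same approach as the paper: both argue that the epimorphisms of Lemma~\ref{lemma f} and Lemma~\ref{lemma g} share the kernel $S^\perp$, giving the isomorphism over $\ZZ$, and then extend scalars via \ref{noth change rings}(a),(d). The only slip is in your final sentence, where you write ``$S^\lambda_\ZZ$ is an $RG$-lattice'' when you mean $\ZZ G$-lattice.
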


\begin{proof}
For $\lambda\in\mathcal{P}(n)$, Lemma~\ref{lemma f} and Lemma~\ref{lemma g} yield a $\ZZ\mathfrak{S}_n$-isomorphism
$S^{\lambda'}_\ZZ\otimes\sgn\cong (S^\lambda_\ZZ)^*$. Hence the assertion of the proposition follows from \ref{noth change rings}(a),(d).
\end{proof}


We are now in the position to prove Theorem~\ref{thm dual} below. Recall from Remark~\ref{rem basis S} that, as $s$ varies over the set of standard $\lambda$-tableaux, the $\ZZ$-linear maps
$$\{s\}^*_{|S^\lambda_\ZZ}:S^\lambda_\ZZ\to \ZZ\,, x\mapsto \langle x\mid \{s\}\rangle$$
vary over a $\ZZ$-basis of $(S^\lambda_\ZZ)^*$.

\begin{thm}[Wildon \cite{Wildon2002}]\label{thm dual}
Let $\lambda$ be a partition of $n$. Then there is a $\ZZ\mathfrak{S}_n$-monomorphism
$$\varphi:(S^\lambda_\ZZ)^*\to S^\lambda_\ZZ$$
such that
$$\varphi(\{s\}^*_{|S^\lambda_\ZZ})=\sum_{\sigma\in R_s} e_{\sigma s}\,,$$
for every $\lambda$-tableau $s$. 
\end{thm}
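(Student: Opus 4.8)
The plan is to glue together the three $\ZZ\mathfrak{S}_n$-homomorphisms produced in Lemma~\ref{lemma f}, Lemma~\ref{lemma g} and Lemma~\ref{lemma h}. Fix a $\lambda$-tableau $t$. By Lemma~\ref{lemma g}, the map $g\colon M\to (S^\lambda_\ZZ)^*$ is a $\ZZ\mathfrak{S}_n$-epimorphism with kernel $S^\perp$, and by Lemma~\ref{lemma f} the map $f_t\colon M\to S^{\lambda'}_\ZZ\otimes\sgn$ is a $\ZZ\mathfrak{S}_n$-epimorphism with the \emph{same} kernel $S^\perp$. Hence both induce isomorphisms out of $M/S^\perp$, and there is a unique $\ZZ\mathfrak{S}_n$-isomorphism $\alpha\colon (S^\lambda_\ZZ)^*\to S^{\lambda'}_\ZZ\otimes\sgn$ with $\alpha\circ g=f_t$ (explicitly, $\alpha=\bar f_t\circ\bar g^{-1}$ for the induced maps $\bar g,\bar f_t$ on $M/S^\perp$; note that $f_t$ is onto $S^{\lambda'}_\ZZ\otimes\sgn$, so $\alpha$ really lands there). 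Post-composing with the injective $\ZZ\mathfrak{S}_n$-homomorphism $h_t|_{S^{\lambda'}_\ZZ\otimes\sgn}\colon S^{\lambda'}_\ZZ\otimes\sgn\to S^\lambda_\ZZ$ of Lemma~\ref{lemma h}, I would define
\[
\varphi:=\bigl(h_t|_{S^{\lambda'}_\ZZ\otimes\sgn}\bigr)\circ\alpha\colon (S^\lambda_\ZZ)^*\longrightarrow S^\lambda_\ZZ,
\]
which is a monomorphism, being the composite of an isomorphism and a monomorphism.

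Next I would evaluate $\varphi$ on the relevant vectors. By the very definition of $g$ one has $g(\{t\})=\{t\}^*_{|S^\lambda_\ZZ}$, so $\alpha(\{t\}^*_{|S^\lambda_\ZZ})=f_t(\{t\})=e_{t'}\otimes 1$ by Lemma~\ref{lemma f}. Applying $h_t$ and reusing the computation carried out in the proof of Lemma~\ref{lemma h}, namely $h_t(e_{t'}\otimes 1)=\sum_{\sigma\in C_{t'}}\sigma e_t$, together with the elementary identities $C_{t'}=R_t$ and $\sigma e_t=e_{\sigma t}$, I obtain
\[
\varphi\bigl(\{t\}^*_{|S^\lambda_\ZZ}\bigr)=h_t(e_{t'}\otimes 1)=\sum_{\sigma\in C_{t'}}\sigma e_t=\sum_{\sigma\in R_t}e_{\sigma t},
\]
so the asserted formula holds at least for the fixed tableau $t$.

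Finally I would upgrade this to an arbitrary $\lambda$-tableau $s$ by exploiting $\mathfrak{S}_n$-equivariance. One first checks that both sides of the desired identity depend only on the tabloid $\{s\}$: for the right-hand side this uses $R_{\tau s}=R_s$ when $\tau\in R_s$ together with the substitution $\sigma\mapsto\sigma\tau$, and for the left-hand side it is immediate. Since $\mathfrak{S}_n$ acts transitively on $\lambda$-tabloids, write $\{s\}=\{\pi t\}$ with $\pi\in\mathfrak{S}_n$. Because the tabloid basis of $M$ is a (transitive) permutation basis, $\pi\cdot\{t\}^*=\{\pi t\}^*$ in $M^*$, and restriction to $S^\lambda_\ZZ$ commutes with the $\mathfrak{S}_n$-action, so $\pi\cdot\{t\}^*_{|S^\lambda_\ZZ}=\{\pi t\}^*_{|S^\lambda_\ZZ}$. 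As $\varphi$ is a $\ZZ\mathfrak{S}_n$-homomorphism and $\pi R_t\pi^{-1}=R_{\pi t}$,
\[
\varphi\bigl(\{\pi t\}^*_{|S^\lambda_\ZZ}\bigr)=\pi\cdot\varphi\bigl(\{t\}^*_{|S^\lambda_\ZZ}\bigr)=\pi\sum_{\sigma\in R_t}e_{\sigma t}=\sum_{\sigma\in R_t}e_{\pi\sigma t}=\sum_{\tau\in R_{\pi t}}e_{\tau(\pi t)},
\]
which is exactly the claimed formula for $s=\pi t$; since every $\lambda$-tableau arises this way, the theorem follows. I do not expect a serious obstacle: all of the genuine work is already encapsulated in Lemmas~\ref{lemma f}--\ref{lemma h}, and what remains is the bookkeeping that the right-hand side is a well-defined function of the tabloid, the verification of $C_{t'}=R_t$, and the coincidence of kernels $\ker(f_t)=\ker(g)=S^\perp$ (both already available).
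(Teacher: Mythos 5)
Your proposal is correct and matches the paper's own proof in all essentials: both define $\varphi$ as the composite $(h_t)|_{S^{\lambda'}_\ZZ\otimes\sgn}\circ\bar f_t\circ\bar g^{-1}$ using the shared kernel $S^\perp$ from Lemmas~\ref{lemma f} and \ref{lemma g} and the injectivity from Lemma~\ref{lemma h}, both compute $\varphi(\{t\}^*_{|S^\lambda_\ZZ})=\sum_{\sigma\in R_t}e_{\sigma t}$ via $C_{t'}=R_t$, and both then propagate the formula to an arbitrary $\lambda$-tableau by transitivity of the $\mathfrak{S}_n$-action together with $R_{\pi t}=\pi R_t\pi^{-1}$. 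Your extra remark that both sides depend only on the tabloid is a small clarification not spelled out in the paper but harmless and correct.
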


\begin{proof}
Let $t$ be a fixed $\lambda$-tableau, as before. By Lemma~\ref{lemma f} and Lemma~\ref{lemma g}, we have 
$\ZZ\mathfrak{S}_n$-isomorphisms
$$\bar{f}_t:M^\lambda_\ZZ/(S^\lambda_\ZZ)^\perp \to S^{\lambda'}_\ZZ\otimes \sgn\,, m+(S^\lambda_\ZZ)^\perp\mapsto f_t(m)$$
and 
$$\bar{g}:M^\lambda_\ZZ/(S^\lambda_\ZZ)^\perp \to (S^\lambda_\ZZ)^*\,, m+(S^\lambda_\ZZ)^\perp\mapsto g(m)\,.$$
Furthermore, by Lemma~\ref{lemma h}, we have the $\ZZ\mathfrak{S}_n$-monomorphism
$$(h_t)_{|S^{\lambda'}_\ZZ\otimes \sgn}\to S^\lambda_\ZZ\,.$$
We set $\varphi:=(h_t)_{|S^{\lambda'}_\ZZ\otimes \sgn}\circ \bar{f}_t\circ\bar{g}^{-1}$, and show that $\varphi$ has the desired property (independently of $t$).
First note that
\begin{align*}
\varphi(\{t\}^*_{|S^\lambda_\ZZ})&=h_t(\bar{f}_t(\{t\}+(S^\lambda_\ZZ)^\perp))=h_t(f_t(\{t\}))=h_t(e_{t'}\otimes 1)\\
&=\sum_{\sigma\in C_{t'}}\sigma e_t=\sum_{\sigma\in R_t} \sigma e_t=\sum_{\sigma\in R_t} e_{\sigma t}\,.
\end{align*}
Now let $u$ be any $\lambda$-tableau. Then there is some $\pi\in \mathfrak{S}_n$ such that $\pi t=u$, thus
also $\pi\{t\}=\{u\}$, and then $\pi\{t\}^*=\{u\}^*$, by \ref{noth bilinear}(b) Furthermore, $R_{u}=R_{\pi t}=\pi R_t \pi^{-1}$.
Consequently,
\begin{align*}
\varphi(\{u\}^*_{|S^\lambda_\ZZ})&=\varphi(\pi\{t\}^*_{|S^\lambda_\ZZ})=\pi\cdot \varphi(\{t\}^*_{|S^\lambda_\ZZ})=\pi\cdot \sum_{\sigma\in R_t} e_{\sigma t}\\
&=\sum_{\sigma\in R_t} e_{\pi\sigma t}=\sum_{\sigma\in R_t} e_{\pi\sigma \pi^{-1}\pi t} =\sum_{\tau\in R_u} e_{\tau u}\,.
\end{align*}
Now the assertion of the theorem follows.
\end{proof}


\begin{cor}\label{cor phi ZZ_p}
Let $\lambda\in\mathcal{P}(n)$, and let $R$ be a principal ideal domain that is a flat right $\ZZ$-module.
Then there is an $R\mathfrak{S}_n$-monomorphism
 $$\varphi^\lambda_{R}:(S^\lambda_R)^*\to S^\lambda_R\,.$$
\end{cor}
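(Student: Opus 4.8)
The plan is to obtain $\varphi^\lambda_R$ simply by extending scalars along $\ZZ\to R$ in the integral monomorphism $\varphi^\lambda_\ZZ\colon (S^\lambda_\ZZ)^*\to S^\lambda_\ZZ$ produced by Theorem~\ref{thm dual}.

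First I would recall the two identifications that make this work. By definition (see~\ref{noth Young Specht}(b)) one has $S^\lambda_R=R\otimes_\ZZ S^\lambda_\ZZ$, and by~\ref{noth change rings}(d), applied with the ring homomorphism $\ZZ\to R$, there is an $R\mathfrak{S}_n$-isomorphism
$$R\otimes_\ZZ (S^\lambda_\ZZ)^*\;\xrightarrow{\ \sim\ }\;(R\otimes_\ZZ S^\lambda_\ZZ)^*=(S^\lambda_R)^*\,,$$
sending $1\otimes b^*$ to $(1\otimes b)^*$ as $b$ runs over a $\ZZ$-basis of $S^\lambda_\ZZ$. Note that this isomorphism requires no hypothesis on $R$.

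Next I would invoke the flatness assumption: since $R$ is flat as a right $\ZZ$-module and $\varphi^\lambda_\ZZ$ is an injective $\ZZ\mathfrak{S}_n$-homomorphism, the induced map
$$\id_R\otimes\varphi^\lambda_\ZZ\colon R\otimes_\ZZ (S^\lambda_\ZZ)^*\longrightarrow R\otimes_\ZZ S^\lambda_\ZZ=S^\lambda_R$$
is again injective, and it is evidently an $R\mathfrak{S}_n$-homomorphism. Precomposing with the inverse of the isomorphism of the previous paragraph yields the desired $R\mathfrak{S}_n$-monomorphism $\varphi^\lambda_R\colon (S^\lambda_R)^*\to S^\lambda_R$. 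Tracing the formula of Theorem~\ref{thm dual} through these identifications, one moreover gets that $\varphi^\lambda_R$ sends $\{s\}^*_{|S^\lambda_R}$ to $\sum_{\sigma\in R_s}e_{\sigma s}$ for every $\lambda$-tableau $s$.

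There is essentially no obstacle here: the only point that needs care is the flatness hypothesis on $R$, which is exactly what is needed to preserve injectivity under $R\otimes_\ZZ-$ (and which, for a PID $R$, amounts to $R$ being torsion-free as an abelian group, so that it applies in particular to $R=\ZZ_p$ and $R=\ZZ_{(p)}$, the cases used in the proof of Proposition~\ref{prop T dual}).
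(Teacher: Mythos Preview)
Your proof is correct and follows essentially the same approach as the paper: identify $(S^\lambda_R)^*\cong R\otimes_\ZZ(S^\lambda_\ZZ)^*$ via \ref{noth change rings}(d), then use flatness of $R$ over $\ZZ$ to conclude that $\id_R\otimes\varphi^\lambda_\ZZ$ remains injective. Your additional remarks on the explicit formula and on the cases $R=\ZZ_p,\ZZ_{(p)}$ are consistent with Example~\ref{expl hook dual}(b) and the applications in Section~\ref{sec p 2}.
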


\begin{proof}
By \ref{noth change rings}(d), we have an $R\mathfrak{S}_n$-isomorphism
$(S_R^\lambda)^*=(R\otimes_\ZZ S^\lambda_\ZZ)^*\cong R\otimes_\ZZ (S^\lambda_\ZZ)^*$. 
Since $R$ is a flat right $\ZZ$-module, the $\ZZ\mathfrak{S}_n$-monomorphism $\varphi^\lambda_\ZZ$
from Theorem~\ref{thm dual} induces an $R\mathfrak{S}_n$-monomorphism 
$\id_R\otimes\varphi_\ZZ^\lambda: R\otimes_\ZZ (S^\lambda_\ZZ)^*\to R\otimes_\ZZ S^\lambda_\ZZ=S_R^\lambda$.
\end{proof}

\begin{expl}\label{expl hook dual}
Suppose that $\lambda$ is a hook partition of $n$, that is, $\lambda=(n-k,1^k)$, for some $0\leq k\leq n-1$.

\smallskip

(a)\, Let $t$ be a standard
$\lambda$-tableau. When writing the standard polytabloid $e_t$ as a $\ZZ$-linear combination of $\lambda$-tabloids, $\{t\}$ is the unique standard $\lambda$-tabloid occurring with non-zero coefficient; in fact, the coefficient at $\{t\}$ is 1. In particular, in this case, we have
$$\{t\}^*_{|S^\lambda_\ZZ}=e_t^*\in (S^\lambda_\ZZ)^*\,,$$
for every standard $\lambda$-tableau $t$.

\smallskip

Recall, moreover, that $S^\lambda_\ZZ$ is a cyclic $\ZZ\mathfrak{S}_n$-lattice, generated by any (standard) $\lambda$-polytabloid $e_t$.
If $s$ is also a standard $\lambda$-tableau, then we have $\sigma\cdot t=s$, for some $\sigma\in\mathfrak{S}_n$, hence
also $\sigma\{t\}=\{s\}$. This in turn implies $\sigma\{t\}^*=\{s\}^*$, see \ref{noth bilinear}(b), thus $e_s^*=\sigma\cdot e_t^*$. This shows that $(S^\lambda_\ZZ)^*$ is a cyclic $\ZZ\mathfrak{S}_n$-module, generated by $e_t^*$, for any standard $\lambda$-polytabloid $e_t$. So, by Theorem~\ref{thm dual}, the element $\sum_{\sigma\in R_t}e_{\sigma t}$ then generates a $\ZZ\mathfrak{S}_n$-sublattice of $S^\lambda_\ZZ$ that is isomorphic to $(S^\lambda_\ZZ)^*$.

\smallskip

(b)\, Let $R$ be a principal ideal domain that is flat as right $\ZZ$-module.
For each $\lambda$-tableau $t$, we identify the element $1\otimes e_t\in S_R^\lambda$ simply with $e_t$ as in \ref{noth Young Specht},
and then the element $e_t^*\in (S_R^\lambda)^*$ with $1\otimes e_t^*\in R\otimes (S_\ZZ^\lambda)^*$ via
the $R\mathfrak{S}_n$-isomorphism in \ref{noth change rings}(d). With this notation, the $R\mathfrak{S}_n$-monomorphism
$\varphi_R^\lambda$ in Corollary~\ref{cor phi ZZ_p} again maps $e_t^*$ to  $\sum_{\sigma\in R_t}e_{\sigma t}$.

\end{expl}
\end{appendix}

\bibliography{hookSpechtLattices}
\bibliographystyle{amsplain}

\bigskip

{\sc S.D.: Department of Mathematics and Geography, KU Eichst\"att-Ingolstadt,
Ostenstr. 26, 
85072 Eichst\"att,
Germany}\\
{\sf susanne.danz@ku.de}

\medskip

{\sc T.H.: Department of Mathematics,
University of Kaiserslautern,\\
P.O. Box 3049,
67653 Kaiserslautern,
Germany}\\
{\sf thofmann@mathematik.uni-kl.de}

\end{document}